\newcommand{\opcit}{\emph{op.cit.}}
\newcommand{\ie}{\emph{i.e.}}
\newcommand{\eg}{\emph{e.g.}}
\newcommand{\ibid}{\emph{ibid.}}
\newcommand{\cf}{\emph{cf.}}
\newcommand{\A}{\ensuremath{\mathbb{A}}}
\newcommand{\bbtwo}{\ensuremath{\mathbbm{2}}}
\newcommand{\C}{\ensuremath{\boxempty}}
\newcommand{\N}{\ensuremath{\mathbb{N}}}
\newcommand{\D}{\ensuremath{\mathbb{D}}}
\newcommand{\B}{\ensuremath{\mathbb{B}}}
\newcommand{\bbC}{\ensuremath{\mathbb{C}}}
\newcommand{\Z}{\ensuremath{\mathbb{Z}}}
\newcommand{\T}{\ensuremath{\mathbb{T}}}
\newcommand{\CC}{\ensuremath{\mathcal{C}}}
\newcommand{\WW}{\ensuremath{\mathcal{W}}}
\newcommand{\FF}{\ensuremath{\mathcal{F}}}
\newcommand{\EE}{\ensuremath{\mathcal{E}}}
\newcommand{\psh}[1]{\ensuremath{\mathsf{Set}^{#1^{\mathrm{op}}}}}
\newcommand{\Set}{\ensuremath{\mathsf{Set}}}
\newcommand{\Cat}{\ensuremath{\mathsf{Cat}}}
\newcommand{\cSet}{\ensuremath{\mathsf{cSet}}}
\newcommand{\cSetZ}{\ensuremath{\mathsf{cSet}/_{\!Z}}}
\newcommand{\cEE}{\ensuremath{\mathsf{c}\mathcal{E}}}
\newcommand{\y}{\ensuremath{\mathsf{y}}} 
\newcommand{\yon}{\ensuremath{\mathsf{y}}} 
\newcommand{\op}[1]{\ensuremath{{#1}^{\mathrm{op}}}}
\newcommand{\eval}{\ensuremath{\mathsf{eval}}}
\newcommand{\dom}{\ensuremath{\mathsf{dom}}}
\newcommand{\cod}{\ensuremath{\mathsf{cod}}}
\newcommand{\proj}{\ensuremath{\mathsf{p}}}
\newcommand{\hook}{\ensuremath{\hookrightarrow}}
\newcommand{\mono}{\ensuremath{\rightarrowtail}}
\newcommand{\ra}{\ensuremath{\rightarrow}}
\newcommand{\cof}{\ensuremath{\rightarrowtail}}
\newcommand{\fib}{\ensuremath{\twoheadrightarrow}}
\renewcommand{\to}{\ensuremath{\rightarrow}}
\newcommand{\too}{\ensuremath{\longrightarrow}}
\newcommand{\onto}{\ensuremath{\twoheadrightarrow}}
\newcommand{\pbh}[2]{#1\!\Rightarrow\!#2}
\newcommand{\gph}[1]{\ensuremath{\langle #1 \rangle}}
\newcommand{\Hom}{\ensuremath{\mathrm{Hom}}}
\renewcommand{\hom}{\ensuremath{\mathrm{Hom}}}
\newcommand{\Sub}[1]{\ensuremath{\mathsf{Sub}{#1}}}
\newcommand{\elem}[1]{\textstyle\int\!{#1}}
\newcommand{\I}{\ensuremath{\mathrm{I}}}
\newcommand{\J}{\ensuremath{\mathrm{J}}}
\newcommand{\II}{\ensuremath{\mathbb{I}}}
\renewcommand{\H}{\ensuremath{\Box}}
\newcommand{\del}{\ensuremath{\partial}}
\newcommand{\pair}[1]{\ensuremath{\langle #1\rangle}}
\newcommand{\U}{\ensuremath{\mathcal{U}}}
\newcommand{\UU}{\ensuremath{\,\dot{\mathcal{U}}}}
\newcommand{\V}{\ensuremath{\mathcal{V}}}
\newcommand{\VV}{\ensuremath{\dot{\mathcal{V}}}}
\newcommand{\SSet}{\ensuremath{\,\dot{\Set}}}
\newcommand{\Fib}{\ensuremath{\mathsf{Fib}}}
\newcommand{\FFib}{\ensuremath{\dot{\mathsf{Fib}}}}
\newcommand{\TFib}{\ensuremath{\mathsf{TFib}}}
\newcommand{\TTFib}{\ensuremath{\dot{\mathsf{TFib}}}}
\newcommand{\TCof}{\ensuremath{\mathsf{TCof}}}
\newtheorem{theorem}{Theorem}
\newtheorem*{theorem*}{Theorem}
\newtheorem{proposition}[theorem]{Proposition} 
\newtheorem{lemma}[theorem]{Lemma}
\newtheorem{corollary}[theorem]{Corollary} 
\theoremstyle{remark}
\newtheorem{remark}[theorem]{Remark} 
\newtheorem*{remarks*}{Remarks}
\newtheorem{example}[theorem]{Example}
\theoremstyle{definition}
\newtheorem{definition}[theorem]{Definition}
\newcommand{\pbmark}{\ar[dr, phantom, "\lrcorner" very near start, shift right=.5ex]}	
\newcommand{\pbcorner}[1][dr]{\save*!/#1-1.2pc/#1:(-1,1)@^{|-}\restore}
\begin{document}

\title{Cartesian cubical model categories}
\author{Steve Awodey}
\date{\today}


\begin{abstract}
\noindent The category of Cartesian cubical sets is introduced and endowed with a Quillen model structure
using ideas coming from recent constructions of cubical systems of univalent type theory.
\end{abstract}

\maketitle

\setcounter{tocdepth}{1}
\tableofcontents


\section*{Introduction}

Recent years have seen renewed interest in the cubical approach to abstract homotopy theory.  This contrasts with the more familiar and widespread simplicial approach, using which many sophisticated and powerful tools have been developed, such as simplicial model categories \cite{DwyerKan:1980sl}, quasi-categories \cite{Joyal:2008tq}, and higher toposes \cite{Lurie:2009ht}.  Of course, some early work like the original papers of D.\ Kan \cite{Kan:55,Kan:56} employed cubical sets, and some researchers such as R.\ Brown \cite{BROWN2018459} and R.\ Jardine \cite{Jardine:cubical} have developed such methods further in a more modern style, but they are swimming against the tide.  

The current interest in the cubical approach arises from connections with the  formal system of \emph{type theory} for the purpose of computerized proof checking \cite{AwodeyCoquand:2013}.  Unlike previous cubical models of homotopy theory, however, the cubes being used for this purpose are generally assumed to be closed under finite products; we call such cube categories \emph{Cartesian}.  This is a natural enough assumption to make for cubes, but one that has somehow escaped serious consideration---but for two notable exceptions: in A.~Grothendieck's famous letter to D.~Quillen, and the accompanying 600 page manuscript \emph{Pursuing stacks} \cite{G:1983}, such cubical sets make an appearance as \emph{test categories}, which model the homotopy category of spaces in a particular way.  In fact, the Cartesian cubes studied here are \emph{strict} test categories in the terminology of \opcit, meaning that the geometric realization functor preserves finite products (\cite{BuchholtzMoorehouse}).  The more familiar category of ``monoidal'' cubical sets used since Kan is also a strict test category provided one includes \emph{connections} \cite{Maltsiniotis:2009}, but this is not necessarily Cartesian.  The second source for Cartesian cubical sets is F.W.~Lawvere, who proposed them as a model for homotopy theory in lectures, and in public and private correspondence, but never (to my knowledge) published anything on the subject.   Among their advantages, he stressed the tinyness of the 1-cube, or ``interval'' $\I$, which indeed plays a role in the current theory---although perhaps not the one envisioned by him.

We can define \emph{the} Cartesian cube category $\Box$ to be the Lawvere algebraic theory of bipointed objects, the opposite of which is therefore the category of finite, strictly bipointed sets $\B = \op\Box$.  Thus $\Box$ is the free finite product category with a bipointed object $[0]\rightrightarrows [1]$.  Our homotopy theory will be based on the category of \emph{Cartesian cubical sets}, which is the category of presheaves on $\Box$,
\[
\cSet = \psh{\Box}
\]
and thus consists of all \emph{covariant} functors $\B\to\Set$. Among these, there is an evident distinguished one, namely that which ``forgets the points'', and it is represented by the generating $1$-cube $[1]$,
\[
\I = \Box(-,[1]) : \B \too\Set\,.
\]

In cubical sets, the bipointed object $1\rightrightarrows\I$ turns out to have the (non-algebraic) property that its two points have a trivial intersection.  
\[
\xymatrix{
0 \ar[d] \ar[r] \pbcorner & 1 \ar[d]  \\
1 \ar[r] & \I
}
\]
We call such an object in a topos an \emph{interval}, and in a sense to be made precise, this is the universal one.  Other categories of Cartesian cubical sets have a canonical comparison to this one, relating their respective homotopy theories.

For the purpose of homotopy theory, namely, this interval provides a good cylinder $X + X \cof \I\times X$ for every object $X$, as well as a good path object $X^\I \fib X\times X$ for every \emph{fibrant} object $X$.  The notion of fibrancy here is determined by the interval $\I$ in terms of \emph{paths} $\I\to X$, and is a generalization of the \emph{path-lifting} condition from classical homotopy theory, suitably modified for this setting. We formulate it using the now-standard notion of a Quillen model structure:

\begin{definition}[cf.\cite{Quillen:1967ha}]\label{def:qmsviaJT}
A \emph{Quillen model structure} on a (bicomplete) category $\EE$ consists of three classes of maps $\CC, \WW, \FF$ satisfying the conditions:\begin{enumerate}
\item $(\CC, \WW\cap\FF)$ and $(\CC\cap\WW, \FF)$ are weak factorization systems,
\item $\WW$ has the 3-for-2 property: if any two sides of the triangle $e = f\circ g$ are in $\WW$, so is the third.
\end{enumerate}
\end{definition}

For the interval $1\rightrightarrows \I$, we have the mono $\del:1+1\cof \I$ as one of two basic cofibrations $\CC$ giving rise to all the others, in a certain sense.  The other basic one is the diagonal $\delta : \I\cof\I\times\I$, which is a special cofibration that, together with $\del$, determines both $\FF$ and $\WW$ just from the conditions (1) and (2) in the definition (which we have restated in a form due to \cite{JT:intro}).  Condition (1) has recently been termed a \emph{premodel structure} by R.\ Barton \cite{Barton}, and its verification in our setting is fairly routine, occupying less than the first half of the paper.   Condition (2) is where all the work is, and where our treatment is most likely to be of interest to the expert. We shall summarize those aspects below, but let us say now that the model structure is not the one determined by the method of Cisinski \cite{cisinski-asterisque}, nor is it Reedy \cite{Reedy:1974ht}, although the Cartesian cube category $\Box$ is ``generalized Reedy'' in the sense of \cite{BergerMoerdijk:2008rc}.  

Having identified $\Box$ as a strict test category, why not simply use standard tools to determine the test model structure on $\cSet$, making it equivalent to the standard homotopy theory of spaces?  Because \emph{we are mainly interested in how the model structure relates to the interpretation of type theory}.  
Specifically, we wish to investigate the relationship between the ingredients of a Quillen model structure and certain standard constructions in type theory, in order to better understand the somewhat mysterious connection between the two.

The first models of homotopy type theory used the standard Kan-Quillen model structure on simplicial sets \cite{awodey-warren:homotopy-idtype,KLV:21}. Much subsequent work has also relied on classical methods, including M.~Shulman's \emph{tour de force} result that  every Grothendieck $\infty$-topos admits a model of HoTT with a univalent universe \cite{shulman2019infty1toposes}.  This means that all of the results in the Homotopy Type Theory book \cite{hottbook} hold not only in the standard model in ``spaces,'' \ie\ simplicial sets, but also in any such higher topos.  In particular, the univalence \emph{axiom} of V.~Voevodsky is actually \emph{true} in all such models.
There is, however, a mismatch between such models of the univalence axiom and the design and implementation of computer systems based on type theory.  Taken as an axiom, univalence blocks the normalization algorithm which forms the basis of type theoretic computation. Voevodsky recognized this, and conjectured (roughly) that the system with the univalence axiom admitted an interpretation into the system without it, in a way that would restore effective computation.  

A version of this ``homotopy canonicity conjecture'' was finally verified a decade later by T.~Coquand and collaborators \cite{BCH,CCHM:2018ctt}.  One key insight that apparently led to their success was the ``change of shape'' from simplicial to cubical sets.\footnote{As suggested by \cite{BC:Kripke2015}.  Whether this alone is essential is still a matter of debate; arguably, it was rather the algebraic aspect underlying the ``uniform Kan filling'' condition that made the break-through possible.  Whether the cubical shape is essential to \emph{that} will perhaps be determined by recent work on an algebraic simplicial approach by \cite{gambino-henry}.}  
Some aspects of Coquand's work were undoubtedly informed by homotopy theory, but much of it was driven by type-theoretic considerations: normalization, canonicity, constructivity, etc.   
Subsequent work on computational systems of univalent type theory (such as \cite{orton-pitts,LOPS18,ABCHFL}) also used intuitions from basic homotopy theory (and some of the jargon), but without bothering to verify the model category axioms.  Of course, this research had a very different aim, namely the provision of a constructive system of type theory with univalence, which would facilitate its implementation in a computer proof system.  Once that was accomplished, there was no need to determine whether a Quillen model structure was also lurking in the background; it simply remained a mystery that the ingredients required for a computational system of univalent type theory seemed to align with the basic concepts of abstract homotopy theory.  

It was C.~Sattler who first recognized that a computational implementation of univalent type theory contains everything required to determine a Quillen model structure \cite{Sattler:2017ee}. An earlier result in this direction had been given by \cite{gambino-garner:idtypewfs}, who showed that the basic system of type theory with identity types not only interpreted into a weak factorization system (as had been shown by \cite{awodey-warren:homotopy-idtype}), but that it actually \emph{required} such a structure for its sound interpretation---essentially by constructing a weak factorization system from the system of type theory itself (P.~LeFanu Lumsdaine subsequently used higher inductive types to construct a second weak factorization system within homotopy type theory \cite{Lumsdaine:HITCofibrations}, making another step toward a full model structure).  The relationship between the full system of univalent type theory and a full Quillen model structure is somewhat more subtle---and part of the present investigation---but the mystery of \emph{why} the tools of model category theory seemed to work so well for constructing systems of univalent type theory is at least partially resolved by the insight that the type theory is apparently describing the same  kind of structure as do certain model categories; namely, that of a higher topos. 
So while there was no reason to expect \emph{a priori} that the work on computer proof systems 
would have any relevance to homotopy theory, the methods developed for those purposes have now acquired such relevance nonetheless. 

These new methods include various species of cubical sets with different combinatorial and homotopical properties \cite{BuchholtzMoorehouse}, some still unknown, as well as various composition, filling, and uniformity conditions with as yet unclear relationships to homotopical algebra \cite{orton-pitts,CCHM:2018ctt,BCH,ABCHFL,CMS:2020}. It is worth noting, for those not familiar with both, that translating between the language of type theory and that of model categories is by no means routine, nor is the converse anything like typesetting a commutative diagram in LaTeX.  (Indeed, the limits of such translation are a matter of current investigation, with the question of how to handle the coherences arising in higher category theory in type theory at the very forefront of current research.) 

The particular category of Cartesian cubes considered here has been studied by the author, in lectures and papers, since 2013 \cite{Awodey:cubical-model,awodey19cmu}, with various different box-filling conditions.  The condition explored in the present work, which we call \emph{unbiased partial box-filling}, was apparently considered early on by Coquand \cite{Coq:cartesian} but abandoned in favor of a monoidal one in \cite{BCH}, and later modified to one depending on the presence of connections in \cite{CCHM:2018ctt}.  The unbiased approach was resurrected and studied intensely in type theory by R.~Harper and his students \cite{BL:2014,AngiuliHarperWilson17,AngiuliFavoniaHarper18,Angiuli:2019}, culminating in \cite{ABCHFL}.   These type theoretic constructions are analyzed in terms of model categories here for the first time, doing for the system of Cartesian cubical type theory roughly what Gambino and Sattler \cite{GS,Sattler:2017ee} did for the system in \cite{CCHM:2018ctt}.

Specifically, we ultimately show that the category of Cartesian cubical sets admits a Quillen model structure $\CC, \WW, \FF$ with the unbiased fibrations as the class $\FF$ and the cofibrations $\CC$ axiomatized to allow for variations, including additional structure on the basic cube category $\Box$ and adjustments in the filling conditions.  Since our proofs are given in elementary diagrammatic form, they will also hold in other categories of Cartesian cubical sets, including those with connections, reversals, etc.  Indeed, part of our motivation is to apply the results obtained here, \emph{mutatis mutandis}, in two other settings: realizability, and equivariant filling.  The former (underway in \cite{AAFS}) imposes a strict condition of constructivity, about which we will say a bit more shortly.   The latter (underway in \cite{ACCRS}) is based on an unpublished result due to Sattler showing that an additional equivariance condition on the unbiased fibrations suffices to turn this model structure into the test model structure already mentioned.  

The possibility of an entirely constructive verification of the Quillen model category axioms is a consequence of the constructive interpretation of univalent type theory labored over by Coquand and his collaborators, and it has applications for the homotopy theory of presheaves and sheaves that stand to be explored further (but cf.~\cite{Coq:stack2017}).   The important uniformity condition on the Kan filling operations is closely related to E.~Riehl's \emph{algebraic model structures} \cite{riehl-algebraic-model} and gives rise to a notion of structured fibration that admits classification, in the sense of classifying spaces, by means of what we here call \emph{classifying types}.   These classifiers are used to construct universal objects of various kinds: families, cofibrations, (trivial) fibrations, and ultimately a universal fibration $\UU\fib\U$, which acts like an object classifier in higher topos theory, but with a stricter universal property.  Our work shows that having such classifying types can be useful, \eg\ when ``changing the base'' from one slice category $\EE/_X$ to another $\EE/_Y$ along a map $f : Y\to X$, or along a more general geometric morphism $f^*\dashv f_* : \FF \to \EE$.

Another application of the constructivity of the model structure is the computation of homotopy invariants as a result of a constructive proof.  This was only a theoretical possibility until quite recently, when a breakthrough by  A.~Ljungstr\"om \cite{AxelLj:2022} finally allowed the computer system Cubical Agda \cite{VezzosiMortbergAbel19} (which is based on the results just mentioned of Coquand \emph{et al.}) to compute the value of $k\in \Z$ from a proof in homotopy type theory that $\pi_4(S^3)\cong \Z/_{\!k\Z}$, which had been done by hand 10 years earlier at the IAS by G.\ Brunerie \cite{GB:IAS2013}.  Realizability models of type theory based on constructively proven model structures should also have applications in computational homotopy theory.

One way to verify that our model structure is entirely constructive would be to formalize the proofs below in a proof assistant such as Agda.  While this could be of interest for the practice of translating model category proofs into type theory, in principle one would learn very little that is not already known, since the model structure given here is in a certain sense ``reverse-engineered'' from a computational interpretation of type theory that has already been fully formalized and verified (namely, that in \cite{ABCHFL}).  Although our definitions and proofs do not parallel those in \ibid\ in the way that a proper formalization would, the interpretation of type theory underlying them will be plainly visible to the experts---since the Quillen model category defined here was already found lurking, as it were, behind that system.

Let us now make this more explicit as we outline the contents of the paper (references to the literature occur at the corresponding points of the main text).
After defining the Cartesian cubical sets and establishing some basic facts about them in Section 1, Section 2  specifies the \emph{cofibrations} axiomatically, as a class of monomorphisms classified by a universal one $t:1 \cof \Phi$.  This permits using the associated polynomial endofunctor $P_t : \cSet\to \cSet$ (which is shown to be a monad by the axioms for cofibrations), to give an algebraic weak factorization system with the cofibrations as the left maps and the (retracts of) $P_t$-algebras as the maps on the right, which we define to be the \emph{trivial fibrations}.  Since the monad is fibered, the factorization system is stable under change of base, which we use to derive the familiar diagonal filling characterization of the trivial fibrations in algebraic form, and relate this to the uniform filling condition from type theory.  The polynomial monad $P_t : \cSet\to \cSet$ is related to the type theoretic partiality- or lifting-monad, and generalizes the partial map classifier from the early days of topos theory.

In Section 3, the \emph{fibrations} are defined in terms of the cofibrations via the Joyal-Tierney calculus of pushout-products and pullback-homs.  A ``biased'' version using the two endpoints $\delta_0, \delta_1: 1\rightrightarrows\I$ is given first, before specifying the ``unbiased''  version in terms of the generic point $\delta : 1 \to \I^*\I$  in the slice category $\cSet/_\I$, namely the diagonal $\I \to \I\times\I$.  Specifically, a map $f : X \to Y$ in $\cSet$ is defined to be an unbiased fibration if its pullback to $\cSet/_\I$ has the right lifting property against all maps of the form $c\, \otimes_I\, \delta$ where $c : C\mono Z$ is a cofibration over $\I$ and the pushout-product with $\delta$ is formed in $\cSet/_\I$. 
\begin{equation*}\label{diagram:unbiasedfibrationIntro}
\xymatrix{
Z +_C (C\times\I^*\I) \ar@{>->}[d]_{c\, \otimes_I\, \delta} \ar[r] & \I^*Y \ar[d]^{\I^*f} \\
Z\times \I^*\I \ar@{.>}[ru] \ar[r] & \I^*X
}
\end{equation*}
The two weak factorization systems of cofibrations and trivial fibrations, and trivial cofibrations and fibrations, are assembled formally into a Barton premodel structure in Section 4, where the \emph{weak equivalences} are determined and related to \emph{weak homotopy equivalences}:  maps that induce isomorphisms in the homotopy category by precomposition.  The 3-for-2 axiom is reduced to a technical condition dubbed the \emph{fibration extension property}, the proof of which is deferred.  This concludes Part 1, and attention shifts to establishing the fibration extension property.

Part 2, consisting of Sections 5-8, is essentially a 60 page proof of a lemma.  It seems entirely likely that a more direct proof could be given, dispatching the entire second part of the paper. Even in that event, however, the work done in Part 2 would remain worthwhile, for this is where an implicit construction of a model of (homotopy) type theory occurs: The Frobenius property in Section 5 establishes the interpretation of $\Pi$-types of fibrations along fibrations, and thus the right properness of the model structure, by an entirely new diagrammatic argument derived from one originally given in type theory.   In Section 6 we construct the classifying types for fibration structure and use them to give a new construction of a universal fibration $\UU\fib\U$.  This is where the tinyness of the interval $\I$ plays an unexpected role, and a related axiom on the cofibrations is discovered.

Sections 7 and 8 make implicit use of the model of type theory emerging in the background, and contribute new diagrammatic proofs of two fundamental facts about it.  In Section 7 an equivalence extension property is established which is closely related to the univalence of the universal fibration $\UU\fib\U$, and in Section 8 that property is used to finally establish the fibration extension property, which is seen to be equivalent to the statement that the base object $\U$ is fibrant.  In sum, then, the missing 3-for-2 property of the premodel structure from Part 1 is proven in Part 2 by constructing a fibrant, univalent universe of fibrant objects.

One thing that we learn from the exercise is that one can get quite far in constructing a model of type theory in a \emph{premodel} category,   without assuming a fibrant universe, its univalence, or even the presence of a universe at all!  Conversely, our results suggest that the presence of a fibrant, univalent universe in such homotopical semantics in a premodel structure is not just necessary for a full model of univalent type theory, but actually suffices for a full Quillen model structure. In this sense, a model of HoTT is \emph{equivalent} to a Quillen model structure of a certain kind---namely, one that presents a higher topos.

\subsection*{Acknowledgements.}

Foremost, I am indebted to Thierry Coquand for sharing his ideas in conversations at the IHES in Paris, online during the COVID-19 pandemic, at the CAS in Oslo, and on several other occasions going back to the IAS in Princeton.  In many of the same places, Andr\'e Joyal has provided patient advice as well as illuminating lectures.   The writings of Mike Shulman and Emily Riehl have also been extremely helpful, as have many discussions with, and especially some late comments from, the latter.  I have learned much from my past and present CMU colleagues Mathieu Anel, Marc Bezem, Ulrik Buchholtz, Jonas Frey, Bas Spitters, Andrew Swan, and the late Pieter Hofstra; I am especially grateful for their patience through many revisions and delays.  Many other people have given good advice over the long period of these investigations, including Bjorn Dundas, Peter Dybjer, Marcelo Fiore, Richard Garner, Nicola Gambino, Dan Licata, Peter LeFanu Lumsdaine, Per Martin-L\"of, Andy Pitts, Christian Sattler, Thomas Streicher, Benno van den Berg, and undoubtedly others that I am forgetting.
I am also grateful to the computational higher type theory group around Bob Harper at CMU, including his students Carlo Angiuli, Evan Cavallo, Favonia, and Jon Sterling, for challenging me to clarify my thoughts.  
For research stays during which some of this work was conducted, I thank in particular the Oslo Centre for Advanced Studies and the Institut des Hautes \'Etudes Scientifiques.   Finally, I must acknowledge my debt to the late Vladimir Voevodsky, whose profound contributions advanced the subject far beyond my original expectations.

This material is based upon work supported by the Air Force Office of Scientific Research under awards number FA9550-21-1-0009, FA9550-20-1-0305 and FA9550-15-1-0053, and the Army Research Office award number W911NF-21-1-0121 P00002.

\clearpage
\section{Cartesian cubical sets}\label{sec:cSet}

There are many different categories of cubes $\Box$ that can be taken as a site for homotopy theory \cite{Grandis:2003,BuchholtzMoorehouse}, and indeed several different ones have recently been explored in connection with cubical systems of (homotopy) type theory, including \cite{BCH,orton-pitts, CCHM:2018ctt,ABCHFL,CMS:2020}, to name a few.  The model structure developed here is intended to work with any of these, insofar as they are  \emph{Cartesian}, in the sense that the indexing cubes $[n]\in\Box$ are closed under finite products $[m]\times[n] = [m+n]$.   Rather than working axiomatically, though, we shall work in the initial such category, which we call \emph{the Cartesian cube category} $\C$, defined as the free finite product category on an interval $\delta_0, \delta_1 : 1\rightrightarrows \I$.

\begin{definition}
The objects $[n]$ of the \emph{Cartesian cube category} $\C$, called $n$-cubes, are finite sets of the form 
\[
[n] = \{0, x_1, ..., x_n, 1\}\,,
\]
 where the $x_1, ..., x_n,$ are arbitrary but distinct elements, and $0,1$ are further distinct, distinguished elements.
The arrows,
\[
f : [m] \ra [n]\,,
\]
are arbitrary bipointed maps $f' : [n]\ra [m]$ (note the variance!).  Thus $\B = \op\C$ is the category if finite, strictly bipointed sets.  
\end{definition}
As a Lawvere theory, the arrows $f : [m] \ra [n]$ in $\C$ may also be regarded as $n$-tuples of elements from the set $\{0, x_1, ..., x_m, 1\}$.   These can be generated under composition by faces, degeneracies, permutations, and diagonals (see \cite{parker:thesis} for further details).

\begin{definition}\label{def:cSet}
The category \cSet\ of \emph{Cartesian cubical sets} is the category of presheaves on the Cartesian cube category $\C$,
\[
\mathsf{cSet}\ =\ \psh{\C}.
\]
It is of course generated by the representable presheaves $\y{[n]}$, to be written \[
\I^n = \y{[n]}
\]
 and called the \emph{geometric $n$-cubes}.   
\end{definition}

Note that the representables $\I^n$ are also closed under finite products, $\I^m \times \I^n = \I^{m+n}$.  We write $\I$ for $\I^1$ and $1$ for $\I^0$, which is terminal.  We will need the following basic fact about the cubes $\I^n$ in $\cSet$.

\begin{proposition}[Lawvere]\label{prop:Itiny}
The $n$-cubes $\I^n$ are \emph{tiny}, in the sense that the endofunctor $X\mapsto X^{\I^n}$ is a left adjoint.
\end{proposition}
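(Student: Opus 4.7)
The plan is to identify the exponential functor $(-)^{\I^n} \colon \cSet \to \cSet$ with a precomposition functor on the presheaf category $\cSet = \psh{\boxcat}$, and then appeal to the standard fact that any such precomposition admits both a left and a right adjoint, given by Kan extension.

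First, I would compute $X^{\I^n}$ objectwise. Since $\I^n = \yon{[n]}$ is representable, the usual formula for exponentials in a presheaf topos together with the Yoneda lemma gives, for any $k \ge 0$,
\[
X^{\I^n}([k]) \;\cong\; \hom_{\cSet}(\I^k \times \I^n,\, X).
\]
Next I would use the fact, noted just before the proposition, that the representables are closed under finite products: because $[k] \times [n] = [k+n]$ in $\boxcat$ and Yoneda preserves any products that exist in its domain, we have $\I^k \times \I^n \cong \I^{k+n}$. Applying Yoneda again yields
\[
X^{\I^n}([k]) \;\cong\; \hom_{\cSet}(\I^{k+n},\, X) \;\cong\; X([k+n]).
\]

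The remaining task is to check that this is natural in $[k]$. Because $F := (-) \times [n] \colon \boxcat \to \boxcat$ is a functor, the above objectwise isomorphisms assemble into a natural isomorphism of presheaves, exhibiting $X^{\I^n}$ as the precomposition of $X \colon \op\boxcat \to \Set$ along $\op F$. In other words, $(-)^{\I^n}$ coincides with the inverse image functor $F^{*}$ associated to $F$. Since $F^{*}$ always admits a right adjoint $F_{*}$ given by right Kan extension along $\op F$, the functor $(-)^{\I^n}$ is a left adjoint, establishing the tinyness of $\I^n$.

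There is no real obstacle here; the argument is a routine combination of the Yoneda lemma and the cartesianness of $\boxcat$. The only bookkeeping is to verify that the objectwise isomorphism $X^{\I^n}([k]) \cong X([k+n])$ is natural, which is immediate once one observes that both sides are contravariantly functorial in $[k]$ via the same assignment $f \mapsto f \times \mathrm{id}_{[n]}$.
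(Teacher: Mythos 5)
Your argument is correct and follows essentially the same route as the paper's: compute $X^{\I^n}([k]) \cong X([k+n])$ via Yoneda and the cartesianness of $\Box$, identify $(-)^{\I^n}$ as precomposition along the functor $[k]\mapsto [k]\times[n]$, and invoke the right Kan extension as the right adjoint. The only cosmetic difference is that the paper first reduces to $n=1$ and phrases the functor as the ``successor'' $[k]\mapsto[k+1]$, while you work with general $n$ directly; the content is the same.
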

\noindent (See \cite{Lawvere:2004} on such ``amazing right adjoints''.)

\begin{proof}
It clearly suffices to prove the claim for $n=1$.   For any cubical set $X$, the exponential $X^\I$ is a ``shift by one dimension'', 
\[
X^\I(n) \cong \hom(\I^n, X^\I)\ \cong\  \hom(\I^{n+1}, X)\cong\ X(n+1).
\]
Thus $X^\I$ is given by precomposition with the ``successor'' functor $\C\to\C$ with $[n] \mapsto [n+1]$. Precomposition always has a right adjoint, which in this case we shall write as 
\[
(-)^\I\dashv (-)_\I 
\]
 and call $X_\I$ the $\I^{th}$-\emph{root} of $X$.
\end{proof}

The following is used to calculate the root $X_\I$. A similar fact holds for the generic object in the object classifying topos $\Set[X]=\Set^{\mathsf{Fin}}$ and related categories used in the theory of abstract higher-order syntax \cite{FiorePlotkinTuri:1999}.

\begin{lemma}\label{lemma:binomial}
For the representable functor $\I = \y[1]$ in $\cSet$, we have $\I^\I\ \cong\ \I+1$.
\end{lemma}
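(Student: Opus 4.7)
The plan is to apply the identity $X^\I([n]) = X([n+1])$ extracted from the proof of the preceding proposition to $X = \I$. This gives
\[
\I^\I([n]) \;=\; \I([n+1]) \;=\; \Box([n+1],[1])
\]
pointwise (via $\I = \y[1]$ and Yoneda), reducing the lemma to a natural bijection $\Box([n+1],[1]) \cong \Box([n],[1]) + 1$.

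The next step is to unpack $\Box([n+1],[1])$ using the Lawvere-theoretic description of $\Box$ recorded just before Definition~\ref{def:cSet}: a morphism $[n+1] \to [1]$ is a $1$-tuple of elements of $\{0, x_1, \dots, x_{n+1}, 1\}$, i.e.\ simply an element of this $(n+3)$-element set. I would then split the set disjointly as
\[
\{0, x_1, \dots, x_n, 1\} \;\sqcup\; \{x_{n+1}\},
\]
identifying the first summand with $\Box([n],[1]) = \I([n])$ and the singleton with $1([n])$, the terminal presheaf evaluated at $[n]$. The sizes already match: $n+3 = (n+2) + 1$.

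All that remains is naturality in $[n]$. For a morphism $g : [n] \to [n']$ in $\Box$, the successor $s(g) : [n+1] \to [n'+1]$ is (by the same product identification) $g \times \mathrm{id}_{[1]}$, which dually in $\B$ extends the bipointed map underlying $g$ by sending the fresh generator $x_{n'+1}$ to $x_{n+1}$. Precomposition with this map, read through the element-description above, preserves the ``old'' summand $\Box([n'],[1]) \hookrightarrow \Box([n'+1],[1])$ (where it agrees with precomposition by $g$) and sends the fresh generator on the left to the fresh generator on the right, exactly matching the action of $g$ on $\I + 1$. The pointwise bijections therefore assemble into the required natural isomorphism.

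I do not foresee any serious obstacle: morally the statement is the familiar identity $X^X \cong X + 1$ for the generic object of the object classifier $\Set^{\mathsf{Fin}}$ mentioned in the remark just before the lemma. The only point of care is keeping the variance straight and pinning down precisely how the successor functor acts on morphisms so that the naturality diagram is oriented correctly.
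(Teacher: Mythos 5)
Your argument is correct and follows the same route as the paper: identify $\I^\I([n])$ with $\I([n+1]) = \Box([n+1],[1])$ via the shift, then split the $(n+3)$-element hom-set into the $(n+2)$-element set $\Box([n],[1])$ plus a singleton for the fresh generator. The paper only states "the isomorphism is natural in $n$" without checking it; you go further and verify naturality by unwinding how the successor acts on a morphism $g : [n] \to [n']$, which is a useful elaboration, and your account of $s(g)$ as $g \times \mathrm{id}_{[1]}$ (dually, the bipointed map extending $g'$ by $x_{n'+1}\mapsto x_{n+1}$) is accurate.
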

\begin{proof}
For any $[n] \in \H$ we have:
\[
(\I^\I)(n) \cong \I(n+1) \cong \Hom(\I^{(n+1)},\I)\cong \H([n+1],[1])\cong \B([1], [n+1])\cong n+3.
\]
On the other hand,
\[
(\I+1)(n) \cong \I(n) + 1(n) \cong \Hom(\I^n, \I) + 1 \cong \B([1],[n]) +1 \cong (n+2) +1.
\]
The isomorphism is natural in $n$.
\end{proof}

\begin{corollary} For any cubical set $X$,
\begin{align*}\textstyle
X_\I(n) &\cong \hom(\I^n, X_\I) \\
&\cong \hom((\I^n)^\I, X) \\
&\cong \hom((\I^\I)^n, X) \\
&\cong \hom((\I+1)^n, X) \\
&\cong \hom(\I^n +  {\textstyle{n\choose n-1}}\I^{n-1} + \dots + {\textstyle{n\choose 1}}\I+1, X) \\
&\cong X_n \times X_{n-1}^{\binom{n}{n-1}} \times \dots \times X_1^{\binom{n}{1}}\times X_0\,.
\end{align*}
\end{corollary}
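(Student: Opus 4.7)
The plan is to verify the corollary as a chain of natural isomorphisms, each corresponding to one line of the displayed calculation, with Yoneda, tinyness of $\I$, cartesian closure of \cSet, Lemma \ref{lemma:binomial}, and the distributivity of products over coproducts in a topos doing the successive work. Throughout, naturality in $n$ is automatic since every isomorphism in sight is a composite of natural transformations.

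First, the opening identification $X_\I(n) \cong \hom(\I^n, X_\I)$ is just the Yoneda lemma, since $\I^n = \yon[n]$. The second step $\hom(\I^n, X_\I) \cong \hom((\I^n)^\I, X)$ is an instance of the adjunction $(-)^\I \dashv (-)_\I$ from Proposition \ref{prop:Itiny}. For the third step, I rewrite $(\I^n)^\I$ as $(\I^\I)^n$: since $\I^n = \I \times \cdots \times \I$ and exponentiation by a fixed object distributes over finite products in the base, the cartesian closed structure of \cSet\ gives $(\I \times \cdots \times \I)^\I \cong \I^\I \times \cdots \times \I^\I = (\I^\I)^n$. The fourth step simply substitutes Lemma \ref{lemma:binomial} to replace $\I^\I$ by $\I + 1$.

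The fifth step is the binomial expansion of $(\I+1)^n$ inside \cSet. Because \cSet\ is a topos, finite products distribute over finite coproducts, so iteratively expanding the $n$-fold product $(\I+1)\times\cdots\times(\I+1)$ yields
\[
(\I+1)^n \;\cong\; \sum_{k=0}^{n}\binom{n}{k}\I^k,
\]
which is the coproduct written out in the statement (with $\binom{n}{n}=1=\binom{n}{0}$ for the endpoint terms). Finally, the sixth step uses that $\hom(-,X)$ sends colimits in its first argument to limits, together with the Yoneda identifications $\hom(\I^k, X) \cong X_k$, giving
\[
\hom\!\Bigl(\sum_{k=0}^{n}\binom{n}{k}\I^k,\,X\Bigr) \;\cong\; \prod_{k=0}^{n} X_k^{\binom{n}{k}}
\]
as required.

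None of these steps is particularly deep; the only one that requires more than formal manipulation is the binomial expansion of $(\I+1)^n$, which genuinely uses that \cSet\ is a topos (so that products distribute over coproducts). The rest is pure cartesian closed and adjoint bookkeeping, together with the essential content already packaged in Proposition \ref{prop:Itiny} and Lemma \ref{lemma:binomial}. Naturality in $n$ follows because the successor endofunctor on $\C$, the adjunction $(-)^\I \dashv (-)_\I$, the isomorphism $\I^\I \cong \I+1$, and the distributivity isomorphisms are all natural.
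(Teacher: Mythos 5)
Your proof is correct and matches the paper's intent exactly: the corollary is stated as a chain of displayed isomorphisms with no further proof, and you have simply supplied the justification for each link (Yoneda, the $(-)^\I \dashv (-)_\I$ adjunction, cartesian closure, Lemma~\ref{lemma:binomial}, distributivity of products over coproducts in a topos, and $\hom(-,X)$ sending coproducts to products). Nothing to add.
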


The exponential  $X^\I$ will be called the \emph{pathobject} of $X$, and plays a special role.  As we have just seen, it classifies ``paths'' in $X$; so the $0$-cubes $p \in (X^I)_0$ in the pathobject correspond to $1$-cubes $p\in X_1$, the ``endpoints'' of which $p_0, p_1\in X_0$ are given by composing with the evaluation maps 
\[
\epsilon_0, \epsilon_1 : X^\I \rightrightarrows X
\]
at the points $\delta_0, \delta_1 : 1\rightrightarrows \I$. More generally, higher cubes $c\in X_{n+1}$ correspond to maps $c : \I^{n+1}\to X$, which are thus paths between the $n$-cubes $c_0, c_1 : \I^n \to X$, corresponding to $c_0, c_1 \in X_n$.  
Note that, as a left adjoint, the pathobject functor $X\mapsto X^\I$ preserves all \emph{co}limits.   

We shall also need the following two facts concerning the interaction of cubes $\I^n$, pathobjects $X^\I$, and the base change functors associated to a map $f : X\to Y$  in $\cSet$, namely,
\[
f_!\dashv f^* \dashv f_* : \cSet/_X \too \cSet/_Y\,.
\]

\begin{lemma}\label{lemma:pathspacepushforward}
The pushforward functor along any map $f : X\to Y$ preserves pathobjects; for any object $A \to X$ over $X$, the pathobject of the pushforward $f_*A$ is (canonically isomorphic over $Y$ to) the pushforward of the pathobject,
\[
(f_*A)^\I \cong f_*(A^\I)\,.
\]
\end{lemma}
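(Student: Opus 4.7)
The plan is to identify $(f_*A)^\I$ and $f_*(A^\I)$ as right adjoints to one and the same functor. Writing $\I_Z := Z^*\I = \I \times Z \to Z$ for the constant $\I$-bundle over a base $Z$, the key observation will be that $\I$ is pulled back from the terminal object, so $f^*\I_Y \cong \I_X$; combined with the fact that $f^*$ preserves products (being a right adjoint to $f_!$), this yields a natural isomorphism
\[
f^*(\I_Y \times_Y T) \;\cong\; \I_X \times_X f^*T
\]
for every $T$ over $Y$.

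From here the argument proceeds via local cartesian closure: in each slice $\cSet/_Z$ one has the exponential adjunction $\I_Z \times_Z (-) \dashv (-)^\I$, and globally $f^* \dashv f_*$. The displayed iso expresses a commuting square of left adjoints $\cSet/_Y \to \cSet/_X$; taking right adjoints yields $f_* \circ (-)^\I \cong (-)^\I \circ f_*$, and evaluating at $A$ gives the claim. Equivalently, one can verify the isomorphism directly by Yoneda in $\cSet/_Y$: for any $T \to Y$,
\[
\hom(T, (f_*A)^\I) \cong \hom(\I_Y \times_Y T, f_*A) \cong \hom(f^*(\I_Y \times_Y T), A) \cong \hom(\I_X \times_X f^*T, A) \cong \hom(f^*T, A^\I) \cong \hom(T, f_*(A^\I)),
\]
using the exponential adjunction twice, $f^* \dashv f_*$ twice, and the commutation of $f^*$ with $\I$-products in the middle.

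The only real step to verify is the identification $f^*\I_Y \cong \I_X$, which is however essentially tautological because $\I$ is pulled back from the terminal. The minor conceptual point to keep in mind is that the pathobject appearing in the statement should be read as the slice-wise exponential by the constant bundle $\I_Z$, so that both $f_*(A^\I)$ and $(f_*A)^\I$ naturally live in the appropriate slice; on total objects this agrees with the ambient pathobject in $\cSet$ precisely because $\I$ is constant.
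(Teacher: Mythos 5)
Your proof is correct and is essentially the same argument the paper sketches: the paper cites the Beck--Chevalley condition for the pullback square $\I\times X \to \I\times Y$ over $X\to Y$, and your Yoneda chain (using $f^*(\I_Y\times_Y T)\cong \I_X\times_X f^*T$ because $\I$ is constant and $f^*$ preserves slice products) is precisely that condition unwound through the adjunctions $\I_Z\times_Z(-)\dashv(-)^\I$ and $f^*\dashv f_*$.
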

\begin{proof}
This is true for any constant family $X^*C = X\times C \to X$ with $C$ in place of $\I$, as the reader can easily verify using the Beck-Chevalley condition.
\end{proof}

\begin{lemma}\label{lemma:tinyslicedI}
The pulled-back interval $\I^*\I = \I\times\I\to \I$ in $\cSet/_\I$ is also tiny.
\end{lemma}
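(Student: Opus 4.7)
The plan is to emulate the strategy of Proposition 2.3 one level up, realising the exponential functor $(-)^{\I^*\I}$ on the slice $\cSet/_\I$ as precomposition with an endofunctor on a small category, so that the desired right adjoint exists automatically as a right Kan extension.

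First I would identify $\cSet/_\I$ with the presheaf category $\psh{\int\I}$ on the category of elements of $\I$: its objects are pairs $([n],\xi)$ with $\xi\colon[n]\to[1]$ in $\Box$, and its morphisms $([n],\xi)\to([m],\eta)$ are $\Box$-maps $\phi\colon[n]\to[m]$ with $\eta\phi=\xi$. Under this equivalence the representable $\y([n],\xi)$ corresponds to the slice object $(\I^n,\xi\colon\I^n\to\I)$.

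Next I would compute the product in $\cSet/_\I$ of an arbitrary representable $(\I^n,\xi)$ with $\I^*\I$. Writing $\I^*\I$ as $\I\times\I\to\I$ via one of the projections, the pullback of $\xi$ along that projection is $\I^{n+1}$ with structure map $\xi\circ\pi_1$, where $\pi_1\colon[n+1]=[n]\times[1]\to[n]$ is the projection in $\Box$. This is again representable, corresponding to $([n+1],\xi\pi_1)$; in particular $\I^*\I$ itself is representable in $\psh{\int\I}$. So product in $\cSet/_\I$ with $\I^*\I$ preserves representables and is given on them by the shift endofunctor
\[
s\colon\int\I\too\int\I,\qquad s([n],\xi)=([n+1],\xi\pi_1),
\]
whose action on morphisms is $\phi\mapsto\phi\times\mathrm{id}_{[1]}$.

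By Yoneda it then follows that
\[
(F^{\I^*\I})([n],\xi)\ \cong\ \cSet/_\I\bigl(\y([n],\xi)\times_\I\I^*\I,\,F\bigr)\ \cong\ F(s([n],\xi)),
\]
so $(-)^{\I^*\I}$ is precomposition along $s$ on $\psh{\int\I}$ and hence admits a right adjoint, namely right Kan extension along $s$. This is the same mechanism as in Proposition 2.3, where tininess of $\I$ came from the fact that $(-)^\I$ equals precomposition with the successor functor $[n]\mapsto[n+1]$ on $\Box$; here the indexing base $\Box$ is simply replaced by its category of elements $\int\I$. I do not anticipate a serious obstacle: the only real care needed is the variance bookkeeping in the identification $\cSet/_\I\cong\psh{\int\I}$ and the verification of functoriality of the shift $s$.
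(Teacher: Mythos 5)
Your proof is correct and follows essentially the same route as the paper's: since $\I=\y[1]$ is representable, $\int\I\cong\Box/_{[1]}$, so both identify $\cSet/_\I$ with presheaves on $\Box/_{[1]}$ and realize $(-)^{\I^*\I}$ as precomposition with the same shift endofunctor $\xi\mapsto\xi\circ\pi_1$. The only (cosmetic) difference is that you read off $(-)^{\I^*\I}\cong s^*$ directly by Yoneda on representables, whereas the paper first identifies the left adjoint $(s_{[1]})_!\cong\I^*\I\times(-)$ from the pullback square and then deduces $(s_{[1]})^*\cong(-)^{\I^*\I}$ by uniqueness of adjoints; either way the desired right adjoint is the right Kan extension along the shift.
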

\begin{proof}
Since the interval $\I = \y[1]$ is representable, the slice category $\cSet/_\I$ is also a category of presheaves, namely over the sliced cube category $\C/_{[1]}$\,,
\[
\cSet/_\I \ =\ {\psh{\C}}\!/_{\y[1]} \ \cong\ \psh{ (\C/_{[1]}) }\,.
\]
However, since $\C$ does not have all finite limits, the sliced index category does not have all finite products, and so we cannot simply repeat the proof from Proposition \ref{prop:Itiny}.  But as in that proof, we do have a ``successor'' functor 
\[
s_{[1]} : \C/_{[1]} \to \C/_{[1]}\,,
\]
resulting from the ``predecessor'' natural transformation $s \Rightarrow 1_\C$ given by the  projection $\I\times X \to X$. Evaluating $s$ at each object $f : [n] \to [1]$ in  $\C/_{[1]}$, we obtain a commutative diagram:
\begin{equation}\label{diag:snatural}
\begin{tikzcd}
s[n]  \ar[d,swap, "{sf}"] \ar[r, "{\cong}" ] & {[1]}\!\times\! {[n]}  \ar[r, "p_n"] & {[n]} \ar[d, "f"] \\  
s[1]  \ar[r, swap, "{\cong}" ] & {[1]}\!\times\! {[1]}  \ar[r, swap, "p_1"] & {[1]}
 \end{tikzcd}
 \end{equation}
We can then set $s_{[1]}(f) = p_1\circ sf = f\circ p_n$. As in the foregoing proof, we can then calculate the values of the adjoints on presheaves, associated to $s_{[1]}$,
 \[
 {s_{[1]}}_! \dashv {s_{[1]}}^* : \widehat{\C/_{[1]}} \too \widehat{\C/_{[1]}} \]
  to be, successively,
 \begin{align*}
 {s_{[1]}}_! (X) &= \I^*\I \times X\,, \\
  {s_{[1]}}^* (X) &= X^{\I^*\I} \,.
 \end{align*}
The first equation follows from the observation that the diagram \eqref{diag:snatural} is a pullback, and so the object $s_{[1]}(f)  : s[n]\to [1]$ of $\widehat{\C/_{[1]}}$ given by the evident composite is just $\I^*\I \times f $, and the diagram itself represents the counit map $(\I^*\I \times f) \to f$ over $\I$.
 The second line then follows by adjointness, as does the fact that we have a further right adjoint, namely, the ${\I^*\I}^{th}$-\emph{root}:
 \begin{equation*}
 {s_{[1]}}_* (X) =: X_{\I^*\I} \,.
 \end{equation*}
\end{proof}

\section{The cofibration weak factorization system}\label{sec:cofibrations}

To build a model structure on the presheaf category of cubical sets, one can simply take as the cofibrations \emph{all} of the monomorphisms in $\cSet$; but for some purposes, it is convenient to know what is actually required of them (see \eg\ Appendix A). Thus, to begin, the following axioms are assumed.
 
\begin{definition}[Cofibrations]\label{def:cofibration}
The \emph{cofibrations} are a class $\mathcal{C}$ of monomorphisms satisfying the following conditions:
\begin{enumerate}
\item[(C0)] The map $0\to C$ is always a cofibration.
\item[(C1)] All isomorphisms are cofibrations.
\item[(C2)] The composite of two cofibrations is a cofibration.
\item[(C3)] Any pullback of a cofibration is a cofibration.
\end{enumerate}
We also require the cofibrations to be classified by a subobject $\Phi \hook \Omega$ of the standard subobject classifier $\top: 1 \to \Omega$ of $\mathsf{cSet}$:
\begin{enumerate}
\item[(C4)] There is a terminal object $t:1\to\Phi$ in the category of cofibrations and cartesian squares.
\end{enumerate}
Further axioms for cofibrations will be added later as needed: two early in Section \ref{sec:biasedfibration}, one later in Section \ref{subsec:unbiasedfibration}, and a final one in Section \ref{sec:realignment} (see Appendix A for a summary).   Cofibrations will be written 
 \[
 c : A \mono B\,.
 \]
\end{definition}

\subsection*{The cofibrant partial map classifier.}
Consider the polynomial endofunctor $P_t : \cSet\to \cSet$ determined by the cofibration classifier $t : 1 \mono \Phi$ (see  \cite{gambino-kock}).  We will write the value of this functor at an object $X$ as 
\begin{equation}\label{eq:partialmapclassifier}
X^+\ :=\ \Phi_!\,t_*(X)\ =\ \sum_{\varphi: \Phi}X^{[\varphi]}\,.
\end{equation}

The reader familiar with type theory will recognize the similarity to the ``partiality'' or ``lifting''  monad \cite{Moggi:91}.  When all monos are cofibrations, so that $\Phi = \Omega$, the object $X^+$ agrees with the partial map classifier $\widetilde{X}$ from topos theory \cite{JohnstoneTT}.  We may therefore regard $X^+$ as the object of \emph{cofibrant partial elements} of $X$, as we now explain.

Since $t: 1\cof \Phi$ is monic, $t^*t_*\cong 1$, so $X^+$ fits into the  pullback square
\begin{equation}\label{diagram:etadef}
\xymatrix{
X \ar[d]\pbcorner \ar@{>->}[r] & X^+ \ar[d]^{t_*X}\\
1 \ar@{>->}[r]_{t} & \Phi.
}
\end{equation}
Let $\eta : X\mono X^+$ be the indicated top horizontal map; we call this the \emph{cofibrant partial map classifier} of $X$.  By a \emph{cofibrant partial map} (from an object $Z$) into $X$ we mean a span $(c,x): Z\leftarrowtail C\ra X$ with a cofibration on the left.  The object $X^+$ is a \emph{classifying type} for such cofibrant partial maps, in that it has the following universal property.
 
\begin{proposition}\label{prop:cofparclass}
Let $\eta : X\mono X^+$ be as defined in \eqref{diagram:etadef}. 
\begin{enumerate}
\item The map $\eta : X\mono X^+$ is a cofibration.
\item For any object Z and any partial map $(c,x): Z\leftarrowtail C\ra X$, with $c : C\mono Z$ a cofibration, there is a unique $\chi : Z\ra X^+$ fitting into a pullback square as follows.
\[
\xymatrix{
C \ar@{>->}[d]_{c} \pbcorner \ar[r]^x & X \ar@{>->}[d]^{\eta}\\
Z \ar[r]_\chi & X^+
}
\]
The map $\chi : Z\ra X^+$ is said to \emph{classify} the partial map 
\[
(c,x): Z\leftarrowtail C\ra X\,.
\]
\end{enumerate}
\end{proposition}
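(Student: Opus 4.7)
\medskip

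\noindent\textbf{Proof plan.}
For part (1), I would simply observe that the defining diagram \eqref{diagram:etadef} exhibits $\eta$ as the pullback of the universal cofibration $t:1\mono\Phi$ along the projection $p := t_*X : X^+\to\Phi$. Axiom (C3) (pullback-stability of cofibrations) then yields that $\eta$ is a cofibration.

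For part (2), the plan is to unpack the polynomial-functor definition $X^+ = \Phi_!\,t_*(X)$ via the standard base-change adjunctions
\[
t^*\dashv t_* : \cSet/_1 \too \cSet/_\Phi\,,\qquad \Phi_!\dashv \Phi^* : \cSet/_\Phi \too \cSet\,.
\]
A map $\chi : Z\to X^+$ in $\cSet$ is the same as a map $Z\to \Phi_!\,t_*X$, which (since $\Phi_!$ is the forgetful functor) is a map in $\cSet/_\Phi$ of the form $(Z,\gamma)\to t_*X$ for some $\gamma : Z\to\Phi$; by the adjunction $t^*\dashv t_*$, the latter corresponds to a map $t^*(Z,\gamma)\to X$ in $\cSet$. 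Thus maps $\chi : Z\to X^+$ are in natural bijection with pairs $(\gamma,\,x')$ where $\gamma : Z\to\Phi$ and $x' : t^*(Z,\gamma)\to X$.

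Now given a cofibrant partial map $(c,x) : Z\leftarrowtail C\to X$, axiom (C4) produces a unique $\gamma : Z\to\Phi$ together with a pullback square identifying $c : C\mono Z$ with $t^*(Z,\gamma)\mono Z$; transporting $x$ along this isomorphism yields an $x' : t^*(Z,\gamma)\to X$, and the bijection above delivers a unique $\chi : Z\to X^+$ corresponding to $(\gamma,x')$. To verify that the resulting square is cartesian, observe that by construction $p\circ\chi = \gamma$, and pulling back $\eta$ along $\chi$ is (by pasting of pullbacks applied to \eqref{diagram:etadef}) the same as pulling back $t$ along $\gamma$, which by (C4) is exactly $c$; tracing the top edge through the adjunction identifications shows it is $x$. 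Uniqueness follows because any other $\chi'$ fitting into such a pullback square would classify the same cofibration $c$, forcing $p\circ\chi' = \gamma$ by the uniqueness clause in (C4), and then the top edge determines the $t^*\dashv t_*$-transpose, so $\chi' = \chi$.

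The only real content here is making sure the bijection $\mathrm{Hom}(Z,X^+) \cong \{(\gamma,x')\}$ is compatible with the pullback description along $\eta$; once the polynomial/base-change bookkeeping is set up, everything else is formal. There is no serious obstacle — this is essentially the standard universal property of a polynomial endofunctor evaluated at the universal cofibration, and the subtlety (if any) lies only in keeping careful track of which square is a pullback at each step.
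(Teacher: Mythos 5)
Your proof follows essentially the same approach as the paper's. For part (1), the paper also notes that $\eta$ is a pullback of the universal cofibration $t:1\cof\Phi$ (hence a cofibration by (C3)); you spell out which projection realizes this, but it is the same argument. For part (2), the paper states the result in one sentence as ``just the universal property of $X^+$ as a polynomial'' and defers to a citation (\cite{A:natural}, prop.~7); your proof unpacks exactly that universal property by walking through the base-change adjunctions $\Phi_!\dashv\Phi^*$ and $t^*\dashv t_*$, uses (C4) to produce the unique $\gamma:Z\to\Phi$ classifying $c$, and transports the data across the adjunction. In effect, you have filled in the contents of the citation the paper points to, rather than finding a different route. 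One small point worth being explicit about: the step ``tracing the top edge through the adjunction identifications shows it is $x$'' is exactly where the fact that $t$ is monic (so $t^*t_*\cong\mathrm{id}$ and the counit is an iso) enters; without that, the restriction of $\chi$ to the fiber over $1$ would not identify with the top leg of the square, and the partial-map interpretation of the polynomial would fail. You use this implicitly, and it would be worth flagging.
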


\begin{proof}
The map $\eta : X\mono X^+$ is a cofibration since it is a pullback of the universal cofibration $t : 1\cof \Phi$. Observe that $(\eta, 1_X) : X^+\leftarrowtail X\ra X$ is therefore a cofibrant partial map into $X$.  The second statement is just the universal property of $X^+$ as a polynomial (see \cite{A:natural}, prop.~7). 
\end{proof}


\begin{proposition}\label{prop:plusmonad}
The pointed endofunctor  $\eta_X : X\mono X^+$ has a natural multiplication $\mu_X : X^{++} \ra X^+$ making it a monad.
\end{proposition}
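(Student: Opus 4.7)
The plan is to leverage Proposition~\ref{prop:cofparclass} to identify morphisms $Z \to X^+$ with cofibrant partial maps $Z \leftarrowtail C \to X$, and then to define $\mu_X$ by using axiom~(C2) to ``flatten'' two-stage cofibrant partial data into a single cofibrant partial map. By the universal property of $X^+$ applied twice, a morphism $Z \to X^{++}$ corresponds naturally to a two-stage structure consisting of a cofibration $c : C \mono Z$, a further cofibration $d : D \mono C$, and a map $g : D \to X$. Since the composite $c\circ d : D \mono Z$ is a cofibration by~(C2), this data classifies a single cofibrant partial map $(c\circ d,\, g) : Z \leftarrowtail D \to X$, whose classifying morphism $Z \to X^+$ is then natural in $Z$ and defines $\mu_X : X^{++} \to X^+$.

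Equivalently, $\mu_X$ is the unique map classifying the cofibrant partial map into $X$ with composite cofibration $\eta_{X^+} \circ \eta_X : X \mono X^+ \mono X^{++}$ (a cofibration by~(C2)) and target map $1_X$, fitting into the pullback square
\[
\xymatrix{
X \ar[d]_{\eta_{X^+} \circ \eta_X} \pbcorner \ar[r]^{1_X} & X \ar[d]^{\eta_X} \\
X^{++} \ar[r]_{\mu_X} & X^+\,.
}
\]
To verify the monad laws, I would check each by comparing the partial maps classified by the relevant composites. The unit law $\mu_X \circ \eta_{X^+} = 1_{X^+}$ follows because $\eta_{X^+}$ introduces an identity as the outer cofibration in the two-stage data, which vanishes under the flattening. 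Symmetrically, $\mu_X \circ X^+(\eta_X) = 1_{X^+}$ follows because $X^+(\eta_X)$ introduces an identity as the inner cofibration. Associativity, $\mu_X \circ \mu_{X^+} = \mu_X \circ X^+(\mu_X)$, amounts to the statement that both composites classify the triple composite cofibration arising from generic three-stage data $Z \leftarrowtail C \leftarrowtail D \leftarrowtail E \to X$, and these coincide by associativity of composition of cofibrations.

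The main obstacle is not any single step, but the careful bookkeeping of naturality and pullback-pasting required to establish the correspondence between iterated applications of $(-)^+$ and multi-stage cofibrant partial data. Notably, axioms~(C1)--(C3) are exactly what is needed here: (C1) supplies identity cofibrations (so that total identity maps count as cofibrant partial maps), (C2) delivers the composite cofibration underlying $\mu_X$, and (C3) ensures that cofibrations can be pulled back along arbitrary maps when composing partial maps. In effect, these axioms make the cofibrant partial maps into a category, whose Kleisli-style representation by the functor $X \mapsto X^+$ is precisely the monad structure we are constructing.
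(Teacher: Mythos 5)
Your proposal matches the paper's proof essentially exactly: the paper also invokes closure of cofibrations under composition (C2) and defines $\mu_X$ by the same pullback square, classifying the cofibrant partial map with cofibration $\eta_{X^+}\circ\eta_X : X \mono X^{++}$ and target $1_X$, deferring the verification of the monad laws to a citation. Your sketch of the unit and associativity laws via the flattening of multi-stage cofibrant partial data is a correct expansion of what the paper leaves implicit.
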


\begin{proof}
Since the cofibrations are closed under composition, the monad structure on $X^+$ follows as in \cite{A:natural}, Lemma 5.  Explicitly, $\mu_X$ is determined by proposition \ref{prop:cofparclass} as the unique map making the following a pullback diagram.
\[
\xymatrix{
X \ar@{>->}[d]_{\eta_X} \ar[r]^= & X \ar@{>->}[dd]^{\eta}\\
X^+ \ar@{>->}[d]_{\eta_{X^+}} & \\
X^{++} \ar@{.>}[r]_\mu & X^+
}
\]
\end{proof}

\subsection*{Relative partial map classifier.}

For any object $X\in\cSet$ the pullback functor 
\[
X^* : \cSet \ra \cSet/_X\,,
\]
taking any $A$ to the (say) first projection $X\times A \ra X$, not only preserves the subobject classifier $\Omega$, but also the cofibration classifier $\Phi \hook \Omega$, where a map in $\cSet/_X$ is defined to be a cofibration if it is one in \cSet\ (under the forgetful functor $\cSet/_X \to \cSet$). Thus in $\cSet/_X$ we can define the \emph{(relative) cofibration classifier} to be the map
\[
X^*t : X^*1 \too X^*\Phi	\quad\text{over $X$}\,,
\]
which we may also write $t_X : 1_X \ra \Phi_X$.  Like $t : 1\ra \Phi$, this map determines a polynomial endofunctor  
\[
+_X : \cSet/_X \ra \cSet/_X\,,
\]
 which commutes (up to natural isomorphism) with $+ : \cSet \ra \cSet$ and $X^* : \cSet \ra \cSet/_X$ in the expected way, namely:
\begin{equation}\label{diag:+fibered}
\xymatrix{
\cSet/_X \ar[r]^{+_X}& \cSet/_X \\
\cSet \ar[u]^{X^*} \ar[r]_+ & \ar[u]_{X^*}\cSet 
}
\end{equation}
The endofunctor $+_X$ is also pointed $\eta_Y : Y \ra Y^+$ and has a natural monad multiplication $\mu_Y : Y^{++} \ra Y^+$, for any $Y\ra X$, for the same reason that $+$ has this structure.  Summarizing, we may say:

\begin{proposition}\label{prop:fiberedpolymonad}
The polynomial monad $+ : \cSet \ra \cSet$ of \emph{cofibrant partial elements} is indexed (or fibered) over \cSet.
\end{proposition}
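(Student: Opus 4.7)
The plan is to check that the slice-indexed family of monads $+_X$ on $\cSet/_X$ assembles into a fibered monad over $\cSet$, meaning that every pullback functor $f^{*}\colon\cSet/_{X}\to\cSet/_{Y}$ along a map $f\colon Y\to X$ commutes with the endofunctors $+_{X}$ and $+_{Y}$ up to a canonical natural isomorphism, coherently with the units and multiplications. Diagram \eqref{diag:+fibered} records the special case in which $f$ is the terminal projection $X\to 1$; the argument for a general $f$ proceeds in exactly the same way, since slices of slices are again slices of $\cSet$.

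First I would observe that pullback preserves the relative cofibration classifier. Axioms (C3) and (C4) make cofibrations pullback-stable and classified, so for any $f\colon Y\to X$ the pullback $f^{*}(t_{X})$ is canonically the universal cofibration $t_{Y}\colon 1_{Y}\mono\Phi_{Y}$ in $\cSet/_{Y}$; equivalently $f^{*}\Phi_{X}\cong\Phi_{Y}$. This is the standard observation that a pullback-stable, classified subobject class is itself fibered over $\cSet$, which one can verify by chasing the defining square for $t_X$ through the pullback along $f$ and invoking uniqueness of classification in $\cSet/_Y$.

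With this in hand, the natural isomorphism
\[
f^{*}\circ{+_{X}}\;\cong\;{+_{Y}}\circ f^{*}
\]
follows by Beck--Chevalley for polynomial functors (see \cite{gambino-kock}). Since $+_{X}=(\Phi_{X})_{!}\,(t_{X})_{*}$ is computed inside the slice topos $\cSet/_{X}$, and $f^{*}$ itself has left and right adjoints $f_{!}\dashv f^{*}\dashv f_{*}$ between slice toposes, pulling back the relative version of the defining square \eqref{diagram:etadef} along $f$ produces the corresponding defining square over $Y$ for $(f^{*}A)^{+_{Y}}$. Hence $f^{*}(A^{+_{X}})\cong(f^{*}A)^{+_{Y}}$, naturally in $A\to X$.

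Compatibility with $\eta$ and $\mu$ is then automatic from the universal property of the cofibrant partial map classifier (Proposition \ref{prop:cofparclass}): $f^{*}(\eta_{A})$ is a cofibration into $(f^{*}A)^{+_{Y}}$, and by uniqueness of classification its classifying map must agree with that of $\eta_{f^{*}A}$; the argument for $\mu$ is identical using the pullback square in the proof of Proposition \ref{prop:plusmonad}. The principal obstacle is purely bookkeeping: assembling the pointwise Beck--Chevalley isomorphisms into a genuine pseudo-functorial structure on the indexed category of slices, and verifying the monad laws cohere. In the topos setting this is routine, as the relevant isomorphisms are uniquely determined by universal properties, so the required coherences hold automatically.
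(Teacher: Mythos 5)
Your proposal takes essentially the same approach as the paper, whose ``proof'' is simply the discussion preceding the proposition: observe that pullback preserves the cofibration classifier $\Phi$ (hence, by the corresponding fact for polynomial functors, the endofunctor $+$), and that the unit and multiplication transfer for the same reason. You additionally spell out the Beck--Chevalley step and the coherence for $\eta$ and $\mu$, and you treat arbitrary base-change maps $f\colon Y\to X$ rather than only the terminal projection recorded in diagram \eqref{diag:+fibered}, but the core idea is identical.
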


\begin{definition}\label{def:+alg}
A \emph{$+$-algebra} in \cSet\ is an algebra for the pointed endofunctor $+ : \cSet\ra \cSet$.  Explicitly, a $+$-algebra is a cubical set $A$ together with a retraction $\alpha : A^+\ra A$ of the unit $\eta_A : A \ra A^+$.  Algebras for the monad $(+, \eta, \mu)$ will be referred to explicitly as \emph{$(+, \eta, \mu)$-algebras}, or \emph{$+$-monad algebras}. 

A \emph{relative $+$-algebra} in \cSet\ is a map $A \ra X$, together with an algebra structure over the codomain $X$ for the pointed endofunctor 
\[
+_X : \cSet/_X \too \cSet/_X\,.
\]
\end{definition}

\subsection*{The cofibration weak factorization system.}

The following proposition generalizes one in \cite{bourke-garner-I}.
\begin{proposition}
There is an (algebraic) weak factoriation system on \cSet\ with the cofibrations as the left class, and as the right class, the maps underlying relative $+$-algebras.  Thus a right map is one $p : A\ra X$ for which there is a retract $\alpha : A'\ra A$ over $X$ of the canonical map $\eta : A\ra A'$,
\[
\xymatrix{
A\ar[rd]_{p} \ar[r]^{\eta} \ar@/^5ex/ [rr]^= & A' \ar[r]^{\alpha} \ar[d]^-{p^+} & \ar[ld]^{p} A \\
& X. &
}
\]
\end{proposition}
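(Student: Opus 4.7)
The plan is to construct a functorial factorization using the fibered polynomial monad $+_X$ of Proposition \ref{prop:fiberedpolymonad}, and then derive the right lifting property of cofibrations against relative $+$-algebras directly from the classification property of Proposition \ref{prop:cofparclass} applied in each slice $\cSet/_X$.

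First I would define the factorization of an arbitrary map $p : A \ra X$ by regarding it as an object of $\cSet/_X$ and applying the pointed endofunctor $+_X$ to obtain $p^+ : A^+ \ra X$ together with the unit $\eta_p : A \ra A^+$, yielding $p = p^+ \circ \eta_p$. Computing the defining square \eqref{diagram:etadef} in the slice exhibits $\eta_p$ as a pullback of the relative universal cofibration $t_X : 1_X \cof \Phi_X$, so $\eta_p$ is a cofibration by axiom (C3). The right part $p^+$ inherits a canonical relative $+$-algebra structure from the monad multiplication $\mu_p : A^{++} \ra A^+$ of Proposition \ref{prop:plusmonad}, which by the monad unit laws retracts $\eta_{A^+}$ over $X$. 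This gives the desired functorial factorization with a monad structure on the right.

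Next, for the lifting property, consider a commutative square with $c : C \cof Z$ a cofibration on the left, $p : A \ra X$ a right map equipped with structure map $\alpha : A^+ \ra A$ over $X$, top edge $u : C \ra A$, and bottom edge $v : Z \ra X$. The key step is to view the span $(c, u) : Z \leftarrowtail C \ra A$ as a cofibrant partial map in the slice $\cSet/_X$ (with $Z$ over $X$ via $v$ and $A$ over $X$ via $p$), and invoke Proposition \ref{prop:cofparclass} in the slice---via the fibered compatibility diagram \eqref{diag:+fibered}---to obtain a unique classifying map $\chi : Z \ra A^+$ over $X$ with $\chi \circ c = \eta_A \circ u$. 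The desired diagonal filler is then
\[
j \;:=\; \alpha \circ \chi \;:\; Z \ra A.
\]
By construction $j$ is a morphism over $X$, so $p \circ j = v$; and $j \circ c = \alpha \circ \eta_A \circ u = u$ because $\alpha$ retracts $\eta_A$.

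The main obstacle is really bookkeeping: one has to transport the classifier construction from $\cSet$ into each slice $\cSet/_X$ functorially and compatibly with base change, which is precisely the content of the fibered statement in Proposition \ref{prop:fiberedpolymonad} together with the observation that the pullback functor $X^*$ preserves the cofibration classifier $\Phi$. Once this is in hand, both the factorization and the lifting property fall out formally, and the remaining algebraic structure of the wfs---a functorial factorization with monad on the right and a compatible comonad on the left---follows from the general recipe for generating an algebraic weak factorization system from a pointed polynomial endofunctor, following the template of \cite{bourke-garner-I}.
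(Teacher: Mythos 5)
Your proposal is correct and follows essentially the same route as the paper: factor via the relative $+$-functor over the codomain, observe the unit is a cofibration (pullback of $t_X$) and the second factor is the free $+$-monad algebra, and obtain the diagonal filler by classifying the cofibrant partial map $(c,u)$ in the slice over $X$ and postcomposing with the algebra structure map $\alpha$. The paper additionally makes explicit that retract-closure of the left and right classes follows respectively from the classification by $t:1\cof\Phi$ and from the right maps being algebras for a pointed endofunctor underlying a monad, which you fold into your appeal to the general recipe.
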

(Note that the domain of $p^+ : A' \to X$ is not $A^+$, unless of course $X = 1$.)
\begin{proof}
The factorization of a map $f : Y\ra X$ is given by applying the relative $+$-functor over the codomain,
\[
\xymatrix{
Y\ar[rd]_{f} \ar@{>->}[r]^{\eta_f} & Y' \ar[d]^-{f^{+}} \\
& X. 
}
\]
We know by proposition \ref{prop:cofparclass} that the unit $\eta_f$ is always a cofibration, and since $f^{+}$ is the free algebra for the relative $+$-monad, it is in particular a $+$-algebra.

For the lifting condition, consider a cofibration $c : B\mono C$, and a right map $p:A\ra X$ with $+$-algebra structure map $\alpha: A' \ra A$ over $X$, and a commutative square as indicated below.
\[
\xymatrix{
B \ar@{>->}[dd]_{c} \ar[rr]^{a}  && A \ar[dd]_p \ar[rd]_{\eta} & \\
 &&& A' \ar@/_4ex/ [lu]_\alpha  \ar[ld]^{p^+} \\
C \ar[rr]_{x} && X &
}
\]
Thus in the slice category over $X$, we have
\[
\xymatrix{
B\ar@{>->}[d]_{c} \ar[r]^{a} & A \ar[d]^-{\eta} \\
C \ar@{.>}[ru]_{d} & A^+ \ar@/_4ex/ [u]_\alpha
}
\]
and we seek a diagonal filler $d$ as indicated.  (Note that we are writing $A^+$ for the map $p^+ : A'\to A$ regarded as an object over $X$, and similarly $C$ for $x : C\to X$ and $B$ for $xc : B\to X$ and $A$ for $p:A\to X$.)
Since $(c,a) : B \leftarrowtail C \ra A$ is a cofibrant partial map into $A$, by the universal property of $\eta : A \cof A^+$ (Proposition \ref{prop:cofparclass}) there is a unique classifying map $\varphi : C \ra A^+$ (over X) making a pullback square,
\[
\xymatrix{
B\ar@{>->}[d]_{c} \ar[r]^{a} & A \ar[d]^-{\eta} \\
C \ar@{.>}[r]_{\varphi} & A^+. \ar@/_4ex/ [u]_\alpha
}
\]
We can set $d := \alpha\circ \varphi : C \ra A$ to obtain the required diagonal filler, since $dc = \alpha\varphi{c} = \alpha\eta{a} = a$, because $\alpha$ is a retract of $\eta$.

The closure of the cofibrations under retracts follows from their classification by a universal object $t : 1 \cof \Phi$, and the closure of the right maps under retracts follows from their being the algebras for a pointed endofunctor underlying a monad (cf.~\cite{Riehl}).  Algebraicity of this weak factorization system is immediate, since $+$ is a monad.
\end{proof}

Summarizing, we have an algebraic weak factorization system $(\mathcal{C}, \mathcal{C}^\pitchfork)$ on the category \cSet\ of cubical sets, where:
\begin{align*}
\mathcal{C}\ &=\ \text{the cofibrations}\\
\mathcal{C}^\pitchfork\ &=\  \text{the maps underlying relative $+$-algebras}
\end{align*}
We shall call this the \emph{cofibration weak factorization system}. 
The right maps will be called \emph{trivial fibrations}, and the class of all such denoted
\[
\mathsf{TFib} = \mathcal{C}^\pitchfork\,.
\]

The cofibration algebraic weak factorization system is a generalization of one defined in \cite{bourke-garner-I} and mentioned in \cite{GS}.

\subsection*{Uniform filling structure.}

It will be useful to relate relative $+$-algebra structure to the more familiar diagonal filling condition of cofibrantly generated weak factorization systems, and specifically the special ones occuring in \cite{CCHM:2018ctt} under the name \emph{uniform filling structure} (this notion is also closely related to that of an \emph{algebraic weak factorization system}, \cf\ \cite{garner:small-object-argument,riehl-algebraic-model}).

Consider a generating sub\emph{set} of cofibrations consisting of those with representable codomain $c : C \mono \I^n$, and call these the \emph{basic cofibrations}.
\begin{equation}\label{eq:basiccof}
\mathsf{BCof} = \{c : C\mono \I^n\,|\ c\in \mathcal{C}, n\geq 0 \}.
\end{equation}

\begin{proposition}\label{prop:uniformstructequiv} For any object $X$ in \cSet\ the following are equivalent:
\begin{enumerate}
\item $X$ admits a \emph{$+$-algebra structure:} a retraction $\alpha :X^+ \ra X$ of the unit $\eta : X\ra X^+$.
\item $X\to 1$ is a \emph{trivial fibration:} it has the right lifting property with respect to all cofibrations,
\[
\mathcal{C}\, \pitchfork\,X.
\]
\item\label{smalluniformfilling} $X$ admits a \emph{uniform filling structure:} 
for each basic cofibration $c : C \mono \I^{n}$ and map $x : C\ra X$ there is given an extension $j(c,x)$,
\begin{equation}\label{diagram:uniformbasic}
\xymatrix{
C \ar@{>->}[d]_{c} \ar[r]^{x} & X, \\
\I^{n}\ar@{.>}[ru]_{j(c,x)} &
}
\end{equation}
and the choice is \emph{uniform in $\I^n$} in the following sense.  

Given any cubical map $u : \I^m \ra \I^n$, the pullback $u^*c : u^*C\mono \I^m$, which is again a basic cofibration, fits into a commutative diagram of the form
\begin{equation}\label{diagram:uniformbasic2}
\xymatrix{
u^*C \ar@{>->}[d]_{u^*c} \ar[rr]^{c^*u} \pbcorner &&  C \ar@{>->}[d]_{c} \ar[r]^{x} & X. \\
\I^{m} \ar[rr]_{u} && \I^{n} \ar@{.>}[ru]_{j(c,x)} &
}
\end{equation}
For the pair $(u^*c,\, x\circ c^*u)$ in \eqref{diagram:uniformbasic2}, the chosen extension $j(u^*c,x\circ c^*u): \I^m \ra X$, is required to be equal to  $j(c,x)\circ u$,
\begin{equation}\label{eq:uniformfillers}
j(u^*c,x\circ c^*u) = j(c,x) \circ u.
\end{equation}
\end{enumerate}
\end{proposition}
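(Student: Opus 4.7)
The plan is to close the triangle of implications using two ideas: $(1) \Leftrightarrow (2)$ follows directly from the preceding cofibration weak factorization system, and $(1) \Leftrightarrow (3)$ is a Yoneda-style translation between maps $\alpha : X^+ \to X$ and families of fillers on basic cofibrations.

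For $(1) \Leftrightarrow (2)$, I would observe that $\cSet/_1 \cong \cSet$, so a $+$-algebra structure on $X$ is the same as a relative $+$-algebra structure on $X \to 1$. By the preceding proposition, the right class of the cofibration weak factorization system is the class of maps underlying relative $+$-algebras, and in any weak factorization system the right class coincides with the maps having the right lifting property against the left class. The direction $(2) \Rightarrow (1)$ amounts concretely to solving the lifting problem with $\eta_X : X \cof X^+$ on the left and $X \to 1$ on the right (with $1_X : X \to X$ on top), producing the retraction $\alpha : X^+ \to X$.

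For $(1) \Rightarrow (3)$, given a retraction $\alpha : X^+ \to X$ of $\eta$, each basic cofibration $c : C \cof \I^n$ equipped with $x : C \to X$ is classified by a unique $\chi : \I^n \to X^+$ by Proposition \ref{prop:cofparclass}. Setting $j(c, x) := \alpha \circ \chi$, the pullback square defining $\chi$ gives $\chi \circ c = \eta \circ x$, hence $j(c,x) \circ c = \alpha\eta \circ x = x$, so $j(c,x)$ extends $x$. Uniformity in $\I^n$ is the naturality of $\chi$ under pullback along any $u : \I^m \to \I^n$.

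The real content lies in $(3) \Rightarrow (1)$, which is essentially Yoneda. By \eqref{eq:partialmapclassifier} and Proposition \ref{prop:cofparclass}, an $n$-cube of $X^+$ is precisely a pair $(c, x)$ with $c : C \cof \I^n$ a basic cofibration and $x : C \to X$, and reindexing along $u : \I^m \to \I^n$ sends this to $(u^*c,\, x \circ c^*u)$ as in diagram \eqref{diagram:uniformbasic2}. A natural transformation $\alpha : X^+ \to X$ is therefore exactly an assignment $(c, x) \mapsto \alpha_n(c, x) \in X_n$ subject to $\alpha_m(u^*c,\, x \circ c^*u) = \alpha_n(c, x) \circ u$. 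Defining $\alpha_n(c, x) := j(c, x)$, the uniformity condition \eqref{eq:uniformfillers} is exactly this naturality, and the retraction identity $\alpha \circ \eta = 1_X$ reduces to $j(1_{\I^n}, x) = x$, which is forced by the extension condition $j(c,x) \circ c = x$ in the case $c = 1_{\I^n}$. I expect the only real obstacle is bookkeeping: recognizing that the reindexing of a pair $(c, x)$ inside $X^+$ is indeed $(u^*c, x \circ c^*u)$, after which uniformity is transparently naturality and the fillers reassemble into a morphism of presheaves automatically.
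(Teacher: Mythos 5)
Your proposal is correct and follows essentially the same route as the paper: $(1)\Rightarrow(3)$ by composing the classifying map with $\alpha$ and invoking uniqueness of classifiers for uniformity; $(3)\Rightarrow(1)$ by Yoneda, using Proposition \ref{prop:cofparclass} to identify $n$-cubes of $X^+$ with basic cofibrant partial maps $(c,x)$, reading off naturality of $\alpha$ from the uniformity condition, and deriving $\alpha\eta = 1$ from the identity case $c = 1_{\I^n}$ (where the paper also notes the filler is forced). Your $(1)\Leftrightarrow(2)$ via the preceding weak factorization system is a slight abbreviation of the paper's explicit construction, but the content is the same.
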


\begin{proof}
Let $(X, \alpha)$ be a \emph{$+$-algebra} and suppose given the span $(c,x)$ as below, with $c$ a cofibration. 
\begin{equation*}
\xymatrix{
C \ar@{>->}[d]_{c} \ar[r]^{x} & X \\
Z &
}
\end{equation*}
Let $\chi(c,x): Z\ra X^+$ be the classifying map of the cofibrant partial map $(c,x) : Z \leftarrowtail C \to X$, so that we have a pullback square as follows.
\begin{equation}\label{diagram:defphi}
\xymatrix{
C \ar@{>->}[d]_{c} \ar[rr]^{x} \pbcorner && X \ar[d]^{\eta} \\
Z \ar[rr]_-{\chi(c,x)} && X^+
}
\end{equation}
Then set
\begin{equation}\label{def:phi}
j = \alpha\circ\chi(c,x) : Z\ra X
\end{equation}
to get a filler,
\begin{equation}\label{diagram:defphi2}
\xymatrix{
C \ar@{>->}[d]_{c} \ar[rr]^{x} && X \ar[d]^{\eta} \\
Z\ar@{.>}[rru]_{j} \ar[rr]_{\chi(c,x)} && X^+  \ar@/_4ex/ [u]_\alpha
}
\end{equation}
since 
\[
j\circ c = \alpha\circ\chi(c,x)\circ c = \alpha\circ\eta \circ x = x.
\]
Thus (1) implies (2).  To see that it also implies (3), observe that in the case where $Z=I^n$ and we specify, in \eqref{def:phi}, that
\begin{equation}\label{def:j}
j(c,x) = \alpha\circ\chi(c,x) : \I^n\ra X,
\end{equation}
then the assignment is natural in $\I^n$. Indeed,  given any $u : \I^m \ra \I^n$, we have
\begin{equation}\label{eq:proof,uniformfillers}
j(c',xu') = \alpha\circ\chi(c',xu') = \alpha\circ\chi(c,x)\circ u = j(c,x) u,
\end{equation}
by the uniqueness of the classifying maps.

It is clear that (2) implies (1), since if $\mathcal{C} \pitchfork X$ then we can take as an algebra structure $\alpha : X^+ \ra X$ any filler for the universal span
\[
\xymatrix{
X \ar@{>->}[d]_{\eta} \ar[r]^{=} & X .\\
X^+ \ar@{.>}[ru]_\alpha&
}
\]

To see that (3) implies (1), suppose that $X$ has a uniform filling structure $j$ and we want to define an algebra structure $\alpha : X^+ \ra X$. By Yoneda, for every $y : \I^n \ra X^+$ we need a map $\alpha(y) : \I^n \ra X$, naturally in $\I^n$, in the sense that for any $u : \I^m \ra \I^n$, we have
\begin{equation}\label{eq:proof,plusstructure}
\alpha(yu) = \alpha(y)u.
\end{equation}
Moreover, to ensure that $\alpha\eta = 1_X$, for any $x : \I^n \ra X$ we must have $\alpha(\eta\circ x) = x$. So take $y : \I^n \ra X^+$  and let 
\[
\alpha(y) = j(y^*\eta, y'),
\]
as indicated on the right below.
\begin{equation}\label{diagram:definingalpha}
\xymatrix{
u^*C \ar@{>->}[d]_{u^*y^*\eta} \ar[r]^{u'} \pbcorner &  C \ar@{>->}[d]_{y^*\eta} \ar[rr]^{y'}\pbcorner && X \ar[d]^\eta . \\
\I^{m} \ar[r]_{u} & \I^{n} \ar@{.>}[rru]_{j(y^*\eta,y')} \ar[rr]_{y} && X^+
}
\end{equation}
Then for any $u : \I^m \ra \I^n$, we indeed have 

\[
\alpha(yu) =  j\big( (yu)^*\eta, y'u' \big) = j(y^*\eta, y')\circ u = \alpha(y)u,
\]
 by the uniformity of $j$. Finally, if $y=\eta\circ x$ for some $x : \I^n\ra X$ then 
 \[
 \alpha(\eta x) =  j\big( (\eta x)^*\eta, (\eta x)'\big) = j(1_X, x) = x,
 \]
 because the defining diagram for $\alpha(\eta x)$, i.e.\ the one on the right in \eqref{diagram:definingalpha}, then factors as
 \begin{equation}\label{diagram:definingalphatwo}
\xymatrix{
\I^n \ar@{>->}[d]_{=} \ar[r]^{x} \pbcorner &  X \ar@{>->}[d]_{=} \ar[r]^{=}\pbcorner & X \ar[d]^\eta , \\
\I^{n} \ar[r]_{x} & X  \ar[r]_{\eta} & X^+
}
\end{equation}
and the only possible extension $j(1_X, x)$ for the span $(1_{\I^n}, x)$ is  $x$ itself.
 \end{proof}
 
 \begin{remark}\label{remark:largeuniformfilling}
 Observe that the uniformilty condition (\ref{smalluniformfilling}) can be extended to the \emph{class of all} cofibrations, in the form:
 
 \begin{enumerate}
 \item[4.]\label{largeuniformfilling} $X$ admits a \emph{(large) uniform filling structure:} 
for each cofibration $c : C \mono Z$ and map $x : C\ra X$ there is given an extension $j(c,x)$,
\begin{equation}\label{diagram:largeuniformbasic}
\xymatrix{
C \ar@{>->}[d]_{c} \ar[r]^{x} & X, \\
Z\ar@{.>}[ru]_{j(c,x)} &
}
\end{equation}
and the choice is \emph{uniform in $Z$} in the following sense:
Given any map $u :Y \ra Z$, the pullback $u^*c : u^*C\mono Y$, which is again a cofibration, fits into a commutative diagram of the form
\begin{equation}\label{diagram:largeuniformbasic2}
\xymatrix{
u^*C \ar@{>->}[d]_{u^*c} \ar[rr]^{c^*u} \pbcorner &&  C \ar@{>->}[d]_{c} \ar[r]^{x} & X. \\
Y \ar[rr]_{u} && z \ar@{.>}[ru]_{j(c,x)} &
}
\end{equation}
For the pair $(u^*c,\, x\circ c^*u)$ in \eqref{diagram:largeuniformbasic2}, the chosen extension $j(u^*c,x\circ c^*u): \I^m \ra X$, is required to be equal to  $j(c,x)\circ u$,
\begin{equation}\label{eq:largeuniformfillers}
j(u^*c,x\circ c^*u) = j(c,x) \circ u.
\end{equation}
\end{enumerate}
Indeed, the proof that (1) implies (2) and (3) works just as well to infer (4), which in turn implies (2) and (3) as special cases.
\end{remark}

The relative version of the foregoing is entirely analogous, since the $+$-functor is fibered over $\cSet$ in the sense of diagram \eqref{diag:+fibered}.  We can therefore omit the entirely analogous proof of the following.
 
 \begin{proposition}\label{prop:uniformstructequivrelative} For any map $f : Y\ra X$ in \cSet\ the following are equivalent:
\begin{enumerate}
\item $f:Y\ra X$ admits a \emph{relative $+$-algebra structure over $X$}, i.e.\ there is a retraction  $\alpha :Y' \ra Y$ over X of the unit $\eta : Y\ra Y'$, where $f^+ : Y' \ra X$ is the result of the relative $+$-functor applied to $f$, as in definition \ref{def:+alg}.
\item $f:Y\ra  X$ is a \emph{trivial fibration},
\[
\mathcal{C}\, \pitchfork\,f.
\]
\item $f:Y\ra  X$ admits a (small) \emph{uniform filling structure}: 
for each basic cofibration $c : C \mono \I^{n}$ and maps $x : C\ra X$ and $y : \I^n\ra Y$ making the square below commute, there is given a diagonal filler $j(c,x,y)$,
\begin{equation}\label{diagram:coffillers}
\xymatrix{
C \ar@{>->}[d]_{c} \ar[rr]^{x} && X \ar[d]^{f} \\
\I^{n}\ar@{.>}[rru]_{j(c,x,y)} \ar[rr]_{y} && Y,
}
\end{equation}
and the choice is \emph{uniform in $\I^n$} in the following sense: given any cubical map $u : \I^m \ra \I^n$, the pullback $u^*c : u^*C\mono \I^m$ is again a basic cofibration and fits into a commutative diagram of the form
\begin{equation}\label{diagram:coffillers2}
\xymatrix{
u^*C \ar@{>->}[d]_{u^*c} \ar[rr]^{c^*u} \pbcorner &&  C \ar@{>->}[d]_{c} \ar[rr]^{x} && X\ar[d]^{f} \\
\I^{m} \ar[rr]_{u} && \I^{n} \ar@{.>}[rru]_{j(c,x,y)} \ar[rr]_{y} && Y.
}
\end{equation}
For the evident triple $(u^*c,\, x\circ c^*u, y\circ u)$ in \eqref{diagram:coffillers2} the chosen diagonal filler 
\[
j(u^*c,x\circ c^*u,y\circ u): \I^m \ra X
\]
 is equal to  $j(c,x,y)\circ u$,
\begin{equation}\label{eq:uniformfillers1}
j(u^*c, x\circ c^*u,y\circ u) = j(c,x,y)\circ u.
\end{equation}
\end{enumerate}
And again, a large version of (3) with arbitrary cofibrations $c : C\cof Z$ is again equivalent to (1)-(3).
\end{proposition}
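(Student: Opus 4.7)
The plan is to mirror the proof of Proposition~\ref{prop:uniformstructequiv} inside the slice category $\cSet/_X$, exploiting the fact that the relative cofibration classifier $t_X : 1_X \cof \Phi_X$ and the relative polynomial monad $+_X$ have exactly the same formal properties as their absolute counterparts. Throughout, the key input is that Proposition~\ref{prop:cofparclass} holds relative to any base, so cofibrant partial maps $(c,x) : Z \leftarrowtail C \to Y$ in $\cSet/_X$ are classified by a unique $\chi : Z \to Y'$ over $X$.

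For $(1)\Rightarrow(2)$, I would start with a relative $+$-algebra structure $\alpha : Y' \to Y$ over $X$ and a lifting problem consisting of a cofibration $c : C \mono Z$, maps $x : C \to Y$, $z : Z \to X$ with $fx = zc$. Viewing $(c,x) : Z \leftarrowtail C \to Y$ as a cofibrant partial map in $\cSet/_X$ (all objects being equipped with their canonical maps to $X$), the relative version of Proposition~\ref{prop:cofparclass} produces a unique classifying map $\chi : Z \to Y'$ over $X$ with $\chi \circ c = \eta \circ x$; the composite $\alpha \circ \chi : Z \to Y$ is then the desired diagonal filler, and it lies over $X$ because both $\alpha$ and $\chi$ do. The implication $(2) \Rightarrow (3)$ is immediate by restricting the right lifting property to basic cofibrations.

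For $(3) \Rightarrow (1)$, I would use Yoneda in $\cSet/_X$ to define $\alpha : Y' \to Y$ over $X$. A generalized element $y : \I^n \to Y'$ (over $X$) corresponds, by the pullback definition of $Y'$ as the relative partial map classifier, to a span consisting of a cofibration $c_y : C_y \mono \I^n$, a map into $Y$ from $C_y$, together with the structure map to $X$; set $\alpha(y) := j(c_y, x_y, y^+)$ where $y^+ : \I^n \to X$ is the structural map and $(c_y, x_y)$ is the classified partial map to $Y$. The uniformity condition~\eqref{eq:uniformfillers1} ensures that $\alpha(y \circ u) = \alpha(y) \circ u$ for $u : \I^m \to \I^n$, so that $\alpha$ is a well-defined natural transformation, hence a morphism; the retraction equation $\alpha \circ \eta = 1_Y$ follows as in the absolute case by observing that for $y = \eta \circ y'$ the classified cofibration is an isomorphism and the only possible filler is $y'$ itself. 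The equivalence with the large version (condition (4)) follows exactly as in Remark~\ref{remark:largeuniformfilling}, since $(1) \Rightarrow (4)$ uses only the relative classifying property, while $(4) \Rightarrow (3)$ is restriction.

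The main obstacle is purely notational: keeping track of the three distinct pieces of data $(c, x, y)$ in the relative uniform fillers, together with their behavior under base change by $u : \I^m \to \I^n$, as opposed to the two-piece data $(c, x)$ of the absolute case. Since the relative polynomial monad is fibered over $\cSet$ via diagram~\eqref{diag:+fibered}, the $+_X$-algebra structure on $f : Y \to X$ is determined exactly by what happens at representable stages, and the uniformity condition is precisely the naturality required to assemble stagewise fillers into a genuine morphism $Y' \to Y$ over $X$. No new ideas beyond those in Proposition~\ref{prop:uniformstructequiv} and Proposition~\ref{prop:cofparclass} are required.
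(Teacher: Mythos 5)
Your overall strategy is the right one and matches what the paper means by ``entirely analogous'': relativize the proof of Proposition~\ref{prop:uniformstructequiv} to $\cSet/_X$ using the fibered monad $+_X$ and the relative cofibrant partial map classifier, which have the same formal properties as the absolute versions. The implications $(1)\Rightarrow(2)$ and $(3)\Rightarrow(1)$ are handled correctly: for $(1)\Rightarrow(2)$ you classify the cofibrant partial span over $X$ and compose with $\alpha$, and the filler is over $X$ because $\alpha$ and $\chi$ are; for $(3)\Rightarrow(1)$ you define $\alpha$ on generalized elements via the uniform fillers, and the uniformity equation \eqref{eq:uniformfillers1} is exactly the naturality needed by Yoneda.

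The gap is in your claim that ``$(2)\Rightarrow(3)$ is immediate by restricting the right lifting property to basic cofibrations.'' That is not true as stated. Restricting the RLP to basic cofibrations gives, for each individual lifting problem, the \emph{existence} of a filler; it does not give a \emph{choice} of fillers satisfying the uniformity condition \eqref{eq:uniformfillers1}, which is a structure, not a property. The paper never proves $(2)\Rightarrow(3)$ directly, either in the absolute or the relative case: the absolute proof establishes $(1)\Rightarrow(2)$, $(1)\Rightarrow(3)$, $(2)\Rightarrow(1)$, $(3)\Rightarrow(1)$, so that $(2)\Rightarrow(3)$ is obtained only by composing $(2)\Rightarrow(1)\Rightarrow(3)$. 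You should do the same: first show $(2)\Rightarrow(1)$ by noting that the universal relative cofibrant partial map $\eta : Y \to Y'$ is a cofibration (by the relative version of Proposition~\ref{prop:cofparclass}) so the RLP supplies a retraction $\alpha : Y' \to Y$ over $X$, and then deduce $(1)\Rightarrow(3)$ by defining $j(c,x,y) := \alpha \circ \chi(c,x,y)$, with uniformity following from the uniqueness of the classifying maps $\chi$, exactly as in equation \eqref{eq:proof,uniformfillers}. With that substitution the argument closes correctly. (A minor separate point: the proposition as printed has the roles of $X$ and $Y$ swapped between the text ``$f : Y \to X$'' and the diagrams \eqref{diagram:coffillers}--\eqref{diagram:coffillers2}; your write-up inherits some of that ambiguity, so it would be worth fixing the orientation consistently when you formalize.)
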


We next collect some basic facts about trivial fibrations that will be needed later: they have sections, they are closed under composition and retracts, and they are closed under pullback and pushforward along all maps.

\begin{corollary}\label{cor:plusalgprops} 
\begin{enumerate}
\item Every trivial fibration $A\ra X$ has a section $s : X\ra A$.
\item If $a:A\ra X$ is a trivial fibration and $b:B\ra A$ is a trivial fibration, then  $a\circ b:B\ra X$ is a trivial fibration.
\item If $a:A\ra X$ is a trivial fibration and $a':A'\ra X'$ is a retract of $a$ in the arrow category, then $a'$ is a trivial fibration.
\item For any map $f:X\ra Y$ and any trivial fibration $B\ra Y$,  the pullback $f^*B \ra X$ is a trivial fibration.  
\item For any map $f:X\ra Y$ and any trivial fibration $A\ra X$,  the pushforward $f_*A \ra Y$ is a trivial fibration.  
\end{enumerate}
\end{corollary}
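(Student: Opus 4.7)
The plan is to derive all five parts from the characterization of trivial fibrations as the right class of the cofibration weak factorization system established in Proposition~\ref{prop:uniformstructequiv}, combined with the closure axioms (C0)--(C3) on cofibrations. Parts (1)--(3) use only general facts about weak factorization systems, and (4) is a standard pullback argument; the real content is in (5), which requires an adjoint transposition.

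For (1), a section arises as a diagonal filler for the lifting problem whose left map is the cofibration $0 \cof X$ (a cofibration by (C0)) and whose bottom map is the identity on $X$. Parts (2) and (3) are standard facts about the right class of any weak factorization system characterized by a right lifting property: closure under composition is a two-step lifting argument (first lift against $a$ to land in $A$, then against $b$ to land in $B$), and closure under retracts is a direct diagram chase using the retraction data. Part (4) is also standard: given a cofibration $c : C \mono Z$ and a lifting square against $f^*B \to X$, postcomposing the top edge with the canonical projection $f^*B \to B$ yields a square against the trivial fibration $B \to Y$, whose diagonal filler factors uniquely through $f^*B$ by the universal property of the pullback.

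The heart of the corollary is (5). Given $f : X \to Y$, a trivial fibration $p : A \to X$, a cofibration $c : C \mono Z$, and a lifting square
\[
\xymatrix{
C \ar@{>->}[d]_c \ar[r] & f_*A \ar[d]^{f_*p} \\
Z \ar[r] & Y,
}
\]
I would interpret this as a square in the slice $\cSet/_Y$ and apply the base change adjunction $f^* \dashv f_*$. The top edge transposes to a map $f^*C \to A$ in $\cSet/_X$, the pulled-back map $f^*c : f^*C \to f^*Z$ is again a cofibration by (C3), and the transposed square
\[
\xymatrix{
f^*C \ar@{>->}[d]_{f^*c} \ar[r] & A \ar[d]^p \\
f^*Z \ar[r] & X
}
\]
admits a diagonal filler $f^*Z \to A$ because $p$ is a trivial fibration. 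Transposing this filler back across $f^* \dashv f_*$ yields the desired map $Z \to f_*A$.

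The one subtle point, and the main thing to verify, is that the adjoint transposition respects the commutativity of both triangles of the original lifting square. This is immediate from the naturality of the unit and counit of $f^* \dashv f_*$, but it is the only step in the corollary that uses more than formal weak factorization system manipulations; in particular, it relies essentially on (C3) to ensure that the pulled-back map $f^*c$ remains in the left class.
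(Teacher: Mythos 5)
Your proposal is correct and follows essentially the same route as the paper: part (1) from lifting against the cofibration $0 \cof X$ (i.e., cofibrancy of all objects, axiom (C0)); parts (2)--(4) as standard closure properties of the right class of a weak factorization system; and part (5) by the adjoint transposition across $f^* \dashv f_*$, which turns a lifting problem for $f_*A \to Y$ against a cofibration $c$ into one for $A \to X$ against the pulled-back cofibration $f^*c$, using axiom (C3) to guarantee $f^*c$ is still a cofibration. The paper's proof is simply a terser statement of exactly these steps.
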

\begin{proof}
(1) holds because all objects are cofibrant by (C0). (5) is a consequence of (C3), stability of cofibrations under pullback, by a standard argument using the adjunction $f^* \dashv f_*$.  The rest hold for the right maps in any weak factorization system.
\end{proof}

\begin{remark}\label{trivfibpushforward}
The structured notion of trivial fibration, \emph{vis}.\ relative +-algebra, can also be shown algebraically (i.e.\ not using Proposition \ref{prop:uniformstructequivrelative}) to be closed under composition and retracts and preserved by pullback and pushforward. We do just the case of pushforward as an example.   
Thus consider the following situation with $A\to X$ a +-algebra with structure $\alpha$, as indicated.
\begin{equation}\label{diagram:pushforwardplusalgebras}
\xymatrix{
A \ar[rd]_{\eta_A} \ar[dd]  && f_*A \ar[rd]_{\eta_{f_*A}} \ar[dd] & \\
& A^+ \ar[ld] \ar@/_3ex/ [ul]_\alpha && (f_*A)^+ \ar[ld]  \ar@{.>}@/_3ex/ [ul]_\beta \\
X \ar[rr]_{f} && Y &,
}
\end{equation}
A +-algebra structure for $f_*A \to Y$ would be a retract $\beta : (f_*A)^+ \to f_*A$ of $\eta_{f_*A} : f_*A \to (f_*A)^+$ over $Y$, which corresponds under $f^*\dashv f_*$ to a map $\tilde\beta : f^*((f_*A)^+) \to A$ over $X$ with 
\[
\tilde\beta \circ f^*\eta_{f_*A} = \epsilon_A
\]
as indicated below.
\begin{equation}\label{diagram:pushforwardplusalgebras2}
\xymatrix{
f^*f_*A \ar[rrdd]_{\epsilon_A} \ar[rr]_{f^*\eta_{f_*A}} 
	&&  f^*((f_*A)^+)  \ar@{.>}[dd]_{\tilde\beta} \\
&&\\
&& A \ar[r]_{\eta_A} & \ar@/_3ex/ [l]_\alpha A^+ .
}
\end{equation}
But since pullback $f^*$ commutes with $+$, there is a canonical iso $c : f^*((f_*A)^+)  \cong (f^*f_*A)^+$ with $ c\circ f^*\eta_{f_*A} = \eta_{f^*f_*A} $. So we can set $\tilde\beta := \alpha\circ(\epsilon_A)^+\circ c$.
\begin{equation}\label{diagram:pushforwardplusalgebras3}
\xymatrix{
f^*f_*A \ar[rrdd]_{\epsilon_A} \ar[rr]_{f^*\eta_{f_*A}}  \ar@/^4ex/ [rrr]^{\eta_{f^*f_*A}} 
	&&  f^*((f_*A)^+)  \ar@{.>}[dd]_{\tilde\beta} \ar[r]^{\sim}_{c}  & (f^*f_*A)^+  \ar[dd]^{(\epsilon_A)^+} \\
&&&\\
&& A \ar[r]_{\eta_{A}} & \ar@/_3ex/ [l]_\alpha A^+ 
}
\end{equation}
\end{remark}

\section{The fibration weak factorization system}\label{sec:fibrations}

We now specify a second weak factorization system, with a restricted class of ``trivial'' cofibrations on the left, and an expanded class of right maps, the \emph{fibrations}.  As explained in the introduction, we first recall from \cite{GS} what we shall call the ``biased'' notion of fibration, before giving the ``unbiased'' one appropriate to our more general setting.   The two versions are equivalent in the presence of \emph{connections} 
\[
\vee,\wedge : \I\times\I \too \I
\]
 on the cubes, which are used in \cite{Sattler:2017ee} to determine a model structure with biased fibrations.  In \cite{AGH} it is shown that the biased fibrations of \opcit\ agree with those specified in the ``logical style'' of \cite{CCHM:2018ctt,orton-pitts}.    Note that we do \emph{not} assume connections in the category $\Box$ of Cartesian cubical sets.

\subsection*{Partial box filling (biased version)}\label{sec:biasedfibration}

The \emph{generating biased trivial cofibrations} are all maps of the form
\begin{equation}\label{eq:genclassTCof}
c \otimes \delta_\epsilon : D \mono Z\times \I\,,
\end{equation}
where:
\begin{enumerate}
\item  $c : C \mono Z$ is an arbitrary cofibration,

\item $\delta_\epsilon : 1 \ra \I$ is one of the two \emph{endpoint inclusions}, for $\epsilon = 0,1$.

\item $c\otimes\delta_\epsilon$ is the \emph{pushout-product} indicated in the following diagram.
\begin{equation}\label{diagram:pushoutproduct}
\xymatrix{
C\times 1 \ar[d]_{C\times 1} \ar[r]^{C\times \delta_\epsilon} & C\times \I \ar[d] \ar@/^4ex/ [rdd]^{c\times \I}\\
Z\times 1 \ar@/_4ex/ [rrd]_{Z\times \delta_\epsilon} \ar[r] &  Z +_C (C\times\I) \ar@{..>}[rd]_{c\, \otimes\, \delta_\epsilon} \\
&& Z\times\I
}
\end{equation}

\item $D = Z +_C (C\times\I)$ is the indicated domain of the map $c \otimes \delta_\epsilon$.
\end{enumerate}

In order to ensure that such maps are indeed cofibrations, we assume two further axioms in addition to (C1)--(C4) from Definition \ref{def:cofibration}:
\begin{enumerate}\label{cofibration_axioms}
\item[(C5)] The endpoint inclusions $\delta_\epsilon : 1 \ra \I$ are cofibrations, for $\epsilon = 0,1$.
\item[(C6)] The cofibrations are closed under joins $A\vee B \mono C$ of subobjects $A, B \mono C$ of any object $C$.
\end{enumerate}
\begin{remark}\label{rem:somecofibs}
Note that since $\delta_0: 1\to\I$ and $\delta_1:1\to\I$ are disjoint, by (C5) and stability under pullbacks we have that $0 \ra 1$ is a cofibration, so by stability again $0\ra A$ is always a cofibration.  Thus (C0) is no longer required.  From (C6) it follows that cofibrations are closed under pushout-products ${a}\otimes{b}$ in the arrow category.  It also then follows from (C5) that the boundary $\del : 1+1 \to \I$ is a cofibration.
\end{remark}

\subsection*{Fibrations (biased version).}
Now let 
\[
\mathcal{C}\otimes \delta_\epsilon\ =\ \{ c \otimes \delta_\epsilon : D \mono Z \times \I\ |\ c \in\mathcal{C},\ \epsilon = 0,1 \}
\]
be the class of all generating biased trivial cofibrations.
The \emph{biased fibrations} are defined to be the right class of these maps,
\[
(\mathcal{C}\otimes \delta_\epsilon)^\pitchfork\ =\ \mathcal{F}\,.
\]
Thus a map $f : Y\ra X$ is a biased fibration just if for every commutative square of the form
\begin{equation}\label{diagram:biasedfillers}
\xymatrix{
Z +_C (C\times\I) \ar@{>->}[d]_{c\, \otimes\, \delta_\epsilon} \ar[r] & Y \ar[d]^f \\
Z\times \I \ar@{.>}[ru]_{j} \ar[r] & X
}
\end{equation}
with a generating biased trivial cofibration on the left, there is a diagonal filler $j$ as indicated. 

To relate this notion of fibration to the cofibration weak factorization system, fix any map $u : A \ra B$, and recall (\eg\ from \cite{JT:notes,Riehl}) that the pushout-product with $u$ is a functor on the arrow category 
\[
(-)\!\otimes u : \cSet^\mathbbm{2} \ra\cSet^\mathbbm{2}\,.
\]
This functor has a right adjoint, the \emph{pullback-hom}, which for a map $f : X\ra Y$ we shall write as
\[
(u \Rightarrow\! f) : Y^B \too (X^B \times_{X^A} Y^A) \,.
\]
The pullback-hom is determined as indicated in the following diagram.
\begin{equation}\label{diagram:pullbackhom}
\xymatrix{
Y^B \ar@/_4ex/[rdd]_{f^B} \ar@{.>}[rd]^{u\, \Rightarrow f} \ar@/^4ex/ [rrd]^{Y^u} && \\
& X^B \times_{X^A} Y^A \ar[d] \ar[r] & Y^A \ar[d]^{f^A} \\
& X^B \ar[r]_{X^u} &  X^A 
}
\end{equation}

The $\otimes\dashv\ \Rightarrow$ adjunction on the arrow category has the following useful relation to weak factorization systems (cf.~\cite{GS,Riehl,JT:notes}), where, as usual, for any maps $a : A \ra B$ and $f : X\ra Y$ we write 
\[
a\, \pitchfork\, f 
\]
to mean that for every solid square of the form
\begin{equation}\label{eq:defpitchfork}
\xymatrix{
A\ar[d]_{a} \ar[r] & X \ar[d]^f \\
B \ar@{.>}[ru]_j \ar[r] & Y
}
\end{equation}
there exists a diagonal filler $j$ as indicated. 

\begin{lemma}\label{lemma:Leibniz}
For any maps $a : A_0 \to A_1, b : B_0 \to B_1, c : C_0 \to C_1$ in $\cSet$,
\[
(a\otimes b)\, \pitchfork\, c\quad\text{iff}\quad a\, \pitchfork\, (b\Rightarrow\! c)\,.
\]
\end{lemma}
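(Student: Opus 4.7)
The plan is to unpack the definitions of $\pitchfork$, $\otimes$, and $\Rightarrow$, and then reduce the equivalence to a routine bijection of transposes under the product--exponential adjunction.

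First, I would recall that $a\otimes b : A_1\times B_0 +_{A_0\times B_0} A_0\times B_1 \to A_1\times B_1$ is defined by the universal property of the pushout from the evident commutative square built out of $a\times B_0,\ A_0\times b,\ a\times B_1,\ A_1\times b$. A lifting problem $(a\otimes b)\pitchfork c$ therefore consists of a pair of maps $f : A_1\times B_0 \to C_0$ and $g : A_0\times B_1 \to C_0$ which agree after restriction along $a\times B_0$ and $A_0\times b$, together with a map $h: A_1\times B_1 \to C_1$ extending $cf$ and $cg$ in the evident way; a filler is a map $j : A_1\times B_1 \to C_0$ restricting to $f, g$ and satisfying $cj = h$.

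Next, I would transpose every one of these maps across the adjunction $(-)\times B_i \dashv (-)^{B_i}$ (or equivalently, use the universal property of the pullback defining the codomain of $b\Rightarrow c$). Under this correspondence, the pair $(f,g)$ becomes a pair $\tilde f : A_1 \to C_0^{B_0}$ and $\tilde g : A_0 \to C_0^{B_1}$ (with a compatibility condition), while $h$ becomes $\tilde h : A_1 \to C_1^{B_1}$. Using the universal property of the pullback $C_0^{B_1}\times_{C_0^{B_0}}C_1^{B_0}$ (writing indices to match the orientation of $b\Rightarrow c$ in the paper), these data assemble into exactly a commutative square with $a$ on the left and $b\Rightarrow c$ on the right. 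The same adjunction sends a candidate filler $j : A_1\times B_1 \to C_0$ to a candidate filler $\tilde\jmath : A_1 \to C_0^{B_1}$, and the two compatibility equations for $j$ translate precisely to the two commuting triangles witnessing that $\tilde\jmath$ is a diagonal filler for the transposed square.

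Finally, I would observe that these transpositions are mutually inverse (being instances of the adjunction isomorphism), so they set up a bijection between lifting problems and their fillers on the two sides. Thus a filler exists for every square of one form if and only if a filler exists for every square of the other form, which is the claim.

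I expect no conceptual obstacle here: this is the familiar \emph{Leibniz adjunction}, proven by a straightforward diagram chase. The only place requiring care is bookkeeping the directions and variances of $X^u$ versus $u^*$ when matching up the pullback defining the codomain of $b\Rightarrow c$ with the pushout defining the domain of $a\otimes b$; once the two universal properties are written out side by side, the bijection is immediate from $(-)\times B_i \dashv (-)^{B_i}$.
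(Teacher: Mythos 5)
The paper itself does not prove Lemma~\ref{lemma:Leibniz}; it states it as a standard fact and cites \cite{GS,Riehl,JT:notes}. Your proof is a correct and standard argument for exactly this statement (the ``Leibniz adjunction''): unpack the lifting square, transpose each constituent map across $(-)\times B_i \dashv (-)^{B_i}$, and check that the universal properties of the pushout defining the domain of $a\otimes b$ and the pullback defining the codomain of $b\Rightarrow c$ match up so that lifting problems and fillers correspond bijectively. This is precisely the argument found in the cited references, so your proposal fills the gap the paper delegates to the literature, with no deviation in method.

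One small cosmetic caution: the paper's displayed formula $(u\Rightarrow f): Y^B \to X^B\times_{X^A}Y^A$ in diagram \eqref{diagram:pullbackhom} has the roles of $X$ and $Y$ swapped relative to the orientation actually used later (e.g.\ Proposition~\ref{prop:algequivfill}, where $(\delta_\epsilon\Rightarrow f): Y^\I \to X^\I\times_X Y$ for $f:Y\to X$). You flagged exactly this variance bookkeeping as the one delicate point, which is the right instinct; just make sure your indices track the paper's intended convention (domain of $c$ to the power of the domain of $b$ as the source, with the pullback involving the codomains) rather than the typo in \eqref{diagram:pullbackhom}.
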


The following is now a direct corollary.

\begin{proposition}\label{prop:algequivfill}
An object $X$ is fibrant if and only if both of the endpoint  projections $X^\I \ra X$ from the pathspace are trivial fibrations. More generally, a map $f : Y\ra X$ is a fibration iff both of the maps 
\[
(\delta_\epsilon \Rightarrow f) : Y^I \ra X^I\times_X Y
\]
 are trivial fibrations (for $\epsilon = 0,1$).
\end{proposition}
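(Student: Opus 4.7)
The plan is to derive the proposition as an immediate application of the pushout--product/pullback--hom adjunction (Lemma~\ref{lemma:Leibniz}) combined with the characterization of trivial fibrations from Proposition~\ref{prop:uniformstructequivrelative}. The proposition is set up precisely so that this becomes almost a formality; there is no substantive combinatorial obstacle, only a bit of unpacking.

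First I would address the relative statement. By definition, $f : Y \to X$ is a (biased) fibration iff for every cofibration $c : C \mono Z$ and every $\epsilon \in \{0,1\}$, we have $(c \otimes \delta_\epsilon) \pitchfork f$. Applying Lemma~\ref{lemma:Leibniz} with $a = c$, $b = \delta_\epsilon$, and the rightmost map taken to be $f$, this lifting condition is equivalent to $c \pitchfork (\delta_\epsilon \Rightarrow f)$. Since this must hold for every cofibration $c$, it says exactly that
\[
\mathcal{C} \pitchfork (\delta_\epsilon \Rightarrow f)\quad\text{for }\epsilon = 0,1.
\]
By Proposition~\ref{prop:uniformstructequivrelative}, having the right lifting property against all cofibrations is equivalent to being a trivial fibration. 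Therefore $f$ is a fibration iff both $(\delta_\epsilon \Rightarrow f): Y^\I \to X^\I\times_X Y$ are trivial fibrations, which is precisely the claim.

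Next I would specialize to the absolute case to deduce the characterization of fibrancy. The object $X$ is fibrant iff the unique map $X \to 1$ is a fibration. Chasing the definition of the pullback--hom \eqref{diagram:pullbackhom} with $f : X \to 1$ yields $1^B \cong 1$ and $1^A \cong 1$, so $1^B \times_{1^A} X^A \cong X^A$; taking $u = \delta_\epsilon : 1 \to \I$ then gives $(\delta_\epsilon \Rightarrow (X \to 1))$ equal (up to canonical iso) to the endpoint evaluation $\epsilon_\epsilon : X^\I \to X$. Thus the relative statement specializes to: $X$ is fibrant iff both $X^\I \to X$ are trivial fibrations.

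The only thing requiring a moment's care is making the identification in the last paragraph unambiguous and checking that the pushout--product $c \otimes \delta_\epsilon$ in the definition of fibration matches the hypothesis of Lemma~\ref{lemma:Leibniz} exactly (so that the symmetric role of the two arguments in $\otimes$ and $\Rightarrow$ is handled correctly). This is routine, and once done the argument is really a two-line corollary.
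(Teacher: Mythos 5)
Your argument is correct and is precisely the route the paper intends: it states the proposition as ``a direct corollary'' of Lemma~\ref{lemma:Leibniz}, leaving the transposition $(c \otimes \delta_\epsilon) \pitchfork f \Leftrightarrow c \pitchfork (\delta_\epsilon \Rightarrow f)$ and the appeal to Proposition~\ref{prop:uniformstructequivrelative} implicit. Your unpacking of those two steps, and the specialization to $X \to 1$ for the absolute statement, is exactly what that terse phrasing conceals.
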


\subsection*{Fibration structure (biased version).}
The $\otimes\dashv\ \Rightarrow$ adjunction determines the fibrations in terms of the trivial fibrations, which in turn can be determined by \emph{uniform} lifting against a \emph{small category} consisting of basic cofibrations and pullback squares between them, by proposition \ref{prop:uniformstructequivrelative}.  The fibrations are similarly determined by \emph{uniform} lifting against the \emph{small category} of basic, biased trivial cofibrations, consisting of all those $c \otimes \delta_\epsilon$ in $\mathcal{C}\otimes \delta_\epsilon$ where $c : C \mono \I^n$ is a \emph{basic} cofibration, \ie\ one with representable codomain.  
Thus the set of \emph{basic biased trivial cofibrations} is
\begin{equation}\label{eq:basicTCof}
\mathsf{BCof}\otimes \delta_\epsilon = \{c \otimes \delta_\epsilon : B \mono \I^{n+1}\ |\ c : C\mono \I^n,\,\epsilon = 0,1,\ n\geq 0 \},
\end{equation}
where the pushout-product $c\otimes\delta_\epsilon$ now takes the simpler form
\begin{equation}\label{diagram:basicpushoutproduct}
\xymatrix{
C \ar@{>->}[d] \ar[r] & C\times \I \ar[d] \ar@/^4ex/ [rdd]\\
\I^{n} \ar@/_4ex/ [rrd] \ar[r] &  \I^n +_C (C\times\I) \ar@{>->}[rd]_{c\, \otimes\, \delta_\epsilon} \\
&& \I^{n}\times\I
}
\end{equation}
for a basic cofibration $c : C\mono \I^n$, an endpoint $\delta_\epsilon:1 \ra \I$, and with domain $B = \big(\I^n +_C (C\times\I)\big)$.   These subobjects $B \mono \I^{n+1}$ can be seen geometrically as generalized open box inclusions.

For any map $f:Y\ra X$  a \emph{uniform, biased fibration structure} on $f$ is a choice of diagonal fillers $j_\epsilon(c,x,y)$,
\begin{equation}\label{diagram:directedfillers}
\xymatrix{
\I^n +_C (C\times\I) \ar[d]_{c\,\otimes\,\delta_\epsilon} \ar[rr]^-{x} && X \ar[d]^f \\
\I^{n}\times \I \ar@{.>}[rru]_{j_\epsilon(c,x,y)} \ar[rr]_y && Y,
}
\end{equation}
for each basic biased trivial cofibration $c \otimes \delta_\epsilon : B = (\I^n +_C (C\times\I)) \mono \I^{n+1}$ and maps $x : B\ra X$ and $y : \I^{n+1}\ra Y$, which is \emph{uniform in $\I^{n}$} in the following sense: Given any cubical map $u : \I^m \ra \I^n$, the pullback $u^*c : u^*C\mono \I^m$ of $c : C\mono \I^n$ along $u$ determines another basic biased trivial cofibration 
\[
u^*c \otimes \delta_\epsilon : B' = (\I^m +_{u^*C} (u^*C\times\I)) \mono \I^{m+1},
\]
which fits into a commutative diagram of the form
\begin{equation}\label{diagram:directedfillers2}
\xymatrix{
\I^m +_{u^*C} (u^*C\times\I) \ar[d]_{u^*c\,\otimes\,\delta_\epsilon} \ar[rr]^-{(u\times\I)'} && \I^n +_C (C\times\I) \ar[d]_{c\,\otimes\,\delta_\epsilon} \ar[r]^-{x} & X \ar[d]^f \\
\I^{m}\times \I  \ar[rr]_{u\times \I} && \I^{n}\times \I \ar@{.>}[ru]_{j_\epsilon(c,x,y)} \ar[r]_{y} & Y,
}
\end{equation}
by applying the functor $(-)\otimes\,\delta_\epsilon$ to the pullback square relating $u^*c$ to $c$.  For the outer rectangle in \eqref{diagram:directedfillers2} there is then a chosen diagonal filler 
\[
j_\epsilon(u^*c,x\circ(u\times\I)', y\circ(u\times\I)): \I^m\times\I\ra X\,,
\]
and for this map we require that
\begin{equation}\label{eq:uniformfillers2}
j_\epsilon(u^*c,x\circ (u\times\I)', y\circ(u\times\I)) = j_\epsilon(c,x,y)\circ(u\times \I).
\end{equation}
This can be seen to be a reformulation of the logical specification given in \cite{CCHM:2018ctt} (see \cite{AGH}).

\begin{definition}\label{def:uniform} A \emph{uniform, biased fibration structure} on a map $f: Y\ra X$ is a choice of fillers $j_\epsilon(c,x,y)$ as in \eqref{diagram:directedfillers} satisfying \eqref{eq:uniformfillers2} for all maps $u : \I^m\ra\I^n$.
\end{definition}

Finally, we have the analogue of proposition \ref{prop:uniformstructequiv} for fibrant objects. The analogous statement of proposition \ref{prop:uniformstructequivrelative} for fibrations is omitted, as is the entirely analogous proof.

\begin{corollary}\label{cor:uniformfibstructequiv}  For any object $X$ in \cSet\ the following are equivalent:
\begin{enumerate}
\item $X$ is \emph{biased fibrant}, in the sense that every map $D\to X$ from the domain of a generating biased trivial cofibration $D \mono Z \times \I$ extends to a total map $Z \times \I \ra X$,
\[
\mathcal{C}\otimes \delta_\epsilon\ \pitchfork\ X\,.
\]

\item The canonical maps $(\delta_\epsilon\Rightarrow{X}) : X^I \ra X$ are trivial fibrations.

\item $X\ra 1$ admits a \emph{uniform biased fibration structure}.  Explicitly, for each basic biased trivial cofibration $c \otimes \delta_\epsilon : B \mono \I^{n+1}$ and map $x : B\ra X$, there is given an extension $j_\epsilon(c,x)$,
\begin{equation}\label{diagram:directedfillers2.5}
\xymatrix{
B \ar@{>->}[d]_{c \otimes \delta_\epsilon} \ar[r]^{x} & X, \\
\I^{n+1}\ar@{.>}[ru]_{j_\epsilon(c,x)} &
}
\end{equation}
and, moreover,  the choice is \emph{uniform in $\I^n$} in the following sense: Given any cubical map $u : \I^m \ra \I^n$, the pullback  $u^*c \otimes \delta_\epsilon : B'\mono \I^{m}\times\I$ fits into a commutative diagram of the form
\begin{equation}\label{diagram:directedfillers3}
\xymatrix{
B' \ar@{>->}[d]_{u^*c \otimes \delta_\epsilon} \ar[rr]^{(u\times\I)'} \pbcorner &&  B \ar@{>->}[d]_{c \otimes \delta_\epsilon } \ar[r]^{x} & X. \\
\I^{m}\times\I \ar[rr]_{u\times\I } && \I^{n}\times\I \ar@{.>}[ru]_{j(c,x)} &
}
\end{equation}
For the pair $(u^*c \otimes \delta_\epsilon,\, x\circ (u\times\I)')$ in \eqref{diagram:directedfillers3} the chosen extension 
\[
j(u^*c \otimes \delta_\epsilon,x\circ (u\times\I)'): \I^m\times\I \ra X
\]
 is equal to  $j(c,x)\circ (u\times\I)$,
\begin{equation}\label{eq:uniformfillers3}
j(u^*c \otimes \delta_\epsilon,x\circ(u\times\I)') = j(c,x) (u\times\I).
\end{equation}
\end{enumerate}
\end{corollary}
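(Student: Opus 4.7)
The plan is to reduce both equivalences to Propositions~\ref{prop:algequivfill} and~\ref{prop:uniformstructequivrelative} by passing through the pushout-product / pullback-hom adjunction of Lemma~\ref{lemma:Leibniz}.

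The equivalence $(1)\Leftrightarrow(2)$ is essentially already Proposition~\ref{prop:algequivfill} applied to the map $X\to 1$: the biased fibrancy of $X$ is by definition the lifting property of $X\to 1$ against every $c\otimes\delta_\epsilon$, which by Lemma~\ref{lemma:Leibniz} is equivalent to lifting each cofibration $c$ against the pullback-hom $(\delta_\epsilon \Rightarrow (X\to 1))$; and the latter is identified via diagram~\eqref{diagram:pullbackhom} with the evaluation map $X^\I\to X$ at $\delta_\epsilon$, since $1^\I\times_1 X \cong X$. So (1) asserts exactly that both evaluation maps are trivial fibrations, which is (2).

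For $(2)\Leftrightarrow(3)$, I would apply the relative Proposition~\ref{prop:uniformstructequivrelative} to each of the two maps $X^\I\to X$: such a map is a trivial fibration iff it admits a uniform filling structure against basic cofibrations $c:C\mono\I^n$ in the sense of diagram~\eqref{diagram:coffillers}. Those fillers transpose across the adjunction $(-)\otimes\delta_\epsilon\dashv(\delta_\epsilon\Rightarrow-)$ into fillers for the basic biased trivial cofibrations $c\otimes\delta_\epsilon:B\mono\I^{n+1}$ against $X\to 1$; thus the datum~\eqref{diagram:directedfillers2.5} is precisely the adjoint transpose of the datum~\eqref{diagram:coffillers}, and the assignment $(c,x)\mapsto j_\epsilon(c,x)$ is determined by the corresponding trivial-fibration structure on $X^\I\to X$.

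The main obstacle is verifying that the two uniformity conditions match under this transposition. For any $u:\I^m\to\I^n$, the functor $(-)\otimes\delta_\epsilon$ carries the pullback square $u^*c\to c$ to the left square of~\eqref{diagram:directedfillers3}; naturality of the adjunction in the arrow-category variable then shows that the $u$-uniformity condition~\eqref{eq:uniformfillers} on fillers for $X^\I\to X$ transposes exactly to the $(u\times\I)$-uniformity condition~\eqref{eq:uniformfillers3} on biased-fibration fillers for $X$. This step is essentially bookkeeping, but it is the one point where care is needed, since the adjunction lives on the arrow category $\cSet^\mathbbm{2}$ rather than on $\cSet$ itself; once it is checked, the three clauses of the corollary are equivalent.
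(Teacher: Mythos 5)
Your proof is correct and matches the paper's intended argument: the corollary is presented without explicit proof as ``analogous'' to Proposition~\ref{prop:uniformstructequiv}, and the surrounding text makes clear that the reduction runs exactly as you describe — via Proposition~\ref{prop:algequivfill} for $(1)\Leftrightarrow(2)$, and via Proposition~\ref{prop:uniformstructequivrelative} applied to $X^\I\to X$ together with the $\otimes\dashv\Rightarrow$ adjunction of Lemma~\ref{lemma:Leibniz} for $(2)\Leftrightarrow(3)$, with the uniformity conditions matching up under naturality of the transposition.
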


\subsection*{Partial box filling (unbiased version)}\label{subsec:unbiasedfibration}

Rather than building a weak factorization system based on the foregoing notion of biased fibration (as is done in \cite{GS}), we shall first eliminate the ``bias'' with respect to the endpoints $\delta_\epsilon : 1 \ra \I$, for $\epsilon = 0,1$.  This will have the effect of adding more trivial cofibrations, and thus more weak equivalences, to our model structure. Consider first the simple path-lifting condition for a map $f : Y \to X$, which is a special case of \eqref{diagram:biasedfillers} with $c =\, ! : 0\mono 1$, so that $!\otimes\delta_\epsilon = \delta_\epsilon$.
\begin{equation*}
\xymatrix{
1 \ar@{>->}[d]_{\delta_\epsilon} \ar[r] & Y \ar[d]^f \\
\I \ar@{.>}[ru]_{j_\epsilon} \ar[r] & X
}
\end{equation*}

In topological spaces, for instance, rather than requiring lifts $j_\epsilon$ for each of the endpoints $\epsilon = 0,1$  of the real interval $\I = [0,1]$, one could equivalently require there to be a lift $j_i$ \emph{for each point} $i: 1\ra\I$. Such ``unbiased path-lifting'' can be formulated in \cSet\ by introducing a ``generic point'' $\delta : 1\ra \I$ by passing to $\cSet/_\I$ via the pullback functor $\I^* : \cSet\to \cSet/_\I$, and then requiring path-lifting for $\I^*f$ with respect to $\delta : \I \to \I\times \I$, regarded as a map $\delta : 1\to\I^*{\I}$ in $\cSet/_\I$. We shall therefore define $f$ to be an unbiased fibration just if $\I^*f$ is a $\delta$-biased fibration for the generic point $\delta$.  The following specification implements that  idea, while also adding cofibrant partiality, as in the biased case.  

We first replace axiom (C5) with the following stronger assumption.
\begin{enumerate}
\item[(C7)] The diagonal map $\delta : \I\ra\I\times\I$ of the interval $\I$  is a cofibration.  
\end{enumerate}

The unbiased notion of a fibration for $\cSet$ is now as follows.

\begin{definition}[unbiased fibration]\label{def:unbiasedfibration}
Let $\delta : \I\ra\I\times\I$ be the diagonal map.
\begin{enumerate}
\item\label{item:fibrant} An object $X$ is  \emph{unbiased fibrant} if the map 
\[
(\delta\Rightarrow\! X) = \langle\mathsf{eval}, p_2\rangle : X^\I \times \I \ra X\times \I
\]
is a trivial fibration. 
\item\label{item:fibration} A map $f : Y\ra X$ is an \emph{unbiased fibration} if the map 
\[
(\delta\Rightarrow\! f) = \langle f^\I\times \I, \langle \mathsf{eval},p_2 \rangle\rangle: Y^\I\times \I \ra (X^\I \times \I)\times_{(X\times \I)} (Y\times\I)
\]
is a trivial fibration.
\end{enumerate}
\end{definition}

Let us (temporarily) write $\II = \I^*\I$ for the pulled-back interval in the slice category $\cSet/_\I$, so that the generic point is written $\delta : 1 \to \II$.
Condition \eqref{item:fibrant} above (which of course is a special case of \eqref{item:fibration}) then says that evaluation at the generic point $\delta : 1\ra \II$, the map $(\I^*X)^\delta: (\I^*X)^\II \ra \I^*X$, constructed in the slice category $\cSet/_\I$, is a trivial fibration.  
Condition \eqref{item:fibration} says that the pullback-hom of the generic point $\delta : 1\ra \II$ with $\I^*f$, constructed in the slice category $\cSet/_\I$, is a trivial fibration.  Thus a map $f: Y\to X$ is an \emph{unbiased} fibration just if its base change $\I^*f$  is a $\delta$-\emph{biased} fibration in the slice category $\cSet/_\I$.  The latter condition can also be reformulated as follows.

\begin{proposition}
A map $f : Y\ra X$ is an unbiased fibration if and only if the canonical map $u$ to the pullback, in the following diagram in $\cSet$, is a trivial fibration.
\begin{equation}\label{diagram:unbiasedfibrationeval}
\xymatrix{
Y^\I\times \I \ar@/^3ex/ [rrrd]^{\mathsf{eval}} \ar@/_3ex/ [rdd]_{f^\I\times \I} \ar@{..>}[rd]_{u}  && \\
& Y_\mathsf{eval} \pbcorner  \ar[d] \ar[rr] && Y \ar[d]^f \\
& X^\I \times \I \ar[rr]_{\mathsf{eval}} && X.
}
\end{equation}
\end{proposition}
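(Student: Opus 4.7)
The plan is to identify the target $(X^\I \times \I) \times_{X\times \I} (Y\times \I)$ of the pullback-hom $(\delta \Rightarrow f)$ from Definition~\ref{def:unbiasedfibration}(\ref{item:fibration}) with the pullback $Y_\mathsf{eval}$ appearing in \eqref{diagram:unbiasedfibrationeval}, and to check that under this identification the map $(\delta \Rightarrow f)$ is precisely the canonical comparison map $u$. The desired equivalence then follows since trivial fibrations are stable under isomorphism in the arrow category.

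First I would exhibit a canonical isomorphism
\[
(X^\I \times \I) \times_{X\times \I} (Y\times \I) \;\cong\; (X^\I \times \I) \times_X Y \;=\; Y_\mathsf{eval}
\]
over $X^\I \times \I$. The map $Y\times \I \to X \times \I$ in the left-hand pullback is simply the product $f \times \mathrm{id}_\I$, while $X^\I \times \I \to X \times \I$ is $\langle \mathsf{eval}, p_2\rangle$; both agree on the $\I$-coordinate, so the pullback over $X \times \I$ forces the two $\I$-factors to coincide, and the second $\I$-factor on the right becomes redundant. Concretely, the iso sends a generalized element $((\phi,i),(y,i))$ to $((\phi,i),y)$, with obvious inverse.

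Second, I would verify by unpacking the definitions that under this isomorphism the map
\[
(\delta \Rightarrow f) \;=\; \langle f^\I \times \I,\ \langle \mathsf{eval}, p_2\rangle\rangle \colon Y^\I \times \I \longrightarrow (X^\I \times \I) \times_{X\times \I}(Y\times \I)
\]
corresponds to the canonical map $u : Y^\I \times \I \to Y_\mathsf{eval}$. The first component $f^\I \times \I$ into $X^\I \times \I$ agrees on the nose, and the second component, $\langle \mathsf{eval}, p_2\rangle : Y^\I \times \I \to Y \times \I$, collapses to $\mathsf{eval} : Y^\I \times \I \to Y$ after discarding the redundant $p_2$-coordinate, which is exactly the second universal component of $u$. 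Equivalently, both maps satisfy the same universal property into $Y_\mathsf{eval}$ (namely $\langle f^\I \times \I,\ \mathsf{eval}\rangle$) and so agree.

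Since the foregoing exhibits $(\delta \Rightarrow f)$ and $u$ as isomorphic maps out of $Y^\I \times \I$, one is a trivial fibration if and only if the other is. I do not anticipate any genuine obstacle; the whole argument is a routine unfolding of the pullback-hom, the only subtlety being the bookkeeping needed to see that the $\I$-component in the first pullback is redundant once the constraint over $X \times \I$ is imposed.
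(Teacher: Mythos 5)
Your argument is correct and is essentially the paper's proof: the paper establishes the isomorphism $(X^\I\times\I)\times_{X\times\I}(Y\times\I)\cong (X^\I\times\I)\times_X Y$ by interpolating $Y\times\I\to X\times\I$ into the rectangle and applying pullback pasting, which is just the diagrammatic form of your generalized-element observation that the second $\I$-coordinate is redundant; the identification of $u$ with $\delta\Rightarrow f$ then follows in both cases by matching universal components.
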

\begin{proof}
We interpolate another pullback into the rectangle in \eqref{diagram:unbiasedfibrationeval} to obtain
\begin{equation}\label{diagram:unbiasedfibrationeval2}
\xymatrix{
 Y_\mathsf{eval} \pbcorner  \ar[d] \ar[r] & Y\times \I \ar[d] \pbcorner \ar[r] & Y \ar[d]^f \\
 X^\I \times \I \ar[r] & X\times \I \ar[r] & X
}
\end{equation}
with the evident maps. The left hand square is therefore a pullback, so we indeed have that 
\[
Y_\mathsf{eval}\ \cong\ (X^\I \times \I)\times_{(X\times \I)} (Y\times\I) \cong\ (X^\I \times \I)\times_{X} Y
\]
and $u = (\delta\Rightarrow\! f)$.
\end{proof}

As a special case, we have:
\begin{corollary}\label{cor:unbiasedfibrant}
An object $X$ is unbiased fibrant if and only if the canonical map $u$ to the pullback, in the following diagram in $\cSet$, is a trivial fibration.
\begin{equation}\label{diagram:unbiasedfibrationeval3}
\xymatrix{
X^\I\times \I \ar@/^3ex/ [rrrd]^{\mathsf{eval}} \ar@/_3ex/ [rdd]_{p_2} \ar@{..>}[rd]_{u}  && \\
& \I\times X \pbcorner  \ar[d] \ar[rr] && X \ar[d] \\
& \I \ar[rr] && 1.
}
\end{equation}
\end{corollary}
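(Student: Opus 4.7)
The plan is to obtain this as the special case of the previous proposition applied to the terminal map $f = \,!\,: X \to 1$. Specifically, I would instantiate the diagram \eqref{diagram:unbiasedfibrationeval} with $Y := X$ and $f := \,!$, and then simplify using the fact that $1$ is terminal.

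First I would observe that $1^\I \cong 1$, so $1^\I \times \I \cong \I$, and under this isomorphism the map $f^\I \times \I : X^\I \times \I \to 1^\I \times \I$ becomes simply the second projection $p_2 : X^\I \times \I \to \I$. Similarly, the bottom row $1^\I \times \I \xrightarrow{\mathsf{eval}} 1$ of the pullback square becomes the unique map $\I \to 1$. Hence the pullback $Y_{\mathsf{eval}}$ from the proposition, which was formed as $(X^\I \times \I)\times_X X$ along the composite $X^\I \times \I \xrightarrow{\mathsf{eval}} X$ and $f : X \to 1$ (via the interpolation in \eqref{diagram:unbiasedfibrationeval2}), specializes to the pullback of $X \to 1 \leftarrow \I$, which is $\I \times X$. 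This gives exactly the diagram displayed in the corollary, and the canonical map $u$ is identified with $\langle p_2, \mathsf{eval}\rangle : X^\I \times \I \to \I \times X$.

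Then I would invoke the preceding proposition directly: $f = \,!\,:X \to 1$ is an unbiased fibration if and only if $u$ is a trivial fibration. But by Definition \ref{def:unbiasedfibration}\eqref{item:fibration}, the map $!:X \to 1$ being an unbiased fibration is precisely the condition in Definition \ref{def:unbiasedfibration}\eqref{item:fibrant} that $X$ is unbiased fibrant (noting that $(\delta \Rightarrow\, !)$ and $(\delta \Rightarrow X)$ coincide under the isomorphism $(X^\I \times \I)\times_{(1^\I\times\I)} (X\times\I) \cong X^\I\times\I\times_1 X$ identifying the target with $X\times\I \cong \I\times X$).

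There is essentially no obstacle here; the only minor care needed is bookkeeping the order of the factors in $X \times \I$ versus $\I \times X$ and checking that the universal map $u$ in the corollary's diagram agrees, up to a swap isomorphism, with the pullback-hom $(\delta \Rightarrow X)$ of Definition \ref{def:unbiasedfibration}\eqref{item:fibrant}. Since trivial fibrations are closed under isomorphism, this swap is harmless and the equivalence follows.
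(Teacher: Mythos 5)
Your proof is correct and follows the same route the paper takes: the corollary is stated as the special case $f = {!}\colon X\to 1$ of the preceding proposition, and you simply fill in the bookkeeping (identifying $1^\I\times\I\cong\I$, $f^\I\times\I$ with $p_2$, the pullback with $\I\times X$, and matching $(\delta\Rightarrow{!})$ with $(\delta\Rightarrow X)$ up to the swap) that the paper leaves implicit. The only blemish is a notational slip where you write the pullback as $(X^\I\times\I)\times_X X$; it should be $(1^\I\times\I)\times_1 X\cong\I\times X$, but you reach the right conclusion.
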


Now we can run the proof of Proposition \ref{prop:algequivfill} backwards in order to determine a class of generating trivial cofibrations for the unbiased case. Consider pairs of maps $c : C\mono Z$ and $i:Z\ra\I$, where the former is a cofibration and the latter is regarded as an ``$\I$-indexing'', so that
\[
\xymatrix{
C \ar[rd] \ar@{>->}[r]^{c} & Z  \ar[d]^i \\
& \I
}
\]
is regarded as an ``$\I$-indexed family of cofibrations $c_i : C_i \mono Z_i$''.  We shall use the notation
\begin{equation}\label{eq:graphdef}
\gph{i} := \langle 1_Z, i\rangle : Z\too Z\times\I\,,
\end{equation}
for the graph of the indexing map $i : Z\ra \I$. Then write
\[
c \otimes_i\!\delta := [\gph{i}, c\times\I] : Z+_C(C\times\I) \too Z\times \I\,,
\]
which is easily seen to be well-defined on the indicated pushout below.
\begin{equation}\label{diagram:unbiasedpushoutproduct}
\xymatrix{
C \ar@{>->}[d]_{c} \ar[r]^{\gph{ic}} & C\times \I \ar[d] \ar@/^4ex/ [rdd]^{c\times\I}\\
Z \ar@/_4ex/ [rrd]_{\gph{i}} \ar[r] &  Z +_C (C\times\I) \ar@{.>}[rd]_{c\, \otimes_i \delta} \\
&& Z\times\I\,.
}
\end{equation}

\begin{remark}\label{rem:poprodoverI}
The specification \eqref{diagram:unbiasedpushoutproduct} differs from the similar \eqref{diagram:pushoutproduct} by using the graph $\gph{i} : Z\mono Z\times\I$ for the inclusion of $Z$ into the cylinder over $Z$, rather than one of the two ``ends'',
\begin{equation}\label{eq:grtaphoverI}
\gph{1_Z , \delta_\epsilon !}  : Z\cong Z\times 1 \stackrel{Z\times \delta_\epsilon}{\longrightarrow} Z\times\I
\end{equation}
arising from the endpoint inclusions $\delta_\epsilon : 1\ra\I$, for $\epsilon = 0,1$.   As an arrow over $\I$, the graph $\gph{i} : Z\to Z\times\I$ also takes the form \eqref{eq:grtaphoverI}, namely
\[
\gph{i} = \gph{1_Z , \delta !} : Z\to Z\times \I\,.
\]
If we also regard $c : C\to Z$ as an arrow over $\I$ via $i : Z\to\I$, and use the generic point $\delta : 1 \to \II$ over $\I$ in place of $\delta_\epsilon : 1 \to \I$, then  \eqref{diagram:unbiasedpushoutproduct}  agrees with  \eqref{diagram:pushoutproduct}, up to those changes.  Thus the indicated map ${c\, \otimes_i \delta}$ in  \eqref{diagram:unbiasedpushoutproduct} \emph{is the pushout-product constructed over $\I$} of the generic point $\delta$ with the map $c$ regarded as an $\I$-indexed family of cofibrations via the indexing $i : Z \to \I$.
\end{remark}

Observe that for any map $i : Z\to \I$, the graph $\gph{i} = \langle 1_Z, i \rangle : Z \to Z\times \I$ is a cofibration, since it is a pullback of the diagonal of $\I$ along $i\times \I$.  The subobject 
\[
c \otimes_i\!\delta \mono Z\times \I
\]
 constructed in \eqref{diagram:unbiasedpushoutproduct} is therefore a cofibration, since it is the join in the lattice $\Sub(Z\times \I)$ of the cofibrant subobjects $\gph{i} \mono Z\times \I$ and $C\times \I \mono Z\times \I$, where the latter is the ``cylinder over $C\mono Z$''.

\begin{definition}\label{def:genunbiasedtrivcof}
The maps of the form $c\otimes_i\delta : Z +_C (C\times\I) \mono Z\times\I$  now form the class of \emph{generating unbiased trivial cofibrations},
\begin{equation}\label{eq:generatingtrivialcofibrations}
\mathcal{C}\otimes\delta\ =\ \{ c \otimes_i \delta : D \mono Z \times \I\ |\ c : C\mono Z, i:Z\ra\I \}\,.
\end{equation}
\end{definition}

We can then show that the unbiased fibrations are exactly the right class of these maps,
\[
(\mathcal{C}\otimes\delta)^{\pitchfork} = \mathcal{F}.
\]
\begin{proposition}\label{prop:fibiffrlp}
A map $f: Y\ra X$ is an unbiased fibration iff for every pair of maps $c : C\mono Z$ and $i:Z\ra\I$, where the former is a cofibration, every commutative square of the following form has a diagonal filler, as indicated in the following.
\begin{equation}\label{diagram:unbiasedfibration}
\xymatrix{
Z +_C (C\times\I) \ar@{>->}[d]_{c\, \otimes_i\, \delta} \ar[r] & Y \ar[d]^f \\
Z\times \I \ar@{.>}[ru]_{j} \ar[r] & X.
}
\end{equation}
\end{proposition}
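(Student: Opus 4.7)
The plan is to reduce the unbiased fibration condition on $f$ in $\cSet$ to a biased one in the slice $\cSet/_\I$, and then apply the Leibniz adjunction (Lemma \ref{lemma:Leibniz}) there, matching the resulting generators to the maps $c\otimes_i\delta$ of \eqref{diagram:unbiasedpushoutproduct}.

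First I want to show that $f$ is an unbiased fibration if and only if $(\delta\Rightarrow_\I\I^*f)$ is a trivial fibration in $\cSet/_\I$, where $\delta:1\to\I^*\I$ is the generic point of the slice. Since $\I^*$ preserves finite limits and exponentials, a direct computation (using $(\I^*Y)^{\I^*\I}\cong \I^*(Y^\I) = Y^\I\times\I$ and the fact that pullbacks over $\I^*X$ in the slice reduce to pullbacks over $X\times\I$ in $\cSet$) identifies the underlying map in $\cSet$ of $(\delta\Rightarrow_\I\I^*f)$ with the map $(\delta\Rightarrow f):Y^\I\times\I\to (X^\I\times\I)\times_{X\times\I}(Y\times\I)$ of Definition \ref{def:unbiasedfibration}(2). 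Moreover, a map $g$ living in $\cSet/_\I$ is a trivial fibration in the slice if and only if its underlying map is one in $\cSet$: a $\cSet$-square against $g$ with a cofibration on the left acquires canonical $\I$-structure by composition with the structure map of $g$, and slice fillers are \emph{a fortiori} $\cSet$ fillers. Combining, $f$ is an unbiased fibration iff $(\delta\Rightarrow_\I\I^*f)$ is a trivial fibration in $\cSet/_\I$.

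Next, since $\cSet/_\I$ is a topos and hence Cartesian closed, Lemma \ref{lemma:Leibniz} applies there: $(\delta\Rightarrow_\I\I^*f)$ is a trivial fibration in the slice if and only if every cofibration $c'$ of $\cSet/_\I$ has the LLP against it, equivalently every pushout-product $c'\otimes_\I\delta$ formed in the slice has the LLP against $\I^*f$ in $\cSet/_\I$. Cofibrations in $\cSet/_\I$ are exactly pairs $(c:C\mono Z, i:Z\to\I)$ with $c$ a cofibration in $\cSet$, and by Remark \ref{rem:poprodoverI} the pushout-product $c'\otimes_\I\delta$ in the slice has underlying map in $\cSet$ exactly $c\otimes_i\delta: Z+_C(C\times\I)\to Z\times\I$.

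Finally I want to verify that slice lifting problems for $c\otimes_i\delta$ against $\I^*f = f\times\I$ correspond bijectively to $\cSet$ lifting problems for $c\otimes_i\delta$ against $f$. Since $\I^*X=(X\times\I,p_2)$, any slice map $W\to\I^*X$ (where $W$ has $\I$-structure $w$) is of the form $\langle h,w\rangle$ for a unique $h:W\to X$; projecting away the forced second components reduces the commutativity of the slice square to the commutativity of the underlying $\cSet$ square with $f$ on the right, and similarly converts slice fillers to $\cSet$ fillers and back. This yields the stated equivalence.

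The main obstacle is bookkeeping the three layers of translation cleanly: (i) between trivial fibrations in $\cSet$ and in the slice, (ii) between pushout-products in the slice and the explicit form $c\otimes_i\delta$, and (iii) between slice lifting problems against $\I^*f$ and $\cSet$ lifting problems against $f$. None is individually deep, but the compatibility of pushout-product and pullback-hom with base change along $\I^*$ must be handled carefully; the key computational input beyond general slice/topos formalism is the explicit identification recorded in Remark \ref{rem:poprodoverI}.
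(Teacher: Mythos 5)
Your proof is correct and takes essentially the same route as the paper's: pass to the slice $\cSet/_\I$, apply the Leibniz adjunction (Lemma~\ref{lemma:Leibniz}) there to trade pushout-products against $\delta$ for lifting against the pullback-hom, and identify slice lifting problems with $\cSet$ lifting problems via the adjunction $\I^*\dashv\I_*$. The paper states these equivalences more tersely; your version makes explicit the identification of $\delta\Rightarrow_\I\I^*f$ with the map in Definition~\ref{def:unbiasedfibration}(2), the matching of slice pushout-products with $c\otimes_i\delta$ via Remark~\ref{rem:poprodoverI}, and the bijection between slice and $\cSet$ lifting squares --- all of which the paper tacitly relies on.
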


\begin{proof}
Suppose that for all $c : C\mono Z$ and $i:Z\ra\I$, we have $(c\otimes_i\delta) \pitchfork f$ in $\cSet$. Pulling $f$ back over $\I$, this is equivalent to the condition $c\otimes\delta \pitchfork \I^*f$ in $\cSet/_\I$, for all cofibrations $c : C\mono Z$ over $\I$, which is equivalent to $c\pitchfork(\delta\Rightarrow \I^*f)$ in $\cSet/_\I$ for all cofibrations $c : C\mono Z$.  But this in turn means that $\delta\Rightarrow\I^*f$ is a trivial fibration, which by definition means that $f$ is an unbiased fibration.
\end{proof}

\begin{remark}\label{rem:specialtrivcofs}
Note that the endpoints $\delta_\epsilon : 1 \ra \I$, in particular, are of the form $c \otimes_i\!\delta$ by taking $Z = 1$ and $i = \delta_\epsilon$ and $c =\ ! : 0 \ra 1$, so that the case of biased filling is subsumed.  Moreover, for any $i : Z\to \I$ the graph $\gph{i}: Z \mono Z\times \I$ is itself of the form $0 \otimes_i\!\delta$ for the cofibration $0 \cof Z$, so the graph of any ``$\I$-indexing'' map $i : Z\to\I$ is also a trivial cofibration.
\end{remark}

The following sanity check will be needed later.
\begin{proposition}\label{prop:sanitycheck}
Let $f:F\fib X$ be an unbiased fibration in $\cSet$. Then for the endpoints $\delta_0, \delta_1 : 1 \to \I$, the associated pullback-homs,
\begin{equation}\label{eq:unbiasedimpliesbiased}
\delta_\epsilon \Rightarrow f : F^\I \to X^\I \times_X F \qquad(\epsilon = 0,1)
\end{equation}
are also trivial fibrations. Thus unbiased fibrations are also $\delta_\epsilon$-biased fibrations, for $\epsilon=0,1$.
\end{proposition}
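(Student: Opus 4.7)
The plan is to reduce this to showing that every generating biased trivial cofibration $c \otimes \delta_\epsilon$ occurs in the class $\mathcal{C}\otimes\delta$ of generating unbiased trivial cofibrations, so that an unbiased fibration automatically has the right lifting property against all of them; then applying the Joyal-Tierney adjunction (Lemma \ref{lemma:Leibniz}) gives the desired triviality of $\delta_\epsilon \Rightarrow f$.

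The key observation is that the biased construction \eqref{diagram:pushoutproduct} is exactly the unbiased construction \eqref{diagram:unbiasedpushoutproduct} for a \emph{constant} $\I$-indexing map. Namely, given a cofibration $c: C \mono Z$, take the indexing $i := \delta_\epsilon \circ\, !_Z : Z \to 1 \to \I$. Then by the second line of Remark \ref{rem:poprodoverI}, the graph $\gph{i} = \langle 1_Z, \delta_\epsilon !_Z\rangle$ is precisely the endpoint inclusion $Z \times \delta_\epsilon : Z \to Z \times \I$, so the two pushouts defining $c \otimes_i \delta$ and $c \otimes \delta_\epsilon$ coincide, and the two maps to $Z \times \I$ are literally equal. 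Hence $c \otimes \delta_\epsilon \in \mathcal{C}\otimes\delta$.

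Given an unbiased fibration $f : F \fib X$, Proposition \ref{prop:fibiffrlp} tells us that $f$ has the right lifting property against every member of $\mathcal{C}\otimes\delta$, and in particular against every $c \otimes \delta_\epsilon$ for any cofibration $c : C \mono Z$. By Lemma \ref{lemma:Leibniz}, the condition $(c \otimes \delta_\epsilon)\, \pitchfork\, f$ is equivalent to $c\, \pitchfork\, (\delta_\epsilon \Rightarrow f)$. Since this holds for every cofibration $c$, the map $\delta_\epsilon \Rightarrow f$ lies in $\mathcal{C}^\pitchfork = \mathsf{TFib}$, \ie\ it is a trivial fibration. The equivalence with biased fibrancy then follows from Proposition \ref{prop:algequivfill}.

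There is no real obstacle here, as the proof is essentially a bookkeeping step identifying the biased generating trivial cofibrations as special cases of the unbiased ones (via the constant indexing $Z \to 1 \to \I$), and then invoking the standard pushout-product/pullback-hom adjunction. The only subtlety is verifying cleanly that $\gph{\delta_\epsilon !_Z} = Z \times \delta_\epsilon$, which is immediate from the definition of the graph.
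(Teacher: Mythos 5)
Your proof is correct, and it is a legitimately different route from the one the paper actually writes out, though it is the one the paper gestures at in its opening sentence (``This follows from Remark~\ref{rem:specialtrivcofs} and the ${\otimes}\dashv{\Rightarrow}$ adjunction, but we give a different proof''). You are right that the decisive observation is stronger than Remark~\ref{rem:specialtrivcofs} as literally stated: that remark only records that the single map $\delta_\epsilon$ is of the form $c\otimes_i\delta$ (with $Z=1$, $c = \,! : 0\to 1$), whereas what one actually needs is that \emph{every} $c\otimes\delta_\epsilon$, for arbitrary $c : C\cof Z$, is of the form $c\otimes_i\delta$ with the constant indexing $i = \delta_\epsilon\circ{!_Z}$; knowing only that $\delta_\epsilon$ is a trivial cofibration would give $\delta_\epsilon \pitchfork f$, not the lifting against $c\otimes\delta_\epsilon$ for all cofibrations $c$ that is required to conclude $\delta_\epsilon \Rightarrow f$ is a trivial fibration. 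Your identification $\gph{\delta_\epsilon{!}} = Z\times\delta_\epsilon$, and hence $c\otimes\delta_\epsilon = c\otimes_i\delta$, fills exactly this gap. The paper's written proof instead passes to the slice $\cSet/_\I$, uses the universal property of the generic point to see that $\delta_\epsilon^*(\delta) = \delta_\epsilon$, and concludes by stability of trivial fibrations under the base change $\delta_\epsilon^*$. Your argument is more elementary (no slice category required) and makes the inclusion $\BCof\otimes\delta_\epsilon \subseteq \mathcal{C}\otimes\delta$ explicit; the paper's argument has the virtue of illustrating the ``generic point'' philosophy that unifies the biased and unbiased notions, which is reused elsewhere in the paper.
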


\begin{proof}
This follows from Remark \ref{rem:specialtrivcofs} and the ${\otimes} \dashv {\Rightarrow}$ adjunction, but we give a different proof.
Consider the case $X=1$, the general one $f:F\to X$ being analogous. Thus let $F$ be an unbiased fibrant object in $\cSet$. So by definition $(\I^*F)^\delta : (\I^*F)^\II \too \I^*F$  in $\cSet/_\I$ is a trivial fibration. 
Pulling back $\delta : 1\to \II$ in $\cSet/_\I$ along the base change $\delta_\epsilon : 1\to \I$ takes it to $\delta_\epsilon : 1\to \I$ in $\cSet$, by the universal property of the generic point $\delta : 1\to \II$; that is $\delta_\epsilon^*(\delta) = \delta_\epsilon : 1 \to \I$.  So $(\I^*F)^\delta : (\I^*F)^\II \too \I^*F$ is taken by $\delta_\epsilon^*$ to
\[
\delta_\epsilon^* \big( {(\I^*F)^\delta} \big) =
(\delta_\epsilon^*\I^*F)^{\delta_\epsilon^*\delta} =  F^{\delta_\epsilon} : F^\I \too F\,,
\]
as shown in the following.
\begin{equation}\label{diagram:unbiasedfibration1}
\xymatrix{
F^\I \ar[d]_{F^{\delta_\epsilon}} \ar[rr] \pbcorner && (\I^* F)^{\II} \ar[d]^{(\I^*F)^{\delta}} &  \\
 F \ar[d] \ar[rr] \pbcorner && \I^*F \ar[d]  \ar[r] \pbcorner & F \ar[d]  \\
 1 \ar[rr]_{\delta_\epsilon} &&  \I \ar[r] & 1
}
\end{equation}
And pullback preserves trivial fibrations. 
\end{proof}
\subsection*{Unbiased fibration structure.}\label{sec:unbiasedfibration}

As in the biased case, the fibrations can be determined by \emph{uniform} right-lifting against a  \emph{small category} of unbiased trivial cofibrations, now consisting of all those $c \otimes_i \delta$ in $\mathcal{C}\otimes \delta$ for which $c : C \mono \I^n$ is basic, \ie\ has representable codomain.  Call these maps the \emph{basic unbiased trivial cofibrations}, and let 
\begin{equation}\label{eq:basicunbiasedTCof}
\mathsf{BCof}\otimes \delta = \{c \otimes_i \delta : B \mono \I^{n+1}\ |\ c : C\mono \I^n,\, i : \I^n \ra \I,\,n\geq 0\}\,,
\end{equation}
where the pushout-product $c\otimes_i \delta$ now has the form
\begin{equation}\label{diagram:unbiasedbasicpushoutproduct}
\xymatrix{
C \ar@{>->}[d]_{c} \ar[r]^{\gph{ic}} & C\times \I \ar[d] \ar@/^4ex/ [rdd]^{c\times\I}\\
\I^n \ar@/_4ex/ [rrd]_{\gph{i}} \ar[r] &  \I^n +_C (C\times\I) \ar@{.>}[rd]_{c\, \otimes_i \delta} \\
&& \I^n\times\I
}
\end{equation}
for a basic cofibration $c : C\mono \I^n$ and an indexing map $i : \I^n \ra \I$, and with domain $B = \big(\I^n +_C (C\times\I)\big)$.   These subobjects $B \mono \I^{n+1}$ can again be seen geometrically as ``generalized open box inclusions", but now the floor and lid of the open box are generalized to the graph of an arbitrary map $i:\I^n\ra \I$.

For any map $f:Y\ra X$  a uniform, unbiased fibration structure on $f$ is then a choice of diagonal fillers $j(c,i,x,y)$,
\begin{equation}\label{diagram:basicunbiasedfillers}
\xymatrix{
B \ar@{>->}[d]_{c\,\otimes_i\delta} \ar[rr]^x && X \ar[d]^f \\
\I^{n}\times \I \ar@{.>}[rru]_{j(c,i,x,y)} \ar[rr]_y && Y,
}
\end{equation}
for each basic trivial cofibration $c \otimes_i \delta : B \mono \I^{n+1}$, which is \emph{uniform in $\I^n$} in the following sense: Given any cubical map $u : \I^m \ra \I^n$, the pullback $u^*c : u^*C\mono \I^m$ and the reindexing $iu : \I^m \ra \I^n\ra\I$ determine another basic trivial cofibration $u^*c \otimes_{iu} \delta : B' = (\I^m +_{u^*C} (u^*C\times\I)) \mono \I^{m+1}$, which fits into a commutative diagram of the form
\begin{equation}\label{diagram:basicunbiasedfillersuniformity}
\xymatrix{
B' \ar[d]_{u^*c\,\otimes_{iu}\delta} \ar[rr]^{(u\times \I)'} \pbcorner  && B \ar[d]_{c\,\otimes_i\delta} \ar[rr]^x && X \ar[d]^f \\
\I^{m}\times \I  \ar[rr]_{u\times \I} && \I^{n}\times \I \ar@{.>}[rru]_{j(c,i,x,y)} \ar[rr]_y && Y.
}
\end{equation}
For the outer rectangle in \eqref{diagram:basicunbiasedfillersuniformity} there is a chosen diagonal filler 

\[
j(u^*c,iu,x(u\times \I)', y(u\times \I)): \I^m\times\I\ra X,
\]
 and for this map we require that
\begin{equation}\label{eq:uniformunbiasedfillers}
j(u^*c,iu,x(u\times \I)', y(u\times \I)) = j(c,i,x,y)\circ (u\times \I).
\end{equation}

\begin{definition}\label{def:unbiasedfibstructure} A \emph{uniform, unbiased fibration structure} on a map 
\[
f: Y\ra X
\]
 is a choice of fillers $j(c,i,x,y)$ as in \eqref{diagram:basicunbiasedfillers} satisfying \eqref{eq:uniformunbiasedfillers} for all cubical maps $u : \I^m\to\I^n$.
\end{definition}

In these terms, we have the following analogue of corollary \ref{cor:uniformfibstructequiv}.

\begin{proposition}\label{prop:equivfibstruc} For any object $X$ in \cSet\ the following are equivalent:
\begin{enumerate}
\item $X$ is an unbiased fibrant object in the sense of Definition \ref{def:unbiasedfibration}: the canonical map $\delta\Rightarrow X : X^\I\times \I \ra X\times \I$ is a trivial fibration. 

\item $X$ has the right lifting property with respect to all generating unbiased trivial cofibrations,
\[
(\mathcal{C}\otimes\delta)\, \pitchfork\,X.
\]

\item $X$ has a uniform, unbiased fibration structure in the sense of Definition \ref{def:unbiasedfibstructure}.
\end{enumerate}
\end{proposition}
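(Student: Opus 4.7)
This is the unbiased analogue of Corollary \ref{cor:uniformfibstructequiv}, and I would prove it by transporting Proposition \ref{prop:uniformstructequivrelative} across the base change $\I^* : \cSet \to \cSet/_\I$, in which the unbiased condition on $X$ reduces to a biased-style condition involving only the single generic point $\delta : 1 \to \I^*\I$. The key identification, from Remark \ref{rem:poprodoverI}, is that the generating unbiased trivial cofibration $c \otimes_i \delta$ in $\cSet$ is exactly the pushout-product in $\cSet/_\I$ of $\delta$ with the cofibration $c : C \mono Z$ regarded as a cofibration over $\I$ via $i : Z \to \I$. Since $\cSet/_\I$ is again a presheaf topos, namely $\widehat{\C/_{[1]}}$, all of the trivial-fibration machinery from Section~\ref{sec:cofibrations} is available there verbatim.

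For (1) $\iff$ (2), the argument is essentially Proposition \ref{prop:fibiffrlp} specialized to $f : X \to 1$. Condition (1) says that $(\delta \Rightarrow X)$ is a trivial fibration, so has RLP against every cofibration $c : C \mono Z$ (equipped with an indexing $i : Z \to \I$); by Lemma \ref{lemma:Leibniz} applied fiberwise over $\I$, together with Remark \ref{rem:poprodoverI}, this is equivalent to $X \to 1$ having RLP against every pushout-product $c \otimes_i \delta$, which is precisely condition (2).

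For (2) $\iff$ (3), I apply Proposition \ref{prop:uniformstructequivrelative} \emph{inside} $\cSet/_\I$ to the map $(\delta \Rightarrow X)$. Its equivalence (2) $\iff$ (3) there yields: $(\delta \Rightarrow X)$ is a trivial fibration if and only if it admits a uniform filling structure against basic cofibrations over $\I$. Transporting this across the ${\otimes}\dashv{\Rightarrow}$ adjunction fiberwise over $\I$ produces exactly the uniform unbiased fibration structure of Definition \ref{def:unbiasedfibstructure}: a basic cofibration over $\I$ is a pair $(c : C \mono \I^n,\, i : \I^n \to \I)$, which the adjunction pairs with $\delta$ to give the basic unbiased trivial cofibration $c \otimes_i \delta : B \mono \I^{n+1}$; the filler $j(c,i,x)$ of \eqref{diagram:basicunbiasedfillers} is exactly the filler in $\cSet/_\I$; and the uniformity equation \eqref{eq:uniformunbiasedfillers} for $u : \I^m \to \I^n$ corresponds to the uniformity of fillers under maps between representables in $\C/_{[1]}$. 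In each direction, the explicit construction parallels the corresponding step of Proposition \ref{prop:uniformstructequiv}: for $(2)\Rightarrow(3)$, uniform fillers come from the canonical relative $+_\I$-algebra structure on $(\delta \Rightarrow X)$ via \eqref{def:phi}, with uniformity provided by uniqueness of classifying maps as in \eqref{eq:proof,uniformfillers}; for $(3)\Rightarrow(2)$, uniform fillers yield a relative $+_\I$-algebra structure by the Yoneda construction of \eqref{diagram:definingalpha}.

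The main obstacle is simply the bookkeeping of this last correspondence. One must verify that maps between representables in $\widehat{\C/_{[1]}}$ are precisely cubical maps $u : \I^m \to \I^n$ in $\cSet$ together with a compatible indexing $i : \I^n \to \I$ and its precomposition $iu : \I^m \to \I$, so that the uniformity clause of Definition \ref{def:unbiasedfibstructure} matches, on the nose, the uniformity clause for the relative filling structure in Proposition \ref{prop:uniformstructequivrelative}. Once this identification is spelled out, the rest is automatic, since the passage to $\cSet/_\I$ was precisely arranged to reduce the unbiased setting to the already-treated biased (in fact, trivial-fibration) framework.
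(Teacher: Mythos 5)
Your proposal is correct and follows essentially the same route as the paper: establish $(1)\Leftrightarrow(2)$ via Proposition \ref{prop:fibiffrlp}, pass to $\cSet/_\I$ where the generic point $\delta$ makes the condition ``biased,'' invoke Proposition \ref{prop:uniformstructequivrelative} there, and transport back across the ${\otimes}\dashv{\Rightarrow}$ adjunction; your last paragraph on matching representables in $\widehat{\C/_{[1]}}$ with pairs $(u : \I^m\to\I^n,\, i : \I^n\to\I)$ simply makes explicit the ``unwinding'' that the paper leaves to the reader.
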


\begin{proof}
The equivalence between (1) and (2) is proposition \ref{prop:fibiffrlp}.  
So assume (1).
Then in $\cSet/_\I$, the evaluation at $\delta : 1\to\II$,
\[
(\I^*X)^\delta : (\I^*X)^\II \too X
\]
is a trivial fibration.  By Proposition \ref{prop:uniformstructequivrelative} it therefore 
has a uniform filling structure with respect to all basic cofibrations $c :C\mono \I^n$ over $\I$.  Transposing by the $\otimes\dashv\,\Rightarrow$ adjunction and unwinding then gives exactly a uniform fibration structure on $X$.
\end{proof}

A statement analogous to the foregoing also holds for maps $f:Y\ra X$ in place of objects $X$.  Indeed, as before, we have the following sharper formulation.

\begin{corollary}
Uniform, unbiased fibration structures on a map $f : Y\ra X$ correspond uniquely to relative $+$-algebra structures on the map $(\delta\Rightarrow{f})$ (cf.\ definition \ref{def:unbiasedfibration}),
\[
(\delta\Rightarrow{f}) : Y^I\times \I \too (X^I \times \I)\times_{(X\times \I)} (Y\times\I)\,.
\]
\end{corollary}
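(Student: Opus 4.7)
The plan is to combine the $\otimes\dashv{\Rightarrow}$ Leibniz adjunction (Lemma \ref{lemma:Leibniz}) with the characterization of trivial fibration structure via relative $+$-algebras (Proposition \ref{prop:uniformstructequivrelative}). The strategy proceeds in three stages.

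First, I would establish a natural bijection between the two kinds of lifting data. A commutative square over $f : Y\to X$ with left edge a basic unbiased trivial cofibration $c\otimes_i\delta: \I^n +_C (C\times\I) \mono \I^n\times\I$ transposes, via Leibniz adjointness, to a commutative square over $(\delta\Rightarrow f): Y^\I\times\I \to (X^\I\times\I)\times_{X\times\I}(Y\times\I)$ with left edge the basic cofibration $c : C \mono \I^n$. The crucial observation is that the indexing map $i:\I^n\to\I$ is not extra data on the transposed side: it is recorded as the projection of the bottom map $\I^n \to (X^\I\times\I)\times_{X\times\I}(Y\times\I)$ onto one of the $\I$-factors. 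Thus the data of a square $(c\otimes_i\delta, x, y)$ for all possible indexings $i$ is exactly the data of a square $(c, \tilde x, \tilde y)$ for $(\delta\Rightarrow f)$, and diagonal fillers correspond under the adjunction as in the proof of Proposition \ref{prop:algequivfill}.

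Second, I would verify that the uniformity condition translates across the adjunction. For any $u:\I^m\to\I^n$, the reindexed basic unbiased trivial cofibration $u^*c \otimes_{iu} \delta$ is obtained by pulling back $c\otimes_i\delta$ along $u\times\I:\I^m\times\I\to\I^n\times\I$, so the uniformity condition \eqref{eq:uniformunbiasedfillers} says that the chosen fillers commute with reindexing by $u\times\I$. Naturality of the Leibniz transposition in both variables converts this into the uniformity condition of Proposition \ref{prop:uniformstructequivrelative}(3) for $(\delta\Rightarrow f)$ against $u^*c$. This is essentially bookkeeping, but it is where one must pay the most attention, because the indexing $i$ is implicit in the codomain on the transposed side; the reindexing $i \mapsto iu$ on the unbiased side must appear as the effect of precomposing with $u$ on the $\I$-projection of $\tilde y$ on the transposed side.

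Third, I would conclude by invoking Proposition \ref{prop:uniformstructequivrelative} applied to the map $(\delta\Rightarrow f)$ in $\cSet$, giving a bijection between uniform filling structures against basic cofibrations and relative $+$-algebra structures. Composing this with the bijection established in steps one and two yields the desired unique correspondence between uniform unbiased fibration structures on $f$ and relative $+$-algebra structures on $(\delta\Rightarrow f)$. The main obstacle, as noted, is the second step: ensuring that the implicit nature of the indexing $i$ on the transposed side is accounted for so that the two uniformity conditions match on the nose rather than merely up to a reindexing. Once this is disentangled, the bijection is entirely formal.
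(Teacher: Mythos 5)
Your plan is correct and follows the paper's intended route: the paper leaves this corollary unproved but signals the argument via the proof of Proposition~\ref{prop:equivfibstruc} (``Transposing by the $\otimes\dashv\,\Rightarrow$ adjunction and unwinding''), which is precisely your three-step transposition through the Leibniz adjunction followed by an appeal to Proposition~\ref{prop:uniformstructequivrelative}. Your step two correctly isolates the one subtlety — that the indexing $i:\I^n\to\I$ is absorbed into the codomain of $(\delta\Rightarrow f)$ as the $\I$-component of the bottom map, so the uniformity conditions on the two sides match under transposition.
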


\subsection*{Factorization}\label{subsec:FWFS}

\begin{definition}\label{def:FibWFSclasses}
Summarizing the foregoing definitions, we have the following classes of maps:

\begin{itemize}
\item  The \emph{generating unbiased trivial cofibrations} were determined in \eqref{eq:generatingtrivialcofibrations} as
\begin{equation}\label{eq:genunbiasedTCof}
\mathcal{C}\otimes\delta = \{c \otimes_i \delta : D \mono Z\times\I\ |\  c : C\mono Z,\, i : Z \ra \I\}\,,
\end{equation}
where $D = \big(Z +_C (C\times\I)\big)$ and the pushout-product $c\otimes_i \delta$ has the form
\begin{equation}\label{diagram:unbiasedbasicpushoutproduct2}
\xymatrix{
C \ar@{>->}[d]_{c} \ar[r]^{\gph{ic}} & C\times \I \ar[d] \ar@/^4ex/ [rdd]^{c\times\I}\\
Z \ar@/_4ex/ [rrd]_{\gph{ic}} \ar[r] &  Z +_C (C\times\I) \ar@{.>}[rd]_{c\, \otimes_i \delta} \\
&& Z\times\I
}
\end{equation}
for any cofibration $c : C\mono Z$ and indexing map $i: Z \ra \I$.

\item The class $\mathcal{F}$ of \emph{unbiased fibrations}, which can be characterized as the right-lifting class of the generating unbiased trivial cofibrations,
\[
(\mathcal{C}\otimes\delta)^\pitchfork\, =\,\mathcal{F}.
\]

\item The class of \emph{unbiased trivial cofibrations} is then defined to be left-lifting class of the fibrations,
\[
\mathsf{TCof}\, =\, ^{\pitchfork}\mathcal{F}.
\]
\end{itemize}
\end{definition}

It follows that the classes $\mathsf{TCof}$ and $\mathcal{F}$ are closed under retracts and are mutually weakly orthogonal, $\mathsf{TCof}\ {\pitchfork}\ \mathcal{F}$.
Thus in order to have a weak factorization system $(\mathsf{TCof}, \mathcal{F})$ it just remains to show the following.

\begin{lemma}\label{lemma:factorization}
Every map $f: X\ra Y$ in \cSet\ can be factored as $f  = p\circ i$,
\begin{equation}
\xymatrix{
X \ar[rd]_{f} \ar@{>->}[r]^i & X'\ar@{->>}[d]^p\\
& Y
}
\end{equation}
with $i: X\mono X'$ an unbiased trivial cofibration and $p: X'\onto Y$ an unbiased fibration.
\end{lemma}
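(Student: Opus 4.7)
The plan is to apply the small object argument to the set $J = \mathsf{BCof}\otimes\delta$ of basic unbiased trivial cofibrations defined in \eqref{eq:basicunbiasedTCof}. Since $\cSet = \psh{\Box}$ is locally finitely presentable, the domain $B = \I^n +_C (C\times\I)$ of each generator $c \otimes_i \delta : B \mono \I^{n+1}$ is a pushout of representables and hence finitely presentable, while the codomain $\I^{n+1}$ is representable. The smallness hypotheses of the small object argument are therefore satisfied.

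First, I would invoke the algebraic small object argument of Garner applied to $J$, producing a functorial factorization $f = p\circ i$ in which $i$ lies in the cellular closure of $J$ (a transfinite composition of pushouts of coproducts of generators) and $p$ carries the free algebra structure for the associated pointed endofunctor. Unpacking the algebra structure, $p$ comes equipped with a coherent choice of diagonal filler against every lifting square with left leg in $J$, and these fillers are natural in reindexing maps $u : \I^m \to \I^n$ in the sense of \eqref{eq:uniformunbiasedfillers}; in other words, $p$ acquires a uniform, unbiased fibration structure in the sense of Definition \ref{def:unbiasedfibstructure}.

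Second, the map $i$ lies in $\mathsf{TCof} = {}^\pitchfork\mathcal{F}$ essentially for free. Every generator in $J$ already belongs to $\mathsf{TCof}$ by Definition \ref{def:FibWFSclasses}, and a left lifting class is automatically closed under transfinite composition, pushouts, and coproducts; hence $i \in \mathsf{TCof}$ as required. Third, to see that $p$ is an unbiased fibration, I would invoke the relative version of Proposition \ref{prop:equivfibstruc} (whose existence the text already flags), which equates uniform unbiased fibration structure on a map with membership in $\mathcal{F} = (\mathcal{C}\otimes\delta)^\pitchfork$. The uniform structure obtained in step one supplies exactly the required data, and so $p$ is an unbiased fibration.

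The main technical obstacle is the last step: bridging the gap between uniform right lifting against the small set $J = \mathsf{BCof}\otimes\delta$ (which the algebraic small object argument provides directly) and right lifting against the much larger class $\mathcal{C}\otimes\delta$ whose generators $c\otimes_i\delta$ involve arbitrary cofibrations $c : C\mono Z$ rather than basic ones into a representable cube. This is precisely what the relative analogue of Proposition \ref{prop:equivfibstruc} is designed to deliver: a general cofibration is assembled from basic cofibrations via the canonical colimit presentation $Z \cong \mathrm{colim}_{\I^n\to Z}\I^n$ of its codomain over the category of elements of $Z$, and the uniformity (naturality in reindexing) of the chosen basic fillers is exactly what is needed to glue them into a single well-defined filler against $c\otimes_i\delta$. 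Once this equivalence is in hand, everything else is formal.
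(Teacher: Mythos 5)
Your proposal matches the paper's approach: the paper also invokes the algebraic small object argument applied to the basic unbiased trivial cofibrations, noting that the finite-presentability of the domains and tininess of the codomains $\I^{n+1}$ make the smallness hypotheses hold (and in fact simplify the construction), and referring to \cite{Awodey:cubical-model} for details. Your step of bridging from uniform lifting against the small generating set $\mathsf{BCof}\otimes\delta$ to the full class $\mathcal{C}\otimes\delta$ via the relative version of Proposition \ref{prop:equivfibstruc} is precisely the role the paper assigns to that equivalence, and your observation that the generators lie in $\mathsf{TCof}={}^{\pitchfork}\mathcal{F}$ and that the cellular closure stays there is the standard closure argument the paper leaves implicit.
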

\begin{proof}
We can use a standard argument (the ``algebraic small object argument'', cf.~\cite{garner:small-object-argument,riehl-algebraic-model}), which can be further simplified using the fact that the codomains of the basic trivial cofibrations $c\, \otimes_i \,\delta : B \mono \I^{n+1}$ are not just representable, but \emph{tiny} in the sense of Proposition \ref{prop:Itiny}, and the domains are not merely ``small'', but \emph{finitely presented}.  The reader is referred to \cite{Awodey:cubical-model} for details in a similar case.
\end{proof}

\begin{remark}\label{remak:constructive}
The proof in \ibid\ actually produces a stronger result than we need, namely an \emph{algebraic} weak factorization system.  This follows from the small generating \emph{category} $\mathsf{BCof}\otimes \delta$  of basic unbiased trivial cofibrations (and pullback squares of the form on the left in \eqref{diagram:basicunbiasedfillersuniformity}).  The relationship between this stronger condition and the \emph{classifying types} used in Section \ref{sec:U} is studied in \cite{Swan:Wtypes}, which also gives an even more ``constructive'' proof of the factorization Lemma \ref{lemma:factorization}, not requiring quotients, exactness, or impredicativity.  With this modification, the present approach can also be used in a \emph{quasi}topos, as occurs in \eg\ realizability and sheaves.
\end{remark}

\begin{proposition}\label{prop:fibrationwfs}
There is a weak factorization system on the category \cSet\ in which the right maps are the unbiased fibrations and the left maps are the unbiased trivial cofibrations, both as specified in definition \ref{def:FibWFSclasses}.  This will be called the \emph{(unbiased) fibration weak factorization system}.
\end{proposition}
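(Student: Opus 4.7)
The plan is to observe that this proposition is essentially a corollary of Lemma \ref{lemma:factorization} together with the definitions laid down in Definition \ref{def:FibWFSclasses}. A weak factorization system requires three things: the mutual lifting property between the two classes, closure of each class under retracts in the arrow category, and the factorization of every map as a left map followed by a right map. All three will be assembled from pieces already in hand, so the proposition is in effect a bookkeeping exercise.

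First, the mutual lifting property $\mathsf{TCof}\,\pitchfork\,\mathcal{F}$ holds by construction, since $\mathcal{F}$ was defined as $(\mathcal{C}\otimes\delta)^\pitchfork$ and $\mathsf{TCof}$ was subsequently defined as ${}^\pitchfork\mathcal{F}$. Second, any class of the form $S^\pitchfork$ or ${}^\pitchfork S$ is automatically closed under retracts in the arrow category $\cSet^\bbtwo$, by a standard diagram chase through the retract data; this gives retract-closure for both $\mathsf{TCof}$ and $\mathcal{F}$ with no further effort. Third, and the only step requiring genuine argument, I invoke the factorization Lemma \ref{lemma:factorization} to write an arbitrary $f : X \to Y$ as $f = p \circ i$ with $i \in \mathsf{TCof}$ and $p \in \mathcal{F}$. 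Combining these three facts then produces the weak factorization system $(\mathsf{TCof},\mathcal{F})$ on $\cSet$.

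The substantive difficulty is therefore concentrated entirely in Lemma \ref{lemma:factorization}, which in turn rests on the algebraic small object argument. The argument is especially well-behaved in the present setting because the generating basic unbiased trivial cofibrations $c\otimes_i\delta : B \mono \I^{n+1}$ have \emph{finitely presented} domains and \emph{tiny} codomains by Proposition \ref{prop:Itiny}; this allows the transfinite small object construction to collapse to a finite one and makes $i \in \mathsf{TCof}$ from its membership in the saturation of $\mathcal{C}\otimes\delta$ automatic. With the factorization in place, nothing remains to check and the proposition follows.
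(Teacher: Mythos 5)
Your proof is correct and takes the same route as the paper: the text immediately preceding Lemma~\ref{lemma:factorization} already observes that $\mathsf{TCof}$ and $\mathcal{F}$ are mutually weakly orthogonal and retract-closed by their definitions as lifting classes, so that only the factorization remains, which is exactly what Lemma~\ref{lemma:factorization} supplies; Proposition~\ref{prop:fibrationwfs} is then stated without further proof. Your additional remarks on tininess of the codomains and finite presentability of the domains correctly echo the simplifications the paper highlights in the proof of the factorization lemma itself.
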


Hereafter, unless otherwise stated, all fibrations in $\cSet$ are assumed to be  unbiased.

\section{The weak equivalences}\label{sec:weq}

Our approach to proving that the classes $\CC$ and $\FF$ of cofibrations and fibrations, from Sections \ref{sec:cofibrations} and \ref{sec:fibrations}, determine a model structure will be to first identify a \emph{premodel structure} in the sense of \cite{Barton}, and then turn to the question of the 3-for-2 property for the resulting weak equivalences.

\begin{definition}[Weak equivalence]
A map $f: X\ra Y$ in \cSet\ is a \emph{weak equivalence} if it can be factored as $f  = g\circ h$,
\begin{equation*}
\xymatrix{
X \ar[rd]_{f} \ar[r]^h & W\ar[d]^g\\
& Y
}
\end{equation*}
with $h \pitchfork \FF$ and $\CC\pitchfork g$.  Accordingly, let 
\begin{align*}
\mathcal{W} &= \TFib\circ\TCof \\
	&= \{f: X\ra Y\, |\ f = g\circ h\ \text{for some $g\in\mathsf{TFib}$ and $h\in\mathsf{TCof}$} \}
\end{align*}
 be the class of weak equivalences.
\end{definition}

Observe first that every trivial fibration $f\in\TFib = \mathcal{C}^\pitchfork$ is indeed a fibration, because the generating trivial cofibrations $c\otimes_i\delta$ are cofibrations.  Moreover, every trivial fibration $f : X\to Y$ is also a weak equivalence $f = f\circ 1_X$, since the identity map $1_X$ is (trivially) a trivial cofibration $\TCof =\, ^{\pitchfork}\mathcal{F}$. Thus we have
\[
\mathsf{TFib} \subseteq (\mathcal{F} \cap \mathcal{W}).
\]
Similarly, because $\TFib\subseteq\FF$, we have $\TCof\subseteq\CC$.  Moreover, since identity maps are also trivial fibrations we have $\TCof\subseteq \TFib\circ\TCof  =\WW$.  Thus we also have
\[
\mathsf{TCof} \subseteq (\mathcal{C} \cap \mathcal{W}).
\]

\begin{lemma}
$(\mathcal{C} \cap \mathcal{W})  \subseteq \mathsf{TCof}.$
\end{lemma}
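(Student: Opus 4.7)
The plan is a classical retract argument. Take any $f \in \mathcal{C} \cap \mathcal{W}$. By definition of $\mathcal{W}$, we can factor $f = g \circ h$ with $h \in \mathsf{TCof}$ and $g \in \mathsf{TFib}$. The key observation is that $\mathsf{TFib} = \mathcal{C}^\pitchfork$ by the cofibration weak factorization system of Section \ref{sec:cofibrations}, so in the commutative square
\[
\xymatrix{
X \ar[d]_{f} \ar[r]^{h} & W \ar[d]^{g} \\
Y \ar@{.>}[ru]^{r} \ar@{=}[r] & Y
}
\]
the fact that $f \in \mathcal{C}$ and $g \in \mathsf{TFib}$ yields a diagonal filler $r : Y \to W$ satisfying $r \circ f = h$ and $g \circ r = 1_Y$.

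This exhibits $f$ as a retract of $h$ in the arrow category, via
\[
\xymatrix{
X \ar@{=}[r] \ar[d]_{f} & X \ar[d]^{h} \ar@{=}[r] & X \ar[d]^{f} \\
Y \ar[r]_{r} & W \ar[r]_{g} & Y.
}
\]
Since $\mathsf{TCof} = {}^{\pitchfork}\mathcal{F}$ is defined as a left lifting class, it is closed under retracts, and $h \in \mathsf{TCof}$, so $f \in \mathsf{TCof}$, as required.

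There is no real obstacle here: everything comes directly from the weak factorization system property $(\mathcal{C}, \mathsf{TFib})$ established in Section \ref{sec:cofibrations} together with the standard closure of a left lifting class under retracts. The dual inclusion $(\mathcal{F} \cap \mathcal{W}) \subseteq \mathsf{TFib}$ (which is not asked for here) would follow by the symmetric argument using the fibration weak factorization system $(\mathsf{TCof}, \mathcal{F})$.
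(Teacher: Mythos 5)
Your retract argument is correct and is essentially the paper's proof in a slightly more packaged form: the paper directly constructs a diagonal filler for $c$ against an arbitrary fibration $f$ by stitching together the lift of $tc$ against $f$ and the lift of $c$ against $tf$, whereas you use the lift of $c$ against $tf$ to exhibit $c$ as a retract of $tc$ and then invoke closure of the left lifting class $\mathsf{TCof} = {}^{\pitchfork}\mathcal{F}$ under retracts. Both are the standard retract argument; no gap.
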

\begin{proof}
Let $c : A\mono B$ be a cofibration with a factorization 
\[
c = tf\circ tc : A \ra W\ra B
\]
where $tc\in\mathsf{TCof}$ and $tf\in\mathsf{TFib}$.  Let $f:X\onto Y$ be a fibration and consider a commutative diagram,
\begin{equation*}
\xymatrix{
A \ar@{>->}[d]_{c} \ar[r]^-{x}  & X \ar@{>>}[d]^{f} \\
B \ar[r]_{y} &  Y.
}
\end{equation*}
Inserting the factorization of $c$, from $tc \pitchfork f$ we obtain $j : W\ra X$ as indicated, with $j\circ tc = x$ and $f\circ j = y\circ tf$.
\begin{equation*}
\xymatrix{
A \ar@{>->}[dd]_{c} \ar[rd]_{tc} \ar[rr]^-{x}  && X \ar@{>>}[dd]^{f} \\
& W \ar[ld]_{tf} \ar@{.>}[ru]_{j} & \\
B \ar[rr]_{y} &&  Y.
}
\end{equation*}
Moreover, since $c\pitchfork tf$ there is an $i : B \ra W$ as indicated, with $i\circ c = tc$ and $tf\circ i = 1_B$.
\begin{equation*}
\xymatrix{
A \ar@{>->}[dd]_{c} \ar[rd]_{tc} \ar[rr]^-{x}  && X \ar@{>>}[dd]^{f} \\
& W \ar[ld]_{tf} \ar@{.>}[ru]_{j} & \\
B \ar[rr]_{y} \ar@{.>}@/_2ex/[ru]_--{i} &&  Y.
}
\end{equation*}
Let $k = j\circ i$. Then $k \circ c = j\circ i \circ c = j \circ tc = x$, and $f \circ k = f\circ j\circ i = y\circ tf\circ i = y$.
\end{proof}

The proof of the following is exactly dual.
\begin{lemma}
$(\mathcal{F} \cap \mathcal{W})  \subseteq \mathsf{TFib}.$
\end{lemma}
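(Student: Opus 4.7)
The plan is to run the proof of the previous lemma backwards, exchanging the roles of the two factorization systems and reversing the direction of the lifts. Concretely, I would start with a fibration $f : X\fib Y$ that is also a weak equivalence and factor it as $f = tf\circ tc$ with $tc : X\to W$ a trivial cofibration and $tf: W\fib Y$ a trivial fibration (using the definition of $\WW = \TFib\circ\TCof$). The goal is then to prove $f\in\TFib$, i.e., to lift $f$ against an arbitrary cofibration $c : A\mono B$.

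Given any commutative square with $c$ on the left and $f$ on the right, I would first use $c\pitchfork tf$ (since $c\in\CC$ and $tf\in\TFib = \CC^\pitchfork$) to obtain a map $j : B\to W$ with $j\circ c = tc\circ x$ and $tf\circ j = y$. Then, because $tc\in\TCof =\ ^\pitchfork\FF$ and $f\in\FF$, the square
\begin{equation*}
\xymatrix{
X \ar[d]_{tc} \ar[r]^{=} & X \ar[d]^{f} \\
W \ar[r]_{tf} \ar@{.>}[ru]_{r} & Y
}
\end{equation*}
admits a filler $r : W\to X$ with $r\circ tc = 1_X$ and $f\circ r = tf$. Setting $k := r\circ j : B\to X$, a direct check gives $k\circ c = r\circ j\circ c = r\circ tc\circ x = x$ and $f\circ k = f\circ r\circ j = tf\circ j = y$, so $k$ is the required diagonal filler for the original square.

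Since $c$ was an arbitrary cofibration, this shows $\CC\pitchfork f$, whence $f\in\CC^\pitchfork = \TFib$. There is no real obstacle here: the argument is formally dual to that of the preceding lemma, with the factorization $c = tf\circ tc$ of a trivial cofibration being replaced by the factorization $f = tf\circ tc$ of a trivial fibration, and with the trivial fibration half of the factorization now furnishing the lift against the cofibration $c$ while the trivial cofibration half furnishes the retraction $r$ against the fibration $f$. Thus the only thing to verify is the bookkeeping of the two lifting properties and their composition into the single filler $k = r\circ j$.
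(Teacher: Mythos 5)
Your proof is correct and is precisely the dualization that the paper alludes to when it says ``The proof of the following is exactly dual'' to the preceding lemma; you have simply written out that dual argument explicitly, with the factorization of $f$ replacing the factorization of $c$, the lift $j$ now obtained from $c\pitchfork tf$, and the retraction $r$ from $tc\pitchfork f$, composed in the reverse order $k = r\circ j$.
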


\begin{proposition}\label{prop:FWC}
The three classes of maps $\mathcal{C}, \mathcal{W}, \mathcal{F}$ in \cSet\ constitute a \emph{premodel structure} in the sense of \cite{Barton}.   In particular,  we have 
\begin{align*}
\mathcal{F}\cap\mathcal{W} &= \mathsf{TFib}, \\
\mathcal{C}\cap\mathcal{W} &= \mathsf{TCof},
\end{align*}
and therefore two interlocking weak factorization systems:
\[
(\mathcal{C},\, \mathcal{W}\cap\mathcal{F})\ \ ,\ \ (\mathcal{C}\cap\mathcal{W},\, \mathcal{F}).
\]
\end{proposition}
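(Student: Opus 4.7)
The plan is that essentially all the work has already been done, and the proposition is just a clean assembly of the two preceding lemmas together with the two inclusions established in the text immediately before them. So the proof is primarily bookkeeping: collect four inclusions, rewrite two weak factorization systems in the new notation, and observe that this matches Barton's definition of a premodel structure.

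First I would establish the two equalities $\mathcal{F}\cap\mathcal{W}=\mathsf{TFib}$ and $\mathcal{C}\cap\mathcal{W}=\mathsf{TCof}$ by pairing off the four one-sided inclusions that have just been proved. The inclusion $\mathsf{TFib}\subseteq\mathcal{F}\cap\mathcal{W}$ was observed from (a) $\mathsf{TFib}=\mathcal{C}^{\pitchfork}\subseteq(\mathcal{C}\otimes\delta)^{\pitchfork}=\mathcal{F}$, using that every generating trivial cofibration $c\otimes_i\delta$ is a cofibration, and (b) any $f\in\mathsf{TFib}$ factors as $f=f\circ 1$ with $1\in\mathsf{TCof}$, putting $f$ in $\mathsf{TFib}\circ\mathsf{TCof}=\mathcal{W}$. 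Dually, $\mathsf{TCof}\subseteq\mathcal{C}\cap\mathcal{W}$. The reverse inclusions are exactly the two lemmas just proved, so combining yields the two displayed equalities.

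Next I would simply rename: the cofibration weak factorization system from Section 2, $(\mathcal{C},\mathsf{TFib})$, becomes $(\mathcal{C},\mathcal{W}\cap\mathcal{F})$ by the first equality, and the fibration weak factorization system from Proposition \ref{prop:fibrationwfs}, $(\mathsf{TCof},\mathcal{F})$, becomes $(\mathcal{C}\cap\mathcal{W},\mathcal{F})$ by the second. These are the two interlocking weak factorization systems demanded, which (together with the three distinguished classes $\mathcal{C},\mathcal{W},\mathcal{F}$ themselves) is exactly Barton's premodel data. No 3-for-2 condition is asserted here, so there is nothing further to check.

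There is no real obstacle; the only subtlety worth flagging in the proof text is the direction $\mathsf{TFib}\subseteq\mathcal{F}$, which quietly relies on the two earlier cofibration axioms (C5)/(C7) and on the generating trivial cofibrations being built as pushout--products of cofibrations, so that $\mathcal{C}\otimes\delta\subseteq\mathcal{C}$ and hence $\mathcal{C}^{\pitchfork}\subseteq(\mathcal{C}\otimes\delta)^{\pitchfork}$. Once that is noted, the rest is formal. I would also remark that the class $\mathcal{W}=\mathsf{TFib}\circ\mathsf{TCof}$ is manifestly closed under retracts (each factor is), so the premodel structure is well-defined, with the 3-for-2 property for $\mathcal{W}$ being precisely the content postponed to Part 2.
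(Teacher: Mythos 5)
Your proposal is correct and takes the same route as the paper, which leaves this proposition as an immediate assembly of the four preceding inclusions: $\mathsf{TFib}\subseteq\mathcal{F}\cap\mathcal{W}$ and $\mathsf{TCof}\subseteq\mathcal{C}\cap\mathcal{W}$ are observed directly before the two lemmas, and the lemmas give the reverse containments. Renaming the cofibration and fibration weak factorization systems via the two equalities then produces exactly the interlocking pair demanded by Barton's definition, and no 3-for-2 condition is needed.

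One caveat: your final parenthetical remark that $\mathcal{W}=\mathsf{TFib}\circ\mathsf{TCof}$ is ``manifestly closed under retracts (each factor is)'' is not a valid inference. A retract of a composite need not decompose as a composite of retracts, so closure of $\mathsf{TFib}$ and $\mathsf{TCof}$ under retracts does not by itself give closure of $\mathcal{W}$. Fortunately the premodel structure axioms do not require $\mathcal{W}$ to be retract-closed (that is part of what 3-for-2 together with the two weak factorization systems buys you later), so the slip is harmless to the proposition itself, but the ``manifestly'' should be struck.
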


It now ``only'' remains to show that the weak equivalences $\WW$ satisfy the 3-for-2 axiom from Definition \ref{def:qmsviaJT} in order to verify that $(\mathcal{C}, \mathcal{W}, \mathcal{F})$ is a model structure.  Perhaps surprisingly, this will occupy the remainder of these lectures!  We shall follow roughly the approach of \cite{JT:notes}: the weak equivalences between fibrant objects are shown to be the usual \emph{homotopy equivalences}, which evidently  satisfy 3-for-2.   So we reduce to this case using the fact that $K^X$ is fibrant whenever $K$ is.  It suffices, namely, to show that the weak equivalences are those maps $w : X\to Y$  that induce homotopy equivalences $K^w : K^Y \simeq K^X$ for fibrant $K$.   Such maps are termed \emph{weak homotopy equivalences}  (Definition \ref{def:WHE}), and our task will therefore be to show that a map is a weak equivalence if and only if it is a weak homotopy equivalence.

\subsection*{Homotopy equivalence.}

\begin{definition}[Homotopy]\label{homotopy}
A \emph{homotopy} $\vartheta : f \sim g$ between maps $f, g: X\rightrightarrows Y$  is a map,
\[
\vartheta : \I\times{X} \too Y,
\]
such that $\vartheta \circ \iota_0 = f$ and $\vartheta \circ \iota_1 = g$, 
\begin{equation}\label{diagram:defhomotopy}
\xymatrix{
X \ar[r]^-{\iota_0} \ar[rd]_f & \I\times\!{X} \ar[d]^-{\vartheta} & X, \ar[l]_-{\iota_1} \ar[ld]^g \\
& Y &
}
\end{equation}
where $\iota_0, \iota_1$  are the canonical inclusions into the ends of the cylinder,
\[
\xymatrix{
\iota_\epsilon : X \cong 1\times X \ar[rr]^-{\delta_\epsilon\times X} && \I\times X\,,} \qquad \epsilon = 0,1 .
\]
\end{definition}

Note that each of the inclusions $\iota_\epsilon : X \cof  \I\times X$ is a cofibration, as is their join $X + X \cof  \I\times X$, by Remark \ref{rem:somecofibs}.  

\begin{proposition}\label{prop:homotopyintofibequivrel}
The relation of homotopy $f \sim g$ between maps $f, g: X\rightrightarrows Y$ is preserved by pre- and post-composition. If $Y$ is fibrant, then $f \sim g$ is an equivalence relation.
\end{proposition}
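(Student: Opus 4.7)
The preservation claim is immediate: if $\vartheta : \I\times X\to Y$ satisfies $\vartheta\iota_0 = f$ and $\vartheta\iota_1 = g$, then for any $h : Y\to Z$ and $k : W\to X$ the composites $h\circ\vartheta$ and $\vartheta\circ(\I\times k)$ are homotopies $hf\sim hg$ and $fk\sim gk$ respectively.  Reflexivity $f\sim f$ is witnessed by the constant homotopy $f\circ p_2 : \I\times X\to X\to Y$ and requires no fibrancy hypothesis on $Y$.

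For symmetry and transitivity I will use a single Kan-filling pattern on the 2-cube $\I_{s}\times X\times \I_{t}$, exploiting that $Y$, being unbiased fibrant, is also $\delta_0$-biased fibrant by Proposition \ref{prop:sanitycheck}.  Consider the cofibration $c_X : X+X \mono \I\times X$ given by the two endpoint inclusions $\iota_0,\iota_1$; this is a cofibration by axioms (C5)--(C6) and Remark \ref{rem:somecofibs}.  The generating biased trivial cofibration $c_X \otimes \delta_0$ has as its domain the union of the three faces $\{t=0\}$, $\{s=0\}$, and $\{s=1\}$ of $\I\times X\times \I$; lifting against $Y\to 1$ extends any compatibly prescribed data on these three faces to a map on all of $\I\times X\times \I$, and its fourth face $\{t=1\}$ will provide our output homotopy.

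For symmetry of $\vartheta : f\sim g$, I prescribe on the partial cube the constant $f$ on $\{t=0\}$, the given $\vartheta$ (read in the $t$-direction) on $\{s=0\}$, and the constant $f$ on $\{s=1\}$; compatibility on shared edges follows from $\vartheta(0,x)=f(x)$, and the filled $\{t=1\}$ face sends $s=0$ to $g(x)$ and $s=1$ to $f(x)$, giving the reversed homotopy.  For transitivity of $\vartheta_1 : f\sim g$ and $\vartheta_2 : g\sim h$, I prescribe instead the constant $f$ on $\{s=0\}$, the path $\vartheta_1$ on $\{t=0\}$ (in $s$), and the path $\vartheta_2$ on $\{s=1\}$ (in $t$); compatibility at the corner $(1,x,0)$ follows from $\vartheta_1(1,x)=g(x)=\vartheta_2(0,x)$, and the filled $\{t=1\}$ face is a homotopy from $f$ to $h$.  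The only real obstacle here is bookkeeping---choosing faces and orientations so that the missing face has the desired endpoints and the prescribed data agree on shared edges---and nothing beyond biased fibration filling is used.
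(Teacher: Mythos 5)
Your proof is correct and takes essentially the same approach as the paper: the preservation claims are direct, reflexivity needs no fibrancy, and symmetry/transitivity are established by filling an open two-dimensional box along a $\delta_0$-biased trivial cofibration, invoking Proposition \ref{prop:sanitycheck}. The only cosmetic difference is that you fill against $c_X\otimes\delta_0$ directly in $Y$, whereas the paper transposes across the adjunction $X\times(-) \dashv (-)^X$ and fills the open box $\I+_1\I+_1\I\mono\I\times\I$ in the fibrant exponential $Y^X$; these lifting problems are equivalent by the pushout-product/pullback-hom adjunction.
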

\begin{proof}
Inspecting \eqref{diagram:defhomotopy}, preservation of $f\sim g$ under post-composing with any $h : Y\to Z$ is obvious: we have $h\circ\vartheta : h\circ\,f \sim h\circ\,g$.  Now observe that a homotopy $f\stackrel{\vartheta}{\sim} g : X\times\I \ra Y$ determines a (unique) path $\tilde{\vartheta} : \I\ra Y^X$ in the function space, with endpoints $\vartheta_0 = \vartheta\circ\delta_0 = \tilde{f}: 1\ra Y^X$ and $\vartheta_1 = \vartheta\circ\delta_1 = \tilde{g}$.  
Precomposing maps $f, g: X\rightrightarrows Y$ with any $e : W \to X$ is induced by post-composing  $\tilde{f}, \tilde{g}: 1\ra Y^X$ with the map $Y^e : Y^X \to Y^W$, which then also takes the path $\tilde{\vartheta} : \I \ra Y^X$ to a path $\tilde{\varphi} = Y^e \circ \tilde{\vartheta} : \I \ra Y^W$ corresponding to a  (unique) homotopy $\varphi : f\circ e \sim g\circ e$.

Now note that $Y^X$ is fibrant if $Y$ is fibrant, since the generating trivial cofibrations $c\times_i \delta$ are preserved by the functor $X\times(-)$.  So we can use ``box-filling'' in $Y^X$ to verify the claimed equivalence relation.    
\begin{itemize}
\item Reflexivity $f\sim f$ is witnessed by the homotopy $\rho:\I \ra 1 \stackrel{f}{\ra} Y^X$.  
\item For symmetry $f\sim g\Rightarrow g\sim f$ take $\vartheta : \I\ra Y^X$ with $\vartheta_0 = f$ and $\vartheta_1 = g$ and we want to build $\vartheta' : \I\ra Y^X$ with $\vartheta'_0 = g$ and $\vartheta'_1 = f$. Take an open 2-box in $Y^X$ of the following form.
\begin{equation*}
\xymatrix{
g  & f  \\
f \ar[u]^{\vartheta} \ar[r]_\rho & f \ar[u]_\rho
}
\end{equation*}
This box is a map $b : \I+_1 \I +_1 \I \ra Y^X$ with the indicated components, and it has a filler $c : \I\times \I \ra Y^X$, i.e.\ an extension along the canonical map $\I+_1 \I +_1 \I \mono \I\times\I$, which is a trivial cofibration of the form $\del{\I} \otimes \delta_0$.  Let $t : \I\ra \I\times\I$ be the top face of the 2-cube (the bipointed map $\{0, x_1, x_2, 1\}\to \{0, x, 1\}$ that is constantly $1$).  We can set $\vartheta' = c\circ t : \I \ra Y^X$ to get a homotopy $\vartheta' : \I\ra Y^X$ with $\vartheta'_0 = g$ and $\vartheta'_1 = f$ as required.

\item For transitivity, $f\stackrel{\vartheta}{\sim} g\,,\, g\stackrel{\varphi}{\sim} h\Rightarrow f\sim h$, an analogous  construction will fill the open box:
\begin{equation*}
\xymatrix{
f  & h  \\
f \ar[u]^{\rho} \ar[r]_\vartheta & g \ar[u]_\varphi
}
\end{equation*}
\end{itemize}
\end{proof}

We then have the usual definition of homotopy equivalence: 
\begin{definition}[Homotopy equivalence]\label{def:homotopyequivalence} 
A \emph{homotopy equivalence} is a map $f : X\to Y$ together with a map $g: Y\ra X$ and homotopies $\vartheta : 1_X \sim g\circ f$ and $\varphi : 1_Y\sim f\circ g$.  We call $g$ a \emph{quasi-inverse} of $f$.
\end{definition}
Since these maps clearly compose and come with quasi-inverses, the following is then immediate.
\begin{lemma}\label{lemma:HE342}
The  homotopy equivalences satisfy the 3-for-2 condition. 
\end{lemma}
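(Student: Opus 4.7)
The plan is to derive the 3-for-2 property from two auxiliary closure properties of the class of homotopy equivalences: (i) closure under composition, and (ii) homotopy-invariance, \emph{i.e.} if $f\sim g$ and $f$ is a homotopy equivalence, then so is $g$. Once both are in hand, 3-for-2 follows in two lines.

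For (i), given $g:X\to Y$ and $f:Y\to Z$ with quasi-inverses $g'$ and $f'$, I take $g'\circ f'$ as candidate quasi-inverse of $f\circ g$. The required homotopies $1_X\sim g'f'fg$ and $1_Z\sim fgg'f'$ assemble from the four given ones by pre- and post-composing with appropriate maps and then concatenating, both operations being furnished by Proposition \ref{prop:homotopyintofibequivrel}. For (ii), if $f\sim g$ and $f'$ is a quasi-inverse of $f$, then pre- and post-composing the homotopy $f\sim g$ with $f'$ and combining with the given homotopies yields $g\circ f'\sim f\circ f'\sim 1$ and $f'\circ g\sim f'\circ f\sim 1$, so $f'$ is already a quasi-inverse of $g$.

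To deduce 3-for-2, suppose $h=f\circ g$ and both $h$ (with quasi-inverse $h'$) and $f$ (with quasi-inverse $f'$) are homotopy equivalences. By (i), $f'\circ h$ is a homotopy equivalence, and pre-composing the homotopy $f'\circ f\sim 1_Y$ with $g$ gives $f'\circ h=f'fg\sim g$; by (ii), $g$ is then a homotopy equivalence. The dual case where $h$ and $g$ are given is symmetric, with candidate quasi-inverse $g\circ h'$ for $f$. The third case where $f$ and $g$ are given is (i) itself.

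The only subtle point is bookkeeping: each chain of homotopies invokes transitivity, which by Proposition \ref{prop:homotopyintofibequivrel} requires the relevant targets to be fibrant. This is harmless in the uses of the lemma that follow, since homotopy equivalences will be applied between objects of the form $K^X$ for $K$ fibrant, so that all targets in sight are automatically fibrant. This is presumably why the author regards the result as immediate once closure under composition and the existence of quasi-inverses are granted.
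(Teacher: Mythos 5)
Your argument is correct and fills in precisely what the paper leaves unsaid: the sentence preceding the lemma asserts only that homotopy equivalences ``clearly compose and come with quasi-inverses,'' and you unpack this into (i) closure under composition, (ii) invariance under homotopy, and a formal derivation of 3-for-2 from those two. The fibrancy caveat you flag is genuine, and the paper glosses over it: every concatenation (and reversal) of homotopies in (i) and (ii) invokes Proposition~\ref{prop:homotopyintofibequivrel}, which requires a fibrant codomain, whereas Lemma~\ref{lemma:HE342} is stated with no such hypothesis. Your explanation of why the gap is harmless is roughly right but points at the wrong use-site: the lemma is not applied to exponentials $K^X$ in the passages that follow (those rely on Lemma~\ref{lemma:WHE342} for weak homotopy equivalences, which reduces to 3-for-2 for bijections of sets and sidesteps the issue); rather, it is the sliced analogue Lemma~\ref{lemma:HE342sliced} that gets invoked, in Corollary~\ref{cor:342weqfiboverZ}, and there the relevant objects are fibrations over $Z$, so Proposition~\ref{prop:homotopybetweenfibequivrel} supplies transitivity of $\sim_Z$. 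Noticing the fibrancy dependency, which the paper treats as if it were automatic, is a genuine improvement over its exposition.
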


\begin{lemma}\label{lem:TFibisHE}
A fibration that is a weak equivalence is a homotopy equivalence.
\end{lemma}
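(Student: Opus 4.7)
\bigskip

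The plan is to combine Proposition \ref{prop:FWC} with two straightforward lifts. First, since $f$ is a fibration in $\WW$, we have $f \in \FF \cap \WW = \TFib$ by Proposition \ref{prop:FWC}, so $f$ is actually a trivial fibration. It now suffices to produce a quasi-inverse $s : Y \to X$ together with homotopies $f \circ s \sim 1_Y$ and $s \circ f \sim 1_X$ in the sense of Definition \ref{def:homotopyequivalence}.

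Next, I would extract the section $s : Y \to X$ with $f \circ s = 1_Y$ directly from Corollary \ref{cor:plusalgprops}(1); this gives the $\varphi$ side of the homotopy equivalence ``for free'', by taking $\varphi$ to be the reflexivity homotopy from Proposition \ref{prop:homotopyintofibequivrel}, i.e. the constant homotopy $1_Y \cdot p_2 : \I \times Y \to Y$.

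For the other direction I would build the homotopy $\vartheta : s \circ f \sim 1_X$ by a single lift against the cofibration $[\iota_0,\iota_1] : X + X \cof \I \times X$ (this is a cofibration by Remark \ref{rem:somecofibs}). Consider the square
\[
\xymatrix{
X + X \ar@{>->}[d]_{[\iota_0,\iota_1]} \ar[rr]^-{[s\circ f,\, 1_X]} && X \ar@{->>}[d]^f \\
\I\times X \ar@{.>}[rru]^-{\vartheta} \ar[rr]_-{f \circ p_2} && Y,
}
\]
which commutes because $f \circ s \circ f = f$ and $f \circ 1_X = f$. Since $f$ is a trivial fibration and the left map is a cofibration, a diagonal filler $\vartheta$ exists, and by construction $\vartheta \circ \iota_0 = s \circ f$ and $\vartheta \circ \iota_1 = 1_X$, giving the required homotopy.

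I do not expect any real obstacle here: the content is essentially the fact that every trivial fibration is a deformation retraction up to homotopy, and both ingredients---sections of trivial fibrations and the cofibrancy of the boundary of the cylinder---are already in hand. The only minor point worth checking is the variance in Definition \ref{def:homotopyequivalence} (which endpoint is $1_X$ and which is $s \circ f$), which is handled by symmetry of homotopy in the fibrant object $X$ via Proposition \ref{prop:homotopyintofibequivrel} if needed.
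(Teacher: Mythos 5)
Your argument is essentially identical to the paper's: reduce to a trivial fibration via Proposition \ref{prop:FWC}, take a section $s$ from Corollary \ref{cor:plusalgprops}, and lift $[s f, 1_X]$ against the cylinder boundary cofibration over $f\circ p_2$ to produce the homotopy on the domain side, with the codomain side free because $f s = 1_Y$ strictly. One caution about your closing remark: the appeal to symmetry of homotopy via Proposition \ref{prop:homotopyintofibequivrel} would require $X$ to be fibrant, which is not assumed here; but this is also unnecessary, since you can simply write the cotuple as $[1_X, s f]$ in the lifting problem to match whichever orientation Definition \ref{def:homotopyequivalence} uses.
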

\begin{proof}
Any trivial fibration $f : X \onto Y$ has a section $s: Y\to X$ by Corollary \ref{cor:plusalgprops}.
Consider the following lifting problem:
\begin{equation*}
\xymatrix{
X+X \ar@{>->}[d]_{[\iota_0, \iota_1]} \ar[r]^-{[sf, 1]}  & X\ar@{>>}[d]^{f} \\
\I\times X \ar[r]_{f\pi_2} & Y
}
\end{equation*}
Since the map on the left is a cofibration, a diagonal filler provides a homotopy $\vartheta : sf \sim 1_X$. 
Thus $f$ is a homotopy equivalence. 
\end{proof}

For the further comparison of the weak equivalences with the homotopy equivalences we need the following.

\subsection*{Weak homotopy equivalence.}

\begin{definition}[Connected components]
The functor 
\[
\pi_0 : \cSet\ra\Set
\]
 is defined on a cubical set $X$ as the coequalizer 
 \[
 X_1\rightrightarrows X_0\ra \pi_0X\,,
 \]
  where the two parallel arrows are the maps $X_{\delta_0}, X_{\delta_1} : X_1 \rightrightarrows X_0$ for the endpoints $\delta_0, \delta_1 : 1 \rightrightarrows \I$.   If $K$ is fibrant, then by the foregoing Proposition \ref{prop:homotopyintofibequivrel}, for any $X$ we have  
  \[
  \pi_0(K^X) = \hom(X,K)/\!\!\sim\,.
  \]
    That is, $\pi_0(K^X)$ is the set  $[X, K]$ of homotopy equivalence classes of maps $X\ra K$.
\end{definition}

\begin{remark}
One can show that in fact $\pi_0X = \varinjlim X_n$ where the colimit is taken over \emph{all} objects $[n]$ in the index category $\op\Box = \B$, rather than just the ``last'' two $[1]\rightrightarrows [0]$. Since the category $\B$ of finite strictly bipointed sets is sifted, 
the functor $\pi_0: \cSet\ra\Set$ preserves finite products. 
\end{remark}

\begin{definition}[Weak homotopy equivalence]\label{def:WHE}
A map $f: X\ra Y$ is called a \emph{weak homotopy equivalence} if for every fibrant object $K$, the canonical map $K^f : K^Y \ra K^X$ is bijective on connected components,  
\[
\pi_0(K^f) : \pi_0(K^Y) \cong \pi_0(K^X)\,.
\]
\end{definition}

\begin{lemma}\label{lemma:HEisWHE}
Every homotopy equivalence is a weak homotopy equivalence.
\end{lemma}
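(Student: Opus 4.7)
The plan is to apply the contravariant exponential functor $K^{(-)}$ to the homotopy equivalence data for $f$, obtain homotopy equivalence data for $K^f$ between fibrant mapping spaces, and then observe that $\pi_0$ sends homotopic maps to equal functions of sets.

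First, I would fix a fibrant $K$ and note that the mapping spaces $K^X$ and $K^Y$ are themselves fibrant, by the same argument used in Proposition \ref{prop:homotopyintofibequivrel} (the generating trivial cofibrations $c\otimes_i \delta$ are preserved by $(-)\times X$, so their pullback-homs with the fibration $K\to 1$ remain trivial fibrations). Therefore $\pi_0(K^X)$ and $\pi_0(K^Y)$ compute honest homotopy-equivalence classes of maps in our sense.

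Next, I would establish the general fact that $K^{(-)}$ preserves homotopies in a suitable contravariant sense. Given any homotopy $\vartheta : \I\times A \to B$ between $p,q : A\rightrightarrows B$, the exponential isomorphism $K^{\I\times A} \cong (K^A)^\I$ and the adjoint transposition turn $K^\vartheta : K^B \to K^{\I\times A}$ into a map $\Theta : \I\times K^B \to K^A$ with $\Theta\circ\iota_0 = K^p$ and $\Theta\circ\iota_1 = K^q$. Applying this to $\vartheta : 1_X \sim gf$ and $\varphi : 1_Y \sim fg$, and using the contravariant functoriality $K^{gf} = K^f\circ K^g$, I obtain homotopies $1_{K^X} \sim K^f\circ K^g$ and $1_{K^Y} \sim K^g\circ K^f$. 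Thus $K^f : K^Y \to K^X$ is itself a homotopy equivalence (between fibrant objects) with quasi-inverse $K^g$.

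Finally, I would argue that $\pi_0$ identifies homotopic morphisms, i.e.\ if $F\sim G : A\rightrightarrows B$ then $\pi_0(F) = \pi_0(G)$ as functions. This is immediate from the coequalizer definition of $\pi_0$: a homotopy $\Theta : \I\times A \to B$ transposes to $\tilde\Theta : A\to B^\I$, and for any $a\in A_0$ the $1$-cube $\tilde\Theta(a) \in B_1$ witnesses that $F(a), G(a)\in B_0$ coequalize to the same element of $\pi_0(B)$. Combining this with the previous step, $\pi_0(K^f)\circ\pi_0(K^g) = \pi_0(1_{K^X}) = 1_{\pi_0(K^X)}$ and symmetrically on the other side, so $\pi_0(K^f)$ is a bijection with inverse $\pi_0(K^g)$.

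I do not expect a serious obstacle: the argument is essentially formal and boils down to two bookkeeping ingredients, namely the contravariant preservation of homotopies by $K^{(-)}$ (an application of the exponential adjunction) and the descent of the homotopy relation through the coequalizer defining $\pi_0$. The only thing to keep careful track of is variance and the correct identification $K^{\I\times X}\cong (K^X)^\I$ when transposing the homotopies $\vartheta$ and $\varphi$.
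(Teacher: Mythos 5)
Your proposal is correct and follows essentially the same route as the paper: apply the contravariant functor $K^{(-)}$ to the homotopy-equivalence data (noting that it preserves homotopies via the exponential transposition $K^{\I\times A}\cong (K^A)^\I$ and reverses composition), obtaining a homotopy equivalence $K^f$ with quasi-inverse $K^g$, and then use that $\pi_0$ coequalizes homotopic maps. The only cosmetic difference is that you derive the latter fact directly from the coequalizer definition of $\pi_0$ rather than routing through Proposition \ref{prop:homotopyintofibequivrel}, which is a small simplification since that fact does not actually require fibrancy of the codomain.
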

\begin{proof}
Let $f: X\ra Y$ be a homotopy equivalence.  Then $K^f : K^Y \ra K^X$ is also a homotopy equivalence for any $K$, since homotopy respects (post-) composition by all maps.  If $K$ is fibrant, then so is $K^X$ and $\pi_0$ is well defined on homotopy classes of maps, by Proposition \ref{prop:homotopyintofibequivrel}.  It clearly takes homotopy equivalences to isomorphisms of sets, since it identifies homotopic maps.
\end{proof}

\begin{lemma}\label{lemma:WHE342}
The weak homotopy equivalences also satisfy the 3-for-2 condition. 
\end{lemma}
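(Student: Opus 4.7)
The plan is to deduce the 3-for-2 property for weak homotopy equivalences directly from the (trivial) 3-for-2 property for bijections in $\Set$, exploiting the fact that by Definition \ref{def:WHE} a map $f$ is a weak homotopy equivalence precisely when a family of set maps $\pi_0(K^f)$, indexed by fibrant $K$, consists of bijections.

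First I would fix an arbitrary fibrant object $K$ and a composable pair $f : X \to Y$, $g : Y \to Z$ with composite $h = g\circ f$. The contravariant exponential functor $K^{(-)} : \op{\cSet} \to \cSet$ converts this triangle into $K^h = K^f \circ K^g$, and then applying the covariant $\pi_0 : \cSet \to \Set$ yields a triangle in $\Set$
\[
\pi_0(K^Z) \xrightarrow{\pi_0(K^g)} \pi_0(K^Y) \xrightarrow{\pi_0(K^f)} \pi_0(K^X)
\]
whose composite is $\pi_0(K^h)$.

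Next I would invoke the obvious 3-for-2 property for bijections in $\Set$: in any commutative triangle of sets, if two of the three edges are bijections then so is the third. Thus, if any two of $f, g, h$ are weak homotopy equivalences, then for every fibrant $K$ at least two of $\pi_0(K^f), \pi_0(K^g), \pi_0(K^h)$ are bijections, hence all three are; since $K$ was arbitrary among fibrant objects, the remaining map among $f, g, h$ is a weak homotopy equivalence as well.

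I do not anticipate any genuine obstacle: the argument is purely formal, using only functoriality of $K \mapsto \pi_0(K^{(-)})$ and 3-for-2 for isomorphisms in $\Set$. In particular no further properties of the class of fibrant objects, nor of $\pi_0$ beyond its being a functor, are needed, which is why the weak homotopy equivalences (unlike the weak equivalences $\WW$ themselves) are easily seen to satisfy 3-for-2.
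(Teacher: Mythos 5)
Your proof is correct and is essentially the same as the paper's: the paper likewise reduces the claim to the 3-for-2 property for bijections in $\Set$ via the functors $\pi_0(K^{(-)})$ for fibrant $K$, merely stating this in one line where you spell out the composed triangle explicitly.
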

\begin{proof}
This follows by applying the $\Set$-valued functors $\pi_0(K^{(-)})$, for all fibrant objects $K$, and the corresponding fact about bijections of sets.
\end{proof}

In virtue of Lemma \ref{lemma:WHE342} it now suffices to show that a map is a weak equivalence if and only if it is a weak homotopy equivalence.   The following characterization will be useful.

\begin{lemma}\label{lem:WHEunwound}
A map $f : X\ra Y$ is a weak homotopy equivalence just if it satisfies the following two conditions.
\begin{enumerate}
\item For every fibrant object $K$ and every map $x : X \ra K$ there is a map $y:Y\ra K$ such that $y\circ f \sim x$,
\[
\xymatrix{
X \ar[d]_{f} \ar[r]^{x}  & K.\\
Y \ar@{..>}[ru]_{y}^{\sim} &
}
\]
We say that $x$ ``extends along $f$ up to homotopy''.
\item For every fibrant object $K$ and maps $y, y' : Y \ra K$ such that $yf \sim y'f$, there is a homotopy $y\sim y'$,
\[
\xymatrix{
X \ar[d]_{f} \ar[r]  & K^\I\ar[d] \\
Y \ar@{..>}[ru] \ar[r]_-{\langle y,y'\rangle} & K\times K.
}
\]
\end{enumerate}
\end{lemma}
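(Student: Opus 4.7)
The plan is to verify that conditions (1) and (2) are simply the surjectivity and injectivity, respectively, of the map $\pi_0(K^f) : \pi_0(K^Y) \to \pi_0(K^X)$ once one identifies $\pi_0(K^X)$ with the set of homotopy classes $\Hom(X,K)/{\sim}$. Essentially no new idea is required; the entire content is a translation of ``bijection between quotient sets'' into statements about representative maps.

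First I would invoke the preparatory facts already established in the text. Namely, whenever $K$ is fibrant, so is $K^X$ (the endofunctor $X \times (-)$ sends each generating trivial cofibration $c \otimes_i \delta$ to a trivial cofibration), and hence by Proposition \ref{prop:homotopyintofibequivrel} the homotopy relation $\sim$ on $\Hom(X,K)$ is an equivalence relation, yielding the identification
\[
\pi_0(K^X)\ =\ \Hom(X,K)/{\sim},
\]
as noted just before Definition \ref{def:WHE}. Under this identification, the map $\pi_0(K^f)$ sends $[y]$ to $[y\circ f]$.

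For the forward direction, assume $f$ is a weak homotopy equivalence. Fix fibrant $K$. Given $x:X\to K$, surjectivity of $\pi_0(K^f)$ yields some $y:Y\to K$ with $[y\circ f]=[x]$, i.e.\ $y\circ f\sim x$; this is (1). Given $y,y':Y\to K$ with $y\circ f\sim y'\circ f$, we have $\pi_0(K^f)[y]=\pi_0(K^f)[y']$, so injectivity gives $[y]=[y']$, i.e.\ $y\sim y'$; this is (2). Conversely, if (1) and (2) hold for every fibrant $K$, then (1) is precisely surjectivity of $\pi_0(K^f)$ (every class $[x]$ in the codomain is hit) and (2) is precisely injectivity, so $\pi_0(K^f)$ is a bijection.

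The only small subtlety to check is that both conditions really are stated in terms of the same equivalence relation used to define $\pi_0(K^X)$ and $\pi_0(K^Y)$; this is why fibrancy of $K$ (and hence of $K^X, K^Y$) is essential. There is no real obstacle to the argument: once the identification $\pi_0(K^X)=\Hom(X,K)/{\sim}$ is in hand, the equivalence is immediate from the set-theoretic characterization of bijectivity in terms of surjectivity and injectivity of representatives. The diagram accompanying (2) in the statement should be read as the witness form of ``$y\sim y'$'', namely a map $Y\to K^\I$ whose composite with $K^\I\to K\times K$ is $\langle y,y'\rangle$; this reformulation is trivial and used implicitly above.
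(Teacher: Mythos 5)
Your proof is correct and follows exactly the same approach as the paper's: identifying $\pi_0(K^X)$ with $\Hom(X,K)/{\sim}$ (valid because $K$, hence $K^X$, is fibrant) and then reading condition (1) as surjectivity and condition (2) as injectivity of $\pi_0(K^f)$. You have merely spelled out the details the paper leaves terse.
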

\begin{proof}
Condition (1) says exactly that the internal precomposition map $K^f : K^Y \ra K^X$ is surjective under connected components $\pi_0$, while (2) says just that it is injective under $\pi_0$.
\end{proof}

\begin{lemma}\label{lem:WEimpliesWHE}
Any weak equivalence is a weak homotopy equivalence.
\end{lemma}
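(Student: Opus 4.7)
The plan is to factor the weak equivalence $f = g \circ h$ as a trivial cofibration $h : X \cof W$ followed by a trivial fibration $g : W \fib Y$ (by definition of $\WW$), establish that each factor is a weak homotopy equivalence, and then invoke three-for-two for weak homotopy equivalences (Lemma \ref{lemma:WHE342}). The trivial fibration $g$ presents no difficulty: Lemma \ref{lem:TFibisHE} makes it a homotopy equivalence, and Lemma \ref{lemma:HEisWHE} promotes this to a weak homotopy equivalence.

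The substantive part is to show that the trivial cofibration $h$ is a weak homotopy equivalence, for which I would verify conditions (1) and (2) of the unwinding Lemma \ref{lem:WHEunwound} against an arbitrary fibrant $K$. Condition (1) is immediate: since $K \fib 1$ is a fibration and $h \in \TCof$, the relation $h \pitchfork (K\to 1)$ yields a strict extension $y : W \to K$ of any given $x : X \to K$ along $h$, so $y h = x$ and in particular $yh \sim x$. For condition (2), given $y,y' : W \to K$ and a homotopy $\vartheta : \I \times X \to K$ with $\vartheta\iota_0 = yh$, $\vartheta\iota_1 = y'h$, the canonical lifting problem to solve takes the form
\[
\xymatrix{
(W + W) +_{X+X} (\I \times X) \ar[d]_{h\,\square\,\del} \ar[r]^-{[y,y',\vartheta]} & K \\
\I \times W \ar@{.>}[ru] &
}
\]
where $\del = [\delta_0,\delta_1] : 1+1 \cof \I$, and a diagonal filler provides the required homotopy $\Theta : y \sim y'$. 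Equivalently, by the Leibniz adjunction (Lemma \ref{lemma:Leibniz}), one needs $h$ to have the left lifting property against the path-space map $\langle \epsilon_0,\epsilon_1\rangle = \del \Rightarrow K : K^\I \to K \times K$, i.e.\ that this map is a fibration.

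The main obstacle is thus the partial pushout-product axiom: that $h \square \del$ is a trivial cofibration whenever $h$ is trivial cofibration and $\del$ is a cofibration. The strategy here exploits the specific form of the generating unbiased trivial cofibrations from Definition \ref{def:genunbiasedtrivcof}: taking $c = h$ with indexing map $i = \delta_\epsilon\circ\,! : W \to 1 \to \I$ realises each $h \square \delta_\epsilon$ itself as a generating unbiased trivial cofibration. Writing $U_0 = \I \times X$, $U_1 = U_0 \cup W\times\{0\}$, $V_1 = U_0 \cup W\times\{1\}$, and $U_2 = U_1 \cup V_1 = U_1 +_{U_0}\! V_1$, the inclusions $U_0 \cof U_1$ and $U_0 \cof V_1$ are pushouts of $h$ (hence in $\TCof$), while $U_1 \cof U_3 = \I\times W$ and $V_1 \cof U_3$ are generating trivial cofibrations; one then assembles $h \square \del : U_2 \cof U_3$ as a trivial cofibration by combining the pushout stability of $\TCof$ with these two generating overlaps. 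Once $h \square \del$ is recognised as a trivial cofibration, the lift exists against $K \to 1$ (since $K$ is fibrant), condition (2) is verified, and the three-for-two property of $\WHE$ closes the argument.
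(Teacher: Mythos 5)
Your high-level plan — factor $f=g\circ h$, dispose of the trivial fibration $g$ via Lemmas \ref{lem:TFibisHE} and \ref{lemma:HEisWHE}, handle the trivial cofibration $h$ via the two conditions of Lemma \ref{lem:WHEunwound}, and close with Lemma \ref{lemma:WHE342} — is exactly the paper's, and your treatment of condition (1) and of the trivial fibration factor is correct. You also correctly reformulate condition (2) as the statement that $\del\Rightarrow K : K^\I\to K\times K$ is a fibration whenever $K$ is fibrant; this is precisely what the paper invokes.

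The gap is in the last step, where you pivot away from that reformulation and instead try to prove that $h\,\square\,\del$ is a trivial cofibration for an \emph{arbitrary} trivial cofibration $h$ — the full pushout-product axiom. Your assembly does not close. You have $U_0\cof U_1$, $U_0\cof V_1$, $U_1\cof U_2$, $V_1\cof U_2$ all in $\TCof$ (pushouts of $h$), and $U_1\cof U_3$, $V_1\cof U_3$ as generating trivial cofibrations. But the map you want, $U_2\cof U_3$, appears only as the \emph{second} factor in a factorization such as $V_1\cof U_2\cof U_3$ of a known trivial cofibration, with the \emph{first} factor known to be one; concluding that the second factor is then a trivial cofibration is a right-cancellation property (a form of 3-for-2) that is not available — indeed, it is what the whole section is working toward. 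Nor does the other natural move help: extending a map $U_2\to K$ along the generating trivial cofibration $U_1\cof U_3$ produces an extension unconstrained on $V_1\setminus U_0$, and there is no reason it restricts back to the given map on $U_2$.

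The way to avoid this is the one you already wrote down and then abandoned: show $\del\Rightarrow K$ is a fibration, and then $h\pitchfork(\del\Rightarrow K)$ holds simply because $h$ is a trivial cofibration. To show $\del\Rightarrow K$ is a fibration one only needs to lift against \emph{generating} trivial cofibrations $c\otimes_i\delta$, i.e.\ (by Lemma \ref{lemma:Leibniz}) to show $K$ extends along $(c\otimes_i\delta)\otimes\del$. By associativity and symmetry of the Leibniz product (and the observation that, working over $\I$, this is $(c\otimes\del)\otimes_{i\pi_Z}\delta$), and since cofibrations are closed under pushout-product by (C6) and Remark \ref{rem:somecofibs}, the map $(c\otimes_i\delta)\otimes\del$ is itself a generating trivial cofibration, and the lift exists by fibrancy of $K$. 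This restriction to generators is what makes the argument go through without any pushout-product axiom for the trivial cofibration class.
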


\begin{proof}
By Lemma \ref{lem:TFibisHE} and Lemma \ref{lemma:HEisWHE}, a trivial fibration is also a weak homotopy equivalence. So it suffices to consider the trivial cofibrations, since weak homotopy equivalences are closed under composition, by Lemma \ref{lemma:WHE342}.   Thus let $f : X \mono Y$ be a trivial cofibration, and apply Lemma \ref{lem:WHEunwound}: condition (1) is immediate, and (2) follows because $K^\I \fib K\times K$ is a fibration when $K$ is fibrant, since $\del : 1+1 \cof \I$ is a cofibration (by Remark \ref{rem:somecofibs}).
\end{proof}

Our goal is now to show the converse of Lemma \ref{lem:WEimpliesWHE}(2), that a weak homotopy equivalence is a weak equivalence. We shall first restrict attention to maps $f : X\to K$ with a fibrant codomain $K$.  By factoring such maps, we can split into the cases of a fibration and a cofibration.  

\begin{lemma}\label{FibHETFib}
If $K$ is fibrant, then any fibration $f : X \onto K$ that is a homotopy equivalence is a weak equivalence.
\end{lemma}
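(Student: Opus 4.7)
The strategy is to show that $f$ is a trivial fibration, which is a weak equivalence by $\mathsf{TFib} \subseteq \WW$. I verify the right lifting property of $f$ against an arbitrary cofibration $c : A \cof B$ in three stages: section, fiberwise deformation, filler.

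First, construct a section. Choose a quasi-inverse $g : K \to X$ and a homotopy $h : fg \sim 1_K$ (with $h\iota_0 = fg$ and $h\iota_1 = 1_K$). By Proposition \ref{prop:sanitycheck}, the unbiased fibration $f$ lifts against the biased trivial cofibration $\iota_0 : K \cof \I \times K$; lifting $g$ along $h$ yields $H : \I \times K \to X$ with $H\iota_0 = g$ and $fH = h$. Setting $s := H\iota_1$ gives a section since $fs = 1_K$, and $H(\I \times f) : \I \times X \to X$ is a homotopy $gf \sim sf$ in $X$.

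Next, produce a fiberwise deformation $\Phi : \I \times X \to X$ with $\Phi\iota_0 = sf$, $\Phi\iota_1 = 1_X$, and $f\Phi = f \circ p_2$. Concatenating the reverse of $H(\I \times f)$ with $\vartheta : gf \sim 1_X$ gives a non-fiberwise homotopy $\psi : sf \sim 1_X$ whose projection $f\psi$ is a self-loop at $f \in K^X$, not a constant. To rectify, I set up a lifting problem against $f$ on a 2-cube parameterized by $X$: the left map is the open-box inclusion $\Lambda \times X \cof \I \times \I \times X$ (a trivial cofibration of the form $(\delta_0 \otimes \partial) \times X$); the top map specifies $\vartheta$, $H(\I \times f)$, and the constant $1_X$ on three faces; the bottom map is a 2-cube in $K$ with the corresponding three faces $f\vartheta$, $h(\I \times f)$, and constant $f$, together with a fourth face chosen constant at $f$. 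This bottom 2-cube is obtained by filling a compatible horn in the fibrant $K^X$; the fibration $f$ then lifts the entire configuration to $X$, and the missing face of the resulting 2-cube is the desired $\Phi$ -- fiberwise by construction of the base.

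Finally, solve the original lifting problem. The pushout-product $c \otimes \delta_0 : B +_A (A \times \I) \cof B \times \I$ is a biased trivial cofibration. Define the top map into $X$ by $sb : B \to X$ on the $B$ summand and by $\Phi \circ \mathrm{swap} \circ (a \times \I) : A \times \I \to X$ on the $A \times \I$ summand; these agree on the identified copy of $A$ in the pushout because $sbc = sfa = \Phi\iota_0 \circ a$. The bottom map $b \circ p_1 : B \times \I \to K$ is compatible with the top under $f$ because $fs = 1_K$ and $f\Phi = f \circ p_2$. A lift $F : B \times \I \to X$ exists by the biased fibration property of $f$, and $d := F \circ (B \times \delta_1) : B \to X$ satisfies $fd = b$ and $dc = a$, giving the required filler. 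The main obstacle is stage two: producing a strictly fiberwise deformation from the raw, non-coherent homotopy equivalence data requires a 2-dimensional coherence in $K^X$ supplied by horn-filling (using that $K$ is fibrant) and then pulled up to $X$ via the homotopy lifting property of $f$, so that both hypotheses are genuinely used.
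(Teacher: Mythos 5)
Your overall strategy parallels the paper's: reduce to showing $f$ is a trivial fibration, correct the quasi-inverse to a section, produce a fiberwise homotopy, then fill the lifting square. Stages 1 and 3 are correct. Stage 2, however, has a genuine gap at exactly the step you flag as ``the main obstacle.''

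The bottom 2-cube in $K^X$ that you need has \emph{all four} faces prescribed: $f\vartheta$, $h(\I \times f)$, a constant face, and a fourth face constant at $f$. Fibrancy of $K^X$ only supplies fillers for \emph{horns}, i.e.\ $\delta_\epsilon\otimes\partial$-shaped inclusions in which three faces are specified and the fourth is whatever the filler produces. What you are actually asking for is an extension of a prescribed boundary map $\partial\I^2 \to K^X$ over the whole square, and such an extension exists only if the corresponding loop at $f$ is null-homotopic in $K^X$. That loop is precisely the defect of the triangle identity for the unrefined homotopy-equivalence data $(g, h, \vartheta)$: in general $f\vartheta$ and $h(\I\times f)$, together with two constant edges, need not bound a $2$-cell, because nothing in the hypotheses makes $\vartheta$ and $h$ coherent with one another. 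So ``filling a compatible horn'' gives you three of your four faces and an uncontrolled fourth, not the constant one your Stage 3 requires (where $f\Phi = f\circ p_2$ is used essentially).

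The missing ingredient is a Vogt-style (half-adjointness) refinement: first replace $\vartheta$ by a $\vartheta'$, homotopic to it, for which the triangle identity \emph{does} hold up to a $2$-cell; only then is your prescribed boundary fillable and the rest of Stage 2 goes through. Producing $\vartheta'$ is itself a short argument using the fibration $f^X : X^X \to K^X$ and fibrancy of $K^X$ (or, equivalently, the trivial fibration $\delta_\epsilon\Rightarrow f$), but it is an argument the proof must make explicitly rather than absorb into a horn-filling step. For what it is worth, the paper's own proof avoids building the global deformation $\Phi$ and instead corrects the section against the specific lifting datum $c : C\cof K$ by lifting a homotopy $\varphi : sc\sim x$ against $\delta_\epsilon\Rightarrow f$; but it, too, is terse about why $\varphi$ can be taken fiberwise (i.e.\ as a map $C \to X^\I$ with $f^\I\circ\varphi$ constant), which is the same coherence issue in a localized form. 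Your instinct that a $2$-dimensional coherence must be invoked is right; the proposal just needs to supply it rather than assert it.
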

\begin{proof}
This is a standard argument, which we just sketch.  It suffices to show that any diagram of the form
\begin{equation}\label{diagram:FHEisWE}
\xymatrix{
C \ar@{>->}[d]_{c} \ar[r]^x & X \ar@{>>}[d]^{f} \\
K\ar[r]_{=} & K,
}
\end{equation}
with  $c : C \mono X$  a cofibration, has a diagonal filler, for then $f$ is a trivial fibration.  
Since $f$ is a homotopy equivalence, it has a quasi-inverse $s:K\ra X$ with $\vartheta : fs\sim 1_K$, which we claim can be corrected to a section $s' : K\ra X$. 
Indeed, consider 
\begin{equation*}
\xymatrix{
K \ar@{>->}[d]_{\iota_0} \ar[r]^{s}  & X\ar@{>>}[d]^{f} \\
K\times \I \ar[r]_{\vartheta} \ar@{..>}[ru]^{\vartheta'}& K\\
K \ar@{>->}[u]_{\iota_1} \ar[ru]_=&
}
\end{equation*}
where $\vartheta' $ results from $\iota_0 \pitchfork f$. Let $s' = \vartheta' \iota_1$, so that $\vartheta' : s\sim s'$ and $fs' = 1_K$.

Thus we can assume that $s = s' : K\ra X$ is a section, which fills the diagram \eqref{diagram:FHEisWE} up to a homotopy in the upper triangle.
\begin{equation*}
\xymatrix{
C \ar@{>->}[d]_{c} \ar[r]^x & X \ar@{>>}[d]^{f} \\
K\ar[r]_{=} \ar[ru]_{s}^{\sim} & K
}
\end{equation*}
Now we can correct $s: K\ra X$ to a homotopic $t : K\ra X$ over $f$ by using the homotopy $\varphi : sc\sim x$  to get a map $\varphi : C\ra X^\I$ over $f$.  Since $f$ is a fibration, the projections $p_0, p_1:X^\I \ra X$ over $f$ are trivial fibrations, and so there is a lift $\varphi': K\ra X^\I$ for which $t:= p_1\varphi'$ has $tc= x$ and $ft=1_K$, and so is a filler for \eqref{diagram:FHEisWE}.
\end{proof}

\begin{lemma}\label{lemma:FibWHEfibCodTFib}
If $K$ is fibrant, then any fibration $f : X \onto K$ that is a weak homotopy equivalence is a weak equivalence.
\end{lemma}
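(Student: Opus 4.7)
The plan is to reduce the claim to Lemma \ref{FibHETFib} by showing that, under the given hypotheses, $f$ is actually a \emph{homotopy} equivalence (not merely a weak homotopy equivalence). Then Lemma \ref{FibHETFib} gives that $f$ is a weak equivalence. The whole argument will use only the characterization of weak homotopy equivalences afforded by Lemma \ref{lem:WHEunwound}.

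First I would observe that $X$ is fibrant. Since $f : X \fib K$ is a fibration and $K \fib 1$ is a fibration (as $K$ is fibrant), the composite $X \to 1$ is a fibration because fibrations are the right class of a weak factorization system and hence closed under composition. This fibrancy of $X$ is what lets me plug $X$ itself into the "test object" slot of Lemma \ref{lem:WHEunwound}.

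Next, I would construct a quasi-inverse. Apply condition (1) of Lemma \ref{lem:WHEunwound} to the map $f : X \to K$ with test object $X$ (now known to be fibrant) and $x = 1_X : X \to X$: this yields a map $g : K \to X$ together with a homotopy
\[
g \circ f \sim 1_X.
\]
For the other side, post-compose this homotopy with $f$ to obtain $f \circ g \circ f \sim f \circ 1_X = f = 1_K \circ f$. Now apply condition (2) of Lemma \ref{lem:WHEunwound} with test object $K$ and $y = f \circ g$, $y' = 1_K$ (both maps $K \to K$): the displayed homotopy $(fg)\circ f \sim 1_K \circ f$ upgrades to a homotopy
\[
f \circ g \sim 1_K.
\]
Together with $gf \sim 1_X$, this exhibits $g$ as a quasi-inverse of $f$, so $f$ is a homotopy equivalence. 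Finally, since $f$ is a fibration into a fibrant $K$ and a homotopy equivalence, Lemma \ref{FibHETFib} gives that $f$ is a weak equivalence, completing the proof.

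I do not anticipate a serious obstacle; the only point to verify carefully is the fibrancy of $X$, which unlocks the use of condition (1) of Lemma \ref{lem:WHEunwound} with $x = 1_X$. The rest is a symmetric, two-line application of the two clauses of that lemma, followed by invoking Lemma \ref{FibHETFib}.
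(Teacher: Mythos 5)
Your proposal is correct and follows essentially the same route as the paper: establish that $X$ is fibrant, apply Lemma~\ref{lem:WHEunwound}(1) with test object $X$ and $x = 1_X$ to get $g : K \to X$ with $gf \sim 1_X$, post-compose with $f$ and apply Lemma~\ref{lem:WHEunwound}(2) with test object $K$ to get $fg \sim 1_K$, and conclude via Lemma~\ref{FibHETFib}. The paper's proof is the same argument with $g$ written as $s$ and the second step expressed as a diagonal-filling problem for $K^\I \to K\times K$.
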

\begin{proof}
Since $K$ is fibrant, so is $X$, and since $f$ is a weak homotopy equivalence, by lemma \ref{lem:WHEunwound}(1) there is then a map $s : K\ra X$ and a homotopy $\theta: sf \sim 1_X$.  Postcomposing with $f$ gives a homotopy $f\vartheta: fsf \sim f$, forming the outer commutative square in
\[
\xymatrix{
X \ar[d]_{f} \ar[r]^{f\vartheta}  & K^\I \ar[d] \\
K \ar@{..>}[ru]_{\varphi} \ar[r]_-{\langle fs, 1_K\rangle} & K\times K.
}
\]
By lemma \ref{lem:WHEunwound}(2) there is a diagonal filler $\varphi : fs\sim 1_K$, and so $f$ is a homotopy equivalence. Now apply lemma \ref{FibHETFib}.
\end{proof}

We now have the following.

\begin{proposition}\label{prop:CofbetweenFibWEWHEHE}
If $A$ and $K$ are both fibrant, then for any cofibration $c : A \cof K$ the following are equivalent.
\begin{enumerate}
\item $c : A \cof K$ is a weak equivalence.
\item $c : A \cof K$ is a homotopy equivalence.
\item $c : A \cof K$ is a weak homotopy equivalence.
\end{enumerate}
\end{proposition}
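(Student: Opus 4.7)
The implications $(1)\Rightarrow(3)$ and $(2)\Rightarrow(3)$ already hold for arbitrary maps, by Lemma~\ref{lem:WEimpliesWHE} and Lemma~\ref{lemma:HEisWHE} respectively, so to close the triangle it suffices to prove $(3)\Rightarrow(1)$ and $(1)\Rightarrow(2)$. Both will be obtained by factoring $c$ through the fibration weak factorization system of Proposition~\ref{prop:fibrationwfs}:
\[
c \;=\; f\circ j \,:\, A \stackrel{j}{\cof} W \stackrel{f}{\fib} K\,,
\]
with $j\in\TCof$ and $f\in\FF$. Since $K$ is fibrant and fibrations (being the right class of a weak factorization system) compose, $W$ is fibrant as well.

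For $(3)\Rightarrow(1)$: the trivial cofibration $j\in\TCof\subseteq\WW$ is in particular a weak homotopy equivalence by Lemma~\ref{lem:WEimpliesWHE}. Assuming $c$ is a WHE, the 3-for-2 property for WHEs (Lemma~\ref{lemma:WHE342}) forces $f$ to be a WHE. Since $f$ is a fibration with fibrant codomain $K$, Lemma~\ref{lemma:FibWHEfibCodTFib} identifies it as a trivial fibration. Therefore $c=f\circ j\in\TFib\circ\TCof=\WW$, so $c$ is a weak equivalence.

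For $(1)\Rightarrow(2)$: the trivial fibration $f$ is a homotopy equivalence by Lemma~\ref{lem:TFibisHE}, so I only need to show that the trivial cofibration $j:A\cof W$ between fibrant objects is a homotopy equivalence. Since $A$ is fibrant, lifting $j$ against the fibration $A\to 1$ with upper edge $1_A$ yields a retraction $r:W\to A$ satisfying $rj=1_A$. To build a homotopy $jr\sim 1_W$, use that $W^\I\fib W\times W$ is a fibration because $W$ is fibrant and $\del:1+1\cof\I$ is a cofibration (exactly as noted in the proof of Lemma~\ref{lem:WEimpliesWHE}). The ``constant path'' map $W\to W^\I$ precomposed with $j$ gives a map $A\to W^\I$ whose image in $W\times W$ is $\langle j,j\rangle=\langle jr,1_W\rangle\circ j$, so the square
\[
\xymatrix{
A \ar@{>->}[d]_j \ar[r] & W^\I \ar@{->>}[d] \\
W \ar[r]_-{\langle jr,\,1_W\rangle} & W\times W
}
\]
commutes, and since $j\in\TCof$ it admits a diagonal filler $H:W\to W^\I$. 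This $H$ is the desired homotopy $jr\sim 1_W$. So $j$ is a homotopy equivalence with quasi-inverse $r$, and since homotopy equivalences compose (Lemma~\ref{lemma:HE342}), $c=f\circ j$ is a homotopy equivalence.

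No step presents a serious obstacle, as every tool needed has been set up earlier in the section; the only slightly delicate move is the square above, which is standard once one knows $W$ is fibrant and that the path evaluation $W^\I\fib W\times W$ is a fibration.
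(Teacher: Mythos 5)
Your proof of $(3)\Rightarrow(1)$ is correct and essentially the same as the paper's. But $(1)\Rightarrow(2)$ has a genuine gap: you declare the fibration $f$ in the factorization $c = f\circ j$ to be a \emph{trivial} fibration, whereas the fibration weak factorization system only guarantees $f\in\FF$. Concluding $f\in\WW$ (hence $f\in\FF\cap\WW=\TFib$) from $c,j\in\WW$ would require the $3$-for-$2$ property for weak equivalences, which is precisely what this section is trying to establish. You could rescue the claim by re-running the weak-homotopy-equivalence argument already used in your $(3)\Rightarrow(1)$ step: $c$ is a WHE by Lemma~\ref{lem:WEimpliesWHE}, $j$ is a WHE, so $f$ is a WHE by Lemma~\ref{lemma:WHE342}, hence a WE by Lemma~\ref{lemma:FibWHEfibCodTFib}, hence in $\TFib$. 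But this detour is unnecessary.

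The shorter route --- and the one the paper takes --- avoids the factorization entirely. Since $c$ is a cofibration and a weak equivalence, Proposition~\ref{prop:FWC} gives $c\in\CC\cap\WW=\TCof$, so $c$ is already a trivial cofibration between fibrant objects. Your own argument showing $j$ is a homotopy equivalence (build the retraction by lifting against $A\to 1$, then fill a square against $W^\I\fib W\times W$ using $j\in\TCof$) applies verbatim with $c$ and $K$ in place of $j$ and $W$: one obtains a retraction $r:K\to A$ with $rc=1_A$ and a homotopy $1_K\sim cr$ by filling against $K^\I\fib K\times K$. The factorization, and with it the problematic claim about $f$, are not needed.
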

\begin{proof}
Suppose (1), so $c : A \cof K$ is a trivial cofibration.  Then since $A$ is fibrant, it has a retraction $r : K\to A$.
\begin{equation*}
\xymatrix{
A \ar@{>->}[d]_{c} \ar[r]^= & {A}. \\
{K}\ar[ru]_r & 
}
\end{equation*}
Since $K$ is fibrant, $K^\I \to K\times K$ is a fibration. So the following has a diagonal filler, which is a homotopy $1_K \sim cr$.
\begin{equation*}
\xymatrix{
A \ar@{>->}[d]_{c} \ar[r]^{c}  & K \ar[r]^{K^!} & K^\I  \ar[d]^{\pair{K^{d_0},K^{d_1}}} \\
K \ar[rr]_{\pair{1_K, cr}} \ar@{..>}[rru]_{\vartheta} &&  K\times K
}
\end{equation*}

$(2)\Rightarrow (3)$ is Lemma \ref{lemma:HEisWHE}.

Suppose (3), that $c : A \cof K$ is a weak homotopy equivalence.  Factor $c = f\circ tc$ with a trivial cofibration $tc : A \cof C$ followed by a  fibration $f : C \fib K$.  By parts (1) and (2), $tc : A \cof C$ is then a weak homotopy equivalence. By 3-for-2 for weak homotopy equivalences, Lemma \ref{lemma:WHE342}, $f : C \fib K$ is then also a weak homotopy equivalence. By Lemma \ref{lemma:FibWHEfibCodTFib}, $f : C \fib K$ is then a weak equivalence.
\end{proof}

\begin{proposition}\label{prop:fibobjWEHEWHE} For fibrations $f : X \fib K$ with fibrant codomain $K$, all three concepts coincide: weak equivalences, weak homotopy equivalences, and homotopy equivalences.
\end{proposition}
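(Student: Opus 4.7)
The proposition is essentially a corollary of the lemmas already assembled in this section, so the plan is to close a three-way cycle of implications by citing them in turn. Note first that the hypothesis $f : X \fib K$ with $K$ fibrant forces $X$ to be fibrant as well, since fibrations are closed under composition and the terminal projection $K \to 1$ is a fibration; this puts us exactly in the setting where the preceding lemmas apply.

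For $(1) \Rightarrow (2)$, observe that a fibration that is a weak equivalence lies in $\FF \cap \WW = \TFib$ by Proposition \ref{prop:FWC}, so Lemma \ref{lem:TFibisHE} immediately gives that $f$ is a homotopy equivalence. For $(2) \Rightarrow (3)$, Lemma \ref{lemma:HEisWHE} says that every homotopy equivalence is a weak homotopy equivalence, with no hypotheses on domain or codomain, so this step is immediate. For $(3) \Rightarrow (1)$, the fibrancy of $K$ is essential: Lemma \ref{lemma:FibWHEfibCodTFib} says precisely that a fibration with fibrant codomain which is a weak homotopy equivalence is a trivial fibration, and in particular a weak equivalence. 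Together these three implications close the cycle.

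There is no real obstacle here, since all the substantive work has already been done: the content of $(3) \Rightarrow (1)$ lies in Lemma \ref{lemma:FibWHEfibCodTFib}, whose proof used Lemma \ref{lem:WHEunwound} to convert a weak homotopy equivalence into a genuine homotopy equivalence by constructing a quasi-inverse in $K^\I$, and then invoked Lemma \ref{FibHETFib} to upgrade a fibration-plus-homotopy-equivalence to a trivial fibration by a standard diagonal-filling argument in the presence of fibrant codomain. Hence the proposition itself requires essentially no new work, only the bookkeeping of the cycle $(1) \Rightarrow (2) \Rightarrow (3) \Rightarrow (1)$.
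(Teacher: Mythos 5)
Your proof is correct and follows essentially the same route as the paper: both arrange the implications in a cycle, using $\FF\cap\WW = \TFib$ together with Lemma \ref{lem:TFibisHE} for the step from weak equivalence to homotopy equivalence, Lemma \ref{lemma:HEisWHE} for homotopy equivalence to weak homotopy equivalence, and Lemma \ref{lemma:FibWHEfibCodTFib} (which relies on the fibrancy of $K$) to close the cycle. The only cosmetic difference is the starting point of the cycle; the substance is identical.
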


\begin{proof}
Let $K$ be fibrant and suppose that $f:X\fib K$ is a weak homotopy equivalence.   Then it is a weak equivalence by Lemma \ref{lemma:FibWHEfibCodTFib}.  By Lemma \ref{lem:WEimpliesWHE} any fibration weak equivalence is a homotopy equivalence, and by 
Lemma \ref{lemma:HEisWHE} any homotopy equivalence is a weak homotopy equivalence.
\end{proof}

\begin{corollary} 
For all maps $f : X \to Y$ between fibrant objects $X$ and $Y$, all three concepts coincide: weak equivalence, weak homotopy equivalence, and homotopy equivalence.
\end{corollary}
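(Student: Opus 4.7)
The plan is to reduce the general case to the case of a fibration with fibrant codomain, which is already handled by Proposition \ref{prop:fibobjWEHEWHE}. Given a map $f : X \to Y$ between fibrant objects, the first step is to factor $f = p \circ i$ using the fibration weak factorization system (Proposition \ref{prop:fibrationwfs}), so that $i : X \to W$ is a trivial cofibration and $p : W \fib Y$ is a fibration. The key initial observation is that $W$ is fibrant, since $W \to 1$ factors as $W \fib Y \to 1$ with both maps fibrations (the second because $Y$ is fibrant, and fibrations compose).

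Next I would analyze each factor separately. The map $p : W \fib Y$ is a fibration between fibrant objects, so Proposition \ref{prop:fibobjWEHEWHE} already tells us that weak equivalence, weak homotopy equivalence, and homotopy equivalence all coincide for $p$. The map $i : X \to W$ is a trivial cofibration, hence a weak equivalence by definition; moreover, $i$ is a cofibration between fibrant objects, so Proposition \ref{prop:CofbetweenFibWEWHEHE} promotes this to $i$ being a homotopy equivalence and a weak homotopy equivalence as well.

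Finally I would combine these using three-for-two arguments applied to $f = p \circ i$. For homotopy equivalence, Lemma \ref{lemma:HE342} gives that $f$ is a HE iff $p$ is; for weak homotopy equivalence, Lemma \ref{lemma:WHE342} gives that $f$ is a WHE iff $p$ is. The remaining case --- showing $f$ is a weak equivalence iff $p$ is one --- is where one has to be careful, since three-for-two for $\WW$ is exactly what the broader program is trying to prove and so cannot be invoked. Here I would argue directly in both directions: if $p$ is a weak equivalence, then since $p$ is already a fibration, Proposition \ref{prop:FWC} gives $p \in \TFib$, whence $f = p \circ i$ is a composite $\TFib \circ \TCof$ and is a weak equivalence by definition; conversely, if $f$ is a weak equivalence then $f$ is a weak homotopy equivalence by Lemma \ref{lem:WEimpliesWHE}, so by the paragraph above $p$ is a WHE, and then Proposition \ref{prop:fibobjWEHEWHE} promotes this to $p$ being a weak equivalence. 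Combined with the equivalence of all three notions for $p$, this shows all three coincide for $f$. The step requiring the most care is this last one, precisely because one must avoid circular use of three-for-two for $\WW$, but it is handled cleanly by Lemma \ref{lem:WEimpliesWHE} together with the definitional characterization $\WW = \TFib \circ \TCof$.
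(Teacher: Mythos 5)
Your proof is correct, and it uses the same basic ingredients as the paper's---factoring, Propositions \ref{prop:CofbetweenFibWEWHEHE} and \ref{prop:fibobjWEHEWHE}, and the 3-for-2 properties of Lemmas \ref{lemma:HE342} and \ref{lemma:WHE342}---but the organization is genuinely different. The paper runs a cycle of implications: it shows WE $\Rightarrow$ HE by factoring $f$ as a \emph{trivial} cofibration followed by a \emph{trivial} fibration (a factorization available precisely because $f$ is assumed to be a weak equivalence, via $\WW = \TFib\circ\TCof$), then invokes HE $\Rightarrow$ WHE from Lemma \ref{lemma:HEisWHE}, and finally closes the cycle WHE $\Rightarrow$ WE with the $(\TCof,\FF)$ factorization. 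You instead fix the single $(\TCof,\FF)$ factorization $f = p\circ i$ up front and transport all three conditions between $f$ and $p$ simultaneously, using 3-for-2 for HE and WHE for two of the transfers and a direct two-sided argument for WE. Your approach has the advantage of making the logic of the WE transfer fully explicit, which the paper elides with ``clearly\ldots by factoring''; the paper's has the advantage of being slightly shorter on the page. You are also correct to flag the circularity danger in the WE transfer and to route around it via $\WW = \TFib\circ\TCof$ and Lemma \ref{lem:WEimpliesWHE} rather than invoking 3-for-2 for $\WW$; the paper's cyclic structure avoids the same issue by never needing a WE-to-WE transfer in the first place.
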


\begin{proof}
Let $X$ and $Y$ be fibrant and factor $f = tf\circ tc$ with a trivial cofibration $tc : X \cof F$ followed by a trivial fibration $tf : F \fib Y$.
 Then by Proposition \ref{prop:CofbetweenFibWEWHEHE}, $tc : X \cof F$ is a homotopy equivalence, and by Proposition \ref{prop:fibobjWEHEWHE} so is $tf : F \fib Y$, thus $f = tf\circ tc$  is a homotopy equivalence. Again by Lemma \ref{lemma:HEisWHE}, any homotopy equivalence is a weak homotopy equivalence, and weak homotopy equivalence between fibrant objects is clearly a weak equivalence, by factoring and using the foregoing Propositions \ref{prop:CofbetweenFibWEWHEHE} and \ref{prop:fibobjWEHEWHE}.
\end{proof}

\begin{lemma}\label{CofWHEfibCodTCof}
If $K$ is fibrant, then any cofibration $c : A \mono K$ that is a weak homotopy equivalence is a weak equivalence.
\end{lemma}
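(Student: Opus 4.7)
The plan is to reduce to the fibration case already handled in Lemma \ref{lemma:FibWHEfibCodTFib} by using the fibration weak factorization system to factor $c$, and then exploit the fact that the intermediate object is automatically fibrant.

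First I would factor the cofibration $c : A \cof K$ using Lemma \ref{lemma:factorization} as $c = f \circ tc$, where $tc : A \cof C$ is a trivial cofibration and $f : C \fib K$ is a fibration. Since $K$ is fibrant and fibrations are closed under composition, the object $C$ is then fibrant as well (the composite $C \fib K \fib 1$ is a fibration).

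Next, by Lemma \ref{lem:WEimpliesWHE} the trivial cofibration $tc$ is in particular a weak homotopy equivalence. By hypothesis $c = f \circ tc$ is a weak homotopy equivalence, so the 3-for-2 property for weak homotopy equivalences (Lemma \ref{lemma:WHE342}) forces $f : C \fib K$ to be a weak homotopy equivalence as well. Now $f$ is a fibration with fibrant codomain that is a weak homotopy equivalence, so Lemma \ref{lemma:FibWHEfibCodTFib} applies and yields that $f \in \TFib$.

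To conclude, the factorization $c = f \circ tc$ exhibits $c$ as a composite of a trivial cofibration followed by a trivial fibration, so $c \in \TFib \circ \TCof = \mathcal{W}$ by the definition of weak equivalence. (If one prefers the stronger statement that $c$ is a trivial cofibration, this follows immediately from $c \in \mathcal{C} \cap \mathcal{W} = \TCof$ established in Proposition \ref{prop:FWC}.) There is no real obstacle remaining here: all the technical work has been done in the preceding lemmas, in particular Lemma \ref{lemma:FibWHEfibCodTFib}, which is where the genuine argument—correcting a homotopy section to an honest section along a fibration—was already carried out. The present lemma is essentially a bookkeeping step that leverages the fibrancy of $C$ obtained for free from fibrancy of $K$.
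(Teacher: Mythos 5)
Your proof is correct and follows essentially the same route as the paper: factor $c$ as a trivial cofibration followed by a fibration, use 3-for-2 for weak homotopy equivalences to transfer the WHE property to the fibration, and then invoke Lemma \ref{lemma:FibWHEfibCodTFib}. The observation that $C$ is fibrant, while true, is not actually used (Lemma \ref{lemma:FibWHEfibCodTFib} needs only the codomain $K$ to be fibrant), but this is harmless.
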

\begin{proof}
Let $c : A \mono K$ be a cofibration weak homotopy equivalence and factor it into a trivial cofibration $i : A\mono Z$ followed by a fibration $p: Z\onto K$.  By lemma \ref{lem:WHEunwound}, any trivial cofibration is clearly a weak homotopy equivalence.  So both 
$c$ and $i$ are weak homotopy equivalences, and therefore so is $p$ by 3-for-2 for weak homotopy equivalences.   Since $K$ is fibrant, $p$ is a trivial fibration by lemma \ref{lemma:FibWHEfibCodTFib}, and thus $c$ is a weak equivalence.  
\end{proof}

It now follows that a weak homotopy equivalence $f : X \to K$ with a fibrant codomain is a weak equivalence. To eliminate the condition on the codomain we use the following lemma due to D.-C.~Cisinski \cite{cisinski-asterisque}.

\begin{lemma}\label{lemma:CofWHEiffFibLift}
A cofibration $ c : A \mono B$ weak homotopy equivalence lifts against  any fibration $f : Y\onto K$ with fibrant codomain.
\end{lemma}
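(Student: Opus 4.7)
\bigskip

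The plan is to reduce to the case of a trivial cofibration lifting against a fibration by replacing $B$ with a fibrant object. Concretely, given a lifting problem
\[
\xymatrix{
A \ar[r]^{a} \ar[d]_{c} & Y \ar[d]^{f} \\
B \ar[r]_{b} & K
}
\]
with $c : A\mono B$ a cofibration and weak homotopy equivalence and $f : Y \fib K$ a fibration with $K$ fibrant, I would first factor $B \to 1$ as $j : B \cof \hat{B}$ followed by $\hat{B}\fib 1$, using the fibration weak factorization system of Proposition \ref{prop:fibrationwfs}. Thus $\hat{B}$ is fibrant and $j$ is a trivial cofibration.

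Next, because $j$ is a trivial cofibration and $K$ is fibrant, the map $b : B \to K$ extends along $j$ to some $\hat{b} : \hat{B} \to K$ with $\hat{b}j = b$. Now consider the composite $jc : A \mono \hat{B}$. As a composite of cofibrations this is again a cofibration. Since trivial cofibrations are weak homotopy equivalences (as noted in the proof of Lemma \ref{lem:WEimpliesWHE}) and $c$ is a weak homotopy equivalence by hypothesis, the 3-for-2 property for weak homotopy equivalences (Lemma \ref{lemma:WHE342}) shows that $jc$ is a weak homotopy equivalence. Since its codomain $\hat{B}$ is fibrant, Lemma \ref{CofWHEfibCodTCof} applies to show that $jc$ is in fact a trivial cofibration.

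Now the outer rectangle
\[
\xymatrix{
A \ar[r]^{a} \ar[d]_{jc} & Y \ar[d]^{f} \\
\hat{B} \ar[r]_{\hat{b}} & K
}
\]
commutes, since $\hat{b}\circ jc = bc = fa$, and $jc\pitchfork f$ by the fibration weak factorization system (Proposition \ref{prop:fibrationwfs}), yielding a filler $\hat{d} : \hat{B}\to Y$ with $\hat{d}(jc) = a$ and $f\hat{d} = \hat{b}$. Setting $d := \hat{d}j : B \to Y$ gives the desired diagonal, as $dc = \hat{d}(jc) = a$ and $fd = f\hat{d}j = \hat{b}j = b$. The main obstacle that this strategy has to avoid is any appeal to 3-for-2 for $\mathcal{W}$ itself, which has not yet been established; the key point is that 3-for-2 is only used for the (already established) class of weak homotopy equivalences, and that the fibrant replacement $\hat{B}$ of $B$ makes Lemma \ref{CofWHEfibCodTCof} applicable so that the lifting step reduces to the already-proven fibration weak factorization system.
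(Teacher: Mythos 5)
Your proof is correct and takes essentially the same route as the paper: replace $B$ by a fibrant object $\hat B$ via a trivial cofibration, extend $b$ along it using fibrancy of $K$, observe that the composite $A\cof\hat B$ is a cofibration weak homotopy equivalence with fibrant codomain so that Lemma~\ref{CofWHEfibCodTCof} upgrades it to a trivial cofibration, lift, and restrict. The only cosmetic difference is that you cite 3-for-2 for weak homotopy equivalences to deduce $jc$ is one, where closure under composition (the part of 3-for-2 actually needed) would do; the paper is otherwise identical in structure.
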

\begin{proof}
Let $c : A\mono B$ be a cofibration weak homotopy equivalence and  $f : Y\onto K$ a fibration with fibrant codomain $K$, and consider a lifting problem
\begin{equation*}
\xymatrix{
A \ar@{>->}[d]_{c} \ar[r]^-{a}  & Y \ar@{>>}[d]^{f} \\
B \ar[r]_{b} &  K.
}
\end{equation*}
Let $\eta : B\mono B'$ be a fibrant replacement of $B$, since $K$ is fibrant, $b$ extends along $\eta$ to give $b' : B'\ra K$ as shown below.   
\begin{equation*}
\xymatrix{
A \ar@{>->}[d]_{c} \ar[r]^-{a}  & Y \ar@{>>}[d]^{f} \\
B \ar[r]_{b} \ar[d]_\eta &  K\\
B' \ar[ru]_{b'} & 
}
\end{equation*}
Since $\eta$ is a trivial cofibration, it is a weak homotopy equivalence. So the composite $\eta c$ is also a weak homotopy equivalence.  But since $B'$ is fibrant, $\eta c$ is then a trivial cofibration by lemma \ref{CofWHEfibCodTCof}.  Thus there is a lift $j : B'\ra Y$, and therefore also one $k = j\eta : B\ra Y$. 
\end{proof}

To complete the proof that a weak homotopy equivalence is a weak equivalence, we shall make use of the following \emph{fibration extension property}, the proof of which is deferred to section \ref{sec:FEP}.

\begin{definition}[Fibration extension property]\label{def:fibextreplace}
For any fibration $ f : Y \onto X$ and any trivial cofibration $\eta: X\ra X'$,
there is a fibration $f' : Y' \onto X'$ that pulls back to $f$ along $\eta$, as shown below.
\begin{equation}\label{diagram:FEP}
\xymatrix{
Y \ar@{->>}[d]_{f} \ar[r]  \pbcorner & Y' \ar@{>>}[d]^{f'} \\
X \ar@{>->}[r]_{\eta} &  X'
}
\end{equation}
\end{definition}

\begin{lemma}\label{lemma:CofWEiffFibLift}
Assuming the fibration extension property, a cofibration that lifts against every fibration $f : Y\onto K$ with fibrant codomain is a weak equivalence.
\end{lemma}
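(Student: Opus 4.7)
The plan is to factor the given cofibration through the fibration weak factorization system and exhibit it as a retract of the trivial cofibration so obtained, invoking the fibration extension property to supply the necessary lift against a fibration with fibrant codomain.

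First, I would factor the given cofibration $c : A \cof B$ as
\[
A \stackrel{i}{\cof} Y \stackrel{f}{\fib} B
\]
with $i$ a trivial cofibration and $f$ a fibration (using Proposition~\ref{prop:fibrationwfs}). It suffices to show that $c$ is a trivial cofibration, since $\TCof \subseteq \WW$. Next, I take a fibrant replacement $\eta : B \cof B'$ of the codomain $B$ (i.e.\ a trivial cofibration into a fibrant object $B'$, obtained by factoring $B \to 1$ via Proposition~\ref{prop:fibrationwfs}). Now I apply the fibration extension property (Definition~\ref{def:fibextreplace}) to $f : Y \fib B$ along $\eta$, yielding a fibration $f' : Y' \fib B'$ sitting in a pullback square over $\eta$, together with a map $q : Y \to Y'$.

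With this setup, I form the lifting problem
\[
\xymatrix{
A \ar@{>->}[d]_{c} \ar[r]^-{qi} & Y' \ar@{>>}[d]^{f'} \\
B \ar[r]_-{\eta} & B',
}
\]
which commutes because $f'\circ q\circ i = \eta\circ f\circ i = \eta\circ c$. Since $f'$ is a fibration with fibrant codomain $B'$, the hypothesis on $c$ supplies a diagonal filler $j : B \to Y'$ with $j\circ c = q\circ i$ and $f'\circ j = \eta$. Because the square defining $Y'$ is a pullback and $f'\circ j = \eta\cdot 1_B$, the pair $(1_B, j)$ induces a unique $\bar{j} : B \to Y$ with $f\circ\bar{j} = 1_B$ and $q\circ\bar{j} = j$. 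Finally, I check via the same pullback that $\bar{j}\circ c = i$: both sides become $qi$ after applying $q$ and both become $c$ after applying $f$.

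These identities exhibit $c$ as a retract of $i$ in the arrow category, via
\[
\xymatrix{
A \ar@{=}[r] \ar[d]_{c} & A \ar@{=}[r] \ar[d]_{i} & A \ar[d]^{c} \\
B \ar[r]_{\bar{j}} & Y \ar[r]_{f} & B,
}
\]
whose horizontal composites are identities. Since $i$ is a trivial cofibration and the class $\TCof$ is closed under retracts (as the left class of the fibration weak factorization system), $c \in \TCof \subseteq \WW$, so $c$ is a weak equivalence. The only nontrivial input is the fibration extension property, which produces a fibration with fibrant codomain against which the hypothesis on $c$ can be exercised; everything else is formal manipulation of pullbacks and retracts.
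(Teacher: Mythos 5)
Your proof is correct, and it takes a slightly different (and arguably cleaner) route than the paper's. The paper shows $c \in \TCof$ directly: it fixes an arbitrary lifting problem of $c$ against a fibration $f : Y \onto X$, takes a fibrant replacement $\eta : X \to X'$ of the fibration's codomain, applies the fibration extension property to get $f' : Y' \onto X'$, uses the hypothesis on $c$ to lift against $f'$, and then descends the lift through the pullback (using that $\eta$, hence its pullback $y : Y \to Y'$, is monic to verify the upper triangle). You instead factor $c = f \circ i$ with $i$ a trivial cofibration and $f : Y \onto B$ a fibration, take a fibrant replacement of $B$ rather than of an auxiliary $X$, apply FEP to $f$ itself, and package everything as a retract diagram exhibiting $c$ as a retract of $i$. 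Both routes rest on exactly the same two ingredients (fibrant replacement plus FEP); the difference is that the paper verifies the lifting property against an arbitrary fibration one problem at a time, while your retract argument disposes of all such lifting problems in one stroke via closure of $\TCof$ under retracts. Your verification of $\bar{j} \circ c = i$ via the pullback universal property (checking after composing with $q$ and with $f$) does the same job that the paper's appeal to $y$ being monic does, just organized around the universal property rather than around injectivity.
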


\begin{proof}
Let $c : A\mono B$ be a cofibration and consider a lifting problem against an arbitrary fibration $f: Y\onto X$,
\begin{equation}\label{diagram:CofWHEiffFibLift1}
\xymatrix{
A \ar@{>->}[d]_{c} \ar[r]^-{a}  & Y \ar@{>>}[d]^{f} \\
B \ar[r]_{b} &  X.
}
\end{equation}
Let $\eta: X\ra X'$ be a fibrant replacement, so $\eta$ is a trivial cofibration and $X'$ is fibrant. 
By the fibration extension property of definition \ref{def:fibextreplace}, there is a fibration $f' : Y' \onto X'$ such that $f$ is a pullback of $f'$ along $\eta$. So we can extend diagram \eqref{diagram:CofWHEiffFibLift1} to obtain the following, in which the righthand square is a pullback.
\begin{equation}\label{diagram:CofWHEiffFibLift2}
\xymatrix{
A \ar@{>->}[d]_{c} \ar[r]^-{a}  & Y \ar@{>>}[d]^{f} {\pbcorner} \ar[r]^{y} & Y' \ar@{>>}[d]^{f'} \\
B \ar[r]_{b} &  X  \ar[r]_\eta &  X'.
}
\end{equation}
By assumption, there is a lift $j' : B\ra Y'$ with $f' j' = \eta b$ and $j'c = yb$.  Therefore, since $f$ is a pullback, there is a map $j : B\ra Y$ with $fj = b$ and $y j = j'$.  
\begin{equation}\label{diagram:CofWHEiffFibLift3}
\xymatrix{
A \ar@{>->}[d]_{c} \ar[r]^-{a}  & Y \ar@{>>}[d]_<<<<{f} {\pbcorner} \ar[r]^{y} & Y' \ar@{>>}[d]^{f'} \\
B \ar[r]_{b} \ar@{..>}[ru]^{j} \ar@{..>}[rru]_>>>>{j'} &  X  \ar[r]_\eta &  X'.
}
\end{equation}
Thus $yjc = j'c = ya$.  But as a trivial cofibration, $\eta$ is monic, and as a pullback of $\eta$, $y$ is also monic. So $jc=a$.
\end{proof}

\begin{corollary}\label{cor:CofWHEtoWE}
Assuming the fibration extension property,  
\begin{enumerate}
\item a cofibration $c : A \mono B$ weak homotopy equivalence is a weak equivalence,
\item a fibration $ f : Y \onto X$ weak homotopy equivalence is a weak equivalence.
\end{enumerate}
\end{corollary}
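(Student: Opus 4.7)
The plan is to deduce both parts quickly from results already established just above, so the work has really been done in setting up Lemmas \ref{lemma:CofWHEiffFibLift} and \ref{lemma:CofWEiffFibLift}. No new constructions are needed; the task is just to assemble the implications correctly.

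For part (1), I would simply chain the two preceding lemmas. Given a cofibration $c : A \mono B$ that is a weak homotopy equivalence, Lemma \ref{lemma:CofWHEiffFibLift} (which requires no extension hypothesis) tells us that $c$ lifts against every fibration $Y \fib K$ with fibrant codomain. Then Lemma \ref{lemma:CofWEiffFibLift}, which is where the fibration extension property is invoked, converts this fibrancy-restricted lifting property into lifting against \emph{all} fibrations, and hence (since $c \in \CC$ already) into being a trivial cofibration, i.e.\ $c \in \CC \cap \WW = \TCof \subseteq \WW$.

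For part (2), I would reduce to part (1) by a factorization and the 3-for-2 property for weak homotopy equivalences. Given a fibration $f : Y \fib X$ that is a weak homotopy equivalence, use the cofibration weak factorization system to write
\[
f \ =\ p\circ c,\qquad c : Y \cof Z,\ p : Z \fib X \text{ a trivial fibration},
\]
with $c$ a cofibration and $p \in \TFib$. The trivial fibration $p$ is a weak equivalence (it lies in $\TFib \subseteq \TFib\circ\TCof = \WW$, using that identities are trivial cofibrations), and hence a weak homotopy equivalence by Lemma \ref{lem:WEimpliesWHE}. Since $f$ is a weak homotopy equivalence by assumption, Lemma \ref{lemma:WHE342} (3-for-2 for weak homotopy equivalences) forces $c$ to be a weak homotopy equivalence as well. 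Now $c$ is a cofibration weak homotopy equivalence, so by part (1) it is a weak equivalence, and being a cofibration it lies in $\CC \cap \WW = \TCof$. Therefore $f = p \circ c$ exhibits $f$ as a composite of a trivial cofibration followed by a trivial fibration, so $f \in \TFib\circ\TCof = \WW$ by the very definition of $\WW$.

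There is no real obstacle here—the corollary is a straightforward bookkeeping consequence of the structural lemmas above it. The only subtlety worth flagging is that part (2) avoids using 3-for-2 for the class $\WW$ itself (which has not been established and is in fact the goal of the whole enterprise); it uses only 3-for-2 for weak \emph{homotopy} equivalences together with the definitional equality $\WW = \TFib \circ \TCof$, so the argument is not circular. The genuinely hard work, namely the fibration extension property, is isolated and deferred to Section \ref{sec:FEP} as promised.
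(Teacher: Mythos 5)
Your proof is correct, and part (1) matches the paper exactly. Part (2) takes the same initial steps — factor $f = p\circ c$ with $c$ a cofibration and $p$ a trivial fibration, show $c$ is a weak homotopy equivalence by 3-for-2 for WHEs, then invoke part (1) — but the two arguments conclude differently. You stop as soon as you know $c\in\TCof$ and cite the definitional identity $\WW = \TFib\circ\TCof$, which is entirely adequate for the statement. The paper instead applies the classical retract argument: since $c$ is a trivial cofibration and $f$ is a fibration, $c\pitchfork f$, so $f$ is a retract of $p$ and hence itself a trivial fibration. The paper's route yields the marginally stronger conclusion $f\in\TFib$ (which is what one expects from the premodel identity $\FF\cap\WW=\TFib$), whereas yours lands directly in $\WW$; both are sound, and yours is a touch more economical for the corollary as literally stated.
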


\begin{proof}
(1) follows immediately by combining the previous lemmas \ref{lemma:CofWHEiffFibLift} and \ref{lemma:CofWEiffFibLift}.

For (2), factor $ f : Y \onto X$ into a cofibration $i : Y\mono Z$ followed by a trivial fibration $p: Z\onto X$.  Then $f$ is itself a trivial fibration if $i\pitchfork f$, for then it is a retract of $p$.  Since $p$ is a trivial fibration, it is a weak homotopy equivalence by Lemma \ref{lem:WEimpliesWHE}.  Since $f$ is also a weak homotopy equivalence, so is $i$ by Lemma \ref{lemma:WHE342}.  Thus $i$ is a trivial cofibration by (1). Since $f$ is a fibration, $i\pitchfork f$ as required.
\end{proof}

We have now shown:

\begin{proposition}\label{prop:WHEiffWE}
Assuming the fibration extension property, a map $ f : X \to Y$ is a weak homotopy equivalence if and only if it is a weak equivalence.  The weak equivalences $\mathcal{W}$ therefore satisfy the 3-for-2 condition.
\end{proposition}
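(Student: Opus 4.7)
The plan is to reduce the statement to the already-established results by a single factorization. Lemma \ref{lem:WEimpliesWHE} gives the implication $\mathcal{W}\subseteq\mathsf{WHE}$, so only the converse requires argument.

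For the converse, given an arbitrary map $f: X\to Y$ I would apply the cofibration weak factorization system from Section \ref{sec:cofibrations} to factor
\[
f = q\circ j\,, \qquad j:X\mono Z \text{ a cofibration},\quad q:Z\fib Y \text{ a trivial fibration}.
\]
Since $q\in\mathsf{TFib}\subseteq\mathcal{W}$, Lemma \ref{lem:WEimpliesWHE} tells us $q$ is a weak homotopy equivalence. Assuming $f$ is a weak homotopy equivalence, Lemma \ref{lemma:WHE342} (3-for-2 for weak homotopy equivalences) then forces $j$ to be a weak homotopy equivalence as well. Now $j$ is a cofibration and a weak homotopy equivalence, so Corollary \ref{cor:CofWHEtoWE}(1) (which is where the fibration extension property is invoked) yields $j\in\mathsf{TCof}$. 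Hence $f = q\circ j\in \mathsf{TFib}\circ\mathsf{TCof}=\mathcal{W}$, as required.

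With the equivalence $\mathcal{W}=\mathsf{WHE}$ in hand, the 3-for-2 property for $\mathcal{W}$ is an immediate transfer of the same property for weak homotopy equivalences: if $e=f\circ g$ and two of the three sides lie in $\mathcal{W}$, then those two are in $\mathsf{WHE}$ by the forward direction, the third is in $\mathsf{WHE}$ by Lemma \ref{lemma:WHE342}, and hence the third is in $\mathcal{W}$ by the converse direction just established.

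No genuine obstacle arises at this stage, since the work has been concentrated in the preceding lemmas; the only conceptual point to check is that Corollary \ref{cor:CofWHEtoWE}(1) applies to the left factor $j$ produced by the cofibration weak factorization system, which it does by construction. The remaining substantive content of the paper thus lies in establishing the fibration extension property of Definition \ref{def:fibextreplace}, on which Corollary \ref{cor:CofWHEtoWE} (and hence the present proposition) rests.
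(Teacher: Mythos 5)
Your proof is correct and takes essentially the same route as the paper, which leaves the final factorization step implicit (the paper merely writes ``We have now shown:'' before the proposition). You fill in precisely the one remaining piece — factoring an arbitrary weak homotopy equivalence into a cofibration followed by a trivial fibration, applying 3-for-2 for weak homotopy equivalences and Corollary \ref{cor:CofWHEtoWE}(1) to the left factor — and the transfer of 3-for-2 from weak homotopy equivalences to $\mathcal{W}$ is exactly as the paper intends.
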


The results of this section are summarized in the following.

\begin{theorem}\label{theorem:QMSmodFEP}
Assume the fibration weak factorization system of Definition \ref{def:FibWFSclasses} satisfies the fibration extension property of Definition \ref{def:fibextreplace} (as will be shown  in Corollary \ref{cor:FEP}). 
Then the weak equivalences $\WW$ have the 3-for-2 property, and so by Proposition \ref{prop:FWC}, the classes $(\CC,\WW,\FF)$ form a Quillen model structure.
The weak equivalences $\WW$  are the \emph{weak homotopy equivalences}: those maps $f: X\ra Y$ for which $K^f : K^Y \to K^X$ is bijective on connected components whenever $K$ is fibrant.
\end{theorem}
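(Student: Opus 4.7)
My plan is to observe that this theorem is essentially an assembly of results already proved in Section 4, with the only outstanding obligation being the fibration extension property itself, which the statement explicitly assumes.

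First, I would note that Proposition \ref{prop:FWC} has already verified the premodel structure unconditionally: the two interlocking weak factorization systems $(\CC,\, \WW\cap\FF) = (\CC,\, \TFib)$ and $(\CC\cap\WW,\, \FF) = (\TCof,\, \FF)$ are in place, together with the identifications $\TFib = \WW\cap\FF$ and $\TCof = \CC\cap\WW$. This settles condition (1) of Definition \ref{def:qmsviaJT}. No additional work is needed here, because the premodel structure was established purely from the weak factorization systems built in Sections \ref{sec:cofibrations} and \ref{sec:fibrations} together with the elementary retract arguments preceding Proposition \ref{prop:FWC}.

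For condition (2), the 3-for-2 property for $\WW$, I would invoke Proposition \ref{prop:WHEiffWE}, which under the FEP hypothesis identifies $\WW$ pointwise with the class of weak homotopy equivalences. Since 3-for-2 for the latter class is immediate (Lemma \ref{lemma:WHE342}), being inherited from the corresponding property for bijections of sets via the functors $\pi_0(K^{(-)})$ for fibrant $K$, the identification transfers 3-for-2 to $\WW$. The identification of $\WW$ with the weak homotopy equivalences is itself also recorded in Proposition \ref{prop:WHEiffWE}, so the final clause of the theorem requires no further argument.

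The only genuinely hard step is the FEP, which the theorem defers; the entire Part 2 of the paper is devoted to supplying it (culminating in Corollary \ref{cor:FEP}). The proof I am proposing for the present theorem is therefore just a one-line assembly: combine Proposition \ref{prop:FWC} with Proposition \ref{prop:WHEiffWE} via Definition \ref{def:qmsviaJT}.
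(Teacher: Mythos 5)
Your proposal is correct and matches the paper's own treatment exactly: the theorem is stated as a summary of Section~4, whose proof is precisely the assembly you describe — Proposition~\ref{prop:FWC} supplies the premodel structure (condition (1) of Definition~\ref{def:qmsviaJT}), and Proposition~\ref{prop:WHEiffWE}, which assumes the FEP, both identifies $\WW$ with the weak homotopy equivalences and deduces 3-for-2 from Lemma~\ref{lemma:WHE342}.
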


The proof of the fibration extension property will be given in Section \ref{sec:FEP}.  It uses the equivalence extension property (Section \ref{sec:EEP}), a universal fibration (Section \ref{sec:U}), and the Frobenius condition (Section \ref{sec:Frobenius}), to which we now turn.

\section{The Frobenius condition}\label{sec:Frobenius}

In this section, we show that the (unbiased) fibration weak factorization system from Section \ref{sec:fibrations} satisfies what has been called the \emph{Frobenius condition}: the left maps are stable under pullback along the right maps (see \cite{van-den-berg-garner}).  This will imply the \emph{right properness} of our model structure: the weak equivalences are preserved by pullback along fibrations.  In the present setting, it then follows that the entire model structure is stable under such a base change.  The Frobenius condition will be used in the proof of the equivalence extension property in Section \ref{sec:EEP}.  

A proof of Frobenius in the related setting of cubical sets \emph{with connections} was given in \cite{GS} using conventional, functorial methods.  By contrast, the type theoretic approach of \cite{CCHM:2018ctt} provides a proof that is much more direct, and can also be modified to work without connections (as in \cite{ABCHFL}).  That approach proves the dual fact that the \emph{pushforward} operation, which is right adjoint to pullback and always exists in a topos, preserves fibrations when applied along a fibration.  This corresponds to the type-theoretic $\Pi$-formation rule, and the proof given in \opcit\ is entirely in type theory.  It also employs a reduction of box filling (in all dimensions) to an apparently weaker condition of \emph{Kan composition} (in all dimensions), which merely ``puts a lid on" the open box, rather than filling it.  This aspect of the type theoretic proof can also be described functorially, but is not used in the proof given here, and will therefore not be discussed further (see \cite{LOPS18} for a description of Kan composition with connections, and \cite{A:composiiton} for the same without connections).

Our proof takes the approach that was used to determine the unbiased fibrations, namely we first establish the result in the \emph{biased but generic} setting, and then transfer it to the unbiased setting by pulling back along the base change $\cSet\to\cSet/_\I$. We first give the second step as a conditional statement.

\begin{proposition}\label{prop:biasedFrobimpliesunbiasedFrob}
Suppose the $\delta$-biased fibrations in $\cSet/_\I$ satisfy the Frobenius condition. Then the unbiased fibrations in $\cSet$ also satisfy the Frobenius condition.
\end{proposition}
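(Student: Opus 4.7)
The plan is to transfer the hypothesised $\delta$-biased Frobenius in $\cSet/_\I$ back to $\cSet$ via the adjunction $\I_! \dashv \I^*$. By Remark \ref{rem:poprodoverI}, every generating unbiased trivial cofibration $c\otimes_i\delta$ in $\cSet$ is $\I_!(\bar c\otimes\delta)$ for a corresponding generating $\delta$-biased trivial cofibration $\bar c\otimes\delta$ in $\cSet/_\I$, where $\bar c: (C,ic)\cof (Z,i)$ is the cofibration $c:C\cof Z$ regarded in $\cSet/_\I$ via the indexing $i:Z\to\I$.

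Now let $f:Y\to X$ be an unbiased fibration, so that $\I^*f = \I\times f$ is a $\delta$-biased fibration in $\cSet/_\I$ by definition. Since $f^*$ in $\cSet$ has both adjoints, it preserves pushouts, retracts, and transfinite composites, hence the saturation of $\mathcal{C}\otimes\delta$; it thus suffices to check the Frobenius condition on a generator $c\otimes_i\delta: Z+_C(C\times\I)\to Z\times\I$ equipped with a codomain map $k: Z\times\I\to X$. The indexing $i$ and the structure map $k$ combine to give a morphism $\bar k:\bar B\to \I^*X$ in $\cSet/_\I$, namely $(z,j)\mapsto (i(z), k(z,j))$, placing $\bar c\otimes\delta$ over the codomain $\I^*X$ of $\I^*f$. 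The hypothesised Frobenius in $\cSet/_\I$ then gives that $(\I^*f)^*(\bar c\otimes\delta)$ is a $\delta$-biased trivial cofibration in $\cSet/_\I$.

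It remains to transfer this conclusion back. A direct pullback computation identifies the two relevant underlying objects: both the $\cSet/_\I$-pullback $(\I^*f)^*(\bar c\otimes\delta)$ and the $\cSet$-pullback $f^*(c\otimes_i\delta)$ have underlying object $\{(y,z,j):f(y)=k(z,j)\}$, the redundant coordinate $i'$ in the former being forced by $i'=i(z)$. Hence $\I_!((\I^*f)^*(\bar c\otimes\delta)) = f^*(c\otimes_i\delta)$. Finally, $\I_!$, as a left adjoint, preserves colimits and retracts, and by the opening paragraph sends each generating $\delta$-biased trivial cofibration $\bar c\otimes\delta$ to the generating unbiased trivial cofibration $c\otimes_i\delta$; so it takes the entire saturation class of $\delta$-biased trivial cofibrations in $\cSet/_\I$ into the class of unbiased trivial cofibrations in $\cSet$. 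Therefore $f^*(c\otimes_i\delta)$ is an unbiased trivial cofibration, as required.

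The main subtlety is the structure-map bookkeeping in the middle paragraph: the indexing $i$ must be woven into both the cofibration $\bar c$ and the lifted structure map $\bar k:\bar B\to \I^*X$, so that $\bar c\otimes\delta$ genuinely lives over $\I^*X$ in $\cSet/_\I$. Once this is in place, the transfer is a formal consequence of the $\I_!\dashv \I^*$ adjunction together with the hypothesised biased Frobenius; no analysis of biased fibrant structure on individual maps is required.
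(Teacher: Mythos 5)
Your argument is correct, but it takes a genuinely different — and more laborious — route than the paper's. The paper works with the \emph{adjoint} (pushforward) formulation of Frobenius: given unbiased fibrations $B\fib A\fib X$ in $\cSet$, it forms $A_*B\to X$ and observes that because $\I^*:\cSet\to\cSet/_\I$ preserves the locally cartesian closed structure, one has $\I^*(A_*B)\cong(\I^*A)_*(\I^*B)$; since $\I^*B\to\I^*A\to\I^*X$ are $\delta$-biased fibrations, the hypothesized biased Frobenius immediately gives that $\I^*(A_*B)\to\I^*X$ is a $\delta$-biased fibration, hence $A_*B\to X$ is unbiased. No generators, no small object argument, no saturation bookkeeping. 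You instead work with the pullback formulation, reduce to the generating trivial cofibrations $c\otimes_i\delta$ using the fact that $f^*$ preserves colimits and retracts (and hence the saturated class), lift the generator and its structure map into $\cSet/_\I$, apply biased Frobenius there, and then push back through $\I_!$. This is all correct, and your identification of $\I_!\big((\I^*f)^*(\bar c\otimes\delta)\big)$ with $f^*(c\otimes_i\delta)$ is a genuine and necessary computation that the paper's proof entirely sidesteps. The price is that your argument implicitly needs the $\delta$-biased weak factorization system in $\cSet/_\I$ to be cofibrantly generated (so that the $\delta$-biased trivial cofibrations are exactly retracts of cellular maps, which is what makes the final transfer step along $\I_!$ go through); the paper never needs to set this up because the pushforward version of Frobenius is a statement about the right class alone. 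So: both are valid, and by the usual adjointness between the two formulations of Frobenius they prove the same thing, but the paper's route is considerably shorter and requires less infrastructure, while yours is more explicit about where the indexing data $i$ and the structure map $k$ enter.
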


\begin{proof}
This follows almost immeditely from the fact that the pullback functor $\I^* : \cSet \to \cSet/_\I$ preserves the locally cartesian closed structure, takes unbiased fibrations to $\delta$-biased ones, and reflects $\delta$-biased fibrations to unbiased ones.  In detail, let unbiased fibrations $B \fib A$ and $A \fib X$ in $\cSet$ be given, and we wish to find $C\fib X$ and $e : A\times_X C \to B$ over $A$, universal in the way recalled in the diagram below.
\begin{equation}\label{diagram:biasedFrobenius2unbiasedFrobenius}
\begin{tikzcd}
A \times_X C \ar[d, dotted, "e"] \ar[r, dotted] & C \ar[dd, two heads, dotted]\\
B \ar[d, two heads] &   \\
A \ar[r, two heads]  & X 
\end{tikzcd}
\end{equation}
Take the pushforward $C := A_*B \to X$, and its associated map $e : A\times_X C \to B$, in the locally cartesian closed category $\cSet$.  Since fibrations are stable under (all) pullbacks, it then suffices to show that $C\to X$ is a fibration.  

By definition, $C\to X$ is an unbiased fibration in $\cSet$ just in case the base change $\I^*C \to \I^*X$ is a $\delta$-biased fibration in the slice category $\cSet/_\I$.  Since the pullback functor $\I^* : \cSet \to \cSet/_\I$ preserves all lcc structure, over $\I^*X$ we have an iso,
\[
\I^*C = \I^*(A_*B) \cong (\I^*A)_*\I^*B\,,
\]
where the pushforward $(\I^*A)_*\I^*B$ is taken in the topos $\cSet/_\I$.  But $\I^*B \to \I^*A$ and $\I^*A \to \I^*X$ are $\delta$-biased fibrations in $\cSet/_\I$ because $B \to A$ and $A \to X$ were assumed to be unbiased fibrations in $\cSet$.  Since we are assuming the Frobenius condition for $\delta$-biased fibrations in $\cSet/_\I$, the pushforward $\I^*C \cong (\I^*A)_*\I^*B \to \I^*X$ is also a $\delta$-biased fibration, as required.
\end{proof}
  
\subsection*{Frobenius for biased fibrations.}

The results proved in this section will be applied to the slice category $\cSet/_\I$ and the generic point $\delta : 1 \ra \I = \I^*\I$, but nothing depends on this particular case, and so we shall write simply $\delta : 1\to \I$ for a chosen pointed object in an arbitrary topos $\EE$.  (Indeed, in this section $\EE$ may even be just a locally cartesian closed category with a class of cofibrations in the sense of Appendix A.)

Recall from Definition \ref{def:unbiasedfibration} that a map $f:A\ra X$  is a $\delta$-biased fibration just if the map $\delta \Rightarrow f$ admits a relative +-algebra structure, and is therefore a trivial fibration.  The definition of the pullback-hom $\pbh{\delta}{f}$ is recalled below.
\begin{equation}\label{diagram:frobenius1}
\xymatrix{
A^\I \ar@/_3ex/ [rdd]_{f^\I} \ar@{.>}[rd]^{\delta\Rightarrow{f}} \ar@/^3ex/ [rrd]^{A^{\delta}}  && \\
& X^\I \times_{X} A \ar[d] \ar[r] & A \ar[d]^{f} \\
& X^\I \ar[r]_{X^{\delta}} &  X
}
\end{equation}
Let us write this condition schematically as follows:
\begin{equation}\label{diagram:frobenius2}
\xymatrix{
A^\I \ar[r]|| \ar[r]  & A_\epsilon\ar[d] \pbcorner \ar[r] & A \ar[d]^f \\
& X^\I \ar[r]_{\epsilon} &  X
}
\end{equation}
where $\epsilon = X^{\delta}$ and $A_\epsilon = X^\I \times_{X} A$, and the struck-through arrow indicates that it admits a +-algebra structure.

\begin{lemma}\label{lemma:fibrationspullback}
Let  $A \ra X$ be a $\delta$-biased fibration and $t: Y\ra X$ any map, then the pullback $t^*A \ra Y$ is also a $\delta$-biased  fibration.
\end{lemma}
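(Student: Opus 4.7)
My approach would be to reduce this to the already-established fact that trivial fibrations are stable under pullback (Corollary \ref{cor:plusalgprops}(4)). By the definition of $\delta$-biased fibration, what I need to show is that the pullback-hom $\delta \Rightarrow t^*f$ is a trivial fibration, given that $\delta \Rightarrow f$ is. So the key step is to identify $\delta \Rightarrow t^*f$ as a pullback of $\delta \Rightarrow f$.

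To do this, I would use that $(-)^\I$ is a right adjoint, hence preserves the pullback square defining $t^*A = Y\times_X A$; this yields $(t^*A)^\I \cong Y^\I \times_{X^\I} A^\I$. For the codomain of the pullback-hom, there is the trivial identification $Y^\I \times_Y t^*A \cong Y^\I \times_X A$. Unwinding the universal property of the pullback-hom from \eqref{diagram:frobenius1} then shows that the evident square
\[
\begin{tikzcd}
(t^*A)^\I \ar[r] \ar[d, "\delta \Rightarrow t^*f"'] & A^\I \ar[d, "\delta \Rightarrow f"] \\
Y^\I \times_Y t^*A \ar[r] & X^\I \times_X A
\end{tikzcd}
\]
is a pullback, where the horizontal maps are those canonically induced by $t:Y\to X$ and the pullback projection $t^*A \to A$. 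With this square identified, stability of trivial fibrations under pullback finishes the argument.

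The only technical point is the verification of the pullback square above, which amounts to a short diagram chase comparing two universal properties. An entirely equivalent route, perhaps more efficient, uses Lemma \ref{lemma:Leibniz} to recast the condition of being a $\delta$-biased fibration as the right lifting property against the set of maps $\{c \otimes \delta \mid c \in \mathcal{C}\}$; since the right class of any lifting problem is automatically stable under pullback, the result follows with essentially no calculation. I would probably write the proof in this second style to keep it short, then remark on the compatibility with the pullback-hom formulation for use in later arguments.
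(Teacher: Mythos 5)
Your proposal is correct and matches the paper's proof: both establish the lemma by exhibiting $\delta\Rightarrow t^*f$ as a pullback of $\delta\Rightarrow f$ (using that $(-)^\I$ preserves the defining pullback and comparing codomains), and both also note the quicker route via Lemma~\ref{lemma:Leibniz} and stability of the right lifting class under pullback. The one thing to note is that the paper deliberately elaborates the pullback-hom argument rather than relegating it to a remark, because what is reused later (in the construction of the universal fibration, cf.\ the Remark following the lemma) is that the fibration \emph{structure} -- the relative $+$-algebra -- pulls back, not merely the lifting property.
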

\begin{proof}
This is clear from the fact that the $\delta$-biased fibrations can be made into the right class of a weak factorization system (by reasoning analogous to that for Proposition \ref{prop:fibrationwfs}), but it will be useful to see how the structure indicated in \eqref{diagram:frobenius1} is itself stable under pullback.  Indeed, consider the following commutative diagram, in which the front face of the cube is the pullback in question, and the right and left sides are the respective versions of the construction in \eqref{diagram:frobenius1}.
\begin{equation}\label{diagram:fibrationspullback}
\xymatrix{
 (t^*A)^\I \ar[rr] \ar[d] && A^\I \ar[d] & \\
  (t^*A)_\epsilon \ar[dd] \ar[rd] \ar@{.>}[rr] && A_\epsilon \ar[dd] \ar[rd] & \\
  & t^*A \ar[rr] \ar[dd]  && A \ar[dd]  \\
Y^\I \ar[rd]_\epsilon \ar[rr] & &  X^\I \ar[rd]_\epsilon & \\
 & Y \ar[rr]_t && X 
 }
\end{equation}
The rear square of solid arrows is the image of the front face under the pathobject functor and is therefore also a pullback. The base commutes by the naturality of the maps $\epsilon$, as does a corresponding top square involving further such $\epsilon$'s not shown.  Note that these naturality squares need not be pullbacks, but the vertical squares on the sides are, by construction.  It follows that there is a dotted arrow as shown, making the resulting lower rear square commute.  That lower square is then also a pullback, since the other vertical faces of the resulting cube are pullbacks, and thus finally, the upper rear square is also a pullback.  

Now if $A\to X$ is a $\delta$-biased fibration, then $A^\I \to A_\epsilon$ is a trivial fibration, and then so is its pullback $(t^*A)^\I \to (t^*A)_\epsilon$ since relative $+$-algebras are stable under pullback.  Therefore the pullback $t^*A\to Y$ is also a $\delta$-biased fibration.
\end{proof}

\begin{remark}
In this way we can show algebraically that the pullback of a $\delta$-biased fibration is again one by pulling back the structure that makes it so.  In Section \ref{sec:universalfibration}, the pullback stability of the fibration structure will be used in the construction of a universal fibration via a closely related argument.
\end{remark}

\begin{lemma}\label{lemma:fibrationscompose}
Let  $\alpha : A \ra X$ and $\beta: B\ra A$ be $\delta$-biased fibrations, then the composite $\alpha\circ\beta : B \ra X$ is also a $\delta$-biased fibration.
\end{lemma}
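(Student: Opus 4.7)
The plan is to give an algebraic proof in the style of Lemma \ref{lemma:fibrationspullback}, by exhibiting the pullback-hom $\delta\Rightarrow(\alpha\beta)$ as a composite of two trivial fibrations. The key observation is the factorization
\[
\xymatrix{
B^\I \ar[r]^-{\delta\Rightarrow\beta} & A^\I \times_A B \ar[r] & X^\I \times_X B,
}
\]
whose composite is $\delta\Rightarrow(\alpha\beta)$. The first arrow is simply $\delta\Rightarrow\beta$ applied to the fibration $\beta$. The second arrow is induced by $\alpha^\I : A^\I \to X^\I$ on the first factor and the identity on $B$; using the canonical pullback-pasting isomorphism $(X^\I \times_X A)\times_A B \cong X^\I \times_X B$, this arrow is recognized as the pullback of $\delta\Rightarrow\alpha : A^\I \to X^\I \times_X A$ along $\beta : B\to A$.

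Granting the factorization, the argument then concludes immediately. By hypothesis $\delta\Rightarrow\beta$ is a trivial fibration; by hypothesis $\delta\Rightarrow\alpha$ is a trivial fibration, and its pullback along $\beta$ --- which is exactly the second arrow above --- is therefore also a trivial fibration, by pullback stability (Corollary \ref{cor:plusalgprops}(4)). Trivial fibrations compose (Corollary \ref{cor:plusalgprops}(2)), so the total composite $\delta\Rightarrow(\alpha\beta)$ is a trivial fibration, which is precisely the condition for $\alpha\beta$ to be a $\delta$-biased fibration.

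The only real work is the diagrammatic bookkeeping to verify the factorization, which is entirely analogous to the cube diagram \eqref{diagram:fibrationspullback}: one stacks the two pullback-hom constructions \eqref{diagram:frobenius1} (one for $\alpha$, one for $\beta$) on top of one another and uses pullback pasting, together with the fact that the pathobject functor preserves the composite $\alpha\beta$, to identify the middle column as $A^\I \times_A B$. I do not anticipate any genuine obstacle here, and the same argument will transfer verbatim to the unbiased setting of $\cSet$ by way of Proposition \ref{prop:biasedFrobimpliesunbiasedFrob} (or, more directly, by running it in $\cSet/_\I$ with the generic point). A purely lifting-theoretic proof --- take a square against $\alpha\beta$, lift first against $\alpha$, then against $\beta$ --- is of course also available, but the algebraic version is preferable here because it produces composite structure that is itself stable under further pullbacks, as will be needed downstream in Section \ref{sec:Frobenius}.
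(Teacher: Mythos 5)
Your proof is correct and follows essentially the same route as the paper's: the paper factors $B^\I \to B_{\epsilon_X}$ through $B_{\epsilon_A} = A^\I\times_A B$, identifying the first leg with $\delta\Rightarrow\beta$ and the second with the pullback along $\beta$ of $\delta\Rightarrow\alpha$, exactly as you do, and likewise notes up front that the statement is immediate from fibrations being a right class of a WFS while preferring the algebraic version for later use. The only cosmetic difference is that the paper phrases the closure steps via the structured notion (canonical pullback and composition of $+$-algebra structures, cf.\ Remark~\ref{trivfibpushforward}) rather than citing Corollary~\ref{cor:plusalgprops}, but these are interchangeable.
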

\begin{proof}
Again for maps in the right class of a weak factorization system this is immediate.  But let us see how the fibration structures also compose.  We have the following diagram for the fibration structures on $B\ra A$ and $A\ra X$ (with obvious notation).
\begin{equation}\label{diagram:fibcomposition1}
\xymatrix{
B^\I \ar[r]|| \ar[r]  & B_{\epsilon_A} \ar[rr]  \ar[d] \pbcorner & & B \ar[d]\\
&A^\I \ar[r]|| \ar[r] & A_{\epsilon_X} \ar[d] \pbcorner \ar[r] & A \ar[d]\\
&& X^\I \ar[r]_{\epsilon_X} &  X,
}
\end{equation}
Pulling back $B\ra A$ in two steps we therefore obtain the intermediate map $B_{\epsilon_X} \to A_{\epsilon_X}$  indicated in the following diagram. 
\begin{equation}\label{diagram:fibcomposition2}
\xymatrix{
B^\I \ar[r]|| \ar[r]   & B_{\epsilon_A} \ar[r]  \ar[d] \pbcorner & B_{\epsilon_X}  \ar[d] \pbcorner \ar[r] & B \ar[d]\\
&A^\I \ar[r]|| \ar[r]  & A_{\epsilon_X} \ar[d] \pbcorner \ar[r] & A \ar[d]\\
&& X^\I \ar[r]_{\epsilon_X} &  X
}
\end{equation}
Now use the fact that a trivial fibration structure (\ie\ a +-algebra structure) has a canonical pullback along any map, and that two such structures have a canonical composition (cf.\ Remark \ref{trivfibpushforward}), to obtain a trivial fibration structure for the indicated composite map $B^\I \ra B_{\epsilon_X}$, which is then a fibration structure for the composite $B\to A\to X$.
\end{proof}

\begin{proposition}[$\delta$-Biased Frobenius]\label{prop:Frobenius}
If $\alpha : A \ra X$ and $\beta: B\ra A$  are $\delta$-biased fibrations, then the pushforward $\alpha_*\beta : \Pi_AB \ra X$ is also a $\delta$-biased fibration.
\end{proposition}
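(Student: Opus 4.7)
The plan is to verify the right lifting property of $\alpha_*\beta : \Pi_A B \to X$ against the generating $\delta$-biased trivial cofibrations $c\otimes\delta : Z+_C(C\times\I)\cof Z\times\I$, where $c:C\cof Z$ ranges over cofibrations in $\EE$. Given such a lifting problem
\begin{equation*}
\xymatrix{
Z+_C(C\times\I) \ar[d]_{c\otimes\delta} \ar[r] & \Pi_A B \ar[d]^{\alpha_*\beta} \\
Z\times\I \ar[r] & X \,,
}
\end{equation*}
the adjunction $\alpha^*\dashv\alpha_*$ between the slice categories $\EE/_X$ and $\EE/_A$ transposes diagonal fillers bijectively into diagonal fillers of a corresponding square in $\EE/_A$ with $\beta : B \to A$ on the right vertical.

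The crux of the argument is to identify the left vertical of the transposed square. Since the pullback functor $\alpha^* : \EE/_X \to \EE/_A$ is logical --- preserving all locally cartesian closed structure and, in particular, the constant factor $\I$ together with the pulled-back biasing point $\delta$ --- it commutes with the formation of pushout-products, yielding a canonical identification of $\alpha^*(c\otimes\delta)$ with $\alpha^*c\otimes\delta$. By axiom (C3), cofibrations are pullback-stable, so $\alpha^*c : \alpha^*C \cof \alpha^*Z$ is again a cofibration, and the transposed problem therefore presents a generating $\delta$-biased trivial cofibration on the left. Since $\beta$ is a $\delta$-biased fibration by hypothesis, the required diagonal filler exists; transposing it back across $\alpha^*\dashv\alpha_*$ solves the original square.

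In the spirit of Lemmas \ref{lemma:fibrationspullback} and \ref{lemma:fibrationscompose}, this content admits an algebraic refinement in which a $+$-algebra structure on $\delta\Rightarrow\beta$ is transported by $\alpha_*$ --- which preserves relative trivial fibrations, being right adjoint to a cofibration-preserving $\alpha^*$ --- to a $+$-algebra structure on $\delta\Rightarrow(\alpha_*\beta)$, via the canonical compatibility of $\alpha_*$ with pullbacks and with the exponential by the constant $\I$. I expect the main technical labor of the full write-up to lie in the careful diagrammatic verification that $\alpha^*$ genuinely commutes with all the ingredients assembling $c\otimes\delta$ --- the pushout, the constant cylinder factor $\I$, and the biasing point $\delta$ itself --- so that the transposed left vertical is a generating trivial cofibration on the nose rather than merely a retract or pushout of one, and in recording explicitly how the hypotheses on $\alpha$ and $\beta$ enter the resulting construction.
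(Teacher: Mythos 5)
There is a genuine gap, and it lies exactly where you expected the technical labor to be: the claim that $\alpha^*(c\otimes\delta)$ is canonically identified with $\alpha^*c\otimes\delta$ is false in general.

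When you regard $c\otimes\delta : Z+_C(C\times\I) \cof Z\times\I$ as a morphism of $\EE/_X$, the structure map of the codomain is the arbitrary bottom arrow $g : Z\times\I\to X$ of the lifting problem. There is no reason for $g$ to factor through the projection $\pi_1 : Z\times\I\to Z$, and when it does not, the object $Z\times\I$ over $X$ is \emph{not} of the form $Z\times_X X^*\I$ for $Z$ equipped with any structure map to $X$. Consequently $\alpha^*(Z\times\I) = A\times_X(Z\times\I)$, with base change taken along $g$, is not naturally isomorphic to $(A\times_X Z)\times\I$, and the pullback $\alpha^*(c\otimes\delta)$ is not a pushout-product of the form $c'\otimes\delta$ for any cofibration $c'$. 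Your appeal to the "logical" nature of $\alpha^*$ only applies to structure built internally to $\EE/_X$, but the $\I$-factor in the codomain of $c\otimes\delta$ is external to $\EE/_X$ in the relevant lifting problems: via the $\otimes\dashv\Rightarrow$ adjunction, lifting against squares with $g$ factoring through $\pi_1$ corresponds only to lifting of $\delta\Rightarrow f$ against cofibrations into the constant paths $Z\to X\to X^\I$, which is strictly weaker than $\delta\Rightarrow f$ being a trivial fibration. So the transposed left vertical lands outside the generating class, and indeed outside the class of maps you currently know to be trivial cofibrations: establishing that pullback along a fibration preserves trivial cofibrations is exactly the Frobenius condition being proved, so the transposition argument as written is essentially circular. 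The "algebraic refinement" paragraph has the same problem in disguise: $\alpha_*$ does not commute with $\delta\Rightarrow(-)$, so it does not directly transport a $+$-algebra structure from $\delta\Rightarrow\beta$ to $\delta\Rightarrow(\alpha_*\beta)$.

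The paper's proof is of a different character precisely because of this obstruction. Rather than transposing lifting problems, it works with the pullback-hom characterization directly and exhibits $\delta\Rightarrow(\alpha_*\beta)$ as a \emph{retract} of $\Pi_{A^\I}\bigl(\delta\Rightarrow\beta\bigr)$, which is a trivial fibration because pushforward preserves $+$-algebras. Constructing this retract is the substance of the argument: the maps $\varphi$, $\psi$, $\tau$ in the proof, the use of the counit of $\alpha^*\dashv\alpha_*$ applied under $(-)^\I$, the section of the trivial fibration $a : A^\I\to A_\epsilon$, and the manipulation of the evaluation at $\delta$ via the $\I^*\dashv\I_*$ adjunction. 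None of this reduces to a preservation property of $\alpha^*$, which is why the proof is so much longer than your sketch anticipates. (In settings with connections one \emph{can} get the pullback-stability of the generating class, which is why the Gambino--Sattler approach you are implicitly channelling works there, but not here.)
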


\begin{proof}
Given $\delta$-biased fibrations $\alpha : A \ra X$ and $\beta: B\ra A$, let $a : A^\I \ra A_\epsilon$ and $b : B^\I \ra a^*B_\epsilon$ be the associated trivial fibrations, so that we have the situation of diagram \eqref{diagram:fibcomposition2}, with all three squares pullbacks.
\begin{equation}\label{diagram:frobenius1.5}
\xymatrix{
B^\I \ar[r]|| \ar[r]^{b} \ar[rd]_{\beta^\I}  & a^*B_\epsilon \ar[r]  \ar[d]  & B_{\epsilon}  \ar[d]   \ar[r] & B \ar[d]^\beta \\
&A^\I \ar[r]|| \ar[r]^{a} \ar[rd]_{\alpha^\I}  & A_{\epsilon} \ar[d]   \ar[r] & A \ar[d]^\alpha \\
&& X^\I \ar[r]_{\epsilon} &  X.
}
\end{equation}
Taking the pushforward of the righthand vertical column gives a map, 
\[
\gamma:= \alpha_*\beta : \Pi_A{B} \to X\,,
\]
and placing it underneath, along with the corresponding construction from \eqref{diagram:frobenius1}, we then have the following commutative diagram.
 \begin{equation}\label{diagram:frobenius2.5}
\xymatrix{
B^\I \ar[r]|| \ar[r]^{b} \ar[rd]_{\beta^\I}  & a^*B_\epsilon \ar[r]  \ar[d]  & B_{\epsilon}  \ar[d]   \ar[r] & B \ar[d]^\beta \\
& A^\I \ar[r]|| \ar[r]^{a} \ar[rd]_{\alpha^\I}  & A_{\epsilon} \ar[d]   \ar[r] & A \ar[d]^\alpha \\
&& X^\I \ar[r]_{\epsilon} &  X \\
& (\Pi_AB)^\I  \ar[r]_c  \ar[ru]^{\gamma^\I} & (\Pi_AB)_\epsilon  \ar[u] \ar[r] & \Pi_AB \ar[u]_\gamma 
}
\end{equation}
We wish to show that the indicated  map $c : (\Pi_AB)^\I \ra (\Pi_AB)_\epsilon$
admits a +-algebra structure. This we will do by showing that it is a retract of a known +-algebra.
Namely, we can apply the pushforward along the map $\alpha^\I:A^\I \ra X^\I$ to the +-algebra $b : B^\I \ra a^*B_\epsilon$ regarded as an arrow over $A^\I$.  We obtain an arrow over $X^\I$ of the form
\begin{equation}\label{plusalgretract}
\Pi_{A^\I}\,b :  \Pi_{A^\I}\,B^\I \too \Pi_{A^\I}\,a^*B_\epsilon 
\end{equation}
which is indeed a +-algebra, since these are preserved under pushing forward, by Remark \ref{trivfibpushforward}.

Next, observe that by the Beck-Chevalley condition for the central pullback, for the codomain of $c$ we have an isomorphism
\[
(\Pi_AB )_\epsilon\ \cong\ \Pi_{A_\epsilon} B_\epsilon \qquad \text{over $X^\I$.}
\]
And since $\Pi_{A^\I} \cong \Pi_{A_\epsilon} \circ a_*$, for the codomain of our $+$-algebra $\Pi_{A^\I}\,b$ from \eqref{plusalgretract} we also have
\[
 \Pi_{A^\I}\,a^*B_\epsilon\ \cong\ \Pi_{A_\epsilon} a_*a^* B_\epsilon \,.
\]
Thus the image of the unit $\eta : B_\epsilon \ra a_*a^* B_\epsilon$ under $\Pi_{A_\epsilon}$ provides a map 
$\sigma := \Pi_{A_\epsilon}\eta$ over $X^\I$ of the form:
\begin{equation}\label{diagram:frobenius3}
\xymatrix{
& X^\I  \\
 (\Pi_AB)^\I  \ar[r]_c   \ar[ru] & \Pi_{A_\epsilon}B_\epsilon  \ar[u] \ar[d]^{\sigma}\\
 \Pi_{A^\I}B^\I \ar[r]_-{\Pi_{A^\I}\,b} & \Pi_{A_\epsilon} a_*a^* B_\epsilon
}
\end{equation}
Our goal is now to determine further arrows $\varphi, \psi,\tau$ as indicated below, exhibiting $c$ as a retract of $\Pi_{A^\I}\,b$ in the arrow category over $X^\I$.
\begin{equation}\label{diagram:frobenius4}
\xymatrix{
& X^\I  \\
 (\Pi_AB)^\I  \ar[r]_c  \ar[ru] \ar@{..>}[d]_\varphi & \Pi_{A_\epsilon}B_\epsilon  \ar[u] \ar[d]^{\sigma}\\
 \Pi_{A^\I}B^\I \ar[r]_-{\Pi_{A^\I}\,b} \ar@{..>}[d]_\psi & \Pi_{A_\epsilon} a_*a^* B_\epsilon \ar@{..>}[d]^\tau\\
  (\Pi_AB)^\I  \ar[r]_c  & \Pi_{A_\epsilon}B_\epsilon
}
\end{equation}

\smallskip

\noindent $\bullet$ For $\varphi$, we require a map 
\[
\varphi : (\Pi_AB)^\I \ra \Pi_{A^\I}B^\I \qquad \text{over $X^\I$.}
\]

Consider the following diagram, which is based on \eqref{diagram:frobenius2}.
\begin{equation}\label{diagram:frobenius7}
\xymatrix{
B^\I \ar[r]|| \ar[r]_{b} \ar[rd]_{\beta^\I}  & a^*B_\epsilon \ar[r]  \ar[d]  & B_{\epsilon}  \ar[d]   \ar[r] & B \ar[d]^\beta & \\
& A^\I \ar[r]|| \ar[r]_{a}  \ar[rd]_{\alpha^\I}  & A_{\epsilon} \ar[d]   \ar[r] & A \ar[d]^\alpha &  \ar[d] \ar[l] \ar@{..>}[lu]_{e}\Pi_AB\times_{X} A\\
 (\Pi_AB \times_{X} A)^\I \ar@{..>}[uu]^{e^\I} \ar[ru] \ar[rd] && X^\I \ar[r]_{\epsilon} &  X &   \Pi_AB \ar[l]\\
& (\Pi_AB)^\I \ar[ru] \ar@{..>}[r]_\varphi  & \Pi_{A^\I}B^\I   \ar[u] & &
}
\end{equation}
The map $e$ is the counit at $\beta: B\ra A$ of the pullback-pushforward adjunction along $\alpha: A\ra X$. The right-hand side of the diagram, including $e$ and the associated pullback square, reappears (mirrored) on the left under the functor $(-)^\I$, which preserves the pullback. Thus we can take $\varphi$ to be the transpose of $e^\I$ under the pullback-pushforward adjunction along $\alpha^\I: A^I\ra X^\I$,
\[
\varphi\, :=\, \widetilde{e^\I}\,.
\]
An easy diagram chase involving the pullback-pushforward adjunction along $A_\epsilon\ra X^\I$ shows that the upper square in \eqref{diagram:frobenius4} then commutes.

\smallskip

\noindent $\bullet$ For $\tau$: referring to the diagram \eqref{diagram:frobenius2}, since $a : A^\I \ra A_\epsilon$ is a trivial fibration, it has a section $o :  A_\epsilon \ra A^\I$ by lemma \ref{cor:plusalgprops}.  Pulling  $a^*B_\epsilon \ra A^\I$ back along $o$ results in an iso,
\[
o^*a^* B_\epsilon \cong B_\epsilon\quad\text{over $A_\epsilon$}
\]
and so by the adjunction $o^*\!\dashv o_*$ there is an associated map,
\[
a^* B_\epsilon \ra o_* B_\epsilon\quad\text{over $A^\I$}
\]
to which we can apply $a_*$ to obtain a map,
\[
t : a_*a^* B_\epsilon \ra a_*o_*B_\epsilon \cong B_\epsilon\quad \text{over $A_\epsilon$\,.}
\]
This map $t$ is evidently a retraction of the unit $\eta : B_\epsilon \ra a_*a^* B_\epsilon$ over $A_\epsilon$.  Applying the functor $ \Pi_{A_\epsilon}$ therefore gives the desired retraction  of $\sigma$, 
\[
\tau\, :=\, \Pi_{A_\epsilon}t :  \Pi_{A_\epsilon}a_*a^* B_\epsilon \ra \Pi_{A_\epsilon}B_\epsilon\,.
\]

\medskip
\noindent $\bullet$ For $\psi$, we require a map 
\[
\psi:\Pi_{A^\I}B^\I \ra (\Pi_AB)^\I \qquad \text{over $X^\I$.}
\]
Consider the following diagram resulting from combining \eqref{diagram:frobenius2} and \eqref{diagram:frobenius4}, in which all solid arrows are those already introduced. The dotted arrow labelled $p$ is the evident composite.
\begin{equation}\label{diagram:frobenius8}
\xymatrix{
& X^\I \ar[r]^\epsilon & X \\
 (\Pi_AB)^\I  \ar[r]  \ar[ru] \ar[d] & \Pi_{A_\epsilon}B_\epsilon  \ar[u] \ar[d] \ar[r] & \Pi_{A}B \ar[u]\ar[dd]^= \\
 \Pi_{A^\I}B^\I \ar[r] \ar@{..>}[rrd]^>>>>>>>>>>>{p} & \Pi_{A_\epsilon} a_*a^* B_\epsilon \ar[d] & \\
  (\Pi_AB)^\I  \ar[r]  & \Pi_{A_\epsilon}B_\epsilon \ar[r] & \Pi_{A}B 
}
\end{equation}
The lower horizontal composite is the evaluation of the pathobject $(\Pi_AB)^\I$ at the point $\delta : 1 \to \I$, 
\[
\epsilon_{\Pi_AB} = (\Pi_AB)^\delta : (\Pi_AB)^\I \too (\Pi_AB)^1 \cong \Pi_AB\,.
\] 
This is constructed from the (cartesian closed) evaluation,
\[
\eval : \I \times (\Pi_AB)^\I \too \Pi_AB
\]
which is the counit of $\I\times(-) \dashv (-)^\I$, as the composite shown below.
\begin{equation}\label{diagram:biasedfrobenius}
\xymatrix{
(\Pi_AB)^\I \ar[d]_{\cong} \ar[rr]^{\epsilon_{\Pi_AB}} && \Pi_AB  \\
1\times (\Pi_AB)^\I \ar[rr]_{\delta\times{(\Pi_AB)^\I}} && \ar[u]_{\eval}  \I \times (\Pi_AB)^\I 
}
\end{equation}
Let us analyse this evaluation at $\delta$ further, in terms of the \emph{locally} cartesian closed structure associated to the base changes along the section $\delta : 1\to \I$  and retraction $\I \to 1$ in~$\EE$.
Since $\mathsf{id}  \cong \delta^*\I^* : \EE \to \EE/_\I \to \EE$, the map $\epsilon_{\Pi_AB}$ can be rewritten as follows.
\begin{equation}\label{diagram:biasedfrobenius2}
\xymatrix{
(\Pi_AB)^\I \ar[d]_{\cong} \ar[rr]^{\epsilon_{\Pi_AB}} && \Pi_AB  \ar[d]^{\cong} \\
\delta^*\I^*((\Pi_AB)^\I )  \ar[d]_{\cong}  \ar[rr]^{\delta^*\I^*\epsilon_{\Pi_AB}}  &&  \delta^*\I^*\Pi_AB  \ar[d]^{=} \\
\delta^*\I^*\I_*\I^*\Pi_AB \ar[rr]  \ar[rr]_{\delta^*\varepsilon}  & &  \delta^*\I^*\Pi_AB 
}
\end{equation}
where the map $\delta^*\varepsilon$ across the bottom is the counit of the adjunction $\I^*\dashv\I_*$, taken at $\I^*\Pi_AB$, and then pulled back along $\delta : 1\to\I$.  
Before taking the pullback, we therefore have the following iso over $\I$ between that counit $\varepsilon_{\I^*}$ and the image under $\I^*$ of the previously considered evaluation $\epsilon : (\Pi_AB)^\I \to \Pi_AB$ from \eqref{diagram:biasedfrobenius}.
\begin{equation}\label{diagram:biasedfrobenius3}
\xymatrix{
\I^*((\Pi_AB)^\I) \ar[d]_{\cong}  \ar[rrr]^{\\I^*\epsilon} &&& \I^*\Pi_AB  \ar[d]^{=} \\
\I^*\I_*\I^*\Pi_AB   \ar[rrr]_{\varepsilon_{\I^*}}  &&&  \I^*\Pi_AB\,.
}
\end{equation}
Now let us apply $\I^*$ to  \eqref{diagram:frobenius8} to get the map $\I^*p$ in the diagram below, which therefore factors (up to \eqref{diagram:biasedfrobenius3}) through the counit $\varepsilon_{\I^*}$ as $\varepsilon_{\I^*}\circ\I^*(\widetilde{\I^*p})$, where $\widetilde{\I^*p}$ is the adjoint transpose of $\I^*p$, as shown.
\begin{equation}\label{diagram:biasedfrobenius4}
\xymatrix{
\I^*\Pi_{A^\I}B^\I \ar[d]_{\I^*(\widetilde{\I^*p})} \ar[r] \ar@{..>}[rrd]^>>>>>>>>>>>{\I^*p} & \I^*\Pi_{A_\epsilon} a_*a^* B_\epsilon \ar[d] & \\
  \I^*\I_*\I^*\Pi_AB  \ar[r] \ar@/_4ex/ [rr]_{\varepsilon_{\I^*}} & \I^*\Pi_{A_\epsilon}B_\epsilon \ar[r] & \I^*\Pi_{A}B 
}
\end{equation}
We can therefore set 
\[
\psi\, := \, \widetilde{\I^*p}\,,
\]
and we obtain $\epsilon\circ \psi = p$, from which it follows that the square in \eqref{diagram:biasedfrobenius4} commutes by the definition of $\Pi_{A_\epsilon}B_\epsilon$ as a pullback. The same square without $\I^*$ then also commutes by applying the retraction $\delta^*$.  

We have now defined all the maps indicated below, the squares involving $\varphi$ and $\psi$ commute, and the  composite of $\sigma$ and $\tau$  is the identity.
\begin{equation}\label{diagram:frobenius9}
\xymatrix{
& X^\I \ar[r] & X \\
 (\Pi_AB)^\I  \ar[r]  \ar[ru] \ar[d]_{\varphi} 
 	& \Pi_{A_\epsilon}B_\epsilon  \ar[u] \ar[d]^\sigma \ar[r] & \Pi_{A}B \ar[u]\ar[dd]^= \\
 \Pi_{A^\I}B^\I \ar[r] \ar[d]_{\psi} \ar@{..>}[rrd]^>>>>>>>>>>>{p} 
 	& \Pi_{A_\epsilon} p_*p^* B_\epsilon \ar[d]^\tau & \\
  (\Pi_AB)^\I  \ar[r] \ar@/_4ex/ [rr]_\epsilon & \Pi_{A_\epsilon}B_\epsilon \ar[r] & \Pi_{A}B 
}
\end{equation}
To see that $\psi\circ\varphi = 1$, an easy chase through the diagram \eqref{diagram:frobenius9} shows that
\[
\epsilon\circ \psi\circ\varphi = p \circ \varphi = \epsilon\,.
\]
Thus by applying $\I^*$ and using \eqref{diagram:biasedfrobenius3} we have $
\varepsilon_{\I^*} \circ \I^*(\psi \circ\varphi) = \varepsilon_{\I^*}$, 
and so $\psi \circ\varphi = \widetilde{\varepsilon_{\I^*}} = 1$.
\end{proof}

From Proposition \ref{prop:biasedFrobimpliesunbiasedFrob} we then have:

\begin{corollary}[Unbiased Frobenius]\label{cor:unbiasedFrobenius}
The unbiased fibration weak factorization system on $\cSet$ satisfies the Frobenius condition.
\end{corollary}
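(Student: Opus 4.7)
The plan is to simply chain the two preceding results, Propositions \ref{prop:biasedFrobimpliesunbiasedFrob} and \ref{prop:Frobenius}. By Proposition \ref{prop:biasedFrobimpliesunbiasedFrob}, in order to establish the Frobenius condition for the unbiased fibrations in $\cSet$, it suffices to establish the Frobenius condition for the $\delta$-biased fibrations in the slice topos $\cSet/_\I$, where $\delta : 1 \to \I^*\I$ is the generic point, i.e., the diagonal $\I \to \I\times\I$ viewed as an arrow over $\I$.

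Next, I would observe that this is precisely the hypothesis delivered by Proposition \ref{prop:Frobenius}. That proposition was formulated in a generic ambient category $\EE$ equipped with a pointed object $\delta : 1 \to \I$ and a suitable class of cofibrations; it was never specialized to $\cSet$ itself. Setting $\EE := \cSet/_\I$ and taking as the pointed object the generic point $\delta : 1 \to \I^*\I$, the class of cofibrations in $\cSet/_\I$ inherited from $\cSet$ (via the forgetful functor) satisfies the required axioms, and the $\delta$-biased fibrations in Definition \ref{def:unbiasedfibration} agree, after pulling back along $\cSet \to \cSet/_\I$, with the $\delta$-biased fibrations over $\I$. Thus Proposition \ref{prop:Frobenius} applies verbatim to yield the biased Frobenius property in $\cSet/_\I$.

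Combining these two steps produces the corollary. There is no substantive obstacle beyond bookkeeping here, since the real work was done in Proposition \ref{prop:Frobenius} (the construction of the retract $\varphi, \psi, \sigma, \tau$ exhibiting the relevant map $c$ as a retract of a known $+$-algebra) and in Proposition \ref{prop:biasedFrobimpliesunbiasedFrob} (the use of the fact that the pullback functor $\I^* : \cSet \to \cSet/_\I$ preserves the locally cartesian closed structure, together with the preservation and reflection of (un)biased fibration structure under this base change). The only thing one might want to double-check is the identification of the pushforward along $\alpha : A \fib X$ in $\cSet$ with the pushforward in $\cSet/_\I$ after base change, but this is just the Beck--Chevalley isomorphism $\I^*(A_*B) \cong (\I^*A)_*(\I^*B)$ over $\I^*X$, which is already used explicitly in the proof of Proposition \ref{prop:biasedFrobimpliesunbiasedFrob}.
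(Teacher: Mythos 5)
Your proposal is correct and takes exactly the same route as the paper: the paper's own proof of this corollary is just the one-line remark ``From Proposition \ref{prop:biasedFrobimpliesunbiasedFrob} we then have:'' placed immediately after Proposition \ref{prop:Frobenius}, which is precisely the chaining you spell out. The additional bookkeeping you flag---that Proposition \ref{prop:Frobenius} was deliberately stated for an arbitrary topos $\EE$ with a chosen point $\delta:1\to\I$ so that it specializes to $\cSet/_\I$ with the generic point, and that $\I^*$ preserves and reflects the relevant fibration structure via Beck--Chevalley---is all correctly identified and is indeed already handled inside the proofs of the two cited propositions.
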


\begin{corollary}\label{cor:unbiasedPi}
Unbiased fibrations are closed under pushforward along unbiased fibrations.  Thus given unbiased fibrations $X\fib Z$ and $Y\fib Z$ over any base $Z$, the relative exponential $Y^X = X_*X^* Y \to Z$, formed in the slice over $Z$, is again an unbiased fibration.
\end{corollary}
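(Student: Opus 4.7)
The plan is to derive the corollary as an immediate consequence of the Frobenius condition (Corollary \ref{cor:unbiasedFrobenius}) combined with the standard formula for relative exponentials in a locally cartesian closed category.

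For the first assertion, that pushforward along an unbiased fibration preserves unbiased fibrations: this is essentially a direct restatement of what has just been proven. Proposition \ref{prop:Frobenius} establishes it for $\delta$-biased fibrations, and Proposition \ref{prop:biasedFrobimpliesunbiasedFrob} transfers the statement to the unbiased setting. Explicitly, for any composable pair of unbiased fibrations $\beta : B \fib A$ and $\alpha : A \fib Z$, the pushforward $\alpha_*\beta : \Pi_A B \to Z$ is again an unbiased fibration.

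For the second assertion, given unbiased fibrations $p : X \fib Z$ and $q : Y \fib Z$, first I would form the pullback $p^* Y \to X$, which is itself an unbiased fibration by pullback-stability of the fibration weak factorization system (Proposition \ref{prop:fibrationwfs}, since the right class of any weak factorization system is closed under pullback). Then I would apply the pushforward statement just recalled to the composable pair $p^* Y \fib X \fib Z$, concluding that $p_*(p^* Y) \to Z$ is an unbiased fibration. Since the relative exponential $Y^X$ formed in the slice $\cSet/_Z$ is given, by the standard formula in any locally cartesian closed category, as $p_* p^* Y$ equipped with its canonical map to $Z$, this gives exactly the desired conclusion.

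No real obstacle is anticipated, as both ingredients---pullback-stability of fibrations and the Frobenius condition---are established in the immediately preceding material; the corollary is really just a packaging of Frobenius together with the LCC description of relative exponentials.
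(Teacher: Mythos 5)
Your proof is correct and matches the paper's (implicit) intent: the first sentence of the corollary is exactly the unbiased Frobenius statement established via Propositions \ref{prop:Frobenius} and \ref{prop:biasedFrobimpliesunbiasedFrob}, and the second follows by combining pullback-stability of the right class with the LCC formula $Y^X = p_*p^*Y$ for the structure map $p : X \to Z$. The paper gives no separate argument because, as you observe, the corollary is just a repackaging of these facts.
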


\begin{remark}\label{remark:unbiasedFrobeniusgeneralizes}
We note in passing that the proof just given for the $\delta$-biased case of Frobenius, Proposition \ref{prop:Frobenius}, made no use of the fact that $\delta : 1\to\I$ is generic, nor even that we were working in the slice category over $\I$.  Indeed the same algebraic argument works for $p$-biased fibrations for any point $p : 1\to\I$ of any object $\I$, in any (quasi-)topos $\EE$.
\end{remark}

\section{A universal fibration}\label{sec:U}

We shall construct a \emph{universal small fibration} $\dot{\U}\ra \U$, which is a classifier for small fibrations.  It will be shown in Section \ref{sec:FEP} that the base object $\U$ is fibrant, using the fact to be proved in Section \ref{sec:EEP} that the map $\dot{\U}\ra \U$ itself is \emph{univalent}, in a sense to be made precise.

Our construction of $\dot{\U}\ra \U$ makes use, first of all, of a new description of the well-known Hofmann-Streicher universe in a category $\widehat{\bbC} = [\op{\bbC}, \Set]$ of presheaves on a small category~$\bbC$, which was used in \cite{HS:1997} to interpret dependent type theory. See \cite{awodey:HSuniverse} for further details. 

\subsection*{Classifying families}\label{sec:Ufam}

\begin{definition}[\cite{HS:1997}]\label{def:HSuniverse}
Let $\bbC$ be a small category.  A (type-theoretic) \emph{universe}  $(U, {\mathsf{E}l})$  consists of 
$U\in\widehat{\bbC}$ and $\textstyle{\mathsf{E}l} \in \widehat{\int_\bbC U}$ with: 
 \begin{align}
	U(c)\ &=\ \Cat\big(\op{\bbC/_c}, \Set\big) \label{eq:universeob}\\ 
 	{\mathsf{E}l}(c, A)\ &=\ A(id_c) \label{eq:universeel}
 \end{align}
with the evident associated action on morphisms.  
\end{definition}

A few comments are required: 
\begin{itemize}
\item In contrast to \cite{HS:1997}, in \eqref{eq:universeob}  we take the underlying set of objects of the functor category $\widehat{\bbC/_c}=[\op{\bbC/_c}, \Set]$.

\item As in \cite{HS:1997}, \eqref{eq:universeel} adopts the ``categories with families'' point of view in describing an arrow $E\to U$ in $\widehat{\bbC}$ equivalently as a presheaf on the category of elements $\int_{\bbC}U$, using 
\begin{equation}\label{eq:elements}\textstyle
\widehat{\bbC}/_U\ \simeq\  \widehat{\int_{\bbC}U}\,
\end{equation}
where
\[
E(c)\ =\ {\textstyle \coprod_{A\in U(c)}{\mathsf{E}l}(c, A)}.
\]
The argument $(c, A) \in \int_{\bbC}U$ in \eqref{eq:universeel} thus consists of an object $c\in\bbC$ and an element $A\in U(c)$.
\item To account for size issues, the authors of \cite{HS:1997} assume a Grothendieck universe $u$ in $\Set$, the elements of which are called \emph{small}. The category $\bbC$ is assumed to be small, as are the values of the presheaves, unless otherwise stated.  
\end{itemize}

The presheaf $U$, which is not small, is then regarded as the Grothendieck universe $u$ ``lifted'' from $\Set$ to $[\op{\bbC}, \Set]$.  We first analyse this specification of $(U, {\mathsf{E}l})$ from a different perspective, in order to establish its basic property as a classifier for small families in $\widehat\bbC$. 

\subsubsection*{A realization-nerve adjunction.}

For a presheaf $X$ on $\bbC$, recall that the category of elements is the comma category,
\[\textstyle
\int_\bbC X\ =\ \yon_\bbC/_X\,,
\] 
where $\yon_\bbC : \bbC \to \psh\bbC$ is the Yoneda embedding, which we sometimes supress and write simply $\bbC/_X$ for $\yon_\bbC/_X$. 

\begin{proposition}[\cite{G:1983}, \S{28}]
The category of elements functor 
\[
\textstyle \int_\bbC : \widehat\bbC \too \Cat
\]
 has a right adjoint,
\[
\nu_\bbC : \Cat \too \widehat\bbC\,.
\]
For a small category $\A$, we shall call the presheaf $\nu_\bbC(\A)$ the \emph{($\bbC$-)nerve} of $\A$.
\end{proposition}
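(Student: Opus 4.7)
The plan is to define the right adjoint explicitly by the slice-category formula
\[
\nu_\bbC(\A)(c)\ :=\ \Cat(\bbC/_c,\,\A),
\]
with presheaf functoriality in $c$ induced by precomposition with the post-composition functors $f_! : \bbC/_{c'} \to \bbC/_{c}$ for each $f : c' \to c$ in $\bbC$, and functoriality in $\A$ by postcomposition. The first sanity check is that $\int_\bbC$ takes a representable to the corresponding slice: $\int_\bbC \yon(c) \cong \bbC/_c$ naturally in $c$. This is built into the definition of the category of elements, since a generalized element of $\yon(c)$ over $d$ is exactly a morphism $d \to c$. So the prospective right adjoint satisfies $\nu_\bbC(\A)(c) \cong \Cat(\int_\bbC \yon(c),\A)$, which is the Yoneda-lemma shadow of the desired adjunction on representables.

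Next I would construct the bijection in general by direct formulas. Given a functor $F : \int_\bbC X \to \A$, define $\varphi(F) : X \to \nu_\bbC(\A)$ componentwise by
\[
\varphi(F)_c(x)(u : d \to c)\ =\ F(d,\, x\!\cdot\! u),
\]
where $x \cdot u := X(u)(x)$, with action on morphisms $g$ of $\bbC/_c$ given by $F$ applied to the evident morphism of $\int_\bbC X$. Conversely, given a natural transformation $\alpha : X \to \nu_\bbC(\A)$, define $\psi(\alpha) : \int_\bbC X \to \A$ on objects by
\[
\psi(\alpha)(c,x)\ =\ \alpha_c(x)(\mathsf{id}_c),
\]
and on a morphism $f : (c',x\!\cdot\!f) \to (c,x)$ by $\alpha_c(x)(f)$. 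A short diagram chase shows $\varphi$ and $\psi$ are mutually inverse: $\psi(\varphi(F))(c,x) = F(c, x\cdot \mathsf{id}_c) = F(c,x)$, and the naturality of $\alpha$ in $\bbC$ together with the identity-reduction $\alpha_c(x)(u) = \alpha_d(x\cdot u)(\mathsf{id}_d)$ recovers $\varphi(\psi(\alpha)) = \alpha$.

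Finally I would verify naturality of the bijection in both $X \in \widehat\bbC$ and $\A \in \Cat$; this amounts to unwinding the definitions of the induced maps $\nu_\bbC(H) : \nu_\bbC(\A) \to \nu_\bbC(\A')$ for $H : \A \to \A'$ and of $\int_\bbC(\beta) : \int_\bbC X' \to \int_\bbC X$ for $\beta : X' \to X$, and checking that the two routes around the naturality squares agree term-by-term.

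The only place requiring care is the bookkeeping of variance and of identity/composition under reindexing $x \mapsto x\!\cdot\! u$; there is no real obstacle, as both sides of the purported adjunction are manifestly cocontinuous in $X$ (the left trivially, and $\hom(X, \nu_\bbC(\A))$ by representability), so one could alternatively extend from representables using that $\widehat\bbC$ is the free cocompletion of $\bbC$ and that $\Cat$ is cocomplete, thus recognising $\int_\bbC$ as the left Kan extension of $c \mapsto \bbC/_c$ along Yoneda, whose pointwise right adjoint is precisely $\nu_\bbC$ as defined.
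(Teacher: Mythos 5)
Your proposal defines $\nu_\bbC$ by the same formula $\nu_\bbC(\A)(c)=\Cat(\bbC/_c,\A)$ and rests on the same key observation $\int_\bbC\yon(c)\cong\bbC/_c$, so it is essentially the paper's argument; the only difference is that you extend from representables to general $X$ by writing the unit/counit formulas out explicitly, whereas the paper invokes cocontinuity of $\int_\bbC$ and the colimit presentation $\varinjlim_{d\in\D}\D/_d\cong\D$ (a route you also note at the end). Both are correct; your explicit $\varphi,\psi$ make the bijection concrete, while the paper's version is shorter once the colimit lemma is granted.
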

\begin{proof}
The adjunction $\int_\bbC\! \dashv \nu_\bbC$ is an instance of the usual ``realization/nerve'' adjunction, here with respect to the covariant slice category functor $\bbC/- : \bbC\to\Cat$, as indicated below.
\begin{equation}\label{eq:nerve}\textstyle
\begin{tikzcd}
	 \widehat\bbC \ar[rr, swap,"\int_\bbC"] &&  \ar[ll, swap,bend right=20, "{ \nu_\bbC}"] \Cat\\  
	 \\
	\bbC \ar[uu, hook, "\yon"] \ar[rruu, swap,"{\bbC/_{-}}"] &&
 \end{tikzcd}
 \end{equation}
In detail, for  $\A\in\Cat$ and $c\in\bbC$, let $\nu_{\bbC}(\A)(c)$ be the Hom-set of functors,
\begin{align*}
\nu_\bbC(\A)(c) &= \Cat\big( {\bbC/_c}\,,\, \A \big)\,,
\end{align*}
with contravariant action on $h : d\to c$ given by pre-composing a functor $P : {\bbC/_c}\to\A$  with the post-composition functor
\[
{\bbC/_h} : {\bbC/_d}\too {\bbC/_c} \,.
\]
For the adjunction, observe that the slice category $\bbC/_c$ is the category of elements of the representable functor $\y{c}$\,,
\[\textstyle
\int_\bbC\y{c}\ \cong\ \bbC/_c\,.
\]
 Thus for representables $\y{c}$\,, we have the required natural isomorphism
 \[\textstyle
 \widehat\bbC\big( \y{c}\,,\, \nu_\bbC(\A) \big)\ \cong\ \nu_\bbC(\A)(c)\  =\ \Cat\big( {\bbC/_c}\,,\, \A \big)\ \cong\ \Cat\big( \int_\bbC\y{c}\,,\, \A \big)\,.
  \]
For arbitrary presheaves $X$, one uses the presentation of $X$ as a colimit of representables over the index category $\int_\bbC X$, and the easy to prove fact that $\int_\bbC$ itself preserves colimits.  Indeed, for any category $\D$, we have an isomorphism in $\Cat$,
\[
\varinjlim_{d\in\D}\,\D/_d \ \cong\ \D\,.
\]
\end{proof}

When $\bbC$ is fixed, we may omit the subscript in the notation $\yon_\bbC$ and  $\int_\bbC$ and $\nu_\bbC$.  The unit and counit maps of the adjunction $\int \dashv \nu$, 
\begin{align*}\textstyle
\eta :&\ \textstyle  X \too \nu{\elem{X}}\,, \\
\epsilon :&\ \textstyle  \elem\nu\A \too \A\,,
\end{align*}
 are then as follows.  At $c\in\bbC$, for $x : \y{c}\ra X$, the functor $(\eta_X)_c(x) : \bbC/_c \to \bbC/_X$ is just composition with $x$, 
\begin{equation}\label{eq:eta}
(\eta_X)_c(x) = \bbC/_x : \bbC/_c \too \bbC/_X\,.
\end{equation}
For $\A\in\Cat$, the functor $ \epsilon : \int\nu\A \to \A$ takes a pair $(c\in\bbC, f : \bbC/_c \to \A)$ to the object $f(1_c) \in \A$,
\[
\epsilon(c,f) = f(1_c).
\]
\begin{lemma}\label{lemma:natpb}
For any $f : Y\to X$, the naturality square below is a pullback.
\begin{equation}\label{eq:naturality}\textstyle
\begin{tikzcd}
	 Y \ar[d, swap,"f"] \ar[r, "{\eta_Y}"] & \nu{\int\!{Y}} \ar[d, "{ \nu{\int\!{f}}}"]\\  
	X \ar[r, swap,"{\eta_X}"] &   \nu{\int\!{X}}.
 \end{tikzcd}
 \end{equation}
\end{lemma}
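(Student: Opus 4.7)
The plan is to verify the pullback property pointwise. Since limits in $\widehat\bbC$ are computed objectwise, it suffices to show that for each $c \in \bbC$ the evaluated square of sets is a pullback. Unwinding the definitions, evaluation at $c$ gives
\[
\begin{tikzcd}
Y(c) \ar[d, swap,"f_c"] \ar[r, "{(\eta_Y)_c}"] & \Cat(\bbC/_c,\int\! Y) \ar[d, "{\Cat(\bbC/_c,\,\int\!{f})}"]\\
X(c) \ar[r, swap,"{(\eta_X)_c}"] &   \Cat(\bbC/_c,\int\!{X}),
\end{tikzcd}
\]
where, recalling \eqref{eq:eta}, the top map sends $y \in Y(c) \cong \widehat\bbC(\y c,Y)$ to the functor $\bbC/_y : (h:d\to c)\mapsto (d,\, Y(h)(y))$, and similarly for $X$.

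The main step is to show that any compatible pair $(x,F)$, consisting of $x\in X(c)$ and a functor $F:\bbC/_c \to \int\! Y$ with $(\int\! f)\circ F = \bbC/_x$, arises from a unique $y\in Y(c)$. The construction is forced: set $y := F(1_c)$, where the commutativity condition applied at the terminal object $1_c\in\bbC/_c$ forces $F(1_c)$ to be of the form $(c,y)$ with $f_c(y)=x$.

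The key observation is that the projection $\pi_Y:\int\! Y \to \bbC$ (and similarly $\pi_X$) is a \emph{discrete Grothendieck fibration}, and $\int\!{f}$ is a map of such over $\bbC$. The identity $(\int\!{f})\circ F = \bbC/_x$ therefore forces $\pi_Y\circ F : \bbC/_c \to \bbC$ to coincide with the ``domain'' functor $(h:d\to c)\mapsto d$. Every object $(h:d\to c)$ of $\bbC/_c$ carries a unique morphism $h\to 1_c$ (namely $h$ itself), so $F(h\to 1_c)$ is a morphism in $\int\! Y$ with target $(c,y)$ whose underlying $\bbC$-morphism must be $h$; by discreteness of $\pi_Y$ this morphism, and hence its source $F(h) = (d,\,Y(h)(y))$, is uniquely determined. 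Chasing the same discreteness for morphisms in $\bbC/_c$ then gives $F = \bbC/_y = (\eta_Y)_c(y)$, which also yields uniqueness of $y$.

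The routine parts are the unwinding of $(\eta_X)_c$, $(\eta_Y)_c$, and $\nu(\int\!f)$ via \eqref{eq:eta} and the definition of $\nu$; the single substantive step is the reconstruction of $F$ from $F(1_c)$ using the discrete-fibration property of $\pi_Y$, which is where the argument might require the most care, although it is ultimately just a matter of tracking universal objects in $\bbC/_c$ and $\int\! Y$.
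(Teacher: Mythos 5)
Your proof is correct and takes essentially the same route as the paper: evaluate pointwise at each $c\in\bbC$ and reconstruct the functor $F:\bbC/_c\to\int Y$ from its value $F(1_c)$ at the terminal object, using the discrete-fibration structure of $\int Y\to\bbC$. The only difference is cosmetic: the paper first reduces to the case $f:X\to 1$ by the pasting lemma for pullbacks (so that the compatibility condition becomes simply $U_X\circ F = U_c$), whereas you work directly with general $f$ and the condition $(\int f)\circ F=\bbC/_x$; both then hinge on the same observation that $1_c$ is terminal in $\bbC/_c$ and that lifts along a discrete fibration are unique.
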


\begin{proof}
It suffices to prove this for the case $f : X\ra 1$.  Thus consider the square 
\begin{equation}\label{eq:naturalityobject}\textstyle
\begin{tikzcd}
	 X \ar[d] \ar[r, "{\eta_X}"] & \nu{\int\!{X}} \ar[d]\\  
	1\ar[r, swap,"{\eta_1}"] &   \nu{\int\! 1}.
 \end{tikzcd}
 \end{equation}
Evaluating at $c\in\bbC$ and applying \eqref{eq:eta} gives the following square in $\Set$.
\begin{equation}\label{eq:naturalityobjecteval}\textstyle
\begin{tikzcd}
	 Xc \ar[d] \ar[r, "{\bbC/_{-}}"] & \Cat\big( {\bbC/_c}\,,\, {\bbC/_X}\, \big) \ar[d]\\  
	1c\ar[r, swap, "{\bbC/_{-}}"] &   \Cat\big( {\bbC/_c}\,,\, \bbC/_1 \big)
 \end{tikzcd}
 \end{equation}
The image of $*\in 1c$ along the bottom is the forgetful functor $U_c : \bbC/_c\to \bbC$, and its fiber under the map on the right is the set of functors $F : {\bbC/_c}\to {\bbC/_X}$ such that $U_X\circ F = U_c$, where $U_X : \bbC/_X\to \bbC$ is also a forgetful functor. But any such $F$ is uniquely of the form $\bbC/_{x}$ for $x = F(1_c) : \y{c} \to X$.
\end{proof}

\subsubsection*{A universal family.}

For the terminal presheaf $1\in\widehat{\bbC}$ we have an iso $\elem{1} \cong\bbC$, so for every $X\in\widehat{\bbC}$ there is a canonical projection  $\elem X \ra\bbC$, which is a discrete fibration.  It follows that for any map $Y\to X$ of presheaves, the associated map $\elem Y \to \elem X$ is also a discrete fibration. 
Ignoring size issues temporarily, recall that discrete fibrations in $\Cat$ are classified by the forgetful functor $\op{\dot{\Set}}\to \op{\Set}$ from (the opposites of) the category of pointed sets to that of sets (cf.~\cite{W:2007}).  For every presheaf $X\in\widehat{\bbC}$, we therefore have a pullback diagram in $\Cat$,
\begin{equation}\label{eq:classifyuniversecat}\textstyle
\begin{tikzcd}
	 \elem X \ar[d] \ar[r] \pbmark & \op{\dot{\Set}} \ar[d]\\  
	\bbC \ar[r,swap,"X"] &  \op{\Set}.
 \end{tikzcd}
 \end{equation}
Using $\bbC\cong\elem{1}$ and transposing by the adjunction $\int \dashv \nu$ then gives a commutative square in $\widehat{\bbC}$ of the form:
\begin{equation}\label{eq:classifyuniversetype}\textstyle
\begin{tikzcd}
	 X \ar[d] \ar[r] & \nu\op{\dot{\Set}} \ar[d]\\  
	1 \ar[r,swap,"\tilde{X}"] &  \nu\op{\Set}.
 \end{tikzcd}
 \end{equation}

\begin{lemma}
The square \eqref{eq:classifyuniversetype} is a pullback in $\widehat{\bbC}$. More generally, for any map $Y\ra X$ in $\widehat{\bbC}$, there is a canonical pullback square 
\begin{equation}\label{eq:classifyuniversefamily}\textstyle
\begin{tikzcd}
	 Y \ar[d] \pbmark \ar[r] & \nu\op{\dot{\Set}} \ar[d] \\  
	X \ar[r] &  \nu\op{\Set}\,.
 \end{tikzcd}
 \end{equation}
\end{lemma}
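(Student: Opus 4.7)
The plan is to combine three ingredients: the universal pullback classifying discrete fibrations in $\Cat$ (now applied to $\int Y \to \int X$), the fact that $\nu$ preserves pullbacks as a right adjoint, and the naturality pullback from Lemma \ref{lemma:natpb}.

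First, I would observe that the induced functor $\int Y \to \int X$ is itself a discrete fibration: since both composites $\int Y \to \int X \to \bbC$ and $\int X \to \bbC$ are discrete fibrations, the first factor $\int Y \to \int X$ must be as well (discrete fibrations enjoy the right-cancellation property). Classifying this discrete fibration by the generic one $\op{\dot\Set} \to \op\Set$, exactly as in \eqref{eq:classifyuniversecat} but now with base $\int X$ rather than $\bbC$, gives a canonical pullback square in $\Cat$:
\[
\begin{tikzcd}
\int Y \ar[d] \pbmark \ar[r] & \op{\dot{\Set}} \ar[d] \\
\int X \ar[r] & \op{\Set}\,.
\end{tikzcd}
\]

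Next, since $\nu : \Cat \to \widehat{\bbC}$ is a right adjoint it preserves pullbacks, so applying $\nu$ keeps the square a pullback. I would then paste this onto the naturality pullback from Lemma \ref{lemma:natpb} for $Y \to X$, obtaining
\[
\begin{tikzcd}
Y \ar[d] \pbmark \ar[r,"\eta_Y"] & \nu\!\int\! Y \ar[d] \pbmark \ar[r] & \nu\op{\dot{\Set}} \ar[d] \\
X \ar[r,swap,"\eta_X"] & \nu\!\int\! X \ar[r] & \nu\op{\Set}\,.
\end{tikzcd}
\]
Both inner squares are pullbacks, so the outer rectangle is a pullback. By the triangle identities for $\int \dashv \nu$, the outer horizontal composites are precisely the adjoint transposes of the classifying functors $\int Y \to \op{\dot\Set}$ and $\int X \to \op\Set$; these are the maps of \eqref{eq:classifyuniversefamily}, so the desired square is indeed a pullback.

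The only substantive step is the initial classification of the discrete fibration $\int Y \to \int X$: one invokes the standard equivalence between discrete fibrations over a category and $\op\Set$-valued functors on it, with pullback from $\op{\dot\Set} \to \op\Set$ giving the classifying square. Modulo the size concerns already flagged in the excerpt, everything else is a purely formal combination of adjointness and pullback pasting, so I do not anticipate any real obstacle beyond making these bookkeeping steps precise.
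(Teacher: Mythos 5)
Your proof is correct and follows essentially the same route as the paper: classify the discrete fibration $\int Y \to \int X$ by a pullback onto $\op{\dot\Set} \to \op\Set$, apply the right adjoint $\nu$ (which preserves pullbacks), and paste with the naturality pullback of Lemma \ref{lemma:natpb}. The only addition is your explicit remark that $\int Y \to \int X$ is a discrete fibration by right cancellation, which the paper asserts slightly earlier in passing.
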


\begin{proof}
Apply the right adjoint $\nu$ to the pullback square \eqref{eq:classifyuniversecat} and paste the naturality square \eqref{eq:naturality} from Lemma \ref{lemma:natpb} on the left, to obtain the transposed square \eqref{eq:classifyuniversefamily} as a pasting of two pullbacks.
\end{proof}

Let us write $\VV \to \V$ for the vertical map on the right in \eqref{eq:classifyuniversefamily}, setting
\begin{align}\label{eq:universedef}\textstyle
\VV\, &:=\, \nu\op{\dot{\Set}}\\  
\V\, &:=\, \nu\op{\Set}.\notag
 \end{align}
 
 We summarize our results so far as follows.

 \begin{proposition}\label{prop:Vclassifies}
The nerve $\VV\to\V$  of the classifier for discrete fibrations $\op\SSet\to\op\Set$, as defined in \eqref{eq:universedef}, classifies natural transformations $Y\to X$ in $\widehat{\bbC}$, in the sense that there is always a pullback square,
\begin{equation}\label{eq:classifyuniversefamily2}\textstyle
\begin{tikzcd}
	 Y \ar[d] \pbmark \ar[r] & \VV \ar[d] \\  
	X \ar[r,swap, "\tilde{Y} "] &  \V.
 \end{tikzcd}
 \end{equation}
The classifying map $\tilde{Y} : X\to \V$ is determined by the adjunction $\int \dashv \nu$ as the transpose of the classifying map of the discrete fibration $\elem Y\to\elem X$.  
\end{proposition}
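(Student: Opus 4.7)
The plan is to package together the results already established in the preceding lemma, and then identify the classifying map with the transpose under $\int \dashv \nu$.

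First, I would note that for any map $f : Y \to X$ in $\widehat{\bbC}$, the induced functor $\int\!{f} : \int Y \to \int X$ is a discrete fibration in $\Cat$: both projections $\int Y \to \bbC$ and $\int X \to \bbC$ are discrete fibrations, and any map between discrete fibrations over a common base is itself a discrete fibration. Hence the universal property of the discrete fibration classifier $\op\SSet \to \op\Set$ in $\Cat$ yields a unique classifying functor $c_f : \int X \to \op\Set$ fitting into a pullback square generalizing \eqref{eq:classifyuniversecat}, namely
\[
\begin{tikzcd}
\int Y \ar[d, swap, "\int\!f"] \ar[r] \pbmark & \op\SSet \ar[d] \\
\int X \ar[r, swap, "c_f"] & \op\Set\,.
\end{tikzcd}
\]

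Next, I would apply the right adjoint $\nu : \Cat \to \widehat{\bbC}$. Since right adjoints preserve limits, the image is a pullback in $\widehat{\bbC}$ with vertical maps $\nu(\int\!f)$ on the left and $\VV = \nu\op\SSet \to \nu\op\Set = \V$ on the right. Now I would paste on the left with the naturality square of Lemma~\ref{lemma:natpb} applied to $f$, namely
\[
\begin{tikzcd}
Y \ar[d, swap, "f"] \ar[r, "\eta_Y"] \pbmark & \nu\!\int\! Y \ar[d, "\nu(\int\!f)"] \\
X \ar[r, swap, "\eta_X"] & \nu\!\int\! X\,.
\end{tikzcd}
\]
By the two-pullback lemma (pasting of pullbacks), the resulting outer rectangle is a pullback of the form \eqref{eq:classifyuniversefamily2}, with classifying map
\[
\tilde Y \;=\; \nu(c_f) \circ \eta_X \;:\; X \too \V.
\]

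Finally, this composite is, by definition, the transpose of $c_f : \int X \to \op\Set$ under the adjunction $\int \dashv \nu$, which establishes the last claim of the proposition. No step is genuinely hard: the content of the lemma immediately preceding the proposition already provides both pullbacks, and the proposition merely names the construction and records the identification of $\tilde Y$ as an adjoint transpose. The only minor subtlety worth flagging is size: $\V = \nu\op\Set$ is a large presheaf (its sections at $c$ are functors $\op{(\bbC/_c)} \to \op\Set$ into the large category $\op\Set$), so the classification is for families valued in a fixed Grothendieck universe, as in the Hofmann--Streicher setup of Definition~\ref{def:HSuniverse}.
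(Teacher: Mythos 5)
Your proof is correct and is essentially the same as the paper's: apply $\nu$ to the pullback in $\Cat$ classifying the discrete fibration $\int f$, paste with the naturality pullback square of Lemma~\ref{lemma:natpb}, and identify the composite bottom edge as the $\int\dashv\nu$ transpose of the classifying functor. Your additional observation that a map between discrete fibrations over a common base is itself a discrete fibration is a helpful explication of a step the paper leaves implicit.
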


Given a natural transformation $Y\to X$, the classifying map $\tilde{Y} : X\to \V$ is of course not in general unique. Nonetheless, we can  use the construction of $\VV\to\V$ as the nerve of the discrete fibration classifier $\op\SSet\to\op\Set$, for which classifying functors $\bbC \to \op\Set$ are unique up to natural isomorphism, to infer the following proposition, which will be required below (cf.~\cite{Shu:15,GSS:22}).

\begin{proposition}[Realignment for families]\label{prop:realignment}
Given a monomorphism $c : C\cof X$ and a family $Y\to X$, let $y_c : C \to \V$ classify the pullback $c^*Y\to C$.  Then there is a classifying map $y: X \to \V$ for $Y\to X$ with $y\circ c = y_c$.
\begin{equation}\label{diagram:presheafrealignment}
\begin{tikzcd}
c^*Y \ar[dd] \ar[rd] \ar[rr] && \VV \ar[dd] \\
& Y \ar[dd] \ar[ru, dotted] & \\
C  \ar[rd, tail,swap, "c"] \ar[rr, near start, "y_c"] && \V  \\
& X \ar[ru, dotted, swap, "y"] &
\end{tikzcd}
\end{equation}
\end{proposition}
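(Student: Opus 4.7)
\medskip

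\noindent\textbf{Proof plan.} The strategy is to transpose the problem across the adjunction $\int \dashv \nu$ and solve it in $\Cat$ for discrete fibrations, then transpose back. A classifying map $X \to \V = \nu\op\Set$ for $Y \to X$ corresponds, under $\int \dashv \nu$, to a classifying functor $\int X \to \op\Set$ for the discrete fibration $\int Y \to \int X$; analogously for $y_c$. So it suffices to solve the following rigid-extension problem in $\Cat$: given a classifying functor $\phi_c : \int C \to \op\Set$ for $\int(c^*Y) \to \int C$, find a classifying functor $\phi' : \int X \to \op\Set$ for $\int Y \to \int X$ with $\phi' \circ \int c = \phi_c$ \emph{on the nose}.

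First I would verify two key properties of $\int c : \int C \to \int X$ coming from the fact that $c : C \cof X$ is a monomorphism (i.e.\ pointwise injective): (a) $\int c$ is injective on objects and fully faithful, so it exhibits $\int C$ as a full subcategory of $\int X$, and (b) its image is a \emph{sieve}: if $(d,\xi') = \int c(d,\gamma')$ lies in the image and $h : (d',\xi) \to (d,\xi')$ is a morphism of $\int X$, then $\xi = X(h)(c_d\gamma') = c_{d'}(C(h)\gamma')$, so $(d',\xi)$ also lies in the image. This is where monicity of $c$ is crucially used.

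Next, pick any classifying functor $\phi : \int X \to \op\Set$ for $\int Y \to \int X$ (e.g.\ the canonical fiber-functor). Its restriction $\phi \circ \int c$ classifies $\int(c^*Y) \to \int C$, so by uniqueness-up-to-iso of discrete-fibration classifiers, there is a natural isomorphism $\alpha : \phi\circ\int c \xrightarrow{\;\cong\;} \phi_c$. Now define the transported functor $\phi' : \int X \to \op\Set$ by
\begin{itemize}
\item on objects: $\phi'(e) = \phi_c(c')$ if $e = \int c(c')$ (well-defined by injectivity on objects), and $\phi'(e) = \phi(e)$ otherwise;
\item on morphisms $f : e \to e'$: use fullness to set $\phi'(f) = \phi_c(f')$ when both endpoints lie in the image; set $\phi'(f) = \phi(f)$ when neither does; and when only the source $e$ lies in the image (say $e = \int c(c')$), compose $\phi(f)$ with the component $\alpha_{c'}$ appropriately. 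By the sieve property the fourth case (source not in image, target in image) cannot occur, which is what makes the patching coherent.
\end{itemize}
Functoriality of $\phi'$ then follows by a routine case check using naturality of $\alpha$ and functoriality of $\phi, \phi_c$. The componentwise isomorphism between $\phi$ and $\phi'$ (equal to $\alpha$ on the image of $\int c$ and to the identity elsewhere) is natural, so $\phi'$ also classifies $\int Y \to \int X$. Transposing $\phi'$ back across $\int \dashv \nu$ and invoking Lemma~\ref{lemma:natpb} (so that naturality delivers a pullback square) produces $y : X \to \V$ with the required properties.

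The main obstacle is the bookkeeping in the patching step: one must check that the two mixed-endpoint cases really reduce to the three listed above (thanks to the sieve property) and that the resulting $\phi'$ is genuinely functorial. Everything else—transposition, the sieve calculation, and the induced pullback—is formal once the adjunction $\int \dashv \nu$ and Proposition~\ref{prop:Vclassifies} are in hand.
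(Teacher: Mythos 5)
Your proof is correct and complete, but it takes a genuinely different route from the paper's. The paper transposes across $\int \dashv \nu$ exactly as you do, observes that $\int c$ is injective on objects, and then invokes the weak factorization system on $\Cat$ with injective-on-objects functors on the left and isofibrations on the right: it lifts the natural isomorphism $e : F_0 \circ \int c \cong E$ (viewed as a functor into $\op{(\Set^{\cong})}$) against $\op{(\Set^{\cong})} \to \op{\Set}$, and reads off the desired $F$ as the second projection of the filler. You instead build the corrected classifying functor $\phi'$ by hand, patching $\phi$ and $\phi_c$ across the full subcategory $\int C \subseteq \int X$ using the natural isomorphism $\alpha$, and observing that the sieve property of the inclusion rules out the one mixed case that would make the patching incoherent. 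Your argument is more elementary and self-contained (no appeal to the canonical weak factorization system on $\Cat$), while the paper's is a one-line application of a standard tool and requires only injectivity on objects — it never needs full-faithfulness or the sieve property, which your patching does exploit. One small misattribution: the sieve calculation $\xi = X(h)(c_d\gamma') = c_{d'}(C(h)\gamma')$ uses only naturality of $c$, not monicity. Monicity of $c$ is needed instead for injectivity on objects (so your $\phi'$ is well-defined) and for full-faithfulness of $\int c$ (so you can lift morphisms to define $\phi'$ on the image). This does not affect the validity of the argument.
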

\begin{proof}
Transposing the realignment problem \eqref{diagram:presheafrealignment} for presheaves across the adjunction $\int\dashv \nu$ results in the following realignment problem for discrete fibrations.
\begin{equation}\label{diagram:presheafrealignment2}
\begin{tikzcd}
\elem  c^*Y \ar[dd] \ar[rd] \ar[rr] && \op{\dot{\Set}}  \ar[dd] \\
&\elem  Y \ar[dd] \ar[ru, dotted] & \\
\elem  C  \ar[rd, tail,swap, "{\elem  c}"] \ar[rr, near start, "\widetilde{y_c}"] && \op{{\Set}}   \\
& \elem  X \ar[ru, dotted, swap, "\tilde{y}"] &
\end{tikzcd}
\end{equation}
The category of elements functor $\int $ is easily seen to preserve pullbacks, hence monos; thus let us consider the general case of a functor  $C : \bbC \mono \D$ which is monic in $\Cat$, a pullback of discrete fibrations as on the left below, and a presheaf $E : \bbC \to  \op{{\Set}}$ with $\elem E \cong \mathbb{E}$ over $\bbC$. 
\begin{equation}\label{diagram:presheafrealignment3}
\begin{tikzcd}
\mathbb{E} \ar[dd] \ar[rd] \ar[rr] && \op{\dot{\Set}}  \ar[dd] \\
& \mathbb{F}  \ar[dd] \ar[ru, dotted] & \\
\bbC  \ar[rd, tail,swap, "{C}"] \ar[rr, near start, "E"] && \op{{\Set}}   \\
& \D \ar[ru, dotted, swap, "F"] &
\end{tikzcd}
\end{equation}
We seek $F : \D \to  \op{{\Set}}$ with $\elem F \cong \mathbb{F}$ over $\D$ and $F\circ C = E$.  Let $F_0 : \D \to  \op{\Set}$ with $\elem F_0 \cong \mathbb{F}$ over $\D$, which exists since $\mathbb{F}\to\D$ is a discrete fibration.  Since $F_0\circ C$ and $E$ both classify $\mathbb{E}$, there is a natural iso $e : F_0\circ C \cong E$.
Consider the following diagram
\begin{equation}\label{diagram:presheafrealignment4}
\begin{tikzcd}
\bbC \ar[dd, tail, swap, "{C}"]  \ar[rr, "e"] && {\op{(\Set^{\cong})}}  \ar[dd,"{p_1}"] \ar[r,swap,"{p_2}"] &  {\op{\Set}} \\
&&&\\
\D  \ar[rr,swap, "F_0"]  \ar[rruu, dotted, swap, "f"] && \op{\Set} &  \\
\end{tikzcd}
\end{equation}
where $\Set^{\cong}$ is the category of isos in $\Set$, with $p_1, p_2$ the (opposites of the) domain and codomain projections.  There is a well-known weak factorization system on $\Cat$ (part of the ``canonical model structure'') with injective-on-objects functors on the left and isofibrations on the right.  Thus there is a diagonal filler $f$ as indicated.  The functor $F := p_2\circ f : \D \to \op{\Set}$ is then  the one we seek.
\end{proof}

\subsubsection*{Small maps.}
 
Of course, as defined in \eqref{eq:universedef}, the classifier $\VV\to\V$ cannot be a map in $\widehat{\bbC}$, for reasons of size; we now address this.  
Let $\alpha$ be a cardinal number, and call the sets strictly smaller than it $\alpha$-\emph{small}.  Let $\Set_\alpha\hook\Set$ be the full subcategory of $\alpha$-small sets.  
Call a presheaf $X : \op{\bbC} \to \Set$ $\alpha$-small if all of its values are $\alpha$-small sets, and thus if, and only if, it factors through $\Set_\alpha\hook\Set$. Call a map $f:Y\to X$ of presheaves $\alpha$-small if all of the fibers $f_c^{-1}\{ x\} \subseteq Yc$ are $\alpha$-small sets (for all $c\in\bbC$ and $x\in Xc$). The latter condition is of course equivalent to saying that, in the pullback square over the element $x:\y{c} \to X$, 
\begin{equation}\label{eq:smallmap}\textstyle
\begin{tikzcd}
	 Y_x \ar[d] \pbmark \ar[r] & Y \ar[d, "f"] \\  
	\y{c} \ar[r,swap,"x"] &  X,
 \end{tikzcd}
 \end{equation}
the presheaf $Y_x$ is $\alpha$-small.

Now let us restrict the specification \eqref{eq:universedef} of $\VV\to\V$ to the $\alpha$-small sets:
\begin{align}\label{eq:universedefalpha}\textstyle
\VV_\alpha\, &:=\, \nu \dot{\Set^{\mathsf{op}}_\alpha}\\  
\V_\alpha\, &:=\, \nu \Set^{\mathsf{op}}_\alpha. \notag
 \end{align}
Then the evident forgetful map $\VV_\alpha\to\V_\alpha$ \emph{is} a map in the category $\widehat{\bbC}$ of presheaves, and it is in fact $\alpha$-small. Moreover, it has the following basic property, which is just a restriction of the basic property of $\VV\to\V$ stated in Proposition \ref{prop:Vclassifies}.

 \begin{proposition}\label{prop:familyclassifier}
The map $\VV_\alpha\to\V_\alpha$ classifies $\alpha$-small maps $f:Y\to X$ in $\widehat{\bbC}$, in the sense that there is always a pullback square,
\begin{equation}\label{eq:classifyuniversefamilyalpha}\textstyle
\begin{tikzcd}
	 Y \ar[d] \pbmark \ar[r] & \VV_\alpha \ar[d] \\  
	X \ar[r,swap, "\tilde{Y}"] &  \V_\alpha.
 \end{tikzcd}
 \end{equation}
The classifying map $\tilde{Y} : X\to \V_\alpha$ is determined by the adjunction $\int \dashv \nu$ as (the factorization of) the transpose of the classifiyng map of the discrete fibration $\elem X\to\elem Y$. 
\end{proposition}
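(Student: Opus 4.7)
The plan is to reduce the sized proposition to its unsized predecessor (Proposition \ref{prop:Vclassifies}) and check that the classifying data restricts appropriately to the $\alpha$-small subcategory. First, given an $\alpha$-small map $f : Y \to X$ of presheaves, Proposition \ref{prop:Vclassifies} already supplies a pullback square into $\VV \to \V$. I would analyze the classifying discrete fibration $\elem Y \to \elem X$ at the level of its transpose $\elem X \to \op{\Set}$: on an element $(c,x) \in \elem X$ this functor returns the fiber $f_c^{-1}\{x\} \subseteq Yc$, which is $\alpha$-small by hypothesis on $f$. Thus the classifying functor factors through the full subcategory inclusion $\op{\Set_\alpha} \hookrightarrow \op{\Set}$.

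Next, applying the nerve functor $\nu$ to this factorization yields a factorization of the transposed map $\tilde Y : X \to \V$ through $\V_\alpha = \nu \op{\Set_\alpha}$. To be sure that $\V_\alpha$ really is a subobject of $\V$, I would note that $\nu$ is a right adjoint and so preserves monos (or, pointwise, the inclusion $\V_\alpha(c) \hookrightarrow \V(c)$ picks out those functors $\bbC/_c \to \op\Set$ landing in $\op{\Set_\alpha}$). This gives the desired $\tilde Y : X \to \V_\alpha$ as claimed.

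The remaining point is to verify that pulling $\VV_\alpha \to \V_\alpha$ back along this factored $\tilde Y$ recovers $f : Y \to X$. For this I would use that $\nu$, being right adjoint to $\int$, preserves pullbacks, so the evident square
\[
\begin{tikzcd}
\VV_\alpha \ar[d] \ar[r] & \VV \ar[d] \\
\V_\alpha \ar[r] & \V
\end{tikzcd}
\]
(coming from applying $\nu$ to the pullback square of discrete-fibration classifiers $\op{\dot{\Set}_\alpha} \to \op{\Set_\alpha}$ pulled back from $\op{\dot\Set} \to \op\Set$) is itself a pullback in $\widehat\bbC$. Pasting this pullback with the original classifying pullback of $Y \to X$ along $\tilde Y : X \to \V$ (which factors through $\V_\alpha$) exhibits $Y \to X$ as the pullback of $\VV_\alpha \to \V_\alpha$ along the factored $\tilde Y$.

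Nothing in this sketch is really an obstacle: the substance lives in Proposition \ref{prop:Vclassifies}, and the present statement is mostly bookkeeping on the $\alpha$-small subcategory. The only mild subtlety is that the classifying functor of a discrete fibration is only determined up to natural isomorphism, so one must take care (as in the proof of Proposition \ref{prop:realignment}) that the factorization through $\op{\Set_\alpha}$ can be made coherent; once that is done, transposition along $\int \dashv \nu$ transports everything mechanically to $\widehat\bbC$.
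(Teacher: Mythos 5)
Your proof is correct and follows essentially the same route as the paper: factor the transposed classifying functor $\elem X \to \op{\Set}$ through $\op{\Set_\alpha}$ using $\alpha$-smallness of the fibers, observe that $\nu$ (a right adjoint) preserves the pullback $\op{\dot{\Set}_\alpha} = \op{\Set_\alpha} \times_{\op{\Set}} \op{\dot\Set}$, and paste this with the classifying pullback from Proposition~\ref{prop:Vclassifies}. The only difference is cosmetic: the paper performs the pullback-pasting in $\Cat$ and then transports along the adjunction, while you paste directly in $\widehat{\bbC}$; these amount to the same thing. One small correction: the ``mild subtlety'' you flag about coherence is not actually present here — once a classifying functor for $\elem Y \to \elem X$ is fixed (as Proposition~\ref{prop:Vclassifies} already provides), whether it factors through the full subcategory $\op{\Set_\alpha}$ is a property, not extra structure, and it holds pointwise by $\alpha$-smallness of the fibers. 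No realignment in the sense of Proposition~\ref{prop:realignment} is needed; that tool is for a different purpose (adjusting the classifying map to agree with a prescribed one on a subobject).
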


\begin{proof} If $Y\to X$ is $\alpha$-small, its classifying map $\tilde{Y} : X\to\V$ factors through $\V_\alpha \hook \V$, as indicated below, 
\begin{equation}\label{eq:classifyuniversetype2}\textstyle
\begin{tikzcd}
	 Y \ar[d] \ar[rr, bend left] \ar[r] & \nu\op{\dot{\Set_\alpha}} \ar[d] \ar[r,hook] & \nu\op{\dot{\Set}} \ar[d]\\  
	X \ar[rr, bend right, swap,"\tilde{Y}"] \ar[r] &  \nu\op{\Set_\alpha} \ar[r,hook] &  \nu\op{\Set},
 \end{tikzcd}
 \end{equation}
in virtue of the following adjoint transposition,
\begin{equation}\label{eq:adjointtranspose}\textstyle
\begin{tikzcd}
	 \elem Y \ar[d] \ar[rr, bend left] \ar[r] & \op{\dot{\Set_\alpha}} \ar[d] \ar[r,hook] & \op{\dot{\Set}} \ar[d]\\  
	 \elem X \ar[rr, bend right, ] \ar[r]  &  \op{\Set_\alpha} \ar[r,hook]  &  \op{\Set}.
 \end{tikzcd}
  \end{equation}
Note that the square on the right is evidently a pullback, and so the one on the left is, too, because the outer rectangle is the classifying pulback of the discrete fibration $\elem Y \to \elem X$, as stated.  Thus the left square in \eqref{eq:classifyuniversetype2} is also a pullback.
\end{proof}

\subsubsection*{Examples of universal families $\VV_\alpha \too \V_\alpha$.}\label{examples:universalfamilies}

\begin{enumerate}
\item Let $\alpha = \kappa$ a strongly inaccessible cadinal, so that $\mathsf{ob}({\Set_\kappa})$ is a Grothendieck universe.  Then the Hofmann-Streicher universe of Definition \ref{def:HSuniverse} is recovered as the $\kappa$-small map classifier
\begin{equation*}
E\, \cong\, \VV_\kappa \too \V_\kappa\, \cong\, U
\end{equation*}
 in the sense of Proposition \ref{prop:familyclassifier}.  Indeed, for $c\in\bbC$, we have 
 \begin{align}
  \V_{\kappa}{c}\ &=\ \nu(\Set^{\mathsf{op}}_\kappa)(c) = \Cat\big( {\bbC/_c}\,,\, \Set^{\mathsf{op}}_\kappa \big)\  =\ \mathsf{ob}(\widehat{\bbC/_c})\ =\ U{c} \,.
   \end{align} 
For $\VV_{\kappa}$ we then have,
   \begin{align}\label{eq:veedotc}
   \VV_{\kappa}{c}\ =\ \nu(\SSet^{\mathsf{op}}_\kappa)(c)\ &=\ \Cat\big( {\bbC/_c}\,,\, \SSet^{\mathsf{op}}_\kappa \big) \notag \\ 
   &\cong\ {\textstyle \coprod_{A\in\V_{\kappa}{c}}\Cat_{{\bbC/_c}}\big( {\bbC/_c}\,,\, A^*\Set^{\mathsf{op}}_\kappa \big)}
   \end{align}
   where the $A$-summand in \eqref{eq:veedotc} is defined by taking sections of the  pullback indicated below.
   \begin{equation}\label{eq:pbforindexing}\textstyle
\begin{tikzcd}
	A^*\Set^{\mathsf{op}}_\kappa \ar[d] \ar[r] \pbmark & \SSet^{\mathsf{op}}_\kappa \ar[d]\\  
	\bbC/_c \ar[r,swap,"A"] \ar[u, bend left, dotted] \ar[ur, dotted] &  \Set^{\mathsf{op}}_\kappa
 \end{tikzcd}
 \end{equation}
 But $A^*\Set^{\mathsf{op}}_\kappa\ \cong\ {\textstyle \int_{\bbC/_c}\!A}$ over $\bbC/_c\,$, and sections of this discrete fibration in $\Cat$ correspond uniquely to natural maps $1\to A$ in $\widehat{{\bbC/_c}}$.  Since $1$  is representable in $\widehat{{\bbC/_c}}$ we can continue \eqref{eq:veedotc} by
  \begin{align*}
   \VV_{\kappa}{c}\ &\cong\ {\textstyle \coprod_{A\in \V_{\kappa}{c}}\Cat_{{\bbC/_c}}\big( {\bbC/_c}\,,\, A^*\Set^{\mathsf{op}}_\kappa \big)}\\
   	&\cong\ {\textstyle \coprod_{A\in \V_{\kappa}{c}} \widehat{{\bbC/_c}}(1, A)}\\
	&\cong\ {\textstyle \coprod_{A\in \V_{\kappa}{c}} A(1_c) } \\
	& =\ {\textstyle \coprod_{A\in \V_{\kappa}{c}} {\mathsf{E}l}(\langle c, A\rangle)}\\
	& =\  E c\,.
   \end{align*}
 
\item By functoriality of the nerve $\nu : \Cat \to \widehat{\bbC}$, a sequence of Grothendieck universes 
\[
\Set_\alpha \subseteq \Set_{\beta} \subseteq ...
\]
 in $\Set$ gives rise to a (cumulative) sequence of type-theoretic universes 
 \[
 \V_\alpha \mono {\V_\beta} \mono ...
 \]
  in $\widehat{\bbC}$. More precisely, there is a sequence of  cartesian squares,
\begin{equation}\label{eq:Vhierarchy}\textstyle
\begin{tikzcd}
	 \VV_\alpha \ar[d] \ar[r,tail] \pbmark & {\VV_\beta} \ar[d] \ar[r,tail] \pbmark & \dots \\  
	 \V_\alpha  \ar[r, tail]  &  {\V_\beta} \ar[r,tail]  & \dots\,,
 \end{tikzcd}
  \end{equation}
in the image of $\nu : \Cat\too\widehat\bbC$, classifying small maps in $\widehat\bbC$ of increasing size, in the sense of Proposition \ref{prop:familyclassifier}.

\item\label{universeexample:sliceuniverse} Let $\alpha = 2$ so that $1\to 2$ is the subobject classifier of $\Set$, and 
\[
{\mathbb{1}} = \SSet^{\mathsf{op}}_2 \too  \Set^{\mathsf{op}}_2 = \mathbbm{2}
\]
is then a classifier in $\Cat$ for \emph{sieves}, i.e.\ full subcategories $\mathbb{S}\hook\A$ closed under the domains of arrows $a\to s$ for $s\in\mathbb{S}$.  The nerve $\VV_{2}  \to \V_{2}$ is then the usual subobject classifier $1\to\Omega$ of $\widehat\bbC$,
\begin{equation}\label{eq:SOCnerve}\textstyle
\begin{tikzcd}
	 \VV_{2}  \ar[d] \ar[r,equals] & \nu \mathbbm{1} \ar[d] \ar[r,"\sim"]   & 1 \ar[d] \\  
	 \V_{2} \ar[r,equals] & \nu \mathbbm{2} \ar[r,"\sim"]   &  \Omega 
 \end{tikzcd}
  \end{equation}

\item For any $X\in \widehat{\bbC}$, we have an equivalence 
\[
\widehat{\bbC}/_X\ 
\simeq\ \widehat{\textstyle \int_{\bbC}X}\ \simeq\ \mathsf{dFib}/_{\!\int_{\bbC}X}
\]
where, generally, $\mathsf{dFib}/_{\mathbb{D}}$ is the category of discrete fibrations over a category $\mathbb{D}$.
This equivalence commutes with composition along discrete fibrations, in the sense that the forgetful functor 
\[
{X_!}: \widehat{\bbC}/_X \to \widehat{\bbC}
\]
 given by composition along $X \to 1$ agrees (up to canonical isomorphism) with the base change $(p_X)_! \dashv (p_X)^*$ of presheaves along the projection  ${\textstyle  p_X : \int_{\bbC}X \to  \bbC}$, and with composition along the discrete fibration $p_X$, as indicated in:
\begin{equation}\label{diagram:sliceuniversepullsback}
\begin{tikzcd}
\widehat{\bbC}/_X  \ar[d,swap,"{{X_!}}"] \ar[r, "{\sim}"] 
	& \widehat{\textstyle \int_{\bbC}X} \ar[d,swap,"{{(p_X)_!}}"] \ar[r, "{\sim}"] 
		& \mathsf{dFib}/_{\!\int_{\bbC}X} \ar[d,"p_X\circ(-)"]  \\
 \widehat{\bbC}  \ar[r,swap, "{\sim}"]   & \widehat{\bbC}  \ar[r,swap, "{\sim}"] 
	&  \mathsf{dFib}/_{\!\bbC}.
\end{tikzcd}
\end{equation}

It follows that the pullback functor $X^* : \widehat{\bbC} \to \widehat{\bbC}/_X$ commutes with the corresponding right adjoints (one of which is the nerve), and therefore preserves the respective universes, 
\[\textstyle
 X^* \V_{\bbC} \ 
   \cong\ (p_X)^* \nu_{\bbC}(\Set^{\mathsf{op}}) \ 
   \cong\  \nu_{\int_{\bbC}X}(\Set^{\mathsf{op}}) \ 
   \cong\  \V_{ \int_{\bbC}X} \,.
 \]
\end{enumerate}

 \begin{corollary}\label{prop:familyclassifierforslices}
Let $\VV_\alpha\to\V_\alpha$ classify $\alpha$-small maps in $\widehat{\bbC}$, as in Proposition \ref{prop:familyclassifier}.  Then for any $X\in \widehat{\bbC}$, the pullback $X^*\VV_\alpha\to X^*\V_\alpha$ classifies 
$\alpha$-small maps in $\widehat{\bbC}/_X$\,.
\end{corollary}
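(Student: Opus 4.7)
The plan is to reduce the claim to Proposition \ref{prop:familyclassifier} applied to the presheaf category $\widehat{\int_\bbC X}$, using the standard equivalence
\[
\widehat{\bbC}/_X \ \simeq\ \widehat{\textstyle\int_{\bbC}X}
\]
from Example (4) of the list preceding the corollary. A map $Y \to Z$ in $\widehat{\bbC}/_X$ is $\alpha$-small in $\widehat{\bbC}$ if and only if, as a map of presheaves on $\int_\bbC X$, it is $\alpha$-small there---because the fibers in both descriptions are computed pointwise on $\int_\bbC X$, and the forgetful functor $X_! : \widehat{\bbC}/_X \to \widehat\bbC$ corresponds (under the equivalence) to composition with the discrete fibration $p_X : \int_\bbC X \to \bbC$, which does not change the underlying fibers. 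Thus the notion of $\alpha$-small map is preserved and reflected by the equivalence.

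Next, I would invoke Proposition \ref{prop:familyclassifier} applied to the small category $\int_\bbC X$ in place of $\bbC$, which gives a classifying map $\VV_{\int_\bbC X,\alpha} \to \V_{\int_\bbC X,\alpha}$ for $\alpha$-small maps in $\widehat{\int_\bbC X}$. It remains to identify this classifier with $X^* \VV_\alpha \to X^* \V_\alpha$. For this I would invoke the canonical isomorphism
\[
X^* \V_\bbC \ \cong\ (p_X)^* \nu_\bbC(\op\Set) \ \cong\ \nu_{\int_\bbC X}(\op\Set) \ \cong\ \V_{\int_\bbC X}
\]
already established in Example (4), and then observe that the analogous identification restricts to the $\alpha$-small versions, since $\VV_\alpha \hookrightarrow \VV$ and $\V_\alpha \hookrightarrow \V$ are obtained by applying $\nu$ to the full subcategory inclusions $\op{\dot{\Set_\alpha}} \hookrightarrow \op{\dot{\Set}}$ and $\op{\Set_\alpha} \hookrightarrow \op{\Set}$, and $(p_X)^*$ (being a left exact left adjoint) preserves such pullback squares of presheaves.

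Combining these two steps: an $\alpha$-small map $Y \to Z$ in $\widehat{\bbC}/_X$ transports under the equivalence to an $\alpha$-small map in $\widehat{\int_\bbC X}$, is classified there by a unique pullback square against $\VV_{\int_\bbC X,\alpha} \to \V_{\int_\bbC X,\alpha}$, and transports back to a pullback square against $X^*\VV_\alpha \to X^*\V_\alpha$ in $\widehat{\bbC}/_X$. The main subtlety---really the only thing one must check carefully rather than quote---is that the two identifications (of $\alpha$-small maps, and of the $\alpha$-restricted universe) commute with the pullback functor $X^*$, i.e.\ that the restriction of the canonical iso $X^*\V_\bbC \cong \V_{\int_\bbC X}$ to $\alpha$-small sets is still an isomorphism and is still natural in the classifying data; this is immediate from the fact that both sides are defined by applying $\nu$ to the same subcategory of $\op\Set$, and $\nu$ is functorial.
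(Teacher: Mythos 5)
Your proof is correct and takes the same route the paper intends: the corollary follows from the chain of isomorphisms $X^*\V_\bbC \cong (p_X)^*\nu_\bbC(\op\Set) \cong \nu_{\int_\bbC X}(\op\Set) \cong \V_{\int_\bbC X}$ established in Example~(4) preceding the statement, restricted to the $\alpha$-small versions, together with the observation that $\alpha$-smallness of a map over $X$ is unchanged under the equivalence $\widehat{\bbC}/_X \simeq \widehat{\int_\bbC X}$. You have simply spelled out the steps the paper leaves implicit.
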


\subsection*{Classifying trivial fibrations}

Returning now to the presheaf category $\cSet = \psh{\Box}$ of cubical sets, recall from section \ref{sec:cofibrations} that (uniform) trivial fibration structures on a map $A\ra X$ correspond bijectively to relative +-algebra structures over $X$ (definition \ref{def:+alg}).  A relative $+$-algebra structure on $A \ra X$ is an algebra structure for the pointed polynomial endofunctor $+_X : \cSet/X \too \cSet/X$, where recall from \eqref{eq:partialmapclassifier}, 
\[
A^+ = \sum_{\varphi: \Phi} A^{[\varphi]}\quad\text{over $X$}.
\]
A +-algebra structure is then a retract $\alpha : A^+\ra A$ over $X$ of the canonical map $\eta_A : A\ra A^+$,
\begin{equation}\label{eq:Aplus}
\xymatrix{
A\ar[rd] \ar[r]^{\eta_A} \ar@/^6ex/ [rr]^= & A^+ \ar[r]^{\alpha} \ar[d]& \ar[ld]A \\
& X. &
}
\end{equation}
In more detail, let us write $A\ra X$ as a family $(A_x)_{x\in X}$, so that $A=\sum_{x:X}A_x \ra X$. Since the +-functor acts fiberwise, the object $A^+$ in \eqref{eq:Aplus} is then the indexing projection
\[
\sum_{x:X}A^+_x \ra X.
\]
Working in the slice  $\cSet/X$, the (relative) exponentials (internal Hom's) $[A^+, A]$ and $[A, A]$ and the ``precomposition by $\eta_A$'' map $[\eta_A, A]$,  fit into the following pullback diagram 
\begin{equation}\label{diag:plualgstru}
\xymatrix{
+\mathsf{Alg}(A)\ar[d] \ar[r] \pbcorner & [A^+, A] \ar[d]^{[\eta_A, A]}\\
1 \ar[r]_{'{\mathsf{id}_A}'} & [A, A].
}
\end{equation}
The constructed object $+\mathsf{Alg}(A) \ra X$ over $X$ is then the \emph{object of +-algebra structures on $A\ra X$}, in the sense that sections $X \ra +\mathsf{Alg}(A)$ correspond uniquely to +-algebra structures on $A\ra X$. Moreover, $+\mathsf{Alg}(A) \ra X$ is stable under pullback, in the sense that for any $f:Y\ra X$, we have two pullback squares,
\begin{equation}\label{diagram:pbplus}
\xymatrix{
f^*A \ar[d] \ar[r]  & A \ar[d]\\
Y \ar[r]_{f} &X\\
+\mathsf{Alg}(f^*A)\ar[u] \ar[r] & +\mathsf{Alg}(A)\ar[u].
}
\end{equation}
because the +-functor, exponentials and pullbacks occurring in the construction of $+\mathsf{Alg}(A) \ra X$ are themselves all stable. 

It then follows from Proposition \ref{prop:familyclassifier} that, if $A\ra X$ is small, then $+\mathsf{Alg}(A) \ra X$ is itself a pullback of the analogous object $+\mathsf{Alg}(\VV) \ra \V$ constructed from the universal small family $\VV\ra\V$ of Proposition \ref{prop:familyclassifier}, so there are two pullback squares:
\begin{equation}\label{diagram:tfib1}
\xymatrix{
A \ar[d] \ar[r]  & \VV \ar[d]\\
X \ar[r]_{\chi_A} & \V\\
+\mathsf{Alg}(A)\ar[u] \ar[r] & +\mathsf{Alg}(\VV)\ar[u].
}
\end{equation}

\begin{proposition}\label{prop:classTFib}
There is a \emph{universal small trivial fibration}  
\[
\TTFib\ra\TFib.
\]
 Every small trivial fibration $A \ra X$ is a pullback of $\TTFib\ra\TFib$ along a canonically determined classifying map $X\ra \TFib$.
\begin{equation}\label{diagram:classifytf}
\xymatrix{
A \ar[d] \ar[r]  \pbcorner & \TTFib\ar[d]\\
X \ar[r] & \TFib
}
\end{equation}
\end{proposition}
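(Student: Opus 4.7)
The plan is to build $\TTFib \to \TFib$ by combining the universal small family $\VV \to \V$ of Proposition \ref{prop:familyclassifier} with the algebra-structure construction from diagram \eqref{diag:plualgstru}. Specifically, I would set
\[
\TFib\ :=\ +\mathsf{Alg}(\VV),\qquad \TTFib\ :=\ q^*\VV,
\]
where $q : +\mathsf{Alg}(\VV) \to \V$ is the canonical projection and $q^*\VV$ is its pullback, so that we have a pullback square
\[
\xymatrix{
\TTFib \ar[r] \ar[d] \pbcorner & \VV \ar[d] \\
\TFib \ar[r]_-{q} & \V.
}
\]

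First I would verify that $\TTFib \to \TFib$ is itself a small trivial fibration. By the pullback stability of the $+\mathsf{Alg}$ construction recorded in \eqref{diagram:pbplus}, we have a canonical isomorphism $+\mathsf{Alg}(\TTFib) \cong q^*(+\mathsf{Alg}(\VV)) = q^*\TFib$ over $\TFib$. The diagonal $\TFib \to q^*\TFib$ of $q$ is then a section of this projection, and hence corresponds to a $+$-algebra structure on $\TTFib \to \TFib$, which by Proposition \ref{prop:uniformstructequivrelative} is exactly a (uniform) trivial fibration structure on $\TTFib \to \TFib$.

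For the classifying property, let $f : A \to X$ be a small trivial fibration with chosen $+$-algebra structure $\alpha : A^+ \to A$. By Proposition \ref{prop:familyclassifier} the underlying small family has a canonical classifying map $\chi_A : X \to \V$ fitting into a pullback square with $\VV \to \V$. The structure $\alpha$ corresponds (by \eqref{diag:plualgstru}) to a section $s_\alpha : X \to +\mathsf{Alg}(A)$; by pullback stability \eqref{diagram:pbplus}, the object $+\mathsf{Alg}(A)$ is itself the pullback $\chi_A^*(+\mathsf{Alg}(\VV)) = \chi_A^*\TFib$, so $s_\alpha$ factors uniquely as a lift $\tilde{\chi}_A : X \to \TFib$ with $q\circ \tilde{\chi}_A = \chi_A$. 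Pasting the two pullback squares
\[
\xymatrix{
A \ar[r] \ar[d] \pbcorner & \TTFib \ar[r] \ar[d] \pbcorner & \VV \ar[d] \\
X \ar[r]_-{\tilde{\chi}_A} & \TFib \ar[r]_-{q} & \V
}
\]
then exhibits $f : A \to X$ as the pullback of $\TTFib \to \TFib$ along $\tilde{\chi}_A$, as required.

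The main subtlety is verifying that the trivial fibration structure on $A \to X$ transported back from the tautological structure on $\TTFib \to \TFib$ coincides with the original $\alpha$. This reduces to tracing the two applications of pullback stability and observing that the tautological section of $q^*\TFib \to \TFib$ restricts along $\tilde{\chi}_A$ to exactly the section $s_\alpha : X \to \chi_A^*\TFib \cong +\mathsf{Alg}(A)$, which is forced by the definition of $\tilde{\chi}_A$ as the transpose of $s_\alpha$ under the universal property of the pullback \eqref{diagram:pbplus}. Canonicity of the classifying map $\tilde{\chi}_A$ follows from canonicity of $\chi_A$ in Proposition \ref{prop:familyclassifier} together with the uniqueness of the factorization through $\chi_A^*\TFib$.
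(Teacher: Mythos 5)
Your proof is correct and follows essentially the same route as the paper: define $\TFib := +\mathsf{Alg}(\VV)$, pull back $\VV\to\V$ along the projection $q$ to get $\TTFib\to\TFib$, use pullback stability of $+\mathsf{Alg}$ to convert a trivial fibration structure $\alpha$ on $A\to X$ into a lift $\tilde\chi_A : X\to\TFib$ of $\chi_A$, and paste pullback squares. The paper's proof even exhibits the same diagonal section witnessing that $\TTFib\to\TFib$ itself carries a canonical trivial fibration structure, and your additional remark on verifying that the transported structure recovers $\alpha$ is a sound (if implicit in the paper) consistency check rather than a deviation.
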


\begin{proof}
We can take 
\[
\TFib := +\mathsf{Alg}(\VV),
\]
 which comes with its projection $+\mathsf{Alg}(\VV) \ra \V$ as in diagram \eqref{diagram:tfib1}.  Now define $p_t:\TTFib\ra\TFib$ by pulling back the universal small family,
\[
\xymatrix{
\TTFib \ar[d]_{p_t} \ar[r]  \pbcorner & \VV\ar[d]^p\\
\TFib \ar[r] & \V.
}
\]
Consider the following diagram, in which all the squares (including the distorted ones) are pullbacks, with the outer one coming from proposition \ref{prop:familyclassifier} and the lower one from \eqref{diagram:tfib1}.
\begin{equation}\label{diagram:classifytf2}
\xymatrix{
&A \ar[ddd] \ar[rrr]^{q_A}   \ar@{.>}[rrd] &&& \VV\ar[ddd]^p\\
& && \TTFib \ar[d]_{p_t} \ar[ru]  &\\
\TFib(A) \ar[rd] \ar[rrr] |<<<<<<<<<<<<\hole  &&& \TFib \ar[rd] &\\
&X \ar[rrr]_{\chi_A} \ar@{.>}@/^1pc/[lu]^\alpha \ar@{.>}[rru]_{\alpha'} &&& \V.
}
\end{equation}
A trivial fibration structure $\alpha$ on $A\ra X$ is a section the object of $+$-algebra structures on $A$, occurring in the diagram as 
\[
\TFib(A) := +\mathsf{Alg}(A),
\]
 the pullback of $\TFib = +\mathsf{Alg}(\VV)$ along the classifying map $\chi_A : X \to \V$ for the small family $A\to X$.  Such sections correspond uniquely to factorizations $\alpha'$ of $\chi_A$ as indicated, which in turn induce pullback squares of the required kind \eqref{diagram:classifytf}.

Note that the map $p_t : \TTFib\ra\TFib$ has a canonical trivial fibration structure. Indeed, consider the following diagram, in which both squares are pullbacks.
\begin{equation}\label{diagram:fibisfib}
\xymatrix{
\TTFib \ar[d]_{p_t} \ar[r]  & \VV \ar[d]\\
\TFib \ar[r] & \V\\
\TFib(\TTFib) \ar[u] \ar[r] & \TFib(\VV)\ar[u].
}
\end{equation}
$\TFib(\VV)$ is the object of trivial fibration structures on $\VV\ra\V$, and its pullback $\TFib(\TTFib)$ is therefore the object of trivial fibration structures on $p_t : \TTFib\ra\TFib$.  Thus we seek a section of $\TFib(\TTFib) \ra \TFib$.  But recall that $\TFib = \TFib(\VV)$ by definition, so the lower pullback square is the pullback of $\TFib(\VV)\ra \V$ against itself, which does indeed have a distinguished section, namely the diagonal
\[
\Delta : \TFib(\VV) \ra \TFib(\VV)\times_\V\TFib(\VV).
\]
\end{proof}

We record the following notation and corresponding fact from the foregoing proof for future reference:

\begin{lemma}\label{lemma:TFibstable}
The classifying type $\TFib(A) := +\mathsf{Alg}(A) \to X$ for trivial fibration structures on a map $A\to X$  is stable under pullback, in the sense that for any $f:Y\ra X$, we have two pullback squares,
\begin{equation}\label{diagram:TFibstable}
\xymatrix{
f^*A \ar[d] \ar[r]  & A \ar[d]\\
Y \ar[r]_{f} &X\\
\TFib(f^*A)\ar[u] \ar[r] & \TFib(A)\ar[u].
}
\end{equation}
\end{lemma}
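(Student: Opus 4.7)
The plan is to chase the defining pullback square \eqref{diag:plualgstru} through the base change functor $f^{*} : \cSet/_X \to \cSet/_Y$ and show that it lands on the defining pullback square for $\TFib(f^{*}A)$. This is really just a collection of ``stability'' statements assembled in the right order.

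First I would recall the three pieces of structure used to construct $\TFib(A) = \plusalgx(A)$: the relative $+$-functor $+_X : \cSet/_X \to \cSet/_X$, the internal hom $[-,-]_X$ of the slice, and pullbacks in that slice. All three are preserved by $f^{*} : \cSet/_X \to \cSet/_Y$. For the first, this is precisely the fiberedness of the polynomial monad $+$ over $\cSet$ (Proposition~\ref{prop:fiberedpolymonad}), so there is a canonical iso $f^{*}(A^+) \cong (f^{*}A)^+$ over $Y$ intertwining the units $f^{*}\eta_A$ and $\eta_{f^{*}A}$. For the second, $f^{*}$ is a base change between locally cartesian closed categories and so preserves internal homs of the respective slices: $f^{*}[A^+, A]_X \cong [(f^{*}A)^+, f^{*}A]_Y$ and likewise $f^{*}[A, A]_X \cong [f^{*}A, f^{*}A]_Y$. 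For the third, $f^{*}$ is a right adjoint (to $f_!$) and thus preserves pullbacks.

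With these ingredients in hand, I would apply $f^{*}$ termwise to the square \eqref{diag:plualgstru}:
\[
\xymatrix{
f^{*}\plusalgx(A)\ar[d] \ar[r] \pbcorner & f^{*}[A^+, A]_X \ar[d]^{f^{*}[\eta_A, A]}\\
Y \ar[r]_{f^{*}\!'\mathsf{id}_A'} & f^{*}[A, A]_X\,.
}
\]
Using the identifications above, the right-hand vertical map is (canonically) $[\eta_{f^{*}A}, f^{*}A]$, and the bottom horizontal map is $'\mathsf{id}_{f^{*}A}'$ because identity morphisms are preserved under any functor and in particular under $f^{*}$. Hence the displayed square is (up to canonical isomorphism) the defining pullback \eqref{diag:plualgstru} for $\plusalgx(f^{*}A)$. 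Since pullbacks are unique up to unique isomorphism, this gives $f^{*}\TFib(A) \cong \TFib(f^{*}A)$ over $Y$, which is exactly the right-hand pullback square of \eqref{diagram:TFibstable}.

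There is no real obstacle here: the entire content is the pullback-stability of the three pieces of structure. The only point that deserves a moment's care is the identification $f^{*}(A^+) \cong (f^{*}A)^+$ sending $f^{*}\eta_A$ to $\eta_{f^{*}A}$, since the definition of $\plusalgx(A)$ uses $\eta_A$ in an essential way; but this is exactly what Proposition~\ref{prop:fiberedpolymonad} (the fiberedness of $+$ via diagram \eqref{diag:+fibered}) provides. The same argument could equally be run at the level of generalized elements: a map $Z \to f^{*}\TFib(A)$ over $Y$ corresponds to a map $Z \to \TFib(A)$ over $X$ (via $f$), which classifies a $+$-algebra structure on the pullback of $A$ to $Z$; but that pullback can be computed either through $X$ or through $Y$, and so we get a $+$-algebra structure over $Z$ for the pullback of $f^{*}A$, i.e.\ a map $Z \to \TFib(f^{*}A)$ over $Y$.
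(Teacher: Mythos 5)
Your proof is correct and follows exactly the same route the paper takes (the paper dispenses with it in one line around \eqref{diagram:pbplus}: ``because the $+$-functor, exponentials and pullbacks occurring in the construction of $+\mathsf{Alg}(A)\to X$ are themselves all stable''). You simply spell out that one-liner carefully, correctly identifying Proposition~\ref{prop:fiberedpolymonad} as the input that makes $f^*$ commute with $(-)^+$ and the unit.
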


Since the universal small trivial fibration $\TTFib\ra\TFib$ in $\cSet$ from Proposition \ref{prop:classTFib} was constructed as $\TFib = \TFib(\VV)$ for the universal small family $\VV\to \V$, which in turn is stable under pullback by Corollary \ref{prop:familyclassifierforslices}, we also have: 

\begin{corollary}\label{por:classTFibslice}
The base change of the universal small trivial fibration  
\[
\TTFib\ra\TFib
\]
in $\cSet$ along $\I^* : \cSet \to \cSet/_\I$ is a universal small trivial fibration in $\cSet/_\I$.
\end{corollary}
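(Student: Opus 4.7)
The plan is to observe that every ingredient used in the construction of $\TTFib \to \TFib$ in Proposition \ref{prop:classTFib} is stable under the base change functor $\I^* : \cSet \to \cSet/_\I$, so that applying $\I^*$ to the universal small trivial fibration in $\cSet$ yields a universal one for $\cSet/_\I$ by the very same recipe.

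First, by Corollary \ref{prop:familyclassifierforslices}, the pullback $\I^*\VV \to \I^*\V$ is a universal small family in $\cSet/_\I$, classifying all $\alpha$-small maps there in the sense of Proposition \ref{prop:familyclassifier}. Next, by Proposition \ref{prop:fiberedpolymonad} the polynomial monad $+$ is fibered over $\cSet$, so $\I^*$ commutes, up to canonical isomorphism, with the relative $+$-functor, and hence also with the formation of the exponentials and pullbacks used in diagram \eqref{diag:plualgstru} to build the classifying type $\TFib(-)$ of trivial fibration structures. Combined with Lemma \ref{lemma:TFibstable}, this yields
\[
\I^*\TFib \;=\; \I^*\TFib(\VV) \;\cong\; \TFib_{\cSet/_\I}\!\bigl(\I^*\VV\bigr),
\]
where the right-hand side denotes the classifying type of $+_\I$-algebra structures constructed internally in $\cSet/_\I$; pulling back the defining square of $\TTFib \to \TFib$ in \eqref{diagram:classifytf2} along $\I^*$ then likewise identifies $\I^*\TTFib$ with the internally constructed $\TTFib_{\cSet/_\I}$ over $\I^*\V$.

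Having established this compatibility, I would run the proof of Proposition \ref{prop:classTFib} verbatim inside $\cSet/_\I$, with $\I^*\VV \to \I^*\V$ in place of $\VV \to \V$ and $\I^*\Phi \to \I^*1$ as the cofibration classifier (which is again a cofibration classifier by axiom (C3), stability of cofibrations under pullback). Given any small trivial fibration $A \to X$ in $\cSet/_\I$, its family-classifying map $X \to \I^*\V$ factors uniquely through the chosen $+_\I$-algebra structure to produce a pullback square classifying $A \to X$ over $\I^*\TFib$; the canonical $+_\I$-algebra structure on $\I^*p_t$ is obtained from the diagonal of $\TFib_{\cSet/_\I}(\I^*\VV)$ over $\I^*\V$, exactly as in the original proof.

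The only step requiring genuine care is the compatibility of the $+$-construction with $\I^*$, since the rest of the argument is the formal transfer of a diagrammatic construction along a base-change functor that preserves limits and the locally cartesian closed structure. I expect this to be the main obstacle only in the sense of bookkeeping: one must verify that the Beck--Chevalley isomorphism making $+_\I \circ \I^* \cong \I^* \circ +$ identifies the respective unit maps $\eta$ and hence the pullback squares defining $\TFib(-)$. Once that is checked, the corollary follows.
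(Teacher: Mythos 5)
Your proof is correct and follows the paper's implicit argument: the paper derives the corollary in a single sentence by observing that $\TFib = \TFib(\VV)$ with both $\VV\to\V$ (Corollary \ref{prop:familyclassifierforslices}) and the classifying type $\TFib(-)$ (Lemma \ref{lemma:TFibstable}, ultimately resting on the fiberedness of $+$ from Proposition \ref{prop:fiberedpolymonad}) stable under base change. You invoke exactly these three ingredients and merely spell out the bookkeeping the paper leaves tacit, so this is the same approach, just presented in expanded form.
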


\subsection*{Classifying fibrations}\label{sec:universalfibration}

In order to classify fibrations $A\fib X$, we shall proceed as for trivial fibrations by constructing, for any map $A\ra X$, an object $\Fib(A)\to X$ of fibration structures which, moreover, is stable under pullback.  We then apply the construction to the universal small family $\VV\ra\V$ of Proposition \ref{prop:familyclassifier} to obtain a universal small fibration.    Here we will of course need to distinguish between biased and unbiased fibrations.  In Lemma \ref{lemma:classtypebiasedfibstruct}, we first construct a stable classifying type $\Fib(A)\ra X$ for $\delta$-biased fibration structures on any map $A\to X$ in $\cSet/_\I$ where $\delta$ is the generic point.    In Lemma \ref{lemma:classtypeunbiasedfibstruct}
we then transfer the construction along the base change $\I^* : \cSet \to \cSet/_\I$ to obtain a classifier $\Fib(A)\ra X$ for unbiased fibration structures on any $A\to X$ in~$\cSet$.

The construction of $\Fib(A)\ra X$ for biased fibration structures with respect to a point $\delta : 1\to \I$ is already a bit more involved than was that of $\TFib(A)\ra X$.  In particular, it requires the codomain $\I$ of $\delta$ to be \emph{tiny}, which is indeed the case for the generic point $\delta : 1 \to  \I^*\I$ in $\cSet/_\I$ by Lemma~\ref{lemma:tinyslicedI}.

\subsubsection*{The classifying type of biased fibration structures.}

A classifying type $\Fib(A)\ra X$ of (uniform, $\delta$-biased) fibration structures on a map $p: A\ra X$, 
as defined in Section \ref{sec:biasedfibration}, can be constructed as follows.

 \begin{enumerate}

\item First form the pullback-hom $\pbh{\delta}{p} : A^\I \ra X^\I \times_X A$ with the point $\delta : 1\to \I$, as indicated in the following diagram.
\begin{equation}\label{diagram:fibU1}
\xymatrix{
A^\I \ar@/_4ex/ [rdd]_{(p_A)^\I} \ar@{.>}[rd]^{\delta\Rightarrow{p}} \ar@/^4ex/ [rrd]^{A^\delta} && \\
& X^\I \times_{X} A \ar[d] \ar[r] & A \ar[d]^{p} \\
& X^\I \ar[r]_{X^\delta} &  X 
}
\end{equation}

\item A fibration structure on $p : A\ra X$ is then a relative +-algebra structure on $\delta\Rightarrow p$ in the slice category over its codomain $X^\I \times_X A$. 
To construct a classifier for such structures, let us first relabel the objects and arrows in diagram \eqref{diagram:fibU1} as follows:
\begin{align*}
\epsilon &:= X^\delta : X^\I \ra X \\
A_\epsilon &:= X^\I \times_X A\\
\epsilon_A &:= \delta\Rightarrow p
\end{align*}
so that the working part of \eqref{diagram:fibU1} becomes:
\begin{equation}\label{diagram:fibU2}
\xymatrix@=2em{
A^\I \ar[rd]_{\epsilon_A} && \\
& A_\epsilon\ar[d]_{p_\epsilon} \ar[r] \pbcorner & A \ar[d]^{p} \\
& X^\I \ar[r]_{\epsilon} &  X 
}
\end{equation}

\item Now a relative +-algebra structure on $\epsilon_A$ (Definition \ref{def:+alg}) is a retract $\alpha$ over $A_\epsilon$ of the unit $\eta$, as indicated below, where $D$ is simply the domain of the map $(\epsilon_A)^+$ resulting from applying the relative +-functor in the slice category over $A_\epsilon$ to the object $\epsilon_A$.
\begin{equation}\label{diagram:fibU3}
\xymatrix{
A^\I \ar[rd]_{\epsilon_A} \ar[r]_\eta & \ar@{.>}@/_1pc/[l]_\alpha D \ar[d]^{(\epsilon_A)^+} & \\
& A_\epsilon\ar[d]_{p_\epsilon} \ar[r] \pbcorner & A \ar[d]^{p} \\
& X^\I \ar[r]_{\epsilon} &  X 
}
\end{equation}

\item As in the construction \eqref{diag:plualgstru}, there is an object $\TFib(\epsilon_A) = +\mathsf{Alg}(\epsilon_A)$ over  $A_\epsilon$ of relative +-algebra structures on $\epsilon_A$, the sections of which correspond uniquely to relative +-algebra structures on $\epsilon_A$ (and thus to fibration structures on $A$).
\begin{equation}\label{diagram:fibU4}
\xymatrix{
&A^\I \ar[d]_{\epsilon_A} \ar[r]_\eta & \ar@{.>}@/_1pc/[l]_\alpha D \ar[ld]^<<<<<{(\epsilon_A)^+}  \\
\TFib(\epsilon_A) \ar[r] & A_\epsilon\ar[d]_{p_\epsilon} \ar[r] \pbcorner & A \ar[d]^{p} \\
& X^\I \ar[r]_{\epsilon} &  X 
}
\end{equation}

\item Sections of $\TFib(\epsilon_A)\too A_\epsilon$ then correspond to sections of its push-forward along $p_\epsilon$, which we shall call $F_A$: 
\[
F_A := (p_\epsilon)_*\TFib(\epsilon_A)\,.
\]

\begin{equation}\label{diagram:fibU5}
\xymatrix{
&A^\I \ar[d]_{\epsilon_A} \ar[r]_\eta & \ar@{.>}@/_1pc/[l]_\alpha D \ar[ld]^<<<<<{(\epsilon_A)^+}  \\
\TFib(\epsilon_A) \ar[r] & A_\epsilon\ar[d]_{p_\epsilon} \ar[r] \pbcorner & A \ar[d]^{p} \\
F_A \ar[r] & X^\I \ar[r]_{\epsilon} &  X 
}
\end{equation}

\item One might now try taking another pushforward of $F_A \ra X^\I$ along $\epsilon : X^\I\ra X$ to get the object $\Fib(A) \ra X$ that we seek, but unfortunately, this would not be stable under pullback along arbitrary maps $Y\ra X$, because the evaluation $\epsilon = X^\delta : X^\I \ra X$ is not stable in that way.  Instead we use the \emph{root} functor, i.e.\ the right adjoint of the pathspace, $(-)^\I \dashv (-)_\I$ (Proposition \ref{prop:Itiny}). 

Let $f : F_A \ra X^\I$ be the map $(p_\epsilon)_*\TFib(\epsilon_A)$ indicated in \eqref{diagram:fibU5}, and let $\eta : X \ra (X^\I)_\I$ be the unit of the root adjunction at $X$.  Then define $\Fib(A)\ra X$ by 
\[
 \Fib(A) :=  \eta^*{f_\I}
 \]
 as indicated in the following pullback diagram.
\begin{equation}\label{diagram:fibU6}
\xymatrix{
  \Fib(A)  \ar[d] \ar[r] \pbcorner & (F_A)_\I \ar[d]^{f_\I}\\
  X \ar[r]_{\eta} &  (X^\I)_\I
}
\end{equation}
By adjointness, sections of $\Fib(A)\ra X$ then correspond bijectively to sections of  $f : F_A \ra X^\I$.  
\end{enumerate}

\begin{lemma}\label{lemma:classtypebiasedfibstruct}
For any map $A\ra X$ in $\cSet/_\I$, the map $\Fib(A)\ra X$  in \eqref{diagram:fibU6} is a \emph{classifying type for $\delta$-biased fibration structures}: sections of $\Fib(A)\ra X$ correspond bijectively to $\delta$-biased fibration structures on $A\ra X$, and the construction is stable under pullback in the sense that for any $f:Y\ra X$, we have two pullback squares,
\begin{equation}\label{diagram:Fibstable}
\xymatrix{
f^*A \ar[d] \ar[r]  & A \ar[d]\\
Y \ar[r]_{f} &X\\
\Fib(f^*A)\ar[u] \ar[r] & \Fib(A)\ar[u].
}
\end{equation}
\end{lemma}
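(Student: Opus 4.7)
The plan is to prove the two parts of the lemma separately: the bijective correspondence between sections of $\Fib(A) \to X$ and $\delta$-biased fibration structures on $p:A \to X$, and the stability of the construction under pullback.

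For the correspondence, I would chase through the construction from the bottom upward using the relevant adjunctions. A section $s: X \to \Fib(A)$ is, by the defining pullback \eqref{diagram:fibU6}, a lift of the unit $\eta: X \to (X^\I)_\I$ through $f_\I$. Under the root adjunction $(-)^\I \dashv (-)_\I$ from Lemma \ref{lemma:tinyslicedI}, and using that $\eta$ is the adjoint transpose of $1_{X^\I}$, such lifts correspond bijectively to sections of $f: F_A \to X^\I$. The adjunction $(p_\epsilon)^* \dashv (p_\epsilon)_*$ then converts these into sections of $\TFib(\epsilon_A) \to A_\epsilon$ over $A_\epsilon$, which by Proposition \ref{prop:classTFib} applied in the slice over $A_\epsilon$ are in bijection with relative $+$-algebra structures on $\epsilon_A = \delta \Rightarrow p$. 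By Definition \ref{def:unbiasedfibration} (read in its $\delta$-biased form), such structures are precisely $\delta$-biased fibration structures on $p$.

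For stability, I would verify that each step of the six-step construction commutes with base change along $g: Y \to X$ in $\cSet/_\I$. Reinterpreting the construction relatively in the slice $(\cSet/_\I)/_X$ --- so that the pathobject in step (1) is the internal exponential by $X^*(\I^*\I)$ and the pushforward in step (5) is taken in the slice --- the operations of pathobject, pullback, and the relative $+$-functor used in forming $\TFib(\epsilon_A)$ all commute with $g^*$, and Lemma \ref{lemma:TFibstable} supplies stability of $\TFib$ directly. The decisive design choice is step (6): the alternative of pushing $F_A \to X^\I$ forward along $\epsilon = X^\delta: X^\I \to X$ would break stability, because $\epsilon$ itself is not reindexing-stable. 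The use of the root $(-)_\I$ sidesteps this, since $(-)_\I$ is right adjoint to $(-)^\I$ and so preserves all limits; combined with the naturality of the unit $\eta$ and Beck-Chevalley for the pullback-pushforward adjunctions in earlier steps, this produces the required isomorphism $g^*\Fib(A) \cong \Fib(g^*A)$ exhibited in \eqref{diagram:Fibstable}.

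The main obstacle is making the stability argument in step (6) precise: although the root is formally a right adjoint, the composite $\eta^* \circ (-)_\I \circ (p_\epsilon)_*$ interleaves several adjunctions whose Beck-Chevalley squares must be checked. Concretely, one needs that the tinyness of the interval in $\cSet/_\I$ (Lemma \ref{lemma:tinyslicedI}) transfers to tinyness of $X^*(\I^*\I)$ in $(\cSet/_\I)/_X$, so that the relative root exists and its formation commutes with $g^*$. Granted this, the remainder of the stability proof is routine bookkeeping through the adjunctions used in the correspondence part.
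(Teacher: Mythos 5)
Your treatment of the correspondence between sections and $\delta$-biased fibration structures matches the paper's (which only states it is ``clear from the construction''), chasing up through the adjunctions $\eta^*\dashv\ \cdot\ $, $(-)^\I\dashv(-)_\I$, and $(p_\epsilon)^*\dashv (p_\epsilon)_*$; that part is fine.

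For stability, however, your strategy diverges from the paper's in a way that opens up exactly the gap you flag. You propose to re-interpret the whole construction internally to the slice $(\cSet/_\I)/_X$ and argue that each step commutes with $g^*$; this would indeed require the pulled-back interval $X^*(\I^*\I)$ to be tiny in $(\cSet/_\I)/_X$ so that the ``sliced root'' exists and is stable. That tinyness is \emph{not} established anywhere in the paper (Lemma \ref{lemma:tinyslicedI} only handles the slice over the representable $\I$), and it is not automatic: tinyness does not in general pass to slices of a presheaf topos, since the required successor functor on the index category need not survive the category-of-elements construction over an arbitrary $X$. The paper's proof sidesteps this entirely by \emph{never re-indexing}. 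It keeps all objects in $\cSet/_\I$, and compares the constructions for $A\to X$ and $f^*A\to Y$ directly via two cube diagrams: first it shows $F_{f^*A}\cong (f^\I)^*F_A$ over $Y^\I$ using a Beck--Chevalley square for the pushforward $(p_\epsilon)_*$ together with the stability of $\TFib$ from Lemma \ref{lemma:TFibstable}; then it applies the (single, unsliced) root $(-)_\I$ --- a right adjoint, hence pullback-preserving --- to that isomorphism, and finishes by pulling back along the naturality square $\eta_Y, \eta_X$ of the root adjunction's unit. No sliced root is ever needed. You do correctly identify the key design choice (pushforward along $\epsilon = X^\delta$ would destroy stability; the root is used precisely to avoid this), but to complete the argument you should replace the ``verify each step commutes with $g^*$ in the slice'' plan with the direct cube-diagram comparison in $\cSet/_\I$, which closes the gap you noticed rather than leaving it as an unverified hypothesis.
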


\begin{proof}
It is clear from the construction that fibration structures on $A\ra X$ correspond bijectively to sections of $\Fib(A)\ra X$. We show that $\Fib(A)\ra X$ is also stable under pullback.  To that end, the relevant steps of the construction are recalled schematically below.
\begin{equation}\label{diagram:fibU8}
\xymatrix{
&A^\I \ar[d]_{\epsilon_A} & &\\
\TFib(\epsilon_A) \ar[r] & A_\epsilon\ar[d]_{p_\epsilon} \ar[r] \pbcorner & A \ar[d]^{p} &\\
F_A \ar[r] & X^\I \ar[r]_{\epsilon} &  X & \ar[l] \Fib(A)
}
\end{equation}

Now consider the following diagram, in which the right hand side consists  of the data from  \eqref{diagram:fibU8}, and the front, central square is a pullback.
\begin{equation}\label{diagram:fibU9}
\xymatrix{
 & B^\I \ar[rrr] \ar[d]_{\epsilon_B} &&& A^\I \ar[d]^{\epsilon_A} &\\
 \TFib(\epsilon_B) \ar[r] & B_\epsilon \ar[d] \ar[rd] \ar@{.>}[rrr] &&& A_\epsilon \ar[d] \ar[ld] &  \TFib(\epsilon_A) \ar[l] \\
 F_B  \ar[r] & Y^\I \ar[rd] \ar@{.>}@/_.75pc/[rrr] & B \ar[r] \ar[d] & A \ar[d] &  X^\I \ar[ld]  & \ar[l] F_A \\
 &\Fib(B) \ar[r] & Y \ar[r]_f & X &\Fib(A) \ar[l] &
 }
\end{equation}
As in the proof of Lemma \ref{lemma:fibrationspullback}, on the left side we repeat the construction with $B \ra Y$ in place of $A\ra X$.  The left face of the indicated (distorted) cube is then also a pullback, whence the back (dotted) face is a pullback, since the two-story square in back is the image of the front pullback square under the right adjoint $(-)^\I$. Finally, the top rectangle in the back is therefore also a pullback.
 
It follows that $\TFib(\epsilon_B)$ is a pullback of $\TFib(\epsilon_A)$ along the upper dotted arrow, as in Lemma \ref{lemma:TFibstable}, and so the pushforward $F_B$ is a pullback of the corresponding $F_A$, along the lower dotted arrow (which is $f^\I$), by the Beck-Chevalley condition for the dotted pullback square.  Let us record this for later reference:
\begin{equation}\label{eq:pbFB}
F_B \cong (f^\I)^*F_A.
\end{equation}

It remains to show that $\Fib(B)$ is a pullback of $\Fib(A)$ along $f:Y\ra X$, and now it is good that we did not take these to be pushforwards of $F_B$ and $F_A$, because the floor of the cube need not be a pullback, and so the Beck-Chavalley condition would not apply.  Instead, consider the following diagram.
\begin{equation}\label{diagram:fibU10}
\xymatrix{
& \Fib(B) \ar[ld] \ar[dd]  \ar[r] & \Fib(A) \ar[dd] \ar[rd] & \\
(F_B)_\I \ar[dd] \ar[rrr] &&& (F_A)_\I \ar[dd] \\
& Y \ar[ld]_\eta \ar[r]_f & X \ar[rd]^\eta &\\
(Y^\I)_\I \ar[rrr]_{(f^\I)_\I} &&& (X^\I)_\I 
 }
\end{equation}
The sides of the cube are pullbacks by the construction of $\Fib(A)$ and $\Fib(B)$. The front face is the root of the pullback \eqref{eq:pbFB} and is thus also a pullback, since the root is a right adjoint. The base commutes by naturality of the unit of the adjunction, and so the back face is also a pullback, as required.  
\end{proof}

Now let us apply the foregoing construction of $\Fib(A)$ to the universal family $\VV\to\V$ to get $\Fib(\VV) \ra \V$, and define the universal small ($\delta$-biased) fibration in $\cSet/_\I$ by setting $\Fib : =\Fib(\VV)$ and $\FFib\fib\Fib$ by pulling back the universal family,
\begin{equation}\label{diagram:universalfib}
\xymatrix{
\FFib \ar[d] \ar[r]  \pbcorner & \VV\ar[d]^p\\
\Fib \ar[r] & \V.
}
\end{equation}
The proof of the following then proceeds just as that given for $\TTFib \to \TFib$ in Proposition \ref{prop:classTFib}.

\begin{proposition}\label{prop:UniversalFib}
The map $\FFib\ra\Fib$ constructed in \eqref{diagram:universalfib} is a \emph{universal small $\delta$-biased fibration} in $\cSet/_\I$: every small $\delta$-biased fibration $A \fib X$ in $\cSet/_\I$ is a pullback of $\FFib \fib \Fib$ along a canonically determined classifying map $X\ra \Fib$.
\begin{equation}\label{diagram:classifyfib}
\xymatrix{
A \ar@{>>}[d] \ar[r]  \pbcorner & \FFib \ar@{>>}[d] \\
X \ar[r] & \Fib
}
\end{equation}
\end{proposition}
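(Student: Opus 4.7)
My plan is to argue in direct parallel with Proposition \ref{prop:classTFib}, substituting the classifying type $\Fib(A)$ of $\delta$-biased fibration structures for $\TFib(A)$ throughout. The two required inputs are already in hand: the universal small family $\VV\to\V$ in $\cSet/_\I$ (Proposition \ref{prop:familyclassifier} together with Corollary \ref{prop:familyclassifierforslices}), and the pullback stability of $\Fib(-)$ established in Lemma \ref{lemma:classtypebiasedfibstruct}.

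First I would unpack the universal property. Given any small $\delta$-biased fibration $A\fib X$ in $\cSet/_\I$, the underlying small map is classified by a pullback square along some $\chi_A : X \to \V$, exhibiting $A \cong \chi_A^*\VV$. The stability clause of Lemma \ref{lemma:classtypebiasedfibstruct} then identifies $\Fib(A)\to X$ with the pullback $\chi_A^*\Fib$ of $\Fib = \Fib(\VV) \to \V$, producing a diagram exactly analogous to \eqref{diagram:classifytf2} with $\Fib$ in place of $\TFib$. The given $\delta$-biased fibration structure on $A$ corresponds, by Lemma \ref{lemma:classtypebiasedfibstruct}, to a section of $\Fib(A)\to X$, and such sections are, by the pullback square, the same data as lifts $\alpha : X \to \Fib$ of $\chi_A$ through the projection $\Fib\to\V$. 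Pasting the defining pullback square of $\FFib\fib\Fib$ on top then exhibits $A$ as the pullback of $\FFib\fib\Fib$ along $\alpha$, as claimed.

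To endow the universal map $\FFib\fib\Fib$ itself with a canonical fibration structure, I would invoke the same device used at the end of the proof of Proposition \ref{prop:classTFib}. By construction $\FFib$ is the pullback of $\VV$ along the projection $q:\Fib\to\V$, so stability gives $\Fib(\FFib) \cong q^*\Fib(\VV) = \Fib\times_\V\Fib$; the diagonal $\Delta : \Fib \to \Fib\times_\V\Fib$ then supplies a distinguished section over $\Fib$, and hence a distinguished $\delta$-biased fibration structure on $\FFib\fib\Fib$.

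I do not anticipate any real obstacle here: all the genuine work lies in the construction of $\Fib(A)$ and the verification of its pullback stability, which was carried out in Lemma \ref{lemma:classtypebiasedfibstruct} using the tinyness of the pulled-back interval (Lemma \ref{lemma:tinyslicedI}) to replace the failed pushforward by a root. Given that lemma, the present proposition is a formal consequence of pullback stability together with the basic classifying property of $\VV\to\V$, and the proof is essentially the same as that of Proposition \ref{prop:classTFib} word for word.
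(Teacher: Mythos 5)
Your proposal is correct and follows precisely the route the paper indicates, which is to run the proof of Proposition~\ref{prop:classTFib} verbatim with $\Fib(-)$ in place of $\TFib(-)$, using Corollary~\ref{prop:familyclassifierforslices} for the sliced universal family and the stability clause of Lemma~\ref{lemma:classtypebiasedfibstruct} in place of Lemma~\ref{lemma:TFibstable}. Your closing remark about equipping $\FFib\fib\Fib$ with its canonical fibration structure via the diagonal $\Delta:\Fib\to\Fib\times_\V\Fib$ matches the end of the paper's proof of Proposition~\ref{prop:classTFib} exactly.
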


\begin{remark}
Proposition \ref{prop:UniversalFib} made no use of the fact that we were working in the slice category $\cSet/_\I$ with $\delta : 1\to\I$ the generic point. It holds equally for $\delta$-biased fibrations with respect to any point $\delta: 1\to\I$ of a tiny object $\I$.  Thus \eg\ it could be used (with obvious adjustment) to construct a classifier for the $\{\delta_0, \delta_1\}$-biased fibrations of Section \ref{sec:biasedfibration} in (Cartesian, Dedekind, or other varieties of) cubical sets $\cSet$.  
\end{remark}

\subsubsection*{The classifying type of unbiased fibration structures.}\label{par:classifyunbiasedfib} 

In order to classify \emph{unbiased} fibration structures on maps $A\to X$ in $\cSet$, we first apply the pullback $\I^*: \cSet \ra \cSet/_\I$ and take the classifier $\Fib(\I^*A)\ra \I^*X$ for $\delta$-biased fibration structures, then apply the pushforward $\I_*: \cSet/_\I \ra \cSet$ and pull the result   $\I_*\Fib(\I^*A)\ra \I_*\I^*X$ back along the unit  $X \to \I_*\I^*X$.  

To show that this indeed classifies unbiased fibration structures on $A\to X$, let us first rename the classifying type from Lemma \ref{lemma:classtypebiasedfibstruct}, which was constructed over $\I$, to $\Fib_i(\I^*A) \ra \I^{*}X$, and then apply $\I_*$ to get the map,
\[
\Pi_{i:\I}\Fib_i(\I^*A) := \I_*(\Fib_i(\I^*A)) \too X^\I
\]
in $\cSet$.  Then, as just said, we define the desired map $\Fib(A)\ra X$ as the pullback along the unit $\rho : X \ra X^\I$ of  $\I^*\dashv \I_*$ as indicated below.
\begin{equation}\label{diagram:fibU7}
\xymatrix{
 \Fib(A) \ar[d] \ar[r] \pbcorner & \ar[d] \Pi_{i:\I}\Fib_i(\I^*A) \\
 X \ar[r]_{\rho} &  X^\I
}
\end{equation}
It now follows immediately from the adjunction $\I^*\dashv \I_*$ that sections of $\Fib(A)\ra X$ correspond bijectively to sections of $\Fib_i (\I^*A)\ra \I^{*}X$ over~$\I$, and thus to \emph{unbiased} fibration structures on $A\to X$.

\begin{lemma}\label{lemma:classtypeunbiasedfibstruct}
For any map $A\ra X$ in $\cSet$, the map $\Fib(A)\ra X$ in \eqref{diagram:fibU7} is a \emph{classifying type for unbiased fibration structures}: sections of $\Fib(A)\ra X$ correspond bijectively to unbiased fibration structures on $A\ra X$, and the construction is stable under pullback in the expected sense (as in Lemma \ref{lemma:classtypebiasedfibstruct}).
\end{lemma}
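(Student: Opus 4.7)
The plan is to reduce both assertions to the biased case in $\cSet/_\I$ already handled by Lemma~\ref{lemma:classtypebiasedfibstruct}, using the adjunction $\I^* \dashv \I_*$, the naturality of its unit $\rho : X \to X^\I = \I_*\I^*X$, and the fact that $\I_*$, being a right adjoint, preserves all limits.

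For the section--structure correspondence, a section of $\Fib(A) \to X$ over $X$ corresponds, by the defining pullback \eqref{diagram:fibU7}, to a morphism $X \to \I_*\Fib_i(\I^*A)$ lying over $\rho_X : X \to X^\I$. Transposing this datum across $\I^* \dashv \I_*$ produces exactly a section of $\Fib_i(\I^*A) \to \I^*X$ in $\cSet/_\I$. Invoking Lemma~\ref{lemma:classtypebiasedfibstruct} in the slice topos $\cSet/_\I$, with $\delta : 1 \to \I^*\I$ the generic point, such sections are in bijection with $\delta$-biased fibration structures on $\I^*A \to \I^*X$; and by Definition~\ref{def:unbiasedfibration} these are precisely the unbiased fibration structures on $A \to X$ in $\cSet$.

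For pullback stability, given $f : Y \to X$, chain together four facts. First, $\I^*$ preserves the pullback $f^*A \to A$, giving $\I^*(f^*A) \cong (\I^*f)^* \I^*A$ over $\I^*Y$. Second, by the stability half of Lemma~\ref{lemma:classtypebiasedfibstruct} applied in $\cSet/_\I$,
\[
\Fib_i(\I^*f^*A)\ \cong\ (\I^*f)^*\Fib_i(\I^*A)\qquad\text{over }\I^*Y,
\]
so these sit in a pullback square in $\cSet/_\I$ with base edge $\I^*f$. Third, $\I_*$ preserves this pullback square, yielding $\I_*\Fib_i(\I^*f^*A) \cong (f^\I)^*\I_*\Fib_i(\I^*A)$ over $\I_*\I^*Y = Y^\I$. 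Finally, naturality of $\rho$ gives $\rho_X \circ f = f^\I \circ \rho_Y$, so pasting pullbacks along this commutative square produces
\[
\Fib(f^*A)\ =\ \rho_Y^*\I_*\Fib_i(\I^*f^*A)\ \cong\ \rho_Y^*(f^\I)^*\I_*\Fib_i(\I^*A)\ \cong\ f^*\rho_X^*\I_*\Fib_i(\I^*A)\ =\ f^*\Fib(A),
\]
as required.

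In a sense there is no real obstacle here: the essential content of the argument, including the crucial appeal to tinyness of $\I^*\I$ in $\cSet/_\I$, has already been absorbed into Lemma~\ref{lemma:classtypebiasedfibstruct}. What remains is routine adjoint bookkeeping that transports the $\delta$-biased classifier from $\cSet/_\I$ up to $\cSet$ via $\I_*$ and $\rho$. The only point requiring a moment's care is verifying that the comparison map in step three is genuinely invertible, but this is automatic because $\I_*$ preserves every limit, so no Beck--Chevalley hypothesis specific to $f$ need be checked.
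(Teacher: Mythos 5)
Your proof is correct and takes essentially the same approach as the paper: the section--structure bijection is derived by transposing across $\I^*\dashv\I_*$ (as the paper sketches just before the lemma), and stability is obtained from the preservation of pullbacks by both $\I^*$ and $\I_*$ together with naturality of the unit $\rho$. You have merely spelled out the routine bookkeeping that the paper's very terse proof leaves implicit.
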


\begin{proof} It remains only to check the stability, but since both of the adjoints in $\I^* \dashv \I_* : \cSet/_\I \ra\cSet$ preserve pullbacks, this follows easily from the fact that the classifying types $\Fib_i$ are stable under pullback by Lemma \ref{lemma:classtypebiasedfibstruct}.
\end{proof}

Finally, we can again take $\Fib := \Fib(\VV)$ to now obtain a universal small \emph{unbiased} fibration $\FFib\ra\Fib$ in $\cSet$, as in \eqref{diagram:universalfib}, and the proof can conclude just as in that for Proposition~\ref{prop:classTFib}.  

\begin{proposition}\label{prop:UniversalunbiasedFib}
The map $\FFib\ra\Fib$ just constructed is a \emph{universal small unbiased fibration} in $\cSet$: every small unbiased fibration $A \fib X$ is a pullback of $\FFib \fib \Fib$ along a canonically determined classifying map $X\ra \Fib$.
\begin{equation}\label{diagram:classifyfib2}
\xymatrix{
A \ar@{>>}[d] \ar[r]  \pbcorner & \FFib \ar@{>>}[d] \\
X \ar[r] & \Fib
}
\end{equation}
\end{proposition}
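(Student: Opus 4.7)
The plan is to mimic the argument already used for Proposition \ref{prop:classTFib} (and its $\delta$-biased variant Proposition \ref{prop:UniversalFib}), now powered by the unbiased classifying type $\Fib(A) \to X$ of Lemma \ref{lemma:classtypeunbiasedfibstruct}. All the real work has already been done in constructing that classifier and establishing its stability under pullback; what remains is a diagram chase assembling these pieces.

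First, given any small unbiased fibration $p : A \fib X$ in $\cSet$, use Proposition \ref{prop:familyclassifier} (applied at a suitable cardinal $\alpha$) to choose a classifying pullback of $A \to X$ against $\VV \to \V$, yielding a map $\chi_A : X \to \V$. By Lemma \ref{lemma:classtypeunbiasedfibstruct}, the classifying type $\Fib(-)$ is stable under pullback, so $\Fib(A) \to X$ is the pullback of $\Fib(\VV) = \Fib \to \V$ along $\chi_A$. Because $p$ is an unbiased fibration, it carries a uniform unbiased fibration structure, which by Lemma \ref{lemma:classtypeunbiasedfibstruct} corresponds to a section $s : X \to \Fib(A)$ of that pulled-back classifying type. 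Composing $s$ with $\Fib(A) \to \Fib$ produces a lift $\tilde\chi_A : X \to \Fib$ of $\chi_A$ through the forgetful map $\Fib = \Fib(\VV) \to \V$.

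Pulling $\FFib \to \Fib$ back along $\tilde\chi_A$ then gives a square whose outer rectangle, after composing with $\Fib \to \V$, is the original classifying square for $A \to X$ against $\VV \to \V$; by the pullback pasting lemma (together with the definition of $\FFib$ as the pullback of $\VV$ along $\Fib \to \V$), the inner square is a pullback, exhibiting $A \fib X$ as a pullback of $\FFib \fib \Fib$ as claimed. Finally, one checks that $\FFib \to \Fib$ really is an unbiased fibration: by the construction of $\Fib$ as $\Fib(\VV)$, the identity section $\Fib \to \Fib \times_\V \Fib(\VV) \cong \Fib(\VV \times_\V \Fib) = \Fib(\FFib)$ (obtained as the diagonal of $\Fib \to \V$, using stability once more) provides the required unbiased fibration structure on the pulled-back family $\FFib \to \Fib$, exactly as in the last paragraph of the proof of Proposition \ref{prop:classTFib}.

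No step here is genuinely hard, but the one most likely to require care is the self-classification of $\FFib \to \Fib$ at the end, where one must observe that $\Fib = \Fib(\VV)$ is by construction the object of unbiased fibration structures on the universal family $\VV \to \V$, so that the pullback of $\Fib \to \V$ to itself carries a canonical diagonal section, and to check that under the stability isomorphism of Lemma \ref{lemma:classtypeunbiasedfibstruct} this section corresponds to a genuine fibration structure on $\FFib \to \Fib$. Once that is verified, the classifying map $\tilde\chi_A$ produced above is canonically determined by the chosen fibration structure on $A \fib X$, completing the proof.
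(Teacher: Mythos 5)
Your proposal is correct and follows essentially the same argument as the paper: the paper's own proof simply reduces to ``conclude just as in the proof of Proposition~\ref{prop:classTFib},'' and you have spelled out precisely what that reduction amounts to---classifying the underlying family via $\chi_A : X\to\V$, lifting through $\Fib = \Fib(\VV)\to\V$ by means of the section corresponding to the fibration structure under Lemma~\ref{lemma:classtypeunbiasedfibstruct}, applying pullback pasting, and equipping $\FFib\to\Fib$ with the diagonal fibration structure. No gaps.
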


\begin{remark}\label{sliceuniversalfibrations}
Recall from Proposition \ref{prop:familyclassifierforslices} that the universe in the slice category $\cSet/_\I$ is the pullback of the universe $\V$ from $\cSet$ along the base change $\I^* : \cSet \to \cSet/_\I$.  Thus in the construction just given of the classifier $\FFib\ra\Fib$ for unbiased fibrations in $\cSet$  we are first building the classifying type 
\[
\Fib_i(\I^*\VV) \to \I^*\V
\]
 for $\delta$-biased fibration structures on the universal family in $\cSet/_\I$, and then taking a pushforward $\I_*: \cSet/_\I \to \cSet$ to obtain the (base of the) classifier for unbiased fibrations as the pullback along the unit:
\begin{equation}\label{diagram:fibUuniversal}
\xymatrix{
 \Fib(\VV) \ar[d] \ar[r] \pbcorner & \ar[d] \Pi_{i:\I}\Fib_i(\I^*\VV) \\
 \V \ar[r]_{\rho} &  \V^\I
}
\end{equation}
We remark for later reference that this classifying type $\Fib = \Fib(\VV) \to \V$ for unbiased fibration structures can therefore be constructed as the pushforward of the classifier $\Fib_i(\I^*\VV) \to \I^*\V$ for $\delta$-biased fibration structures along the projection $q : \I^*\V =  \I\times \V \to \V$ indicated below.
\begin{equation}\label{diagram:fibUuniversal2}
\xymatrix{
 \Fib_i(\I^*\VV) \ar[d] &  \Fib(\VV) \ar[d] \ar[r] \pbcorner & \ar[d] \Pi_{i:\I}\Fib_i(\VV) \\
 \I^*\V \ar[r]_q \ar[d] \pbcorner & \V \ar[r]_{\rho} \ar[d] &  \V^\I\\
 \I \ar[r] & 1
}
\end{equation}
We record this fact as:
\begin{corollary}\label{cor:Fibaspushforwardalongq}
$\Fib = \Sigma_{\V}\,q_* \Fib_i(\I^*\VV).$
\end{corollary}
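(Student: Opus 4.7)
The formula $\Fib = \Sigma_{\V}\,q_* \Fib_i(\I^*\VV)$ is a reformulation of the construction of $\Fib = \Fib(\VV)$ recorded in diagram \eqref{diagram:fibUuniversal2}: the pullback along $\rho$ of the pushforward along $\I\to 1$ is reinterpreted as a pushforward along the projection $q : \I^*\V = \I\times\V \to \V$. The key ingredient is the Beck--Chevalley identification arising from the pullback square on the lower left of \eqref{diagram:fibUuniversal2},
\[
\begin{tikzcd}
\I\times\V \ar[r, "q"] \ar[d, "p"'] & \V \ar[d] \\
\I \ar[r] & 1\,,
\end{tikzcd}
\]
together with the two adjunctions $\I^* \dashv \I_*$ (between $\cSet$ and $\cSet/_\I$) and $q^* \dashv q_*$ (between $\cSet/_\V$ and $\cSet/_{\I\times\V}$).

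I would argue by Yoneda. For a test object $T\in\cSet$ equipped with a map $t:T\to\V$, I first use the pullback defining $\Fib$ in \eqref{diagram:fibUuniversal2} to describe a lift $T\to\Fib$ over $t$ as a map $T\to \I_*\Fib_i(\I^*\VV)$ over $\V^\I$. The adjunction $\I^*\dashv\I_*$ then transposes this to a map $\I\times T\to \Fib_i(\I^*\VV)$ in $\cSet/_\I$, and the compatibility with $\rho$ (which sends $t$ to the ``constant path'' $\rho\circ t$) forces its further composite to $\I\times\V$ to coincide with $\I\times t$. But this is precisely the data of a map $q^*T\to \Fib_i(\I^*\VV)$ in $\cSet/_{\I\times\V}$, since $q^*T \cong \I\times T$ over $\I\times\V$ (with the second factor $t$). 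By the adjunction $q^*\dashv q_*$, such a map transposes to a unique map $T\to q_*\Fib_i(\I^*\VV)$ over $\V$.

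The bijection is natural in $T$, whence $\Fib \cong q_*\Fib_i(\I^*\VV)$ as objects of $\cSet/_\V$; taking total spaces delivers the stated formula. No real obstacle is anticipated, as the argument is a straightforward unwinding of universal properties. The only point requiring care is distinguishing the two right adjoints involved --- the exponential-style $\I_*$ along $\I\to 1$ and the relative pushforward $q_*$ along $q:\I\times\V\to\V$ --- and the coherence between them after base change along $\rho$ is precisely the Beck--Chevalley condition for the displayed pullback square, already implicit in the construction recorded by diagram \eqref{diagram:fibUuniversal2}.
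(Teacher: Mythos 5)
Your Yoneda computation is correct and matches the paper's intent: the corollary is presented there as an observation following directly from diagram \eqref{diagram:fibUuniversal2}, with no further proof, and your argument fills in the universal-property unwinding that the paper leaves to the reader. One small caveat: your closing remark that the coherence ``is precisely the Beck--Chevalley condition for the displayed pullback square'' is not quite accurate---Beck--Chevalley for the square $\I\times\V\to\V$ over $\I\to 1$ identifies $\V^*\circ\I_*$ with $q_*\circ p^*$ (base change along $\V\to 1$), whereas what you are actually establishing is $\rho^*\circ\I_*\cong q_*$ (base change along the \emph{unit} $\rho$, restricted to the appropriate slices). This is a distinct statement, and your transposition argument via $\I^*\dashv\I_*$ and $q^*\dashv q_*$ proves it directly without invoking Beck--Chevalley, so the aside is harmless but slightly misleading as a characterization of the mechanism.
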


The reader may also find it illuminating to reconsider the construction of the universal small unbiased fibration in more type theoretic terms.  
It was defined to be $\FFib\fib\Fib = \Fib(\VV)$, for the universal family $\VV \to \V$, with $\FFib$ the pullback of $\VV \to \V$ along the canonical projection $\Fib(\VV)\to\V$. Since, type theoretically, we have $\VV = \Sigma_{A:\V}A$, by the stability of the classifying type $\Fib(-)$ we can  write $\Fib = \Sigma_{A:\V}\Fib(A)$ so that:
\[
\FFib = \Sigma_{A:\V}\Fib(A)\times A\ \too\ \Sigma_{A:\V}\Fib(A)\ =\ \Fib \,.
\]
\end{remark}


\subsection*{Realignment for fibration structure}\label{sec:realignment}

The realignment for families of Proposition \ref{prop:realignment} will need to be extended to (structured) fibrations. Our approach makes use of the notion of a \emph{weak proposition}.  Informally, a map $P\to X$ may be said to be a weak proposition if it is ``conditionally contractible'', in the sense that it is contractible if it has a section (recall that a proposition may be defined as a fibration that is ``contractible if inhabited'').  More formally, we have the following.

\begin{definition}
A map $P\to X$ is said to be a \emph{weak proposition} if the projection $P\times_X P\to P$ is a trivial fibration.
\begin{equation}\label{diagram:weakprop}
\begin{tikzcd}
P^2 \ar[d,two heads,swap,"{\sim}"] \ar[r]  \pbmark & P \ar[d] \\
P \ar[r] &  X.
\end{tikzcd}
\end{equation}
Note that if either projection is a trivial fibration, then both are.
\end{definition}

As an object over the base, a weak proposition is thus one that ``thinks it is contractible''.  The key fact needed for realignment is the following.

\begin{lemma}\label{lemma:Fibweakprop} For any $A\to X$, the classifying type $\TFib(A) \to X$ is a weak proposition.  Moreover, the same is true for $\Fib(A)\to X$ (both the biased and unbiased versions) if the cofibrations are closed under exponentiation by the interval $\I$.
\end{lemma}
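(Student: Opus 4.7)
The plan is to handle the two claims modularly: first, a direct argument for $\TFib(A) \to X$ by explicit construction of uniform fillers; then, for $\Fib(A) \to X$, to observe that the property of being a weak proposition propagates through the three operations used in its construction (pullback, pushforward, and the root functor $(-)_\I$), the last invoking the hypothesis on cofibrations.

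For $\TFib(A) \to X$, I would unfold the required lifting property via the universal property of the classifying type. Using Lemma \ref{lemma:TFibstable}, a lifting problem for $p_1 : \TFib(A) \times_X \TFib(A) \to \TFib(A)$ against a cofibration $c : C \cof Z$ corresponds, over some underlying map $g' : Z \to X$, to a trivial fibration structure $\alpha$ on $B := g'^*A \to Z$ together with a second trivial fibration structure $\beta_0$ on $c^*B \to C$, and a filler corresponds to a trivial fibration structure $\beta$ on $B \to Z$ with $c^*\beta = \beta_0$. I would form the cofibration $\iota : B \cup_{c^*B} c^*B^+ \cof B^+$ (the join of the cofibrations $\eta_B$ and $c^*B^+ \mono B^+$, using C6 and pullback-stability) together with the map $\varphi : B \cup_{c^*B} c^*B^+ \to B$ given by $\mathrm{id}_B$ on $B$ and by $c^*B \cof B$ composed with $\beta_0$ on $c^*B^+$; these agree on $c^*B$ because $\beta_0$ retracts $c^*\eta_B = \eta_{c^*B}$. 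Since $\alpha$ equips $B \to Z$ with a uniform filling structure, filling $\varphi$ along $\iota$ yields a canonical $\beta : B^+ \to B$ with $\beta \circ \eta_B = \mathrm{id}_B$ (so $\beta$ is a retraction) and $c^*\beta = \beta_0$ (so the restriction to $C$ agrees with $\beta_0$). Naturality under pullbacks of $c$ is inherited from the uniformity of $\alpha$, so $p_1$ acquires a uniform trivial fibration structure.

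For $\Fib(A) \to X$, I would establish the following closure properties and apply them to the constructions in \eqref{diagram:fibU6} and \eqref{diagram:fibU7}: (i) pullback preserves weak propositions, since it preserves finite limits and trivial fibrations; (ii) pushforward preserves weak propositions, since $f_*$ preserves pullbacks (right adjoint) and trivial fibrations by Corollary \ref{cor:plusalgprops}(5); (iii) the root $(-)_\I$ preserves weak propositions under the hypothesis, since it preserves limits as a right adjoint to $(-)^\I$ (so $(P \times_X P)_\I = P_\I \times_{X_\I} P_\I$), and it preserves trivial fibrations precisely when $(-)^\I$ preserves cofibrations---which is exactly the hypothesis. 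In the biased case, $\Fib(A) = \eta^* ((p_\epsilon)_* \TFib(\epsilon_A))_\I$ is built from $\TFib(\epsilon_A)$ by pushforward, root, and pullback, each preserving weak propositions. In the unbiased case, $\Fib(A) = \rho^* \I_* \Fib_i(\I^*A)$ requires only pushforward and pullback applied to the already-established biased $\Fib_i$ in $\cSet/_\I$.

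The main obstacle will be verifying that the $\TFib$ construction yields a genuine uniform trivial fibration structure on $p_1$ rather than merely the unstructured right lifting property; this reduces to showing that uniformity of $\alpha$'s filler propagates through the join and filling steps, which it does since all the auxiliary constructions (pullback, the unit $\eta$, the join with $\eta_B$) are natural. A secondary concern, for the biased portion of $\Fib$, is that the hypothesis on $(-)^\I$ in $\cSet$ should yield the analogous closure for the slice pathspace in $\cSet/_\I$ used in step (iii); this holds because cofibrations in the slice are defined as those with underlying cofibration in $\cSet$, and the internal pathspace by $\I^*\I$ is compatible with $(-)^\I$ in $\cSet$ under the identification in Lemma \ref{lemma:tinyslicedI}.
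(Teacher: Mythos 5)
Your argument is correct, and for the $\Fib(A)$ case it is essentially the paper's: both propagate the weak-proposition property through pushforward (right adjoint, preserves trivial fibrations by Corollary \ref{cor:plusalgprops}), the root $(-)_\I$ (right adjoint to $(-)^\I$, preserving trivial fibrations exactly because its left adjoint preserves cofibrations by (C8)), and pullback. For the $\TFib(A)$ case, however, you take a genuinely more hands-on route. The paper's proof is two moves: (a) by pullback-stability, $\TFib(A)^2 \cong \TFib(A')$ where $A' = \TFib(A)\times_X A$, and the diagonal section shows $A'\to\TFib(A)$ is a trivial fibration; (b) $\TFib$ sends trivial fibrations to trivial fibrations, since $\TFib(A) = +\mathsf{Alg}(A)$ is by \eqref{diag:plualgstru} a pullback of the pullback-hom $[\eta_A,A]:[A^+,A]\to[A,A]$ in $\cSet/_X$, which is a trivial fibration whenever $A\to X$ is one because $\eta_A$ is always a cofibration (the $\otimes\dashv\Rightarrow$ adjunction plus (C6)). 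You instead unwind the lifting problem for $p_1:\TFib(A)^2\to\TFib(A)$ against $c:C\cof Z$ into a concrete extension problem for $+$-algebra structures, and build the extension by filling the cofibration $B\cup_{c^*B}(c^*B)^+\cof B^+$ (which is, in effect, the domain of a pushout-product of $c$ with $\eta_B$) against $B\to Z$ using the structure $\alpha$. This is precisely a fiberwise unpacking of the paper's step (b): the union you form is what makes $[\eta_B,B]$ a trivial fibration via $c\otimes\eta_B$. Your version is more explicit and self-contained, at the cost of carrying the uniformity bookkeeping by hand; the paper's factoring through the clean statement ``$\TFib$ preserves trivial fibrations'' keeps the calculation at the level of the pullback-hom calculus and avoids re-verifying naturality. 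Both are valid, and the intermediate lemma in the paper's version is independently reusable.
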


\begin{proof}
Let $A\to X$ and consider the following diagram, in which we have written $A' = \TFib(A)\times_X A$ and $\TFib(A)^2 = \TFib(A)\times_X \TFib(A)$.
\begin{equation}\label{diagram:TFibweakprop}
\begin{tikzcd}
A' \ar[dd]  \ar[rr] \pbmark && A \ar[dd]  \\
& \TFib(A)^2 \ar[ld] \ar[rr]  \pbmark && \TFib(A) \ar[ld] \\
\TFib(A) \ar[rr] && X
\end{tikzcd}
\end{equation}
Since $\TFib$ is stable under pullback (by Lemma \ref{lemma:TFibstable}), we have $\TFib(A)^2 \cong \TFib(A')$, and since $\TFib(A)^2$ has a canonical section, $A' \to \TFib(A)$ is therefore a trivial fibration.  Inspecting the definition of $\TFib(A) = +\mathsf{Alg}(A)$ in \eqref{diag:plualgstru}, we see that if a map $A\to X$ is a trivial fibration, then so is $\TFib(A)\to X$ (since $\eta : A\to A^+$ is always a cofibration). Thus $\TFib(A)^2 \cong \TFib(A') \to\TFib(A)$ is also a trivial fibration.

For $\Fib(A)\to X$, with reference to the construction \eqref{diagram:fibU8} we use the foregoing to infer that $\TFib(\epsilon_A) \to A_\epsilon$ is a weak proposition, and so therefore is its pushforward $F_A = (p_\epsilon)_*\TFib(\epsilon_A)\to X^\I$ along the projection $p_\epsilon : A_\epsilon = X^I\times_X A \to X^\I$, since pushforward clearly preserves weak propositions.  Applying the root $(-)_\I$ preserves trivial fibrations, by the assumption that its left adjoint $(-)^\I$ preserves cofibrations, and so, as a right adjoint, it also preserves weak propositions. Therefore $(F_A)_\I \to (X^\I)_\I$ is a weak proposition, but then so is its pullback along the unit $X\to (X^\I)_\I$, which is $\Fib_i(A)\to X$, the classifier for $\delta$-biased fibration structures.  The same reasoning shows that $\Fib(A) = \rho^*\Pi_{i:\I}\Fib_i(\I^*A)$ (as in \eqref{diagram:fibU7}) is also a weak proposition.
\end{proof}

In light of Lemma \ref{lemma:Fibweakprop} we shall assume as a final axiom on cofibrations:
 
\begin{enumerate}
\item[(C8)] The pathobject functor preserves cofibrations: thus $c:A \cof B$ implies $c^\I:A^\I \cof B^\I$.
\end{enumerate}

Now, by Propositions \ref{prop:UniversalFib} and \ref{prop:UniversalunbiasedFib} we have universal small $\delta$-biased and unbiased fibrations, the former in $\cSet/_\I$, the latter in $\cSet$.  The following remarks apply to both, which we refer to neutrally as $\UU \fib \U$.  The base object $\U$ is (the domain of) the classifying type $\mathsf{Fib}(\VV) \to \V$, where $\VV \to \V$ is the universal small family.  Type theoretically, this object can be written as
\[
\U = \Sigma_{E:\V}\Fib(E)\,,
\] 
which comes with the canonical projection
\[
\U = \Sigma_{E:\V}\Fib(E) \too \V\,.
\] 
In these terms, a fibration $E \fib X$ is a pair $\langle E, e \rangle$, consisting of the underlying family $E\to X$, equipped with a fibration structure $e :\Fib(E)$.
\noindent Lemma \ref{lemma:Fibweakprop}  then allows us to establish the following, which was first isolated in \cite{Shu:15}  (as condition (2'), also see \cite{GSS:22}). It holds for both biased and unbiased fibrations, and will be used in the sequel to ``correct'' the fibration structure on certain maps.

\begin{lemma}[Realignment for fibrations]\label{lemma:realignmentforfibrations}
Given a fibration $F\fib X$ and a cofibration $c : C\cof X$, let $f_c : C \to \U$ classify the pullback $c^*F\fib C$.  Then there is a classifying map $f: X \to \U$ for $F$ with $f\circ c = f_c$.
\begin{equation}\label{diagram:fibrationrealignment}
\begin{tikzcd}
c^*F \ar[dd, two heads] \ar[rd] \ar[rr] && \UU \ar[dd, two heads] \\
& F \ar[dd, two heads] \ar[ru, dotted] & \\
C  \ar[rd, tail,swap, "c"] \ar[rr, near start, "f_c"] && \U  \\
& X \ar[ru, dotted, swap, "f"] &
\end{tikzcd}
\end{equation}
\end{lemma}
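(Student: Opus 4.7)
The plan is to factor the realignment problem for the classifier $\U = \Sigma_{E:\V}\Fib(E)$ into two stages, handling the underlying family and the fibration structure separately. Write $f_c = (g_c, s_c)$, where $g_c : C \to \V$ is the composite of $f_c$ with the projection $\U \to \V$, classifying the underlying family $c^*F \to C$, and $s_c : C \to \Fib(c^*F)$ is the corresponding section carrying its fibration structure.

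In the first stage, I apply realignment for families (Proposition \ref{prop:realignment}) to the family $F \to X$, the cofibration (in particular, monomorphism) $c : C \cof X$, and the map $g_c$, to produce a classifying map $g : X \to \V$ for $F$ with $g \circ c = g_c$. By stability of the classifying type for fibration structures (Lemma \ref{lemma:classtypeunbiasedfibstruct}), this provides canonical isomorphisms $\Fib(F) \cong g^*\Fib(\VV)$ over $X$ and $\Fib(c^*F) \cong c^*\Fib(F)$ over $C$; under the latter, the section $s_c$ transposes to a map $\tilde{s}_c : C \to \Fib(F)$ lying over $c$.

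In the second stage, I produce a section $s : X \to \Fib(F)$ satisfying $s \circ c = \tilde{s}_c$, so that $f := (g,s) : X \to \U$ will be the desired classifier. The essential input is the weak-proposition property from Lemma \ref{lemma:Fibweakprop}: the projection $\Fib(F) \times_X \Fib(F) \to \Fib(F)$ is a trivial fibration. Since $F$ is given as a fibration, $\Fib(F) \to X$ admits at least one section $\sigma$; pulling the trivial fibration $\Fib(F) \times_X \Fib(F) \to \Fib(F)$ back along $\sigma$ identifies its domain with $\Fib(F)$ over $X$, so $\Fib(F) \to X$ is itself a trivial fibration. The section $s$ then arises from lifting the cofibration $c$ against this trivial fibration, in the square with top map $\tilde{s}_c$ and bottom map $1_X$.

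The only genuinely delicate step is the passage from \emph{weak proposition with a section} to \emph{trivial fibration}, which is what makes the weak-proposition form of Lemma \ref{lemma:Fibweakprop} (and hence axiom (C8)) exactly the right hypothesis; the remainder is a mechanical chase of the stability isomorphisms supplied by Proposition \ref{prop:realignment}, Lemma \ref{lemma:classtypeunbiasedfibstruct}, and the definition $\U = \Sigma_{E:\V}\Fib(E)$.
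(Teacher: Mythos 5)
Your proof is correct and is essentially the paper's argument, phrased locally rather than globally: where you pull back the weak-proposition property along $g$ and lift against the trivial fibration $\Fib(F)\to X$, the paper lifts directly against the trivial fibration $\pi_1 : \U\times_\V\U \fib \U$ (with bottom map an arbitrary classifier $f_1$ of the fibration structure on $F$); your lifting square is exactly the pullback of the paper's along $f_1$. Both proofs pivot on realignment for families plus the weak-proposition property of the fibration classifying type from Lemma \ref{lemma:Fibweakprop}, and both use the fibration structure on $F$ to supply the needed section (or lift $f_1$), so there is no substantive difference in the route.
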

\begin{proof}
First, let $|f_{c}|: C\to \V$ be the composite of $f_c: C\to \U$ with the canonical projection $\U\to \V$, thus classifying the underlying family $c^*F \to C$.  Next, let $f_0 : X\to \V$ classify the underlying family $F\to X$.  We may assume that $f_0\circ c = |f_{c}|$ by realignment for families, Proposition \ref{prop:realignment}.
\begin{equation}\label{diagram:fibrationrealignment2}
\begin{tikzcd}
c^*F \ar[dd, two heads] \ar[rd] \ar[rr] && \UU \ar[dd, two heads] \ar[r] & \VV \ar[dd] \\
& F \ar[dd, two heads] \ar[rru, dotted] & &\\
C  \ar[rd, tail,swap, "c"] \ar[rr, near start, "f_c"] && \U \ar[r] & \V \\
& X \ar[rru, dotted, swap, "f_0"] &
\end{tikzcd}
\end{equation}
Since $F\fib X$ is a fibration, there is a lift $f_1 : X \to \U$ of $f_0$ classifying the fibration structure.
We thus have the following commutative diagram in the base of \eqref{diagram:fibrationrealignment2}.
\begin{equation}\label{diagram:realignmentproof}
\begin{tikzcd}
C  \ar[d, tail,swap, "c"] \ar[r,swap, "{f_c}"] \ar[rr, bend left, "{|f_{c}|}"] & \U \ar[r] & \V \ar[d, equals]  \\
X \ar[r, "f_1"]  \ar[rr,swap,bend right, "f_0"] & \U \ar[r]  & \V
\end{tikzcd}
\end{equation}
Now pull $\U\to \V$ back against itself and rearrange the previous data to give (the solid part of) the following, which also commutes.
\begin{equation}\label{diagram:realignmentproof2}
\begin{tikzcd}
C \ar[d, tail, swap, "{c}"] \ar[rr, swap, "{\langle f_{1}c, f_c \rangle}"]  \ar[rrr, bend left, "{f_{c}}"] 
	&& \U\times_\V \U \ar[r, swap, "{\pi_2}"] \ar[d, two heads, swap, "{\pi_1}"] \pbmark & \U \ar[d]  \\
X \ar[rr,swap, "{f_1}"] \ar[rru, dotted, swap, "{f_2}"]  \ar[rrr,swap, bend right, "{f_0}"] && \U \ar[r]  & \V
\end{tikzcd}
\end{equation}
Since $\U=\Fib(\VV)\to\V$ is a weak proposition by Lemma \ref{lemma:Fibweakprop} and (C8), the projection $\pi_1: \U\times_\V \U \fib \U$ is a trivial fibration, so there is a diagonal filler $f_2 : X\to \U\times_\V \U$ as indicated.  Taking $f := \pi_2\circ f_2 : X\to \U\times_\V \U \to \U$ gives another classifying map for the fibration structure on $F\to X$, for which $f\circ c =  f_c$ as required.
\end{proof}

\section{The equivalence extension property}\label{sec:EEP}

The equivalence extension property is closely related to the \emph{univalence} of the universal fibration $\UU\fib\U$ constructed in Section \ref{sec:universalfibration} (see \cite{Shu:15}).  It will be used in Section \ref{sec:FEP} to show that the base object $\U$ is fibrant.  The proof of the equivalence extension property given here is a reformulation of a type-theoretic argument due to Coquand \cite{CCHM:2018ctt}, which in turn is a modification of the original argument of Voevodsky \cite{KLV:21}.  See \cite{Sattler:2017ee} for another reformulation.  

\subsection*{The sliced premodel structure}

We begin by recalling some basic facts and making some simple observations that are well-known in general model categories, but need to be checked again here, because we do not yet have a full model structure. The reader is reminded that the word ``fibration'' unqualified always refers to \emph{unbiased} fibrations as in Definition \ref{def:unbiasedfibration}.  First, for any object $Z \in\cSet$, the slice category $\cSetZ$  inherits the premodel structure of Proposition \ref{prop:FWC} from $\cSet$ via the forgetful functor 
\[
Z_! : \cSetZ  \too \cSet\,.
\]
In more detail:
\begin{definition}\label{def:slicepremodelstructure}
 A map $f : X \to Y$ over $Z$ is a \emph{(trivial) cofibration or (trivial) fibration} over $Z$ just if it is one in $\cSet$ after forgetting the $Z$-indexing via  $Z_!:\cSetZ \to \cSet$. This will be called the (\emph{relative} or) \emph{sliced premodel structure} on $\cSetZ$.  Accordingly, a map $f : X \to Y$ over $Z$ will be called a \emph{weak equivalence} over $Z$ just if it factors over $Z$ as a trivial fibration over $Z$ after a trivial cofibration over $Z$, which therefore holds just if it is a weak equivalence in $\cSet$ after forgetting the $Z$-indexing.
\end{definition}

That the specification in Definition \ref{def:slicepremodelstructure} actually does determine a premodel structure is a consequence of Proposition \ref{prop:FWC}, and the well-known fact that (pre-)model structures are stable under slicing in this way \cite{Hirschhorn:2003mc}.   In more detail:

\begin{lemma}\label{lem:slicepremodellifting1}
A map $f : X \to Y$ over $Z$  is a fibration (respectively, a trivial fibration) over $Z$ if, and only if, it lifts on the right in the slice category $\cSetZ$ against all trivial cofibrations (respectively, cofibrations) over $Z$.
\end{lemma}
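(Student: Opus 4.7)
The plan is to observe that the forgetful functor $Z_! : \cSetZ \to \cSet$ both reflects and creates the relevant lifting problems: on the one hand, any morphism in $\cSet$ between two objects of $\cSetZ$ that happens to commute with the structure maps to $Z$ is automatically a morphism in $\cSetZ$; on the other hand, any square in $\cSet$ can be re-indexed over $Z$ using the structure maps of its corners. Both directions of the equivalence will then be essentially formal, because (trivial) cofibrations and (trivial) fibrations over $Z$ are defined by a condition on the underlying morphism in $\cSet$ (Definition \ref{def:slicepremodelstructure}).

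For the forward direction, suppose $f : X \to Y$ over $Z$ is a (trivial) fibration, i.e.\ lifts on the right in $\cSet$ against all (trivial) cofibrations. Given a lifting problem in $\cSetZ$ against a (trivial) cofibration $c : A \to B$ over $Z$, the underlying square is a lifting problem in $\cSet$, so there is a diagonal $j : B \to X$ in $\cSet$ making both triangles commute. To see that $j$ is itself a morphism over $Z$, write $z_X : X \to Z$, $z_Y : Y \to Z$, $z_A : A \to Z$, $z_B : B\to Z$ for the indexings. Since $f$ is over $Z$ we have $z_Y \circ f = z_X$, and since the bottom edge $b : B \to Y$ is over $Z$ we have $z_Y \circ b = z_B$. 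Therefore $z_X \circ j = z_Y \circ f \circ j = z_Y \circ b = z_B$, so $j$ is a map over $Z$ and gives the required diagonal in $\cSetZ$.

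For the backward direction, suppose $f$ admits diagonal fillers in $\cSetZ$ against all (trivial) cofibrations over $Z$, and consider an arbitrary lifting problem in $\cSet$ against a (trivial) cofibration $c : A \mono B$:
\begin{equation*}
\xymatrix{
A \ar[d]_{c} \ar[r]^{a} & X \ar[d]^{f} \\
B \ar[r]_{b} & Y.
}
\end{equation*}
Equip $A$ with the indexing $\alpha := z_X \circ a$ and $B$ with $\beta := z_Y \circ b$. Then $\beta \circ c = z_Y \circ b \circ c = z_Y \circ f \circ a = z_X \circ a = \alpha$, so $c$ becomes a morphism in $\cSetZ$, and by construction both $a$ and $b$ are also over $Z$. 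Since the class of (trivial) cofibrations over $Z$ is defined by a condition on the underlying map in $\cSet$, $c$ is still a (trivial) cofibration over $Z$, so the hypothesis produces a diagonal in $\cSetZ$, which in particular is a diagonal filler for the original square in $\cSet$.

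I do not anticipate any real obstacle: the entire argument is a bookkeeping exercise about re-indexing and about the fact that $Z_!$ is faithful and that commutativity with the structure maps to $Z$ can be verified by postcomposition with $z_X, z_Y$. The content of the lemma is simply that sliced (trivial) fibrations are characterized by their expected lifting property in the slice, which is a general fact about how weak factorization systems on $\cSet$ transfer to $\cSetZ$ through the forgetful functor.
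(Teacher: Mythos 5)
Your proposal is correct and takes essentially the same approach as the paper: both establish a bijective correspondence between lifting problems (and their fillers) in $\cSet$ against (trivial) cofibrations and lifting problems over $Z$ against (trivial) cofibrations over $Z$, with $B$ and $A$ re-indexed via composition with the structure maps. Your write-up spells out the two directions a little more explicitly, but the underlying bookkeeping is identical.
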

\begin{proof}
Let $X \stackrel{f}{\to} Y \stackrel{p_Y}{\too} Z$, regarded as a map in the slice category over $Z$, with $p_X = p_Y \circ f : X \to Z$.  Then by definition $f$ is a fibration in $\cSetZ$ just if $f : X\to Y$ is a fibration in the total category $\cSet$, which holds just if $f$ lifts on the right against all trivial cofibrations $t : A \cof B$ in $\cSet$.  But every lifting problem of the form $t\pitchfork f$ in $\cSet$,
\[\begin{tikzcd}
A \ar[d,swap,tail, "t"] \ar[r, "x" ] & X \ar[d, "f"] \\  
B  \ar[r, swap, "y" ] \ar[ru, dotted] & Y  \ar[d, "{p_Y}"] \\
& Z
 \end{tikzcd}\]
gives rise to a corresponding one over $Z$, just by composing everything with $p_Y : Y \to Z$. Moreover, the evident resulting map $A\stackrel{t}{\cof} B \to Z$ is then a trivial cofibration over $Z$,  and every such lifting problem for $f$ over $Z$ arises in this way.  Finally, the diagonal fillers for the resulting lifting problem in $\cSetZ$ are exactly the diagonal fillers for the original one in $\cSet$. Thus the map $f$ over $Z$ is a fibration over $Z$ just in case it lifts on the right over $Z$ against all trivial cofibrations over $Z$, as claimed.  The case of trivial fibrations and cofibrations is exactly analogous.
\end{proof}

\begin{lemma}\label{lem:slicepremodellifting2}
A map $f : X \to Y$ over $Z$  is a cofibration (respectively, a trivial cofibration) over $Z$ if, and only if, it lifts on the left in the slice category $\cSetZ$ against all trivial fibrations (respectively, fibrations) over $Z$.
\end{lemma}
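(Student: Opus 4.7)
The plan is to prove this lemma in formal analogy with Lemma~\ref{lem:slicepremodellifting1}, but one cannot simply dualize that argument, because the forgetful functor $\cSetZ \to \cSet$ is a slice, so the ``compose with $p_Y$'' trick that made the earlier proof essentially a one-liner does not dualize. I would handle both pairings (cofibration vs.\ trivial fibration, and trivial cofibration vs.\ fibration) in parallel, writing out only the former.

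For the forward direction, suppose $f: X \to Y$ over $Z$ is a cofibration, \ie\ a cofibration in $\cSet$. Given a lifting problem over $Z$ against a trivial fibration $p: A \to B$ over $Z$, forgetting the $Z$-indexing yields a lifting problem in $\cSet$, which admits a diagonal filler $j: Y \to A$. The only thing to check is that $j$ is a map over $Z$, which is automatic: writing $p_A, p_B, p_X, p_Y$ for the structure maps to $Z$, one has $p_A \circ j = p_B \circ p \circ j = p_B \circ y = p_Y$, using that $p$ is over $Z$ and that the bottom triangle of the original square commutes over $Z$.

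The reverse direction is the main obstacle, since the hypothesis provides lifts only against trivial fibrations that come equipped with a $Z$-indexing. Given a lifting problem $(x,y): f \to p$ against an \emph{arbitrary} trivial fibration $p: A \to B$ in $\cSet$, my plan is to replace $p$ by its product with $Z$, namely $p \times \mathrm{id}_Z : A \times Z \to B \times Z$, which is a pullback of $p$ along the projection $B \times Z \to B$, hence again a trivial fibration in $\cSet$ by Corollary~\ref{cor:plusalgprops}(4), and therefore a trivial fibration over $Z$ via the second projections. The original square transports over $Z$ along the pairs $\langle x, p_X \rangle : X \to A \times Z$ and $\langle y, p_Y \rangle : Y \to B \times Z$, commuting precisely because $px = yf$ in $\cSet$; the hypothesis then yields a filler $\tilde{\jmath} : Y \to A \times Z$ over $Z$, and the first projection $j := \pi_A \circ \tilde{\jmath}$ solves the original lifting problem. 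The trivial cofibration version proceeds identically, invoking instead the pullback stability of fibrations (which holds because they form the right class of a weak factorization system, by Proposition~\ref{prop:fibrationwfs}).
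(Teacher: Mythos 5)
Your proof is correct, and the reverse direction is essentially the paper's argument: transport the lifting problem against an arbitrary trivial fibration $p : A \to B$ in $\cSet$ to one over $Z$ against $p\times\mathrm{id}_Z$ via the pairings $\langle x, p_X\rangle$, $\langle y, p_Y\rangle$, solve over $Z$, and project. Where you genuinely diverge is the forward direction. The paper establishes both implications through a chain of biconditionals: it first shows that $f$ is a cofibration over $Z$ iff it lifts over $Z$ against the \emph{constant} trivial fibrations $Z^*t$, and then observes (via the unit pullback square \eqref{eq:unitgraphpullback}) that every trivial fibration over $Z$ is a pullback over $Z$ of a constant one, so lifting against the constant ones already yields lifting against all of them. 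You instead argue directly: apply $Z_!$, produce a filler $j$ in $\cSet$, and verify $p_A j = p_B p j = p_B y = p_Y$ so that $j$ is already a filler over $Z$. This is shorter and cleaner for the purpose at hand, and it isolates the real structural content — that $Z_! : \cSetZ \to \cSet$ \emph{creates} fillers against maps living over $Z$. The paper's detour through \eqref{eq:unitgraphpullback} is not wasted, however: that observation is reused a few paragraphs later when verifying axiom (C8) for the sliced cofibrations, which is presumably why the author set it up here. Both arguments are valid; yours is the more economical for this lemma in isolation.
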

\begin{proof}
Let $X \stackrel{f}{\to} Y \stackrel{p_Y}{\too} Z$, regarded as a map in the slice category over $Z$, with $p_X = p_Y \circ f : X \to Z$.  Then by definition $f$ is a cofibration in $\cSetZ$ just if $Z_! f : Z_! X\to Z_! Y$ is a cofibration in the total category $\cSet$, which holds just if $Z_! f$ lifts on the left against all trivial fibrations $t : E \fib F$ in $\cSet$.  
But every lifting problem of the form $Z_! f\pitchfork t$ in $\cSet$,
\[\begin{tikzcd}
X \ar[d,swap, "f"] \ar[r, "x" ] & E \ar[d, two heads, "t"] \\  
Y  \ar[r, swap, "y" ] \ar[ru, dotted] \ar[d,swap, "{p_Y}"] & F  \\
Z &
 \end{tikzcd}\]
gives rise to a corresponding one over $Z$ of the form $f\pitchfork Z^*t$, by pulling $t$ back along $Z \to 1$. Moreover, since trivial fibrations are stable under pullback in $\cSet$, the pullback $Z^*t$ is a trivial fibration, and so $Z^*t$ is a trivial fibration over $Z$.  Thus $f$ is a cofibration in $\cSetZ$ if and only if $f\pitchfork Z^*t$ in $\cSetZ$ for all trivial fibrations $t : E\fib F$ in $\cSet$.  

Now observe that for any map $A \stackrel{g}{\to} B \stackrel{p_B}{\too} Z$ over $Z$, with $p_A = p_B\circ g$, the following unit square is a pullback, as indicated below,
\begin{equation}\label{eq:unitgraphpullback}
\begin{tikzcd}
A \ar[d,swap, "g"] \ar[r, "\eta_A" ] \pbmark & Z^*Z_!A  \ar[d, "Z^*Z_!g"] \ar[r, equals] & Z\times A \ar[d, "Z\times g"] \\  
B  \ar[r, swap, "\eta_B" ]   \ar[d] & Z^*Z_!A   \ar[d] \ar[r, equals] & Z\times B  \ar[d] \\
Z  \ar[r, equals] & Z  \ar[r, equals] & Z,
 \end{tikzcd}
 \end{equation}
because the graph $\eta_A = \pair{p_A, 1_Z} : A \to Z\times A$ is a pullback of $\Delta_A = \pair{1_Z, 1_Z} : Z \to Z\times Z$ along $1_Z \times p_A : Z\times A \to Z\times Z$, and similarly for $\eta_B$.  Thus in particular, every trivial fibration $A \stackrel{g}{\fib} B \stackrel{p_B}{\too} Z$ over $Z$ is a pullback over $Z$ of one of the form $Z^*g : Z^*A \fib Z^*B$ for a trivial fibration $g : A\fib B$ in $\cSet$.  Therefore $f$ is a cofibration in $\cSetZ$ if and only if $f\pitchfork g$ in $\cSetZ$ for all trivial fibrations $g : A\fib B$ in $\cSetZ$, as claimed. 
The case of trivial cofibrations and fibrations is exactly analogous.
\end{proof}

Since factoring a map in the slice category is evidently given simply by factoring it after forgetting the indexing, we now have:

\begin{proposition}\label{prop:slicepremodelstructure}
The specification in Definition \ref{def:slicepremodelstructure} determines a premodel structure on $\cSetZ$ for any object $Z \in \cSet$.
\end{proposition}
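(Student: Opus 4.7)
The plan is to verify the two required weak factorization systems $(\CC,\WW\cap\FF)$ and $(\CC\cap\WW,\FF)$ on $\cSetZ$ by transporting each of the defining properties from $\cSet$ across the forgetful functor $Z_!:\cSetZ\to\cSet$. The lifting halves of these weak factorization systems have essentially been handled already by Lemmas \ref{lem:slicepremodellifting1} and \ref{lem:slicepremodellifting2}, so what remains is the factorization clause and closure under retracts.

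First, for factorization: given any map $f:X\to Y$ over $Z$, with structure maps $p_X:X\to Z$ and $p_Y:Y\to Z$ satisfying $p_Y\circ f=p_X$, apply the cofibration weak factorization system (Section \ref{sec:cofibrations}) or the fibration weak factorization system (Lemma \ref{lemma:factorization}) in $\cSet$ to write $Z_!f = p\circ i : X \to W \to Y$ with $i$ in the appropriate left class and $p$ in the appropriate right class. Equip $W$ with the structure map $p_W := p_Y\circ p: W\to Z$; then both $i$ and $p$ become maps over $Z$ by construction, and by Definition \ref{def:slicepremodelstructure} they belong to the corresponding left and right classes over $Z$. This gives the required factorization in $\cSetZ$.

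Next, closure of each of the four classes under retracts in the arrow category $(\cSetZ)^{\mathbbm{2}}$: any retract diagram over $Z$ maps to a retract diagram in $\cSet^{\mathbbm{2}}$ under the functor $Z_!$, and Proposition \ref{prop:FWC} already guarantees that $\CC, \CC\cap\WW, \FF, \WW\cap\FF$ are all closed under retracts in $\cSet$, so the corresponding classes over $Z$ inherit this closure immediately.

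Finally, combining factorization with the lifting characterizations of Lemmas \ref{lem:slicepremodellifting1} and \ref{lem:slicepremodellifting2}, together with the retract closure just noted, we obtain the two interlocking weak factorization systems
\[
(\CC_{/Z},\ (\WW\cap\FF)_{/Z})\,,\quad ((\CC\cap\WW)_{/Z},\ \FF_{/Z})
\]
on $\cSetZ$, which is precisely the data of a Barton premodel structure. There is no substantive obstacle here since the real work has been done in the two preceding lemmas; the main thing to check is merely that ``being a weak equivalence over $Z$'' as in Definition \ref{def:slicepremodelstructure}, \emph{i.e.}\ factoring over $Z$ as a trivial cofibration over $Z$ followed by a trivial fibration over $Z$, indeed coincides with being a weak equivalence in $\cSet$ after applying $Z_!$---which holds because any factorization in $\cSet$ through an intermediate object $W$ lifts canonically to a factorization over $Z$ using $p_Y\circ p:W\to Z$, as observed above.
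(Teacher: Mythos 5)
Your proof is correct and follows essentially the same route as the paper, which compresses the argument into the single remark that ``factoring a map in the slice category is evidently given simply by factoring it after forgetting the indexing,'' combined with Lemmas \ref{lem:slicepremodellifting1} and \ref{lem:slicepremodellifting2}. Your version simply spells out the three ingredients (lifting characterizations, factorization via $p_W := p_Y\circ p$, and retract closure under $Z_!$) that the paper leaves implicit.
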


The reader is warned that when $Z = \I$ there is a possibility of confusion with the $\delta$-biased fibrations in $\cSet/_\I$, which do not in general agree with the $\I$-sliced (unbiased) fibrations.  

In order to verify the axioms (C1)-(C8) for cofibrations, let $Z^*1 \rightrightarrows Z^*\I$ in $\cSetZ$ be the result of pulling the interval $1 \rightrightarrows \I$ back along $Z\to 1$, to obtain a bipointed object in $\cSetZ$ that we shall write as,
\begin{equation}\label{eq:relativeinterval}
\delta_0, \delta_1 : 1_Z \rightrightarrows \I_Z\,.
\end{equation}
Observe that $1_Z + 1_Z \cong Z^*1 + Z^*1 \cof Z^*\I$ since the pullback functor  $Z^* : \cSet\to \cSetZ$ preserves (co)limits and cofibrations.   

\begin{proposition}
Taking $\delta_0, \delta_1 : 1_Z \rightrightarrows \I_Z$ as an interval, the axioms  (C1)-(C8) for cofibrations are satisfied in $\cSetZ$ 
\end{proposition}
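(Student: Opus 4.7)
The strategy is to reduce each axiom to the corresponding one in $\cSet$ via the forgetful functor $Z_! : \cSetZ \to \cSet$, which by definition creates cofibrations, preserves connected limits (in particular pullbacks), and preserves all colimits. Throughout we use that $Z^* : \cSet \to \cSetZ$ preserves cofibrations (by (C3) in $\cSet$), and that the interval $\delta_0, \delta_1 : 1_Z \rightrightarrows \I_Z$ in $\cSetZ$ is precisely $Z^*$ of the interval in $\cSet$.

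Axioms (C1), (C2), and (C3) transfer immediately, since isomorphisms, composites, and pullbacks in $\cSetZ$ are sent by $Z_!$ to the corresponding constructions in $\cSet$. Axiom (C5) holds because each endpoint $\delta_\epsilon : 1_Z \to \I_Z$ equals $Z^*\delta_\epsilon$, a pullback of a cofibration; similarly (C7) holds because the diagonal $\I_Z \to \I_Z \times_Z \I_Z$ is $Z^*$ of the diagonal in $\cSet$. For (C6), the join of two subobjects sharing a codomain in $\cSetZ$ is the image of their coproduct map, and since $Z_!$ preserves both coproducts and epi-mono factorizations, joins in $\cSetZ$ agree with joins in $\cSet$. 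For (C4), the relative cofibration classifier $t_Z := Z^*t : 1_Z \cof Z^*\Phi$ already introduced in Section \ref{sec:cofibrations} serves as the classifier: given a cofibration $c : C \cof D$ over $Z$, its classifying map $\chi_c : D \to \Phi$ in $\cSet$ pairs with the structure map $D \to Z$ to yield the unique classifying map $D \to Z^*\Phi$ over $Z$.

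The one axiom requiring a genuine argument is (C8): stability of cofibrations under the relative pathobject functor $(-)^{\I_Z}$ on $\cSetZ$. The key claim is that for any $p_X : X \to Z$ the pathobject $X^{\I_Z}$ in $\cSetZ$ fits into a pullback square
\begin{equation*}
\begin{tikzcd}
X^{\I_Z} \ar[d] \ar[r] \pbmark & X^{\I} \ar[d, "{p_X^{\I}}"] \\
Z \ar[r, swap, "k"] & Z^{\I}
\end{tikzcd}
\end{equation*}
where $k : Z \to Z^{\I}$ is the transpose of the projection $Z \times \I \to Z$, i.e., the constant-path map. This holds because a $Y$-element of $X^{\I_Z}$ over $Z$ is, by the slice exponential adjunction, a map $Y \times_Z \I_Z = Y \times \I \to X$ over $Z$, and the over-$Z$ condition transposes exactly to compatibility with $k$. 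Given a cofibration $c : A \cof B$ over $Z$, applying this description to both $A$ and $B$ exhibits the induced map $c^{\I_Z} : A^{\I_Z} \to B^{\I_Z}$ as a pullback of $c^{\I} : A^{\I} \to B^{\I}$; since $c^{\I}$ is a cofibration by (C8) in $\cSet$, so is $c^{\I_Z}$ by (C3). The main obstacle is establishing this pullback description of the relative pathobject; once it is in hand, everything reduces to pullback-stability of cofibrations in $\cSet$.
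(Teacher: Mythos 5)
Your proof is correct, and for the routine axioms (C1)--(C7) it agrees with what the paper simply calls ``routine.'' For (C8), however, you use a genuinely different key observation than the one in the paper's sketch, and yours is the sound one. The paper's sketch exhibits $c^{\I_Z}$ as a pullback of $Z^*Z_!(c^{\I_Z})$ via the unit of $Z_! \dashv Z^*$, and then asserts $Z_!(A^{\I_Z}) \cong Z_!(A)^{\I}$ in order to reduce to (C8) in $\cSet$. That isomorphism does not hold in general: $Z_!(A^{\I_Z})$ consists only of those paths in $Z_!(A)$ whose $p_A$-image is constant in the interval direction, which is exactly the pullback of $p_A^{\I} : A^{\I} \to Z^{\I}$ along the constant-path map $k : Z\to Z^{\I}$ that you describe. (Concretely, take $Z = \I$ and $A = 1_Z$: then $Z_!(A^{\I_Z}) = \I$ while $Z_!(A)^{\I} = \I^{\I} \cong \I + 1$ by Lemma \ref{lemma:binomial}, so the claimed iso fails.) Your pullback description of $X^{\I_Z}$ is the correct general formula for exponentiation by a pulled-back object in a slice, and the remainder of your argument --- pasting pullbacks to exhibit $c^{\I_Z}$ as a pullback of $c^{\I}$, then invoking (C8) and (C3) in $\cSet$ --- is sound. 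In short, your proposal is not merely correct; it supplies the right key lemma where the paper's sketch has a slip.
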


\begin{proof}(sketch)
The (relative) cofibration classifier in $\cSetZ$ is the pullback $Z^*t : Z^*1 \cof Z^*\Phi$, which we shall write as
\begin{equation}\label{eq:relativecofclassifier}
t_Z : 1_Z \cof \Phi_Z\,.
\end{equation}
For axiom (C8), observe that for a map $c : A \to B$ in $\cSetZ$, the exponential $c^{\I_Z} : A^{\I_Z}  \to A^{\I_Z} $ in $\cSetZ$ fits into a unit pullback square of the form \eqref{eq:unitgraphpullback},
\begin{equation}\label{eq:unitgraphpullback2}
\begin{tikzcd}
A^{\I_Z}  \ar[d,swap, "c^{\I_Z}"] \ar[r, "\eta_A" ] \pbmark & Z^*Z_!(A^{\I_Z})   \ar[d, "Z^*Z_!(c^{\I_Z})"] \ar[r, "\cong"] 
	& Z^*\big(Z_!(A)^{\I} \big) \ar[d, "Z^*\big(Z_!(c)^{\I} \big)"] \\  
B^{\I_Z}   \ar[r, swap, "\eta_B" ]   & Z^*Z_!(B^{\I_Z})   \ar[r,swap, "\cong"] & Z^*\big(Z_!(B)^{\I} \big)
 \end{tikzcd}
 \end{equation}
So if $c$ is a cofibration, so is $c^{\I_Z}$.
The other axioms are routine.
\end{proof}

\begin{lemma}\label{lemma:JTslice}
For any cubical set $Z$, we have the following relative versions of the pushout-product and pullback-hom conditions involving the interval in the slice category $\cSetZ$.
\begin{enumerate}
\item If $c : A \cof B$ is a cofibration in $\cSetZ$, then the pushout-product formed in $\cSetZ$ with $\delta_0 : 1_Z \cof \I_Z$, written
\[
c\otimes_Z \delta_0 : B +_{A} (A\times_Z \I_Z) \too B\times_Z \I_Z\,,
\]
is a trivial cofibration (and similarly for $\delta_1 : 1_Z\cof \I_Z$). 
\item If $f : X \fib Y$ is a fibration in $\cSetZ$, then the pullback-hom formed in $\cSetZ$ with $\delta_0 : 1_Z \cof \I_Z$, written
\[
\delta_0\Rightarrow_Z f : X^{\I_Z} \too Y^{\I_Z}\times_Z X \,,
\]
is a trivial fibration (and similarly for $\delta_1 : 1_Z\cof \I_Z$). 
\end{enumerate}
\end{lemma}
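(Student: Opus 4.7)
The plan is to reduce both claims to the absolute statements in $\cSet$ by applying the forgetful functor $Z_! : \cSetZ \to \cSet$, which is a left adjoint and hence preserves all colimits. The key computational observation is that for any object $A \to Z$ in $\cSetZ$, the slice product $A\times_Z \I_Z = A\times_Z Z^*\I$ is precisely $A\times\I$ once we forget the $Z$-indexing, since $\pi_1 : Z\times\I \to Z$ pulled back along $A\to Z$ yields $A\times\I$ with the composite structure map. Consequently, applying $Z_!$ to the pushout-product constructed in $\cSetZ$ yields
\[
Z_!(c \otimes_Z \delta_\epsilon) \;=\; Z_!(c) \otimes \delta_\epsilon
\]
in $\cSet$, where the right-hand side is the biased pushout-product from \eqref{diagram:pushoutproduct}.

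For part (1), suppose $c : A \cof B$ is a cofibration over $Z$, which by Definition \ref{def:slicepremodelstructure} means $Z_!(c)$ is a cofibration in $\cSet$. Then $Z_!(c) \otimes \delta_\epsilon$ is a biased trivial cofibration in $\cSet$, but by Remark \ref{rem:specialtrivcofs} this is a special case of the unbiased form $c' \otimes_i \delta$ obtained by taking the indexing $i = \delta_\epsilon\,\circ\, ! : Z_!(B) \to 1 \to \I$. Hence it belongs to the generating class $\CC\otimes\delta$ and is a trivial cofibration in $\cSet$. By the definition of the sliced premodel structure, $c \otimes_Z \delta_\epsilon$ is therefore a trivial cofibration over $Z$.

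For part (2), we invoke the Joyal--Tierney adjunction $\otimes_Z\,\dashv\,\Rightarrow_Z$ within the locally cartesian closed topos $\cSetZ$, which gives the relative version of Lemma \ref{lemma:Leibniz}: for any cofibration $c : A \cof B$ over $Z$,
\[
(c \otimes_Z \delta_\epsilon) \,\pitchfork\, f \quad\iff\quad c \,\pitchfork\, (\delta_\epsilon \Rightarrow_Z f)\,.
\]
By part (1), $c \otimes_Z \delta_\epsilon$ is a trivial cofibration over $Z$, and since $f$ is a fibration over $Z$, Lemma \ref{lem:slicepremodellifting1} (for trivial cofibrations) yields the left-hand lifting. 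Hence the right-hand lifting also holds for every cofibration $c$ over $Z$, and by Lemma \ref{lem:slicepremodellifting1} this shows $\delta_\epsilon \Rightarrow_Z f$ is a trivial fibration over $Z$.

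The only potential subtlety is verifying that $Z_!$ correctly transports the pushout-product construction---in particular, that the pullbacks over $Z$ in the domain of $c\otimes_Z \delta_\epsilon$ agree with ordinary products with $\I$ after forgetting the indexing, and that the pushout computed in $\cSetZ$ agrees with that computed in $\cSet$. Both facts are immediate since $Z_!$ is a left adjoint (so preserves the pushout) and since $\I_Z = Z^*\I$ is built precisely so that $A\times_Z Z^*\I \cong A\times\I$ in $\cSet$. Once this bookkeeping is done, the argument is essentially formal.
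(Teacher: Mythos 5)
Your proof is correct and follows essentially the same route as the paper: both proofs observe that $A \times_Z \I_Z \cong A \times \I$ (so that the sliced pushout-product collapses under $Z_!$ to the absolute biased one $Z_!(c)\otimes\delta_\epsilon$, which is an unbiased trivial cofibration since the endpoint inclusion $B \to B\times\I$ is the graph of the constant indexing $\delta_\epsilon\circ\,!$), and both then derive part (2) from part (1) via the $\otimes_Z\,\dashv\,\Rightarrow_Z$ adjunction in $\cSetZ$ together with the lifting characterization of the sliced premodel structure. Your invocation of Lemma~\ref{lem:slicepremodellifting1} versus the paper's citation of Lemma~\ref{lem:slicepremodellifting2} is an immaterial choice of which side of the adjoint lifting correspondence to cite; both lemmas characterize the same premodel structure.
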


\begin{proof}
For (1),  the pushout-product $c \otimes_Z \delta_0 : D \to B \times_Z \I_Z$ over $Z$ is equal to the (non-relative) pushout-product $c \otimes \delta_0 : D \to B \times \I$, because $Z^*\delta_0 : Z^*1 \to Z^*\I$  is constant over $Z$, so
\[
B\times_Z \I_Z \cong B \times \I\,,
\]  
and similarly for $A$ (and pushouts in the slice are created by the forgetful functor $\cSetZ \to \cSet$).  Thus, briefly,
\[
c \otimes_Z Z^*\delta_0 = c \otimes \delta_0 \,,
\]
which is indeed a trivial cofibration.

(2) follows from (1) and lemma \ref{lem:slicepremodellifting2}, together with the usual adjunction between $\otimes_Z$ and $\Rightarrow_Z$.
\end{proof}

In order to apply the results on weak equivalences from Section \ref{sec:weq} in arbitrary slice categories $\cSetZ$ we shall also require  the notions of homotopy equivalence over $Z$ and weak homotopy equivalence over $Z$.  We first use the relative interval $\delta_0, \delta_1 : 1_Z \rightrightarrows \I_Z$ \eqref{eq:relativeinterval} to define homotopy between maps over $Z$ in the expected way, namely:

\begin{definition}\label{homotopyinaslice}
For any object $Z$ and maps $f, g: X\rightrightarrows Y$ in $\cSetZ$, a \emph{homotopy over $Z$}, written 
\[
\vartheta : f \sim_Z g\,,
\]
 is a map over $Z$,
\[
\vartheta : \I_Z\times_Z {X} \too Y,
\]
such that $\vartheta \circ \iota_0 = f$ and $\vartheta \circ \iota_1 = g$, 
\begin{equation}\label{diagram:defhomotopy2}
\xymatrix{
X \ar[r]^-{\iota_0} \ar[rd]_f & \I_Z\times_Z\!{X} \ar[d]^-{\vartheta} & X, \ar[l]_-{\iota_1} \ar[ld]^g \\
& Y &
}
\end{equation}
where, as usual, $\iota_0, \iota_1$ are the canonical inclusions into the ends of the cylinder,
\[
\xymatrix{
\iota_\epsilon : X \cong 1_Z\times_Z X \ar[rr]^-{\delta_\epsilon\times_Z X} && \I_Z\times_Z X\,,} \qquad \epsilon = 0,1 .
\]
\end{definition}

\begin{lemma}\label{lem:homotopyinthesliceishomotopy}
For any object $Z$ and maps $f, g: X\rightrightarrows Y$ in $\cSetZ$, a homotopy over $Z$ determines a homotopy of the underlying maps by applying the functor $Z_! : \cSetZ \to \cSet$ that forgets the $Z$-indexing,
\[
\vartheta : f\sim_Z g \quad\ \mapsto\quad  Z_! \vartheta : Z_! f\sim Z_! g\,.
\]
\end{lemma}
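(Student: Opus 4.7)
The plan rests on the observation that the relative cylinder in $\cSetZ$ becomes, after forgetting the $Z$-indexing via $Z_!$, (naturally isomorphic to) the absolute cylinder in $\cSet$. Since $\I_Z = Z^*\I$ is the projection $\pi_1 : Z\times\I \to Z$, and the pullback in the slice $\cSetZ$ along $p_X : X \to Z$ just adjoins the second coordinate from $\I$, there is a canonical isomorphism
\[
Z_!(\I_Z \times_Z X)\ \cong\ \I \times Z_!X
\]
in $\cSet$, compatible with the forgetful map down to $Z$ on the left and $p_X\circ \pi_2$ on the right. Concretely, an element of $\I_Z \times_Z X$ lying over $z\in Z$ is a triple $(z, i, x)$ with $p_X(x)=z$, which is just a pair $(i,x)$ in $\I\times X$.

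Next, I would check that under this isomorphism, the relative endpoint inclusions $\iota_\epsilon : X \cong 1_Z\times_Z X \to \I_Z \times_Z X$ of Definition \ref{homotopyinaslice} correspond, after applying $Z_!$, to the absolute endpoint inclusions $\iota_\epsilon : X \cong 1\times X \to \I\times X$ of Definition \ref{homotopy}. This is immediate because $Z^*\delta_\epsilon$ is just $Z\times\delta_\epsilon : Z\to Z\times\I$, so after pulling back along $p_X$ and forgetting to $\cSet$ it becomes $\delta_\epsilon\times X : X \to \I\times X$.

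Given then a homotopy $\vartheta : \I_Z\times_Z X \to Y$ over $Z$ with $\vartheta\circ\iota_0 = f$ and $\vartheta\circ\iota_1 = g$, applying $Z_!$ and composing with the isomorphism above yields a map $Z_!\vartheta : \I\times Z_!X \to Z_!Y$ in $\cSet$ satisfying $Z_!\vartheta\circ\iota_0 = Z_!f$ and $Z_!\vartheta\circ\iota_1 = Z_!g$, which is precisely a homotopy $Z_!f \sim Z_!g$ in the sense of Definition \ref{homotopy}. Since this is essentially a chase through the definitions, there is no real obstacle beyond setting up the canonical isomorphism cleanly; the result could equivalently be stated by saying that $Z_!$ carries the cylinder object $\I_Z\times_Z(-)$ in $\cSetZ$ to the cylinder object $\I\times(-)$ in $\cSet$, preserving the endpoint inclusions.
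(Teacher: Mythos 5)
Your argument is correct and follows essentially the same route as the paper: both reduce the claim to the canonical isomorphism $Z_!(\I_Z\times_Z X)\cong\I\times Z_!X$ (the paper obtains it from a pasting of pullback squares, you from an elementwise/universal-property reading of the same pullbacks) and then observe that $Z_!$ applied to $\vartheta$ gives the required absolute homotopy. Your extra step verifying that the endpoint inclusions match up is a detail the paper leaves implicit, but it does not constitute a different approach.
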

\begin{proof} 
 Consider the following diagram depicting a homotopy $\vartheta : f\sim_Z g$ over $Z$. 
\[\begin{tikzcd}
&& X  \ar[d, shift left, "\iota_1"] \ar[d, shift right, swap,  "\iota_0"] \ar[r,swap, "f"] \ar[r, shift left=1ex, "g"] & Y \ar[dd] \\  
\I \ar[d]  & \ar[l]  \I \times Z \ar[d] &  \ar[l]   \I_Z \times_Z X  \ar[d] \ar[ru,swap, "\vartheta"] &  \\
1 & Z\ar[l] & \ar[l] X  \ar[r] & Z
 \end{tikzcd}\]
 Since the lower left two squares are pullbacks, we have $\I_Z \times_Z X \cong \I\times X$. So applying $Z_!$ to $\vartheta$ results in a homotopy $Z_!\vartheta : Z_! f \sim Z_!g$.
 
Note that an arbitrary homotopy $\varphi : f \sim  g$ will not result in one over $Z$, however, unless $\varphi$ commutes with the indexing maps to $Z$.
\end{proof}

\begin{proposition}\label{prop:homotopybetweenfibequivrel}
For any object $Z$, the relation of homotopy over $Z$ between maps $f, g: X\rightrightarrows Y$ over $Z$ is preserved by pre- and post-composition. If $X\fib Z$ and $Y\fib Z$ are both fibrations, then the relation $f \sim_Z g$ of maps between them is an equivalence relation.
\end{proposition}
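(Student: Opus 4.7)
The plan is to carry out the argument of Proposition \ref{prop:homotopyintofibequivrel} verbatim, but inside the sliced premodel structure on $\cSetZ$ of Proposition \ref{prop:slicepremodelstructure}, using the relative interval $\delta_0,\delta_1 : 1_Z \rightrightarrows \I_Z$ of \eqref{eq:relativeinterval} in place of $\delta_0,\delta_1 : 1 \rightrightarrows \I$. Thus let $Y^X$ denote the \emph{relative} exponential in $\cSetZ$, namely $X_\ast X^\ast Y$ formed in the locally cartesian closed structure on $\cSet$.

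For the first assertion, post-composition by $h : Y \to Y'$ over $Z$ is immediate by inspection of diagram \eqref{diagram:defhomotopy2}. For pre-composition, a homotopy $\vartheta : f \sim_Z g$ over $Z$ transposes under the adjunction ${\I_Z \times_Z (-)} \dashv (-)^{\I_Z}$ in $\cSetZ$ to a path $\tilde\vartheta : \I_Z \to Y^X$ with endpoints $\tilde f, \tilde g : 1_Z \to Y^X$, and pre-composition along $e : W \to X$ in $\cSetZ$ is then induced by post-composition with $Y^e : Y^X \to Y^W$, which takes $\tilde\vartheta$ to a path $\tilde\varphi : \I_Z \to Y^W$ corresponding to the required homotopy $\varphi : f \circ e \sim_Z g \circ e$ over $Z$.

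For the equivalence relation statement, the key point is that when $X \fib Z$ and $Y \fib Z$ are both unbiased fibrations, the relative exponential $Y^X = X_\ast X^\ast Y$ is again an unbiased fibration over $Z$ by Corollary \ref{cor:unbiasedPi}; that is, $Y^X$ is a fibrant object in the sliced premodel structure on $\cSetZ$. Reflexivity, symmetry and transitivity can then be verified by the familiar ``open $2$-box'' constructions carried out inside $Y^X$ in $\cSetZ$, exactly as in the proof of Proposition \ref{prop:homotopyintofibequivrel}: the boundary inclusion $\partial : 1_Z + 1_Z \cof \I_Z$ is a cofibration in $\cSetZ$ (since $Z^\ast$ preserves cofibrations), and the relevant ``open box'' inclusions such as $\partial \otimes_Z \delta_0 : \I_Z +_{1_Z} \I_Z +_{1_Z} \I_Z \mono \I_Z \times_Z \I_Z$ are trivial cofibrations in $\cSetZ$ by Lemma \ref{lemma:JTslice}(1), so each of the three boxes can be filled against $Y^X \to Z$.

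The main (and only non-routine) point is thus the fibrancy of the relative exponential $Y^X$ in $\cSetZ$; everything else is a straightforward transcription of the original argument into the sliced setting, using that $Z^\ast : \cSet \to \cSetZ$ preserves cofibrations and that the relative $\otimes_Z \dashv \Rightarrow_Z$ adjunction in $\cSetZ$ behaves as in Lemma \ref{lemma:JTslice}.
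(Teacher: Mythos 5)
Your proposal is correct and follows exactly the route the paper indicates: the paper's "proof" of this proposition is just the remark that it is essentially the same as Proposition \ref{prop:homotopyintofibequivrel}, transcribed to the slice $\cSetZ$ with the relative interval, the one new ingredient being that both $X\fib Z$ and $Y\fib Z$ must be fibrations so that the relative exponential $Y^X=X_*X^*Y$ is again a fibration over $Z$ by Corollary \ref{cor:unbiasedPi}. You have simply written out that transcription in more detail, and the details are right.
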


The proof is essentially the same as the corresponding one for homotopy over $1$, Proposition \ref{prop:homotopyintofibequivrel}, with the exception that both $X$ and $Y$ are required to be fibrant objects over $Z$, so that the exponential $Y^X$ over $Z$ is also a fibration $Y^Z  \fib Z$ (by Corollary \ref{cor:unbiasedPi}).

Next we define a \emph{connected components} functor on the full subcategory $\Fib_Z \hook \cSetZ$ of fibrations over $Z$,
\[
(\pi_0)_Z : \Fib_Z \ra\Set\,,
\]
by taking the global sections of a fibration $F\fib Z$, modulo the relation $\sim_Z$ of homotopy over $Z$. In more detail,  for $F\fib Z$ in $\Fib_Z$ let $(\pi_0)_Z(F)$ be the coequalizer,
\begin{equation}\label{eq:coeqpi0}
\Hom_Z(\I_Z , F) \rightrightarrows \Hom_Z(1_Z , F) \to (\pi_0)_Z(F) \,,
\end{equation}
where the two maps are given by precomposition with the interval $1_Z \rightrightarrows \I_Z$ over $Z$, and the $\hom$-sets are those in $\cSetZ$.

For fibrations $X\fib Z$ and $F\fib Z$ we then again have 
\[
(\pi_0)_Z(F^X) = \hom_{Z}(X,F)/\!\!\sim_Z\,,
\]  
so $(\pi_0)_Z(F^X)$ is the set  $[X, F]_Z$ of $Z$-homotopy equivalence classes of maps $X\ra F$ over $Z$.
For maps over a base object $Z$, we can then define the notions of homotopy equivalence over $Z$ and, between fibrations, weak homotopy equivalence over $Z$ as before (cf.~Section \ref{sec:weq}):

\begin{definition}\label{def:slicedhewe}
Let $Z$ be any object in $\cSet$, and let $X \to Z$ and $Y \to Z$ be regarded as objects over $Z$.
\begin{enumerate}
\item A map $f : X\to Y$ over $Z$ is a \emph{homotopy equivalence over $Z$} 
if there is a map $g : Y\to X$ over $Z$ and two homotopies over $Z$,
\[
\vartheta : g\circ f \sim_Z 1_X,\qquad \varphi : f\circ g \sim_Z 1_Y\,.
\]
\item For $X\fib Z$ and $Y\fib Z$ fibrations, a map $f : X\to Y$ over $Z$ is a \emph{weak homotopy equivalence over $Z$} if for every fibration $F\fib Z$, the precomposition map over $Z$,
\[
F^f : F^Y \to F^X\,,
\]
is bijective on connected components, 
\[
(\pi_0)_Z(F^f) : (\pi_0)_Z(F^Y) \cong (\pi_0)_Z(F^X)\,,
\]
where the indicated exponentials are taken in the slice category. 
\end{enumerate}
\end{definition} 

The proof of the following is analogous to that of the corresponding facts for the case $Z=1$ (Lemmas \ref{lemma:HE342} and \ref{lemma:WHE342}).

\begin{lemma}\label{lemma:HE342sliced}
The  homotopy equivalences over any object $Z$ satisfy the 3-for-2 condition, as do the weak homotopy equivalences over $Z$.
\end{lemma}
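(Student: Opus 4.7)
The plan is to mimic the two proofs for the absolute case ($Z=1$) given as Lemmas~\ref{lemma:HE342} and \ref{lemma:WHE342}, using the sliced versions of the relevant facts established in Proposition~\ref{prop:homotopybetweenfibequivrel} and in the setup of the sliced $(\pi_0)_Z$ from \eqref{eq:coeqpi0}.

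For homotopy equivalences over $Z$, I will first observe that the composition of two such maps $f : X \to Y$ and $g : Y \to W$ over $Z$ is again a homotopy equivalence over $Z$: if $f'$ and $g'$ are quasi-inverses over $Z$ with homotopies over $Z$, then $f'g'$ is a quasi-inverse of $gf$, using that homotopy over $Z$ is preserved by pre- and post-composition over $Z$ (Proposition~\ref{prop:homotopybetweenfibequivrel}). Since any homotopy equivalence over $Z$ comes equipped with a quasi-inverse over $Z$ by definition, the 3-for-2 property then follows formally: given $h = g \circ f$ with two of the three being homotopy equivalences over $Z$, the third is the composite of a homotopy equivalence over $Z$ with the quasi-inverse of another, and hence is itself a homotopy equivalence over $Z$ by closure under composition. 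No fibrancy hypothesis on domains or codomains is needed here, so this part is immediate.

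For weak homotopy equivalences over $Z$, I will apply the $\Set$-valued functor $(\pi_0)_Z\bigl(F^{(-)}\bigr)$ for each fibration $F \fib Z$, where exponentials are formed in $\cSetZ$. By definition, a map $f : X \to Y$ over $Z$ between objects over $Z$ is a weak homotopy equivalence over $Z$ iff for every such $F$ the induced map $(\pi_0)_Z(F^f) : (\pi_0)_Z(F^Y) \to (\pi_0)_Z(F^X)$ is a bijection of sets. Since bijections of sets trivially satisfy 3-for-2, and since exponentiation by a fibration preserves composition contravariantly, the 3-for-2 property for $\Set$-bijections transfers back to weak homotopy equivalence over $Z$.

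The only mildly delicate point is ensuring that $F^X$ and $F^Y$ are well-defined fibrations over $Z$ so that $(\pi_0)_Z$ is applicable; this is precisely why Definition~\ref{def:slicedhewe}(2) restricts to $X \fib Z$ and $Y \fib Z$, and in that setting Corollary~\ref{cor:unbiasedPi} (Frobenius/closure of fibrations under pushforward along fibrations) guarantees that $F^X = X_* X^* F \fib Z$ and $F^Y \fib Z$ are fibrations, so that $(\pi_0)_Z$ makes sense on them. I do not anticipate any obstacle beyond assembling these ingredients; the main conceptual point, already verified in Proposition~\ref{prop:homotopybetweenfibequivrel}, is the compatibility of homotopy over $Z$ with composition, which is what makes the formal proofs of \ref{lemma:HE342} and \ref{lemma:WHE342} go through verbatim in the sliced setting.
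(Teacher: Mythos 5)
Your proposal is essentially the paper's own one-line proof in expanded form: the paper simply cites Lemmas~\ref{lemma:HE342} and~\ref{lemma:WHE342} by analogy, and your treatment of the weak homotopy equivalence half via $(\pi_0)_Z\bigl(F^{(-)}\bigr)$ and Corollary~\ref{cor:unbiasedPi} matches the absolute argument. The point to flag is your parenthetical claim that no fibrancy hypothesis is needed for the homotopy-equivalence half, together with the shortcut that the third map ``is the composite of a homotopy equivalence over $Z$ with the quasi-inverse of another.'' That is not literally so: if $h = g\circ f$ with $f$ and $h$ homotopy equivalences over $Z$ with quasi-inverses $f'$ and $h'$, one has only $g \sim_Z g\circ f\circ f' = h\circ f'$, so $g$ is merely homotopic over $Z$ to such a composite. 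Promoting ``homotopic over $Z$ to a homotopy equivalence over $Z$'' to ``is a homotopy equivalence over $Z$'' requires chaining several homotopies, i.e.\ transitivity of $\sim_Z$, and Proposition~\ref{prop:homotopybetweenfibequivrel} delivers transitivity only when both objects involved are fibrations over $Z$ (the proof uses box-filling in the exponential over $Z$). So the homotopy-equivalence half, like the weak-homotopy-equivalence half, is really a statement about maps between fibrations over $Z$---which is exactly how Lemma~\ref{lemma:HE342sliced} is invoked in Proposition~\ref{prop:weqisheoverZ}. The paper itself elides this (Lemma~\ref{lemma:HE342} likewise just says the maps ``clearly compose and come with quasi-inverses''), so you have inherited an implicit hypothesis rather than missed a step; but you should not convert the paper's silence into a positive assertion that fibrancy plays no role.
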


\begin{proposition}\label{prop:weqisheoverZ}
For any object $Z$ and fibrations $X\fib Z$ and $Y\fib Z$, the following conditions are equivalent for any map $f : X\to Y$ over $Z$.
\begin{enumerate}
\item $f : X\to Y$ is a weak equivalence over $Z$,
\item $f : X\to Y$ is a homotopy equivalence over $Z$, 
\item $f : X\to Y$ is a weak homotopy equivalence over $Z$, 
 \end{enumerate}
\end{proposition}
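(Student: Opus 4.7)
The plan is to redo the arguments from Section \ref{sec:weq} entirely within the sliced premodel structure on $\cSetZ$ of Proposition \ref{prop:slicepremodelstructure}, in which the fibrations $X \fib Z$ and $Y \fib Z$ play the role of ``fibrant objects''. All structural ingredients for the $Z=1$ proofs transfer: Lemma \ref{lemma:JTslice} provides the relative Joyal--Tierney pushout-product/pullback-hom correspondence, so that $F^{\I_Z} \fib F \times_Z F$ is a fibration over $Z$ whenever $F \fib Z$ is; Proposition \ref{prop:homotopybetweenfibequivrel} shows $\sim_Z$ is an equivalence relation between maps of fibrations over $Z$; and Corollary \ref{cor:unbiasedPi} ensures the exponential $F^X$ formed in $\cSetZ$ is a fibration over $Z$ when both $X, F$ are, so that $(\pi_0)_Z(F^X) = [X,F]_Z$ is a well-defined set of $Z$-homotopy classes on which the sliced $(\pi_0)_Z$ can detect bijectivity.

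For (1) $\Rightarrow$ (2), I would factor $f$ over $Z$ as $X \stackrel{tc}{\cof} W \stackrel{tf}{\fib} Y$ with $tc$ a trivial cofibration and $tf$ a trivial fibration over $Z$; the intermediate $W \fib Z$ is a fibration as the composite of $tf$ with $Y \fib Z$. Then $tf$ is a homotopy equivalence over $Z$ by the sliced analog of Lemma \ref{lem:TFibisHE} (sections exist by lifting; the correction $tf \circ s \sim_Z 1_W$ comes from filling against the trivial cofibration $[\iota_0,\iota_1] : W + W \cof \I_Z \times_Z W$ over $Z$), and $tc$ is a homotopy equivalence over $Z$ by the sliced analog of the first implication of Proposition \ref{prop:CofbetweenFibWEWHEHE} (a retraction exists by lifting since $X \fib Z$, and a $Z$-homotopy $1_W \sim_Z tc\circ r$ is built by filling against $W^{\I_Z} \fib W \times_Z W$). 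Composing gives (2). Next, (2) $\Rightarrow$ (3) follows by the sliced version of Lemma \ref{lemma:HEisWHE}: post-composition with a fibration $F \fib Z$ preserves $\sim_Z$, hence sends homotopy equivalences over $Z$ to bijections on $(\pi_0)_Z$.

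The main work is in (3) $\Rightarrow$ (1). Given a WHE $f$ over $Z$, factor it over $Z$ as $f = tf \circ c$ with $c$ a trivial cofibration and $tf$ a fibration, both over $Z$; the middle object is a fibration over $Z$ by composition with $Y \fib Z$. By the already-proven (1) $\Rightarrow$ (3) applied to $c$ and 3-for-2 for WHEs over $Z$ (Lemma \ref{lemma:HE342sliced}), $tf$ is also a WHE over $Z$. Now I would apply the sliced analog of Lemma \ref{lemma:FibWHEfibCodTFib}: use $(\pi_0)_Z$-surjectivity to obtain a map $s: Y \to F$ over $Z$ with $tf \circ s \sim_Z 1_Y$, and $(\pi_0)_Z$-injectivity together with the fibration $F^{\I_Z} \fib F \times_Z F$ to correct $s$ on the other side, producing a quasi-inverse over $Z$. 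Thus $tf$ is a homotopy equivalence over $Z$. The sliced analog of Lemma \ref{FibHETFib} (a fibration over $Z$ between fibrations over $Z$ that is a $Z$-homotopy equivalence is a trivial fibration over $Z$) then promotes $tf$ to a trivial fibration over $Z$, exhibiting $f$ as a weak equivalence over $Z$.

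The principal obstacle is merely verification that each concrete construction from Section \ref{sec:weq} — open-box fillers for transitivity of $\sim_Z$, the path-object correction in Lemma \ref{FibHETFib}, and the section/retraction lifting arguments — indeed goes through over $Z$ with $\I_Z$ in place of $\I$ and fibrations over $Z$ in place of fibrant objects. Lemma \ref{lemma:JTslice} and Proposition \ref{prop:homotopybetweenfibequivrel} are precisely the relativized forms of the absolute facts used in the originals, so the translation is systematic rather than ad hoc.
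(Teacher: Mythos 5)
Your proposal is correct and takes essentially the same approach as the paper's proof: relativize the arguments of Section~\ref{sec:weq} to the sliced premodel structure, using Lemma~\ref{lemma:JTslice}, Proposition~\ref{prop:homotopybetweenfibequivrel}, and sliced analogs of Lemmas~\ref{lem:TFibisHE}, \ref{lemma:HEisWHE}, \ref{FibHETFib}, and~\ref{lemma:FibWHEfibCodTFib}, together with 3-for-2 for weak homotopy equivalences over $Z$ (Lemma~\ref{lemma:HE342sliced}). The only difference is in the direction $(3)\Rightarrow(1)$: you factor $f$ as a trivial cofibration followed by a fibration over $Z$ and apply the sliced analog of Lemma~\ref{lemma:FibWHEfibCodTFib} directly, whereas the paper first factors as (cofibration, trivial fibration), shows the cofibration part is a weak equivalence by a second refactoring, and then uses $\mathcal{C}\cap\mathcal{W}=\mathsf{TCof}$ of the sliced premodel structure; your route reaches the same conclusion a step sooner.
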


\begin{proof}
Let $f : X\to Y$ be a weak equivalence.  Factor $f = tf\circ tc$ with a trivial cofibration $tc : X \cof F$ followed by a trivial fibration $tf : F \fib Y$, both of which are then also over $Z$.  
\[\begin{tikzcd}
	& F  \ar[dd, two heads]  \ar[dr, two heads, "tf"]  &  \\  
X  \ar[ur, tail, "tc"]  \ar[dr, two heads, swap, "p"] \ar[rr, pos=0.3, "f"] & & Y \ar[dl, two heads, "q" ]   \\
 & Z & 
 \end{tikzcd}\]
The proof of Proposition \ref{prop:CofbetweenFibWEWHEHE} $(1\!\Rightarrow\!2)$ now applies over $Z$, \emph{mutatis mutandis},  to show that $tc : X\cof F$ is a homotopy equivalence over $Z$. Similarly, the proof of Lemma \ref{lem:TFibisHE} also works over $Z$ to show that $tf: F\fib Y$ is a homotopy equivalence over $Z$. Thus $f = tf\circ tc$ is a homotopy equivalence over $Z$.

Any homotopy equivalence over $Z$ is clearly a weak homotopy equivalence over $Z$, by the same proof as for Lemma \ref{lemma:HEisWHE} (using the fact that $X\fib Z$ and $Y\fib Z$ in order to form the required exponentials).

If $f : X\to Y$ is a weak homotopy equivalence, then factor it as $f = tf\circ c$ with a cofibration $c : X \cof F$ followed by a trivial fibration $tf : F \fib Y$, both of which are also over $Z$.   We thus just need to show that $c : X \cof F$ is a trivial cofibration.  As in the first step, $tf : F \fib Y$ is a homotopy equivalence over $Z$, whence a weak homotopy equivalence by the second step, and so by 3-for-2 for weak homotopy equivalences over $Z$, Lemma \ref{lemma:HE342sliced}, $c : X \cof F$ is also a weak homotopy equivalence over $Z$.  Now, as in the proof of Proposition \ref{prop:CofbetweenFibWEWHEHE}, factor $c = f\circ tc$ as a trivial cofibration $tc : X \cof C$ followed by a fibration $f : C \fib F$, both over $Z$.   By steps 1 and 2, $tc : X \cof C$ is then a weak homotopy equivalence over $Z$. By 3-for-2 for weak homotopy equivalences, Lemma \ref{lemma:HE342sliced}, $f : C \fib F$ is also a weak homotopy equivalence over $Z$.  It remains to show that the fibration $f : C \fib F$ is a weak equivalence.  This follows by repeating the reasoning for Lemma \ref{lemma:FibWHEfibCodTFib}, and the results leading up to it, over $Z$.
\end{proof}

Using Lemma \ref{lemma:HE342sliced} we now have:

\begin{corollary}\label{cor:342weqfiboverZ}
For any object $Z$, the weak equivalences between fibrations into $Z$ satisfy the 3-for-2 condition.
\end{corollary}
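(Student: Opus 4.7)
The plan is to reduce the claim directly to Lemma \ref{lemma:HE342sliced} by using Proposition \ref{prop:weqisheoverZ} as a translation device. Given a commutative triangle $e = f \circ g$ with all three vertices being fibrations over $Z$, and with two of the three edges being weak equivalences over $Z$, I want to conclude that the third edge is also a weak equivalence over $Z$.

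The key observation is that Proposition \ref{prop:weqisheoverZ} establishes, for any map between two fibrations over $Z$, the equivalence of the three notions: weak equivalence over $Z$, homotopy equivalence over $Z$, and weak homotopy equivalence over $Z$. Since the hypothesis of the corollary places us squarely in this setting (all three objects are fibrations over $Z$, so each of the three edges is a map between fibrations over $Z$), we can apply this equivalence to each edge individually. Thus the two edges assumed to be weak equivalences over $Z$ are in particular weak homotopy equivalences over $Z$.

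Now apply Lemma \ref{lemma:HE342sliced}, which asserts that weak homotopy equivalences over $Z$ satisfy the 3-for-2 property: the third edge is therefore also a weak homotopy equivalence over $Z$. Applying Proposition \ref{prop:weqisheoverZ} once more --- in the reverse direction, and again legitimately since the third edge is a map between fibrations over $Z$ --- we conclude that this third edge is a weak equivalence over $Z$, as required.

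There is no real obstacle here; the work has already been done in Proposition \ref{prop:weqisheoverZ} and Lemma \ref{lemma:HE342sliced}. The only subtlety worth flagging is that the equivalence of the three notions is only available between fibrations over $Z$, which is precisely why the corollary is restricted to triangles whose vertices are all fibrations over $Z$ --- one cannot drop that hypothesis and still run this argument, since Proposition \ref{prop:weqisheoverZ} requires fibrancy of domain and codomain to construct the exponential fibrations used in defining weak homotopy equivalence over $Z$.
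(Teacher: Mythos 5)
Your proof is correct and is exactly the argument the paper has in mind: the corollary is stated immediately after Proposition \ref{prop:weqisheoverZ} and is introduced with ``Using Lemma \ref{lemma:HE342sliced} we now have,'' so the intended derivation is precisely the translation you give between weak equivalences and weak homotopy equivalences over $Z$, followed by 3-for-2 for the latter. Your added remark about why all three vertices must be fibrations over $Z$ correctly identifies the hypothesis that makes Proposition \ref{prop:weqisheoverZ} applicable.
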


\begin{remark}\label{remark:relativepremodelneeds342}
Our immediate goal has been to show Corollary \ref{cor:342weqfiboverZ}, which will be used to establish the equivalence extension property.  Proposition \ref{prop:slicedheweequiv} below, which assumes the fibration extension property, will emphatically \emph{not} be used in the sequel, but is included here simply to complete the study of the relative (pre)model structure.
\end{remark}

\begin{lemma}\label{lemma:heovertohe}
Let $f : X\to Y$ be any map over $Z$. 
\begin{enumerate}
\item If $f : X\to Y$ is homotopy equivalence over $Z$, then $Z_!f : Z_! X \to Z_! Y$ is a homotopy equivalence.
\item If $X\fib Z$ and $Y\fib Z$ are fibrations and $f : X\to Y$ is weak homotopy equivalence over $Z$, then $Z_!f : Z_! X \to Z_! Y$ is a weak homotopy equivalence.
\end{enumerate}
\end{lemma}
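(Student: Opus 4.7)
The plan for (1) is to apply Lemma~\ref{lem:homotopyinthesliceishomotopy} directly.  A homotopy equivalence over $Z$ consists of a quasi-inverse $g\colon Y\to X$ over $Z$ together with homotopies $\vartheta\colon g f\sim_Z 1_X$ and $\varphi\colon f g\sim_Z 1_Y$.  Forgetting the $Z$-indexing via $Z_!$ converts $\vartheta$ and $\varphi$ into ordinary homotopies in $\cSet$ (this is exactly the content of that lemma), exhibiting $Z_! g$ as a quasi-inverse to $Z_! f$.

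For (2), fix any fibrant $K\in\cSet$.  Since unbiased fibrations are pullback-stable, $Z^*K = Z\times K \to Z$ is a fibration, hence admissible as a test object in Definition~\ref{def:slicedhewe}.  The precomposition map
\[
(Z^*K)^f\colon (Z^*K)^Y \too (Z^*K)^X
\]
formed in $\cSetZ$ is therefore bijective on $(\pi_0)_Z$.  The strategy is to translate this bijection across a natural isomorphism
\[
(\pi_0)_Z\bigl((Z^*K)^A\bigr)\,\cong\,\pi_0(K^{Z_! A}),
\]
defined for every fibration $A\fib Z$, which intertwines $(Z^*K)^f$ with $K^{Z_! f}$; the bijectivity of $\pi_0(K^{Z_! f})$ will then follow for all fibrant $K$, giving the required weak homotopy equivalence.

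The isomorphism is constructed from two adjunctions.  The exponential adjunction in $\cSetZ$ and $Z_!\dashv Z^*$ together identify sections over $Z$ of $(Z^*K)^A$ with maps $A\to Z^*K$ over $Z$, and hence with maps $Z_! A \to K$ in $\cSet$, i.e.\ global sections of $K^{Z_! A}$.  Since the cylinder $\I_Z\times_Z A$ is canonically isomorphic to $\I\times A$ (as in the proof of Lemma~\ref{lem:homotopyinthesliceishomotopy}), with $Z_!(\I\times A)=\I\times Z_! A$, homotopies over $Z$ between such sections correspond under the same adjunctions to paths $\I\times Z_! A \to K$, i.e.\ to paths in $K^{Z_! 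A}$.  Hence the coequalizer presentation \eqref{eq:coeqpi0} of $(\pi_0)_Z\bigl((Z^*K)^A\bigr)$ matches the one computing $\pi_0(K^{Z_! A})$.  The only item requiring verification is that this bijection is natural in $A$ and that it intertwines the two precomposition maps induced by $f$; this is a bookkeeping exercise with the two adjunctions and presents no essential obstacle.
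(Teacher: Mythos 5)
Your proof is correct and takes essentially the same approach as the paper: part (1) via Lemma~\ref{lem:homotopyinthesliceishomotopy}, and part (2) by identifying $(\pi_0)_Z\bigl((Z^*K)^A\bigr)$ with $\pi_0(K^{Z_!A})$ through the $Z_!\dashv Z^*$ and exponential adjunctions applied to the defining coequalizer.
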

\begin{proof}
(1) is immediate from the fact that $Z_!$ preserves homotopies, Lemma \ref{lem:homotopyinthesliceishomotopy}.
For (2), let $f : X\to Y$ be a weak homotopy equivalence over $Z$ between fibrations $X\fib Z$ and $Y\fib Z$, and let $K$ be any fibrant object in $\cSet$.  
Consider the internal precomposition map, 
\[
K^{Z_!f} : K^{Z_! Y} \to K^{Z_! X}\,,
\]
which we would like to show is a bijection under $\pi_0 : \cSet \to \Set$.
Since $K\fib1$ is a fibration, so is its pullback $Z^*K \fib Z$.  Therefore, since $f : X\to Y$ is weak homotopy equivalence over $Z$, 
the precomposition map over $Z$,
\[
(Z^*K)^f : (Z^*K)^Y \to (Z^*K)^X\,,
\]
is bijective on connected components, 
\[
(\pi_0)_Z\big((Z^*K)^f\big) : (\pi_0)_Z\big((Z^*K)^Y\big) \cong (\pi_0)_Z\big((Z^*K)^X\big)\,.
\]
But now observe that in the coequalizer \eqref{eq:coeqpi0} that defines $(\pi_0)_Z\big((Z^*K)^X\big)$, we have 
\[\begin{split}
\Hom_Z(1_Z , (Z^*K)^X) \cong  \Hom_Z(X, (Z^*K))\\
 \cong \Hom(Z_!X, K)  \cong \Hom(1, K^{Z_!X})\,,
\end{split}\]
and similarly 
\[\begin{split}
\Hom_Z(\I_Z , (Z^*K)^X) \cong  \Hom_Z(Z^*\I \times X, Z^*K) \\
 \cong \Hom_Z(\I \times Z_! X, K) \cong \Hom(\I, K^{Z_!X})\,.
\end{split}\]
Thus $(\pi_0)_Z((Z^*K)^X) \cong (\pi_0)(K^{Z_!X})$, and the same is true with $Y$ in place of $X$.   So  $K^{Z_!f} : K^{Z_!Y} \to K^{Z_!X}$ is also bijective on connected components.
\end{proof}

\begin{proposition}\label{prop:slicedheweequiv}
Let $Z$ be any object in $\cSet$ and $X \fib Z$ and $Y \fib Z$ fibrations.  For any map $f : X\to Y$ over $Z$,  the following conditions are equivalent, \emph{assuming the fibration extension property, Corollary \ref{cor:FEP}}.
\begin{enumerate}
\item $f : X\to Y$  is a weak equivalence over $Z$. 
\item $f : X\to Y$   is a  homotopy equivalence over $Z$. 
\item $f : X\to Y$   is a weak homotopy equivalence over $Z$. 
\item $Z_!f : Z_!X\to Z_!Y$ is a weak equivalence. 
\item $Z_!f : Z_!X\to Z_!Y$ is a homotopy equivalence in $\cSet$. 
\item $Z_!f : Z_!X\to Z_!Y$ is a weak homotopy equivalence in $\cSet$. 
\end{enumerate}
\end{proposition}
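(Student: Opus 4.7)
The plan is to close a cycle of implications that bridges the ``sliced'' conditions (1)--(3) with the ``absolute'' conditions (4)--(6). Three of the sliced equivalences are already in hand: $(1)\!\Leftrightarrow\!(2)\!\Leftrightarrow\!(3)$ is exactly Proposition \ref{prop:weqisheoverZ}, which used that $X\fib Z$ and $Y\fib Z$ are fibrations (so that internal exponentials into them over $Z$ exist by Corollary \ref{cor:unbiasedPi}). The remaining work is therefore to link the two groups.

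First I would establish $(1)\!\Leftrightarrow\!(4)$ essentially by unwinding Definition \ref{def:slicepremodelstructure}: $f$ is a weak equivalence over $Z$ iff it factors over $Z$ as a trivial cofibration over $Z$ followed by a trivial fibration over $Z$, and by Lemmas \ref{lem:slicepremodellifting1} and \ref{lem:slicepremodellifting2} these sliced classes coincide with the absolute trivial cofibrations and trivial fibrations in $\cSet$ (applied via $Z_!$). Any such factorization in $\cSet$ can then be reindexed back over $Z$ using the map to $Z$.

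Next I would observe that $(2)\!\Rightarrow\!(5)$ and $(3)\!\Rightarrow\!(6)$ are exactly Lemma \ref{lemma:heovertohe}, since $Z_!$ preserves homotopies, homotopy equivalences, and weak homotopy equivalences between fibrations over $Z$. Then $(5)\!\Rightarrow\!(6)$ is Lemma \ref{lemma:HEisWHE} applied to $Z_!f:Z_!X\to Z_!Y$ in $\cSet$.

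The only remaining and genuinely substantive step is $(6)\!\Rightarrow\!(4)$, which is precisely Proposition \ref{prop:WHEiffWE}, and this is where the fibration extension property is invoked (via Corollary \ref{cor:FEP}). Combining $(4)\!\Rightarrow\!(1)\!\Rightarrow\!(2)\!\Rightarrow\!(5)\!\Rightarrow\!(6)\!\Rightarrow\!(4)$ with the sliced triangle $(1)\!\Leftrightarrow\!(2)\!\Leftrightarrow\!(3)$ from Proposition \ref{prop:weqisheoverZ} yields the mutual equivalence of all six conditions. No single step is really the main obstacle here since everything reduces to previously proved results; the only place where the proposition cannot be proved ``by the premodel structure alone'' is the passage $(6)\!\Rightarrow\!(4)$, which explains the explicit hypothesis of the fibration extension property.
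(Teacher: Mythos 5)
Your proof is correct and takes essentially the same route as the paper's: the sliced triangle $(1)\!\Leftrightarrow\!(2)\!\Leftrightarrow\!(3)$ from Proposition \ref{prop:weqisheoverZ}, the definitional identification $(1)\!\Leftrightarrow\!(4)$, the downward arrows via Lemma \ref{lemma:heovertohe} and Lemma \ref{lemma:HEisWHE}, and finally the substantive step $(6)\!\Rightarrow\!(4)$ via Proposition \ref{prop:WHEiffWE} and the fibration extension property. The only cosmetic difference is that the paper additionally records $(4)\!\Rightarrow\!(6)$ via Lemma \ref{lem:WEimpliesWHE}, whereas you close the cycle without needing that implication.
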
 

\begin{proof}
In Proposition \ref{prop:weqisheoverZ}  we showed the implications $1\Leftrightarrow 2 \Leftrightarrow 3$.
We also have $1\Leftrightarrow 4$ by definition, and $4\Rightarrow 6$ and $5 \Rightarrow 6$ by Lemmas \ref{lem:WEimpliesWHE} and \ref{lemma:HEisWHE}.  Moreover, by Lemma \ref{lemma:heovertohe} we have $2\Rightarrow 5$ and $3\Rightarrow 6$.  
Thus all 6 conditions will be equivalent once we have $6\Rightarrow 4$, which follows from Proposition \ref{prop:WHEiffWE} and the fibration extension property, Corollary \ref{cor:FEP}. 
\end{proof}

\subsection*{Pathobject factorizations.}

For any map $f :X\ra Y$ in $\cSet$, recall the \emph{pathobject factorization} $f = t\circ s$ indicated below.
\begin{equation}\label{diag:pathspace_factorization}
\xymatrix{
X \ar@/_1.5pc/[ddd]_= \ar[d]^s \ar[rr]^f \pbcorner && Y \ar[d]_r \ar[rrd]^=\\
P_f  \ar[dd]^{p_f} \ar[rr]^{p_0^*f} \ar@{.>}@/_1.5pc/[rrrr]_<<<<<<<<<<<<<<<t \pbcorner && Y^\I \ar[dd]_{p_0} \ar[rr]^-{p_1} && Y \\
\\
X \ar[rr]_f  &&  Y &
}
\end{equation}
Here $p_0, p_1$ are the evaluations $Y^{\delta_{0}}, Y^{\delta_{1}} : Y^\I \to Y$ at the endpoints $\delta_0, \delta_1 : 1\ra\I$, and let $r:= Y^!$ for $! : \I\ra 1$, so that $p_0r = p_1r = 1_Y$.  Then let  $p_f := f^*p_0 : P_f \ra Y$, the pullback of $p_0$ along $f$, and $s:= f^*r : X\ra P_f$ (as a map over $X$).  Finally, let $t:= p_1\circ p_0^*f : P_f \ra Y$ be the indicated horizontal composite. 

We then have the following facts:
\begin{enumerate}
\item The retraction $p_0\circ r = 1_Y$ pulls back along $f$ to a retraction ${p_f}\circ{s}=1_X$.

\item If $Y$ is a fibrant object, then $p_0 , p_1 : Y^\I \ra Y$ are both trivial fibrations, by Proposition \ref{prop:sanitycheck}.  

\item If $X$ and $Y$ are both fibrant then $t= p_1\circ p_0^*f : P_f \ra Y$ is a fibration.  This can be seen by factoring the maps $p_0, p_1 :Y^\I \rightrightarrows  Y$ through the product projections as
\[
\pi_0\circ p,\ \pi_1\circ p : Y^\I \ra Y\times Y \rightrightarrows Y
\]
where $p = (p_0, p_1)$, and then interpolating the pullback $(f,1_Y) : X\times Y\ra Y\times Y$ into \eqref{diag:pathspace_factorization} as indicated below.
\begin{equation}\label{diag:pathspace_factorization2}
\xymatrix{
& X \ar[d]^s \ar[rr]^f \pbcorner && Y \ar[d]_r \\
& \ar[ld]_t P_f  \ar[d]^{f^*p} \ar[rr]^{p_0^*f} \pbcorner && Y^\I \ar[d]_{p} \ar[rd]^{p_1}\\
Y & \ar[l]^-{\pi'_1} X\times Y\ar[d]_-{\pi'_0} \ar[rr]_{(f,1_Y)} \pbcorner  &&  Y\times Y \ar[d]^{\pi_0} \ar[r]_-{\pi_1} & Y \\
& X \ar[rr]_f  &&  Y &
}
\end{equation}
The second factor $t = p_1\circ p_0^*f : P_f \ra Y$ now appears also as $\pi_1\circ(f,1_Y)\circ f^*p$, which is equal to the pullback $f^*p: P_f \ra X\times Y$ followed by the second projection $\pi'_1 : X\times Y \ra Y$ (which is not a pullback).  But if $Y$ is fibrant, then $p: Y^\I \ra Y\times Y$ is a fibration by the ${\otimes} \dashv {\Rightarrow}$ adjunction, since $p = \del \Rightarrow Y$ (this is just as in Proposition \ref{prop:sanitycheck}, but with the cofibration $\del : 1+1 \cof \I$ in place of the trivial cofibration $\delta_\epsilon : 1 \to \I$).
Therefore the pullback  $f^*p$ is also a fibration.  And if $X$ is fibrant, then the second projection $\pi'_1:X\times Y \ra Y$ is a fibration. Thus in this case, $t = \pi'_1\circ f^*p : P_f \ra Y$ is a fibration, as claimed.
\end{enumerate}

Summarizing (1)-(3):

\begin{lemma}\label{lemma:pathspace_factorization} 
For any map $f : X\ra Y$ 
there is a factorization $f = t\circ s$,
\begin{equation}\label{diag:pathspace_factorization3}
\xymatrix{
X \ar[r]^s \ar[rd]_f & \ar@{.>}@/_1.5pc/[l]_{p_f} P_f \ar[d]^t \\
& Y
}
\end{equation} 
in which
\begin{enumerate}
\item $s$ is a section of a map $p_f : P_f \to X$,
\item  if $Y$ is fibrant, then $p_f$ is a trivial fibration,
\item if both $X$ and $Y$ are fibrant, then $t$ is a fibration.
\end{enumerate}
Note that the retraction $p_f : P_f \ra X$ of $s$ is not over $Y$.
\end{lemma}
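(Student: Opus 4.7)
The plan is to construct $P_f$ as the pullback of the endpoint evaluation $p_0 = Y^{\delta_0} : Y^\I \to Y$ along $f$, as displayed in diagram \eqref{diag:pathspace_factorization}. Concretely, I would set $P_f := f^*(Y^\I)$ with projection $p_f : P_f \to X$ and upper leg $p_0^*f : P_f \to Y^\I$, define the section $s : X \to P_f$ by pulling back the constant-path map $r = Y^! : Y \to Y^\I$ (which satisfies $p_0 \circ r = 1_Y$ and $p_1 \circ r = 1_Y$), and set $t := p_1 \circ p_0^*f$. Then $t \circ s = p_1 \circ r \circ f = f$, so this really is a factorization.

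For part (1), the retraction $p_0 \circ r = 1_Y$ pulls back along $f$ to the retraction $p_f \circ s = 1_X$ automatically, by the universal property of the pullback defining $s$.

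For part (2), assume $Y$ is fibrant. Then Proposition \ref{prop:sanitycheck} gives that both endpoint evaluations $p_0, p_1 : Y^\I \rightrightarrows Y$ are trivial fibrations. Hence $p_f = f^*p_0$, being the pullback of a trivial fibration along $f$, is itself a trivial fibration by stability of trivial fibrations under pullback (Corollary \ref{cor:plusalgprops}(4)).

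The main obstacle is part (3), where we need $X$ fibrant as well. The key idea, already indicated by diagram \eqref{diag:pathspace_factorization2}, is to factor the pair map $p = \langle p_0, p_1 \rangle : Y^\I \to Y\times Y$ and interpolate an extra pullback: $p$ is exactly the pullback-hom $\del \Rightarrow Y$ for the boundary inclusion $\del : 1+1 \cof \I$ (which is a cofibration by Remark \ref{rem:somecofibs}), and therefore when $Y$ is fibrant the ${\otimes}\dashv{\Rightarrow}$ adjunction (Lemma \ref{lemma:Leibniz}) makes $p$ a fibration. I would then form the pullback of $p$ along $\langle f, 1_Y\rangle : X\times Y \to Y\times Y$ to present $P_f$ equivalently as $(X\times Y)\times_{Y\times Y} Y^\I$, so that $t$ factors as $\pi'_1 \circ f^*p : P_f \to X\times Y \to Y$. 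Since pullback of fibrations yields fibrations, $f^*p$ is a fibration; and since $X\to 1$ is a fibration (as $X$ is fibrant), its pullback $\pi'_1 : X\times Y \to Y$ is a fibration; composition of fibrations then delivers (3). The only subtlety is checking that the two presentations of $P_f$ agree, which is an immediate pasting argument in \eqref{diag:pathspace_factorization2}.
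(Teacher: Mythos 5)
Your proof is correct and follows the same construction as the paper: $P_f := f^*(Y^\I)$, $s$ the pullback of the constant-path map $r = Y^!$, $t := p_1 \circ p_0^*f$, with part (2) via Proposition \ref{prop:sanitycheck} and pullback stability, and part (3) via the interpolated pullback through $X\times Y$ identifying $p = \langle p_0,p_1\rangle$ as $\del\Rightarrow Y$. This matches the paper's argument step for step.
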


Next, if $f:X\ra Y$ is a map over any base object $Z$ in $\cSet$, we can use the same factorization $f = t\circ s$ to get a factorization in the slice category over $Z$,
\begin{equation}\label{diag:pathobjectoverabase}
\xymatrix{
&  P_f  \ar[rd]^t & \\
X \ar[rr]^f \ar[rd]_{p_X} \ar[ru]^{s} && Y \ar[ld]^{p_Y} \\
& Z
}
\end{equation}
with $p_Y \circ t : P_f \to Z$; however, the maps $s, t$ will no longer have the properties stated in Lemma \ref{lemma:pathspace_factorization}, because \eg\ $p_0: Y^\I \to Y$ need not be a trivial fibration, even when $p_Y : Y\to Z$ is a fibration, since the object $Y$ need not be fibrant if the base $Z$ is not fibrant.

To remedy this, we can instead build a \emph{fiberwise pathobject factorization} by using the relative pathobject $Y^{\I_Z} \to Z$, where the indicated exponential is taken in the slice over $Z$, and the interval object $\I_Z$ occurring in the exponent is the relative one from \eqref{eq:relativeinterval}, \ie\ the result of pulling the interval $\I$ back from $\cSet$ along $Z\to 1$.  The pathspace factorization is then constructed as in \eqref{diag:pathspace_factorization}, but now in the slice $\cSetZ$, using the pulled back interval $1_Z\rightrightarrows \I_Z$. Moreover, the resulting factorization $f=t\circ s:X\ra P_f \ra Y$ is then stable under pullback along any map $g : Z' \ra Z$, in the sense that $g^*(Y^{\I_Z}) \cong g^*(Y)^{\I_{Z'}}$ and so $g^*P_f = P_{g^*f}$, where $g^*f : g^*X \ra g^* Y$, and similarly for the factors $g^*s$ and~$g^*t$.  

In more detail, let us review the foregoing steps in the relative case, with reference to the following diagram.
\begin{equation}\label{diag:sliced_pathspace_factorization}
\xymatrix{
X \ar@/_1.5pc/[ddd]_= \ar[d]^s \ar[rr]^f \pbcorner && Y \ar[d]_r \ar[rrd]^=\\
P_f  \ar[dd]^{p_f} \ar[rr]^{p_0^*f} \ar@{.>}@/_1.5pc/[rrrr]_<<<<<<<<<<<<<<<t \pbcorner && Y^{\I_{Z}} \ar[dd]_{p_0} \ar[rr]^-{p_1} && Y \\
\\
X \ar[rd]_{p_X} \ar[rr]_f  && \ar[ld]^{p_Y} Y &\\
& Z &
}
\end{equation}
\begin{enumerate}

\item The exponential of $p_Y : Y\to Z$, taken in $\cSetZ$, by the \emph{constant} maps $Z^*\delta_\epsilon : Z^*1\to Z^*\I$, which we write as $\delta_\epsilon : 1_Z\to \I_Z$, are now maps $p_\epsilon := Y^{\delta_\epsilon} : Y^{\I_{Z}} \to Y$ over $Z$, for $\epsilon = 0,1$.  The retraction $p_0\circ r = 1_Y$ (with $r$ defined accordingly) is now also over $Z$, and it still pulls back along $f$ to a retraction $p_f \circ s =1_X$, also over $Z$.

\item If $p_Y : Y\fib Z$ is a fibration, then the maps $p_0 , p_1 : Y^{\I_Z} \ra Y$ over $Z$ are again trivial fibrations by Lemma \ref{lemma:JTslice}, since these are pullback-homs  over $Z$ of the form $\delta_\epsilon \Rightarrow_Z Y$. 

\item If $X\fib Z$ and $Y\fib Z$ are both fibrations, then for the same reason $t= p_1\circ p_0^*f : P_f \ra Y$ is a fibration. 
\end{enumerate}

Again, summarizing (1)-(3) in the relative case:

\begin{lemma}\label{lemma:relative_pathspace_factorization} 
For any map $f : X\ra Y$  over any base $Z\in \cSet$, 
there is a stable factorization $f = t\circ s$ over $Z$,
\begin{equation}\label{diag:pathspace_factorization4}
\xymatrix{
X \ar[d] \ar[r]^s \ar[rd]_f & \ar@{.>}@/_1.5pc/[l]_{p_f} P_f \ar[d]^t \\
Z & \ar[l] Y
}
\end{equation} 
in which
\begin{enumerate}
\item $s : X \to P_f$ is a section of a map $p_f : P_f \to X$ over $Z$,
\item  if $Y\to Z$ is a fibration, then $p_f : P_f \to X$ is a trivial fibration,
\item if both $X\to Z$ and $Y\to Z$ are fibrations, then $t : P_f \to Y$ is a fibration.
\end{enumerate}
Note that the retraction $p_f : P_f \ra X$ of $s$ is not over $Y$.
\end{lemma}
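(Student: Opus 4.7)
The plan is to transcribe the proof of Lemma~\ref{lemma:pathspace_factorization} into the slice category $\cSetZ$, working with the premodel structure of Proposition~\ref{prop:slicepremodelstructure} and the pulled-back interval $\delta_0,\delta_1:1_Z\rightrightarrows\I_Z$ of \eqref{eq:relativeinterval}. I will construct the factorization exactly as in diagram~\eqref{diag:sliced_pathspace_factorization}: form the relative pathobject $Y^{\I_Z}$ in $\cSetZ$, take the endpoint evaluations $p_\epsilon:=Y^{\delta_\epsilon}$ and the retraction $r:Y\to Y^{\I_Z}$ adjoint to the projection $\I_Z\times_Z Y\to Y$ (so that $p_\epsilon\circ r=1_Y$ over~$Z$), and then define $P_f$, $p_f$, $s$, $t$ by the evident pullback and composite over~$Z$. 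Claim~(1) is then immediate, since the retraction $p_0\circ r=1_Y$ over $Z$ pulls back along $f$ to the retraction $p_f\circ s=1_X$ over~$Z$.

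For claim~(2), I will identify $p_0:Y^{\I_Z}\to Y$ with the sliced pullback-hom $\delta_0\Rightarrow_Z p_Y$, using that $Z^{\I_Z}\cong Z$ in $\cSetZ$ collapses the codomain $Z^{\I_Z}\times_Z Y$ to~$Y$. Lemma~\ref{lemma:JTslice}(2) then delivers $p_0$ as a trivial fibration over~$Z$, and its pullback $p_f$ is therefore one as well.

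For claim~(3), I will reproduce the two-pullback diagram~\eqref{diag:pathspace_factorization2} inside $\cSetZ$. The pairing $p:=\langle p_0,p_1\rangle:Y^{\I_Z}\to Y\times_Z Y$ is the sliced pullback-hom $\partial\Rightarrow_Z p_Y$ of $p_Y$ by the boundary cofibration $\partial=[\delta_0,\delta_1]:1_Z+1_Z\cof\I_Z$, and I claim it is a fibration over~$Z$; this follows from the ${\otimes_Z}\dashv{\Rightarrow_Z}$ adjunction combined with Lemma~\ref{lem:slicepremodellifting2}, and reduces (via the identity $c\otimes_Z\partial=c\otimes\partial$ for any $c$ over $Z$) to the standard Leibniz fact in $\cSet$ that the pushout-product of a cofibration with a trivial cofibration is a trivial cofibration. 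Pulling $p$ back along $(f,1_Y):X\times_Z Y\to Y\times_Z Y$ then makes $f^*p:P_f\to X\times_Z Y$ a fibration over~$Z$, and composing with the second projection $\pi'_1:X\times_Z Y\to Y$—which is a fibration over $Z$ because $p_X:X\fib Z$ is—exhibits $t=\pi'_1\circ f^*p$ as a composite of fibrations over~$Z$. Stability under pullback along any $g:Z'\to Z$ is automatic from $g^*\I_Z\cong\I_{Z'}$ and the preservation of relative exponentials by pullback in the locally cartesian closed~$\cSet$, so every pullback in the construction of $P_f$ composes naturally. The one nontrivial ingredient beyond direct transcription is the Leibniz step for~(3).
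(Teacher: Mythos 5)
Your proof is correct and takes essentially the same route as the paper: both transcribe the absolute pathobject factorization of Lemma~\ref{lemma:pathspace_factorization} into $\cSetZ$, form $P_f$ as the pullback of $p_0 = Y^{\delta_0}:Y^{\I_Z}\to Y$ along $f$, and for part~(3) factor $t$ as $\pi'_1\circ f^*p$ with $p=\langle p_0,p_1\rangle = \partial\Rightarrow_Z p_Y$ a fibration via the ${\otimes_Z}\dashv{\Rightarrow_Z}$ adjunction and the constancy of $\I_Z$ over~$Z$. The only place you spell out more than the paper is the Leibniz step (trivial cofibration $\otimes$ cofibration is a trivial cofibration), which the paper leaves implicit with ``for the same reason'' and its appeal to the reasoning around Proposition~\ref{prop:sanitycheck}; your unpacking is correct and in the paper's spirit.
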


The following fact concerning just the cofibration weak factorization system will also be needed.

\begin{lemma}\label{lemma:etaTF}
Let $p: E \onto B$ be a trivial fibration and $c : C\mono B$ a cofibration.  Then the unit $\eta:E \ra c_*c^*E$ over $B$ of the base change along $c$,
\[
c^*\dashv c_* : \cSet/_C \too \cSet/_B
\]
is also a trivial fibration.
\end{lemma}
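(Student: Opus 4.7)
The plan is to reduce the claim to the trivial fibration property of $p : E \fib B$ by transposing lifting problems across the adjunction $c^* \dashv c_*$ and combining partial data via cofibration axioms (C3) and (C6).

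First I would work entirely over $B$. Given a cofibration $d : D \mono Z$ with structure map $g_Z : Z \to B$, and a commutative square
\[
\begin{tikzcd}
D \ar[d, tail, swap, "d"] \ar[r, "a"] & E \ar[d, "\eta"] \\
Z \ar[r, swap, "b"] & c_*c^*E,
\end{tikzcd}
\]
I would transpose $b$ under $c^* \dashv c_*$ to obtain $b^\flat : c^*Z \to c^*E$ over $C$. Since $c^*E = E \times_B C$ is simply the restriction of $E$ to the subobject $C \mono B$, the map $b^\flat$ amounts to a map $\bar{b} : c^*Z \to E$ over $B$. The unit $\eta$ corresponds under the adjunction to the identity $c^*E \to c^*E$, so the commutativity of the square unwinds exactly to the assertion that $a$ and $\bar{b}$, viewed as partial maps into $E$ with domains the subobjects $D \mono Z$ and $c^*Z \mono Z$, agree on the intersection $D \wedge c^*Z \cong c^*D$ in $\mathsf{Sub}(Z)$.

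The key step then uses the cofibration axioms: by (C3) the pullback $\gamma : c^*Z \mono Z$ is a cofibration, and by (C6) the join $D \vee c^*Z \mono Z$ is a cofibration as well. Gluing $a$ and $\bar{b}$ along their shared restriction to $c^*D$ yields a single map $[a,\bar{b}] : D \vee c^*Z \to E$ over $B$. This gives a lifting problem
\[
\begin{tikzcd}
D \vee c^*Z \ar[d, tail] \ar[r, "{[a,\bar{b}]}"] & E \ar[d, two heads, "p"] \\
Z \ar[r, swap, "g_Z"] & B,
\end{tikzcd}
\]
against the trivial fibration $p$, which by definition admits a diagonal filler $j : Z \to E$. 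This $j$ satisfies $j \circ d = a$ by construction, and its restriction along $\gamma$ is $\bar{b}$; transposing back across the adjunction converts the latter equation into $\eta \circ j = b$, as required.

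The only real obstacle is the adjunction bookkeeping in the first paragraph, namely checking that the unit $\eta$ transposes to the identity and hence that the original square is equivalent to the gluing compatibility $a|_{c^*D} = \bar{b}|_{c^*D}$. Once this is clear, the argument is a direct assembly of (C3), (C6), and the hypothesis that $p$ is a trivial fibration, with no further input.
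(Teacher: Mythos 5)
Your proposal is correct and takes essentially the same route as the paper. The paper's proof identifies $\eta$ as the pullback-hom $c \Rightarrow_B p$ in $\cSet/_B$ (using $c : C \cof 1_B$) and then invokes the Leibniz adjunction, Lemma~\ref{lemma:Leibniz}, to reduce $d \pitchfork \eta$ to $(d \otimes_B c) \pitchfork p$; you instead transpose the lifting problem by hand across $c^* \dashv c_*$ and glue the two partial maps along their common restriction to $c^*D$, which amounts to unfolding that same adjunction and identifying the pushout-product $d \otimes_B c$ concretely as the join of cofibrant subobjects $D \vee c^*Z \cof Z$, thereby making the roles of (C3) and (C6) explicit.
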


\begin{proof}
Regarding $c : C \cof B$ as a subobject $C\cof 1_B$ in $\cSet/_B$, the unit map $\eta : E \ra c_*c^*E = E^C$ is the pullback-hom $c\!\Rightarrow_B\! p$ in the slice category over $B$, as shown below.
\begin{equation}\label{diagram:etaispghom}
\xymatrix{
E^{1_B} \ar@/_4ex/ [rdd]_{p^{1_B}} \ar@{.>}[rd]^{c\, \Rightarrow_B\, {p}} \ar@/^4ex/ [rrd]^{E^c} && \\
& B^\I \times_{B^C} E^C \ar[d] \ar[r] & E^C \ar[d]^{p^C} \\
& B^{1_B} \ar[r]_{B^c} &  B^C 
}
\end{equation}
We use the fact that in $\cSet/_B$ we have $B^c : B^{1_B} \cong 1_B \cong B^C$ and so
\[
(c\Rightarrow_B p) = E^c : E \too E^C\,,
\]
which is indeed $\eta : E \ra c_*c^*E = E^C$.

Now for any cofibration $A \stackrel{a}{\cof} X\to B$ over $B$, by Lemma \ref{lemma:Leibniz} we have an equivalence of diagonal filling conditions in $\cSet/_B$,
\[
a \pitchfork (c\!\Rightarrow_B\!p)  \quad\text{iff}\quad (a\!\otimes_B\!c) \pitchfork p.
\]
But since $c : C\mono B$ is a cofibration, $a\otimes_B c$ is also a cofibration, since $a : A \cof X$ is one, and by axiom (C6), cofibrations are closed under pushout-products.  Thus $(a\otimes_B c) \pitchfork p$ indeed holds, since $p$ is a trivial fibration.
\end{proof}

\begin{proposition}[Equivalence extension property]\label{prop:EEP}
Weak equivalences extended along cofibrations in the following sense: given a cofibration $c:C' \mono C$ and fibrations $A'\onto C'$ and $B\onto C$, and a weak equivalence $w':A' \simeq c^*B$ over $C'$,
\begin{equation}\label{diag:EEP}
\xymatrix{
A' \ar@{->>}[dd] \ar[rd]_{\sim}^{w'} \ar@{..>}[rr] && A \ar@{..>>}[dd] \ar@{..>}[rd]_{\sim}^{w} \\
& c^*B \ar@{->>}[ld] \ar[rr]  && B \ar@{->>}[ld] \\
C' \ar@{>->}[rr]_c && C
}
\end{equation}
there is a fibration $A\onto C$ and a weak equivalence $w: A \simeq B$ over $C$ that pulls back along $c:C' \mono C$ to $w'$, so $c^*w = w'$.

\end{proposition}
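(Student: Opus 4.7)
The plan is a categorical Glue-type construction in the spirit of Voevodsky and Coquand. First, replace $w'$ by a nicer map: using the sliced premodel structure on $\cSet/_{C'}$ (Proposition \ref{prop:slicepremodelstructure}), factor $w' = \bar{w}' \circ s'$ with $s' : A' \cof \bar{A}'$ a trivial cofibration over $C'$ and $\bar{w}' : \bar{A}' \fib c^*B$ a fibration over $C'$; by the 3-for-2 property for weak equivalences between fibrations over $C'$ (Corollary \ref{cor:342weqfiboverZ}) together with Proposition \ref{prop:FWC} applied to the sliced structure, $\bar{w}'$ is a trivial fibration.

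The main construction proceeds via pushforward along the cofibration $c$. Set
\[
\bar{A}\ :=\ c_* \bar{A}' \times_{c_* c^* B} B\,,
\]
the pullback over $C$ of $c_* \bar{w}' : c_* \bar{A}' \to c_* c^* B$ along the unit $\eta_B : B \to c_* c^* B$ of $c^* \dashv c_*$. Since pushforward preserves trivial fibrations (Corollary \ref{cor:plusalgprops}(5)), $c_* \bar{w}'$ is a trivial fibration, and hence so is its pullback $\bar{A} \fib B$; composing with $B \fib C$ gives a fibration $\bar{A} \fib C$. Preservation of pullbacks by $c^*$ together with the counit iso $c^* c_* \cong \mathrm{id}$ (valid since $c$ is a monomorphism) yields $c^* \bar{A} \cong \bar{A}'$ with the restricted projection equal to $\bar{w}'$.

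The remaining and most delicate step is to correct $\bar{A}$ to a fibration $A \to C$ whose $c^*$-pullback is strictly $A'$ (rather than $\bar{A}'$), and whose induced map $A \to B$ restricts along $c$ to $w'$. The idea is to incorporate the trivial cofibration $s' : A' \cof \bar{A}'$ back into the construction. One natural route is to form a further pullback of $\bar{A}$ with $c_* A'$ along $c_* s' : c_* A' \to c_* \bar{A}'$; another is to use realignment for fibrations (Lemma \ref{lemma:realignmentforfibrations}) against the universal fibration $\UU \fib \U$ (Proposition \ref{prop:UniversalunbiasedFib}), leveraging that $\U$ classifies fibration structure as a weak proposition (Lemma \ref{lemma:Fibweakprop}). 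Either way, $w : A \to B$ is assembled compatibly, and $c^* w = w'$ follows by tracing the identifications.

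The main obstacle is precisely this strictification. Realignment for fibrations only adjusts the classifying map of a given fibration along a cofibration, whereas here we must pass from $\bar{A}$ (with $c^* \bar{A} = \bar{A}'$) to a genuinely different fibration $A$ (with $c^* A = A'$), bridging a trivial cofibration rather than an isomorphism. Verifying that the corrected $A \to C$ remains a fibration --- which uses the Frobenius condition (Corollary \ref{cor:unbiasedFrobenius}) applied to the pushforwards along $c$ --- and that $w : A \to B$ remains a weak equivalence over $C$ (via 3-for-2 over $C$ and the weak-proposition property of $\U$) is where the bulk of the diagrammatic work lies, and where the construction must be arranged with care so that every piece of the required structure survives the correction.
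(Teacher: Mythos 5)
Your proposal diverges from the paper at the first step, and the divergence creates a genuine gap that your own text identifies but does not close. You factor $w'$ \emph{before} pushing forward, obtaining $w' = \bar{w}'\circ s'$ with $s' : A' \cof \bar{A}'$ a trivial cofibration and $\bar{w}'$ a trivial fibration over $C'$; then you push $\bar{w}'$ forward along $c$ and pull back along the unit $\eta_B$ to get a trivial fibration $\bar{A}\fib B$ over $C$ with $c^*\bar{A}\cong \bar{A}'$. That part is correct. But you now have a fibration whose restriction along $c$ is $\bar{A}'$, not $A'$, and a trivial fibration $\bar{A}\to B$ restricting to $\bar{w}'$, not $w'$. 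The proposition demands $c^*w = w'$, so you must ``strictify'' across the trivial cofibration $s' : A'\cof\bar{A}'$. Neither of the two routes you suggest does this. Realignment (Lemma~\ref{lemma:realignmentforfibrations}) only adjusts the classifying map of a \emph{fixed} fibration along a cofibration where the restriction is already the desired one up to the action of the weak proposition $\Fib(-)$; it cannot replace $\bar{A}'$ by the genuinely different object $A'$. The pullback $\bar{A}\times_{c_*\bar{A}'}c_*A'$ would need $c_*s'$ to be a trivial cofibration (or at least for the resulting map to $C$ to remain a fibration), but pushforward along a cofibration is a \emph{right} adjoint and preserves trivial fibrations, not trivial cofibrations. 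Your appeal to the Frobenius condition (Corollary~\ref{cor:unbiasedFrobenius}) is also misplaced here: Frobenius governs pushforward along a \emph{fibration}, whereas $c : C'\cof C$ is a cofibration. So the strictification step, which you rightly call ``the bulk of the work,'' is precisely the place where the proposal breaks down, and the construction you propose would essentially need the conclusion you are trying to prove.

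The paper sidesteps this entirely by reversing the order of operations: it pushes $w'$ forward \emph{first} --- along the pulled-back cofibration $b := q^*c : c^*B\cof B$ (over $B$, not over $C$) --- setting $w := b_*w'$ and $A := \mathrm{dom}(w)$. Since $b$ is monic, $b^*b_*\cong\mathrm{id}$ and therefore $c^*w = b^*w = w'$ holds automatically, for free, before any factorization. The hard part is then the reverse of yours: one must show that $p := q\circ w : A\to C$ is a fibration and that $w$ is a weak equivalence, with \emph{no} strictness issue to contend with. This is done via the (relative) path-object factorization $w = t\circ s : A\to P_w\to B$ (Lemma~\ref{lemma:relative_pathspace_factorization}), showing $t'=b^*t$ is a trivial fibration by 3-for-2 over $C'$ (Corollary~\ref{cor:342weqfiboverZ}), and then --- the key technical step --- showing that the unit $\eta_t : P_w\to b_*b^*P_w$ is a trivial fibration by identifying it, via Beck--Chevalley, as a pullback of the unit at the trivial fibration $p_1 : B^\I\to B$, to which Lemma~\ref{lemma:etaTF} applies. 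That yields $t = b_*b^*t\circ\eta_t$ a trivial fibration, whence $p$ is a fibration and $w$ a weak equivalence. If you want to repair your proof, the fix is to drop the preliminary factorization of $w'$ and instead push forward $w'$ directly along $b$, then do the path-object analysis over $C$ afterwards.
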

\begin{proof}
Call the given fibration $q:B\ra C$ and let $b:=  q^*c : c^*B \ra B$ be the indicated pullback, which is thus also a cofibration. Let $w := b_*w' : A\ra B$ be the pushforward of $w'$ along $b$.  Composing $w$ with $q$ gives the map $p:= q\circ w:A\ra C$.  Since $b$ is monic, we indeed have $b^*w = w'$, thus filling in all the dotted arrows in \eqref{diag:EEP}.  Note moreover that $c^*w =  b^*w = w'$, as required. It remains to show that $p:A\ra C$ is a fibration and $w : A\ra B$ is a weak equivalence.   
\begin{equation}\label{diag:EEPfactored}
\xymatrix@=1.5em{
A' \ar@{->>}[dddd] \ar[rrdd]_{\sim}^{w'} \ar@{>->}[rrrr] && && A \ar[dddd]_>>>>>>>>>>>>p \ar[rrdd]^{w} \ar[rr]^s && \ar@{.>}@/_1.5pc/[ll]_{p_w} P_w \ar[dd]^t \\
\\
&& c^*B \ar@{->>}[lldd] \ar@{>->}[rrrr]_<<<<<<<<<<b  &&&& B \ar@{->>}[lldd]^q  \\
\\
C' \ar@{>->}[rrrr]_c &&&& C &&
}
\end{equation}
Let us name $p':= c^*p : A' \ra C'$ and $B' := c^*B$ and $q':= c^*q$.
Now let $w=t\circ s$ be the (relative) pathspace factorization \eqref{diag:sliced_pathspace_factorization} of $w$, as a map over $C$.  Since $q:B \ra C$ is a fibration, by Lemma \ref{lemma:relative_pathspace_factorization}, we know that $s : A\ra P_w$ has a retraction $p_w : P_w \ra A$ over $C$ which is a trivial fibration.  

The pathspace factorization $w=t\circ s : A \ra P_w \ra B$ is stable under pullback along $c$, providing a pathspace factorization of $c^*w = w'=t'\circ s' : A' \ra P_{w'} \ra B'$ over $C'$.  Since both $p'$ and $q'$ are fibrations, the retraction $p_{w'}: P_{w'} \ra A'$ is a trivial fibration, and now $t' : P_{w'} \ra B'$ is a fibration.
\begin{equation}\label{diag:EEPfactored2}
\xymatrix@=1.5em{
A' \ar@{->>}[dddd]_{p'} \ar[rrdd]_{\sim}^{w'}  \ar[rr]^{s'} && \ar@{.>>}@/_1.5pc/[ll]_{p_{w'}} P_{w'} 
	\ar@{->>}[dd]^{t'} \\
\\
&& B' \ar@{->>}[lldd]^{q'} \\
\\
C' &&
}
\end{equation}
Thus the composite $q'\circ t' : P_{w'} \ra B' \ra C'$ is a fibration and therefore, by the retraction over $C'$ with the trivial fibration $p_{w'}$, we have that $s' : A' \ra P_{w'}$ is a weak equivalence, by 3-for-2 for weak equivalences between fibrations, Corollary \ref{cor:342weqfiboverZ}.  For the same reason, $t'$ is then a weak equivalence, and therefore a trivial fibration.

Since $t' = c^*t = b^*t$ is a trivial fibration, its pushforward $b_*b^*t$ along $b$ is also one, by Corollary \ref{cor:plusalgprops}.  Moreover, $b_*b^*t : b_*b^*P_w \ra B$ admits a unit $\eta : P_w \ra  b_*b^*P_w$ (over~$B$).  
\begin{equation}\label{diag:EEPfactored3}
\xymatrix@=1.5em{
A' \ar@{->>}[dddd] \ar[rrdd]_{\sim}^{w'} \ar@{>->}[rrrr] && && A \ar[dddd]_>>>>>>>>>>>>p \ar[rrdd]^{w} \ar[rr]^s && \ar@{.>}@/_1.5pc/[ll]_{p_w} P_w \ar[dd]^t \ar[rr]^-\eta && b_*b^*P_w \ar[lldd]^{b_*b^*t}\\
\\
&& B' \ar@{->>}[lldd] \ar@{>->}[rrrr]_<<<<<<<<<<b  &&&& B \ar@{->>}[lldd]^q &&&& \\
\\
C' \ar@{>->}[rrrr]_c &&&& C &&&&
}
\end{equation}
We now \emph{claim} that $\eta : P_w \ra  b_*b^*P_w$ is a trivial fibration.  Given that,  the composite $t = b_*b^*t \circ \eta$ is also a trivial fibration, whence $q\circ t : P_w \ra C$ is a fibration, and so its retract $p:A\ra C$ is a fibration.  Moreover, since $s$ is a section of the trivial fibration $p_w: P_w\ra A$ between fibrations, again by Corollary \ref{cor:342weqfiboverZ} it is also a weak equivalence. Thus $w=t\circ s$ is a weak equivalence, and we are finished. 

To prove the remaining claim that $\eta : P_w \ra  b_*b^*P_w$ is a trivial fibration, we shall use lemma \ref{lemma:etaTF}.  It does not apply directly, however, since $t : P_w \ra B$ is not yet known to be a trivial fibration.  Instead, we show that $\eta$ is a pullback of the corresponding unit at the trivial fibration $p_1 : B^\I \ra B$.

Consider the following cube (viewed with $b:B'\ra B$ at the front).
\begin{equation}\label{diag:ppcube}
\xymatrix@=1.5em{
P_{w'} \ar@{->>}[ddd]_{p_{w'}} \ar[rrd]^{(p'_0)^*w'} \ar@{>->}[rrr]^{\overline{a}}
		&&& P_w \ar[ddd]^{p_w} \ar[rrd]^{p_0^*w} &&\\
&& B'^\I \ar@{->>}[ddd]_{p'_0} \ar@{>->}[rrr]^{\overline{b}}  
		&&& B^\I \ar@{->>}[ddd]^{p_0} \\
\\
A' \ar@{>->}[rrr]_>>>>>>>a  \ar[rrd]_{w'}
		&&& A \ar[rrd]^{w} &&\\
&& B'\ar@{>->}[rrr]_b &&& B
}
\end{equation}
The right hand face is a pullback by definition, and the remainder results from pulling the entire right face back along $b$, by the stability of the pathspace factorization, Lemma \ref{lemma:relative_pathspace_factorization}. Thus all faces in the cube \eqref{diag:ppcube}
are pullbacks.  The base is also a pushforward, $b_*w'=w$, again by definition.  Thus the top face is also a pushforward, $\overline{b}_*((p'_0)^*w')=p_0^*w$. Indeed, since the front face is a pullback, the Beck-Chevalley condition applies, and so we have $\overline{b}_*(p'_0)^*(w') = p_0^*\,b_*(w') = p_0^*w$.

Now consider the following, in which the top square remains the same as in \eqref{diag:ppcube}, but $p_0$ has been relaced by $p_1 : B^\I \ra B$, so the composite at right is by definition $t = p_1\circ p_0^*w$.
\begin{equation}\label{diag:twounits}
\xymatrix@=1.5em{
P_{w'} \ar@{->>}[ddddrr]_{t'} \ar[rrd]^>>>>>>{(p'_0)^*w'} \ar@{>->}[rrr]^{\overline{a}}
		&&& P_w \ar[ddddrr]_t \ar[rrd]^{p_0^*w} &&\\
&& B'^\I \ar@{->>}[ddd]^{p'_1} \ar@{>->}[rrr]^<<<<<<<<{\overline{b}}  
		&&& B^\I \ar@{->>}[ddd]^{p_1} \\
\\
\\
&& B'\ar@{>->}[rrr]_b &&& B
}
\end{equation}
The horizontal direction is still pullback along $b$; let us rename $p_0^*w=:u$ so that $(p'_0)^*w' = b^*u$ and $t' = b^*t$ and $p'_1 = b^*p_1$ to make this clear. We then add the pushforward along $b$ on the right, in order to obtain the two units $\eta$.
\begin{equation}\label{diag:twounits2}
\xymatrix@=1.5em{
b^*P_{w} \ar@{->>}[ddddrr]_{b^*t} \ar[rrd]^{b^*u} \ar@{>->}[rrr]^{\overline{a}}
		&&& P_w \ar[ddddrr]_t \ar[rrd]^{u} \ar[rr]^{\eta_t}
		&& b_*b^* P_w  \ar[rrd]^{b_*b^* u} \ar[ddddrr]_{b_*b^* t}&&\\
&& b^*B^\I \ar@{->>}[ddd]^{b^*p_1} \ar@{>->}[rrr]^<<<<<<<<<{\overline{b}}  
		&&& B^\I \ar@{->>}[ddd]^{p_1} \ar[rr]^{\eta_{p_1}} 
		&& b_*b^* B^\I \ar[ddd]^{b_*b^*p_1}\\
\\
\\
&& B'\ar@{>->}[rrr]_b &&& B  \ar[rr]_= && B
}
\end{equation}
By the usual calculation of pushforwards in slice categories, $\overline{b}_* \cong \eta_{p_1}^*\circ b_*$, and so for $b^*u$ we have $\overline{b}_*b^* u = \eta_{p_1}^*b_*b^* u$.  But as we just determined in \eqref{diag:ppcube} the top left square is already a pushforward, and therefore $u = \eta_{p_1}^*b_*b^* u$,  so the top right naturality square is a pullback. 

To finish the proof as planned, $p_1 : B^\I \ra B$ is a trivial fibration because $q : B\ra C$ is a fibration, and $b : B' \mono B$ is a cofibration because it is a pullback of $c : C'\mono C$.  Thus by lemma \ref{lemma:etaTF}, we have that $\eta_{p_1}: B^\I \ra  b_*b^*B^\I$ is a trivial fibration, and so its pullback $\eta_t : P_w \ra  b_*b^*P_w$ is a trivial fibration, as claimed.
\end{proof}

\begin{remark}
Note that $p : A \ra C$ is small if $q : B\ra C$ is small.
\end{remark}

\section{The fibration extension property}\label{sec:FEP}

Given a universal fibration $\UU\fib\U$, such as $\FFib\fib\Fib$ of Proposition \ref{prop:UniversalunbiasedFib}, the fibration extension property (Definition \ref{def:fibextreplace}) is closely related to the statement that the base object $\U$ is fibrant.  For Kan simplicial sets, Voevodsky proved the latter directly, using the theory of minimal fibrations \cite{KLV:21}.   In a more general (but still simplicial) setting, Shulman \cite{Shu:15} gives a proof using univalence, in the form of the equivalence extension property of Section \ref{sec:EEP}, but that proof also uses the 3-for-2 property for weak equivalences, which we do not yet have. For cubical sets, Coquand \cite{CCHM:2018ctt} uses the equivalence extension property to prove that $\U$ is fibrant without assuming 3-for-2 for weak equivalences, via a neat type theoretic argument reducing box-filling to an operation of \emph{Kan-composition}.   We shall prove that $\U$ is fibrant using the equivalence extension property, also without assuming 3-for-2 for weak equivalences, but via a different argument than that in \cite{CCHM:2018ctt} not using (type theory or)  Kan composition.

Returning to the relation between the fibration extension property and the fibrancy of the base object of the universal fibration $\UU\fib\U$, it is easy to see that the latter implies the former.  Indeed, let $t : X\cof X'$ be a trivial cofibration and $Y \fib X$ a fibration.  To extend $Y$ along $t$, take a classifying map $y : X \ra \U$, so that $Y \cong y^*\UU$ over $X$. If $\U$ is fibrant then we can extend $y$ along $t : X\cof X'$ to get $y' : X' \ra \U$ with $y = y'\circ t$.  The pullback $Y' = (y')^*\UU \fib X'$ is then a (small) fibration such that $t^*Y' \cong t^*(y')^*\UU\cong y^*\UU \cong Y$ over $X$.  
\[
\xymatrix{
Y \ar@{->>}[dd] \ar[rd] \ar[rr] && \UU \ar@{->>}[dd] \\
& Y' \ar@{->>}[dd] \ar@{..>}[ru] & \\
X  \ar@{>->}[rd]_t \ar[rr]_<<<<<<<<{y}  && \U  \\
& X' \ar@{..>}[ru]_{y'} &
}
\]
Thus, for the record, we have:

\begin{proposition}\label{prop:UfibtoFEP}
If the base object $\,\U$ of the universal fibration $\UU\fib\U$ is fibrant, then the fibration weak factorization system has the fibration extension property.
\end{proposition}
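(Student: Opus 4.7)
The plan is to simply formalize the sketch given in the paragraph preceding the statement. The universal fibration $\UU \fib \U$ from Proposition~\ref{prop:UniversalunbiasedFib} classifies small fibrations: for any small fibration $Y \fib X$ there is a classifying map $y : X \to \U$ with $Y \cong y^*\UU$ over $X$. The fibration extension property (Definition~\ref{def:fibextreplace}) asks, for a trivial cofibration $t : X \cof X'$, to extend $Y \fib X$ to a fibration $Y' \fib X'$ whose pullback along $t$ recovers $Y$. So the natural strategy is to extend the classifying map instead, and then take the induced pullback.

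Concretely, I would first take a classifying map $y : X \to \U$ for the fibration $Y \fib X$, producing the pullback square on the left. By the hypothesis that $\U$ is fibrant, the unique map $\U \to 1$ is a fibration. Trivial cofibrations lift against fibrations, so the solid square
\[
\xymatrix{
X \ar@{>->}[d]_t \ar[r]^{y} & \U \ar[d] \\
X' \ar@{..>}[ru]^{y'} \ar[r] & 1
}
\]
admits a diagonal filler $y' : X' \to \U$ with $y'\circ t = y$. I would then set $Y' := (y')^*\UU$, equipped with its pullback fibration structure, so that $Y' \fib X'$ is a (small) fibration. By the pasting lemma for pullbacks, the composite square
\[
t^*Y' \cong t^*(y')^*\UU \cong (y'\circ t)^*\UU \cong y^*\UU \cong Y
\]
over $X$ is a pullback, which is exactly the required extension diagram \eqref{diagram:FEP}.

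The only real subtlety is size: the classification argument works only if $Y \fib X$ is $\alpha$-small for the cardinal $\alpha$ used to build $\VV_\alpha \to \V_\alpha$ (and hence $\UU \fib \U$). This is handled by the hierarchy of universes from \eqref{eq:Vhierarchy}: given any fibration $Y \fib X$, one chooses $\alpha$ large enough that $Y \fib X$ is $\alpha$-small, and applies the argument to the corresponding universal fibration at that level, whose base is likewise assumed fibrant. I do not expect any genuine obstacle here—the argument is essentially a one-line consequence of classification and the right lifting property, and the hard work has already been done in constructing the universal fibration in Section~\ref{sec:U} and in whatever proof eventually establishes that $\U$ is fibrant (which, per the structure of the paper, will rely on the equivalence extension property of Section~\ref{sec:EEP} together with the realignment lemma for fibrations, Lemma~\ref{lemma:realignmentforfibrations}).
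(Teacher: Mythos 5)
Your proof is correct and follows exactly the argument the paper gives in the paragraph immediately preceding the statement: classify $Y\fib X$ by a map $y:X\to\U$, lift $y$ along the trivial cofibration $t$ using fibrancy of $\U$, and pull $\UU$ back along the extension. The size remark is a sensible addition but not a point of divergence from the paper's reasoning.
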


Conversely, given the Realignment Lemma \ref{lemma:realignmentforfibrations}, the fibration extension property also implies the fibrancy of $\U$:

\begin{corollary}
The fibration extension property implies that the base $\U$ of the universal fibration $\UU\fib\U$ is fibrant: given any $y : X \ra \U$ and trivial cofibration $t : X\cof X'$, there is a map $y' : X' \ra \U$  with $y'\circ t = y$.
\end{corollary}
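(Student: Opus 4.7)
The plan is to combine the fibration extension property with the realignment lemma for fibrations (Lemma \ref{lemma:realignmentforfibrations}).

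First, given $y:X\to\U$, pull back the universal fibration to obtain a small fibration
\[
Y \ :=\ y^*\UU\ \fib\ X,
\]
together with the canonical pullback square exhibiting $y$ as a classifier for $Y\fib X$. Now apply the fibration extension property (Definition \ref{def:fibextreplace}) to the trivial cofibration $t:X\cof X'$ and the fibration $Y\fib X$: this yields a fibration $Y'\fib X'$ together with a pullback square
\[
\xymatrix{
Y \ar@{->>}[d] \ar[r] \pbcorner & Y' \ar@{>>}[d] \\
X \ar@{>->}[r]_{t} &  X'.
}
\]
By Proposition \ref{prop:UniversalunbiasedFib}, since $Y'\fib X'$ is a small fibration, it admits some classifying map $y'_0 : X'\to\U$, with $(y'_0)^*\UU \cong Y'$ over $X'$.

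The difficulty is that the composite $y'_0\circ t : X\to\U$ classifies $t^*Y' \cong Y$ and thus classifies the \emph{same} small fibration as $y$, but it need not be equal to $y$ on the nose --- classifying maps into $\U$ are not unique, only unique up to the homotopy theory of the weak proposition $\Fib(\VV)\to\V$. This is precisely the coherence issue addressed by realignment. Since $t$ is in particular a cofibration (trivial cofibrations are cofibrations), Lemma \ref{lemma:realignmentforfibrations} applies to the cofibration $t:X\cof X'$, the fibration $Y'\fib X'$, and the given classifying map $y:X\to\U$ for the pullback $t^*Y'\cong Y$. It produces a classifying map $y':X'\to\U$ for $Y'\fib X'$ satisfying
\[
y'\circ t \ =\ y,
\]
which is the required extension. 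Thus $\U$ has the right lifting property against every trivial cofibration, so $\U$ is fibrant.

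The main obstacle, as noted, is not the existence of some extension at the level of underlying families (which the fibration extension property delivers immediately), but the on-the-nose strictification of the classifying map along $t$. This is exactly what is arranged by realignment, which in turn rests on the fact, recorded in Lemma \ref{lemma:Fibweakprop}, that $\Fib(\VV)\to\V$ is a weak proposition (and hence the first projection $\U\times_\V\U\fib\U$ is a trivial fibration). No further appeal to $3$-for-$2$ for weak equivalences is needed.
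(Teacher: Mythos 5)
Your proof is correct and follows exactly the same route as the paper: pull back $\UU\fib\U$ along $y$ to get a small fibration $Y\fib X$, extend it along $t$ using the fibration extension property, and then apply the Realignment Lemma (Lemma \ref{lemma:realignmentforfibrations}) to the cofibration $t$ and the classifying map $y$ of $t^*Y'\cong Y$ to obtain $y'$ with $y'\circ t = y$. Your added commentary on why realignment is needed (classifying maps are not unique on the nose) is accurate and matches the role the lemma plays in the paper.
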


\begin{proof}
Take the pullback of $\UU\fib\U$ along $y : X \ra \U$ to get a (small) a fibration $Y\fib X$, which extends along the (trivial) cofibration $t : X\cof X'$ by the fibration extension property,  to a (small) fibration $Y'\onto X'$ with $Y \cong t^*Y'$ over $X$.  By realignment there is a classifying map $y' : X' \ra \U$ for $Y'$ with $y'\circ t = y$.
\end{proof}

Now let us show the following.
\begin{proposition}\label{prop:Ufibrant}
The base $\U$ of the universal fibration $\UU\fib\U$ in $\cSet$, as constructed in Section \ref{sec:universalfibration}, is a fibrant object.
\end{proposition}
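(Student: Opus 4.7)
The plan is to show that $\U$ has the right lifting property against every generating unbiased trivial cofibration. Let $t := c\otimes_i\delta : D \cof Z\times\I$ with $D = Z +_C (C\times\I)$, and suppose $y : D \to \U$ classifies a small fibration $Y \fib D$. Following the Voevodsky/Coquand/Shulman template, the strategy combines the equivalence extension property (Proposition \ref{prop:EEP}) with realignment for fibrations (Lemma \ref{lemma:realignmentforfibrations}): it suffices to produce a small fibration $A \fib Z\times\I$ with $t^*A = Y$ on the nose, since realignment for $A$ together with the given map $y$ classifying $t^*A$ then yields the required $y' : Z\times\I \to \U$ extending $y$.

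To obtain $A$, I would apply the EEP to a suitable fibration $B \fib Z\times\I$ together with a weak equivalence $w : Y \simeq t^*B$ over $D$. Take $B := \pi^*Y_Z$, where $Y_Z := \gph{i}^*Y$ is the restriction of $Y$ along the pushout injection $Z \hookrightarrow D$ (included into $Z\times\I$ as the graph $\gph{i}$) and $\pi : Z\times\I \to Z$ is the projection. Then $B$ is small, so by the remark following Proposition \ref{prop:EEP} the fibration $A$ produced by the EEP will be small as required. A direct pullback computation shows that $t^*B$ agrees with $Y_Z$ on the $Z$-part of $D$ and with $\pi_C^*(Y|_C)$ on the $C\times\I$-part, where $\pi_C : C\times\I \to C$. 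The required $w$ therefore decomposes, via the pushout structure of $D$, into the identity on the $Z$-part together with a weak equivalence $Y|_{C\times\I} \simeq \pi_C^*(Y|_C)$ over $C\times\I$ restricting to the identity along $\gph{ic} : C \cof C\times\I$ (itself a trivial cofibration by Remark \ref{rem:specialtrivcofs}).

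The main obstacle is constructing this weak equivalence over $C\times\I$. The two fibrations agree only along the trivial cofibration $\gph{ic}$, so the equivalence must be produced by lifting. By Frobenius (Corollary \ref{cor:unbiasedPi}) the relative exponential $[\pi_C^*(Y|_C),\, Y|_{C\times\I}]_{C\times\I}$ formed in the slice over $C\times\I$ is a fibration, and, via the classifying-type machinery of Section \ref{sec:universalfibration}, its subobject of fiberwise equivalences inherits a fibration structure as well. The identity morphism furnishes a section of this ``equivalence type'' over $C$, and extending it to a section over all of $C\times\I$—which is possible because $\gph{ic}$ is a trivial cofibration and the equivalence type is fibrant—produces the desired weak equivalence. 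This last step is the essence of the univalent-universe argument, and it is where the interplay between the classifying type of fibrations, Frobenius, and realignment combine to make the Voevodsky-style construction work in the present Cartesian cubical setting.
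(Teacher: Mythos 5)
Your overall strategy matches the paper's: reduce to right lifting of $\U$ against the generating trivial cofibrations $c\otimes_i\delta : D \cof Z\times\I$; pull $\UU$ back to obtain $Y\fib D$; build the fibration $\pi^*Y_Z\fib Z\times\I$ constant in the $\I$-direction, together with a weak equivalence to $t^*(\pi^*Y_Z)$ over $D$; apply the equivalence extension property; and finish with realignment for fibrations. Applying the EEP along $t=c\otimes_i\delta$ all at once, rather than along $c\times\I : C\times\I\cof Z\times\I$ and separately checking that the result restricts to the given fibration on the $Z$-part (as the paper does after \eqref{diag:Ufib2}), is a mild repackaging via the pushout decomposition of $D$, and is fine. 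Your identification $B := \pi^*Y_Z$ corresponds to the paper's $B\times\I$, and $Y|_{C\times\I}$, $Y|_C$ correspond to the paper's $A$, $A_c$.

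The gap is in constructing the central weak equivalence $Y|_{C\times\I}\simeq\pi_C^*(Y|_C)$ over $C\times\I$. You appeal to the fibrancy of the ``subobject of fiberwise equivalences'' inside the relative exponential, but the paper never establishes that fiberwise equivalences between two fibrations form a fibration over the base; this is a substantive theorem whose proof would require developing contractibility types, closure of fibrations under $\Sigma$, and related machinery that the paper deliberately avoids. The paper's route is Lemma~\ref{lem:simple}: from a commutative square whose top and left arrows are trivial cofibrations and whose right and bottom arrows are fibrations, one extracts a weak equivalence between the lower-left and upper-right corners over the common codomain, using only that any two diagonal fillers against a fibration are homotopic. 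In the relevant square \eqref{diag:Ufibn}, the top map $\langle icp_c\rangle : A_c\cof A_c\times\I$ is a trivial cofibration because it is the graph of a map into $\I$ (Remark~\ref{rem:specialtrivcofs}), and the left map $A_c\cof A$ is a trivial cofibration by Frobenius, being the pullback of $\gph{ic}$ along the fibration $p_a$; Lemma~\ref{lem:simple} then delivers the weak equivalence directly, together with the restriction-to-identity-on-$C$ condition you need. This elementary lemma is exactly what your fibrant-equivalence-type argument would ultimately produce, but it does not require building that theory, and without some such replacement your proposal has a hole at its central step.
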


\begin{proof}
By Corollary \ref{cor:unbiasedfibrant}, $\U$ is an unbiased fibrant object if the canonical map $u = \langle \proj_2, \mathsf{eval}\rangle$ in the following diagram in $\cSet$, is a trivial fibration.
\begin{equation}\label{diagram:Uunbiasedfibrant}
\xymatrix{
\U^\I\times \I \ar@/^3ex/ [rrrd]^{\mathsf{eval}} \ar@/_3ex/ [rdd]_{\proj_2} \ar@{..>}[rd]_{u}  && \\
& \I\times \U \pbcorner  \ar[d] \ar[rr] && \U \ar[d] \\
& \I \ar[rr] && 1
}
\end{equation}
Thus consider a filling problem of the following form, with an arbitrary cofibration $c:C \cof Z$.
\begin{equation}\label{diag:Ufib1}
\xymatrix@=3em{
C \ar@{>->}[d]_c \ar[r] & \U^\I\times \I \ar[d]^{\langle \proj_2, \mathsf{eval}\rangle} \\
Z  \ar@{..>}[ru] \ar[r]  & \I\times\U  \\
}
\end{equation}
The horizontal maps may be written in the form $\langle i, b\rangle : Z \to \I\times \U$ and $\langle \tilde{a} , ic\rangle : C \to \U^\I\times\I$, regarding $i : Z \to \I$ as an $\I$-indexing.

Transposing $\tilde{a}$ to $a : C\times \I \ra \U$ we obtain the new problem
\begin{equation}\label{diagram:unbiasedpushoutproduct4}
\xymatrix{
C \ar@{>->}[d]_{c} \ar[rr]^{\gph{ic}} && C\times \I \ar[d]^{c\times\I} \ar@/^4ex/ [rdd]^{a} \\
Z \ar@/_4ex/ [rrrd]_{b} \ar[rr]_{\gph{i}} &&  Z\times\I  \ar@{.>}[rd]_{d} \\
&&& \U
}
\end{equation}
in which we recall from \eqref{eq:graphdef} the notation $\gph{i} = \gph{1_Z, i} : Z \to Z\times \I$ for the graph of a map $i : Z\to \I$.
Given a map $d$ as shown in \eqref{diagram:unbiasedpushoutproduct4}, we can obtain the indicated diagonal filler in \eqref{diag:Ufib1} as  $\langle \tilde{d} , i\rangle : Z \to \U^\I\times\I$.

As a sanity check, note that $b\circ c = a\circ \langle ic\rangle$ turns the problem \eqref{diagram:unbiasedpushoutproduct4} into that of extending the copair $[b,a]$ along the unique map
\[
Z +_C (C\times \I)\too Z\times \I\,,
\]
which is exactly the (trivial cofibration) pushout-product $c\otimes_i\delta$ from \eqref{diagram:unbiasedpushoutproduct}, recalled below for the reader's convenience.

\begin{equation}\label{diagram:unbiasedpushoutproduct5}
\xymatrix{
C \ar@{>->}[d]_{c} \ar[r]^{\gph{ic}} & C\times \I \ar[d] \ar@/^4ex/ [rdd]^{c\times\I}\\
Z \ar@/_4ex/ [rrd]_{\gph{i}} \ar[r] &  Z +_C (C\times\I) \ar@{.>}[rd]_{c\, \otimes_i \delta} \\
&& Z\times\I
}
\end{equation}

Returning to \eqref{diagram:unbiasedpushoutproduct4}, take pullbacks of $\UU\fib \U$ along $a$ and $b$ to get fibrations $p_a : A\fib C\times \I$ and $p_b : B\fib Z$ respectively, and let 
\[
p_c :=  \gph{ic}^*p_a : A_c \too C
\]
be the corresponding ``fiber of $A$ over the graph of $ic$''.  We then have $c^*B \cong A_c$ over $C$ by the commutativity of the outer square of \eqref{diag:Ufib1}.
\[
\xymatrix@=1em{
&& \ar[llddd] A_c \ar@{->>}[dd]^{p_c} \ar[rr]  \pbcorner &&  A \ar@{->>}[dd]^{p_a} \\
&& && \\
&& \ar[llddd]^c C \ar[rr]_{\gph{ic}}  &&  \ar[lddd]^{c\times \I} C\times\I \\
B \ar@{->>}[dd]_{p_b} &&&& \\
&&&& \\
Z \ar[rrr]_{\gph{i}} &&& Z\times\I &
}
\]

The diagonal filler sought in \eqref{diag:Ufib1} now corresponds, again by transposition and pullback of $\UU\fib\U$, to a fibration $p_d : D\fib Z\times \I$ with $\langle i\rangle^*D \cong B$ over $Z$ and $(c\times \I)^*D \cong A$ over $C\times \I$, as indicated below.
\begin{equation}\label{diag:Ufib1.5}
\xymatrix@=1em{
&& \ar[llddd] A_c \ar@{->>}[dd]^{p_c} \ar[rr]  &&  \ar@{..>}[lddd] A \ar@{->>}[dd]^{p_a} \\
&& && \\
&& \ar[llddd]^c C \ar[rr]_<<<<<<<{\gph{ic}}  &&  \ar[lddd]^{c\times\I} C\times\I \\
B \ar@{->>}[dd]_{p_b} \ar@{..>}[rrr] &&& D \ar@{..>>}[dd]_{p_d} & \\
&&&& \\
Z \ar[rrr]_{\gph{i}} &&& Z\times\I &
}
\end{equation}
We shall construct  $p_d : D\fib Z\times \I$ using the equivalence extension property (Proposition \ref{prop:EEP}) as follows.  First apply the functor $(-)\times\I$ to the left vertical (pullback) face of the cube in \eqref{diag:Ufib1.5} to get the following, with a new pullback square on the right with the indicated fibrations.
\begin{equation}\label{diag:Ufib2}
\begin{gathered}
\xymatrix@=1em{
&& \ar[llddd] A_c \ar@{->>}[dd]^{p_c} \ar[rr]  &&  \ar@{..>}[lddd] A \ar@{->>}[dd]^{p_a} &&  \ar@{->>}[lldd]^{p_c\times \I} A_c\times\I \ar[lddd]  \\
&& && &&\\
&& \ar[llddd]^c C  \ar[rr]_<<<<<<<{\gph{ic}}  &&  \ar[lddd]^{c\times\I} C\times\I && \\
B \ar@{->>}[dd]_{p_b} \ar@{..>}[rrr] &&& D \ar@{..>>}[dd]_{p_d} && \ar@{->>}[lldd]^{p_b\times\I} B\times\I &\\
&&&& &&\\
Z \ar[rrr]_{\gph{i}} &&& Z\times\I &&&
}
\end{gathered}
\end{equation}
We now \emph{claim} that  there is a weak equivalence $e:A \simeq A_c \times \I$ over $C\times \I$. From this it  follows by the equivalence extension property (Proposition \ref{prop:EEP}) that there are:
\begin{enumerate}
\item[(i)] a fibration $p_d : D\fib Z\times \I$ with $(c\times\I)^*D \cong A$ over $C\times\I$, and 
\item[(ii)] a weak equivalence $f:D\simeq B\times\I$ over $Z\times \I$ with $(c\times\I)^*f \cong e$ over $C\times\I$. 
\end{enumerate}
It then remains only to show that $B\cong \gph{i}^*D$ over $Z$ to complete the proof.

To obtain the claimed weak equivalence $e$, consider the following square, 
\begin{equation}\label{diag:Ufibn}
\xymatrix{
A_c \ar[d] \ar[rr]^{\langle icp_c \rangle} && A_c\times\I \ar@{>>}[d]^{p_c\times \I} \\
A  \ar@{>>}[rr]_{p_a} &&  C\times\I \,,
}
\end{equation}
in which the top horizontal map is the graph of the composite,
\[
A_c \stackrel{p_c}{\fib} C \stackrel{c}{\mono} Z \stackrel{i}{\to} \I\,,
\]
and the others are the evident ones from \eqref{diag:Ufib2}.
The square is easily seen to commute, and the top map is a trivial cofibration (by Remark \ref{rem:specialtrivcofs}), because it is the graph of a map into $\I$.  The left map is also a trivial cofibration by Frobenius (Proposition \ref{prop:Frobenius}), because by its definition in \eqref{diag:Ufib1.5} it is the pullback of another such graph $\gph{ic}$ along the fibration $p_a$.  A simple lemma (Lemma \ref{lem:simple} below) provides the claimed weak equivalence $e:A \simeq A_c\times \I$ over $C\times \I$.  

To see that $B\cong \gph{i}^*D$ over $Z$, recall from the proof of the equivalence extension property that the map $f:D\cong B\times \I$ is the pushforward of  $e:A \simeq A_c\times \I$ along the cofibration $b_c\times\I :A_c\times\I \cof B\times\I$, where we are calling the evident map in \eqref{diag:Ufib2} $b_c : A_c\cof B$.  Thus by construction $f = (b_c\times\I)_*\,e$.  We can then apply the Beck-Chevalley condition for the pushforward using the pullback square on the left below.
\begin{equation}
\xymatrix{
A_c \ar@{>->}[d]_{b_c} \ar@{>->}[rr]^{\gph{icp_c}} \pbcorner && A_c\times\I \ar@{>->}[d]^{b_c\times \I} & \ar[l]_-e A\\
B  \ar@{>->}[rr]_{\gph{ip_b}}  &&  B\times\I  & \ar[l]^-f D
}
\end{equation}
The pullback of $e$ along the top of the square is the identity on $A_c$, as can be seen by pulling back $e$ as a map over $C\times \I$ along $\gph{ic} :  C\ra C\times\I$. Thus the same is true up to isomorophism for the pullback of $f$ along the bottom.

An application of the Realignment Lemma \ref{lemma:realignmentforfibrations} along the trivial cofibration $c\otimes_i\delta$ completes the proof.
\end{proof}

\begin{lemma}\label{lem:simple}
Suppose the following square commutes and the indicated cofibrations are trivial.
\begin{equation}\label{diag:standard1}
\xymatrix{
A \ar@{>->}[d] \ar@{>->}[r] & C \ar@{>>}[d] \\
B  \ar@{>>}[r] &  D \\
}
\end{equation}
Then there is a weak equivalence $e : B \simeq C$ over $D$ (and under $A$).
\end{lemma}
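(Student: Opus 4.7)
The plan is to construct $e$ by a standard lift and then exhibit it as a homotopy equivalence over $D$ between the fibrant objects $B \fib D$ and $C \fib D$ in the sliced premodel structure (Proposition \ref{prop:slicepremodelstructure}); Proposition \ref{prop:weqisheoverZ} will then upgrade this to a weak equivalence over $D$, which coincides with a weak equivalence in $\cSet$. First, since $a$ is a trivial cofibration and $p$ a fibration, the given square admits a diagonal filler $e : B \to C$ with $e a = i$ and $p e = q$, so $e$ lies over $D$ and under $A$. Dually, solving the lifting $i \pitchfork q$ yields $g : C \to B$ with $g i = a$ and $q g = p$. Both maps are over $D$ and under $A$, and $g e a = a$, $e g i = i$ hold by construction.

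To produce a homotopy $g e \sim_D 1_B$ stationary on $A$, I work in $\cSet/_D$. The relative pathobject $B^{\I_D}$ carries the endpoint evaluations $\epsilon_0, \epsilon_1 : B^{\I_D} \to B$, each a trivial fibration over $D$ by Lemma \ref{lemma:JTslice}(2), together with a common section $r_B : B \to B^{\I_D}$ (the constant-path map). Granting that the combined pullback-hom
\[
\pair{\epsilon_0,\, \epsilon_1} \;=\; (\del \Rightarrow_D q) \;:\; B^{\I_D} \too B \times_D B
\]
is a fibration over $D$, the commutative square
\[
\begin{tikzcd}
A \ar[d, tail, swap, "a"] \ar[r, "{r_B \circ a}"] & B^{\I_D} \ar[d, "{\pair{\epsilon_0,\, \epsilon_1}}"] \\
B \ar[r, swap, "{\pair{g e,\, 1_B}}"] \ar[ru, dotted, "{H}"] & B \times_D B
\end{tikzcd}
\]
(both composites equal $\pair{a,a}$) admits a diagonal filler $H$, which transposes across the slice exponential adjunction to a homotopy over $D$ from $g e$ to $1_B$ constant on $A$. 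A symmetric construction against $\pair{\epsilon_0, \epsilon_1} : C^{\I_D} \to C \times_D C$ produces $e g \sim_D 1_C$, and hence $e$ is a homotopy equivalence over $D$.

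The principal technical obstacle is the Leibniz claim taken on credit above: that $\del \Rightarrow_D q$ is a fibration over $D$ whenever $q$ is, equivalently (by Lemma \ref{lemma:Leibniz}) that the pushout-product of any trivial cofibration with $\del : 1_D + 1_D \cof \I_D$ is again a trivial cofibration. Since $\del = \delta_0 \cup \delta_1$, such a pushout-product is the join of two generating unbiased trivial cofibrations (cf.\ Remark \ref{rem:poprodoverI}), so the content of the step is to verify that this join itself lies in $\TCof$. Once this Leibniz compatibility between the two weak factorization systems is in hand, the rest of the argument is a routine diagonal filling in $\cSet/_D$ followed by an appeal to Proposition \ref{prop:weqisheoverZ}.
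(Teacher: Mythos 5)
Your proof is essentially the paper's own proof, which is compressed into a single line — ``use the fact that any two diagonal fillers are homotopic'' — and you have correctly unwound what that means: construct $e$ and $g$ as fillers, exhibit $ge\sim_D 1_B$ and $eg\sim_D 1_C$ by lifting against the fibration $\del\Rightarrow_D q$ (resp.\ $\del\Rightarrow_D p$), then invoke Proposition \ref{prop:weqisheoverZ} to pass from a homotopy equivalence over $D$ between fibrations to a weak equivalence. Your careful tracking of ``over $D$, under $A$'' is right, and the diagram you write down for the homotopy is the correct one.

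The one step you flag but leave open — that $\del\Rightarrow_D q$ is a fibration over $D$ — does hold, and closes more cleanly than via the ``join of two biased fillers'' route you sketch (which is correct geometrically but requires one to argue separately that the relevant join of trivial cofibrations is trivial). The direct argument is to transpose once more: by associativity of pushout-product and pullback-hom one has $\delta\Rightarrow(\del\Rightarrow_D q)\;\cong\;\del\Rightarrow_D(\delta\Rightarrow q)$. Since $q$ is a fibration, $\delta\Rightarrow q$ is a trivial fibration (Definition \ref{def:unbiasedfibration}); since $\del$ is a cofibration (Remark \ref{rem:somecofibs}) and cofibrations are closed under pushout-products (C6), the pullback-hom $\del\Rightarrow_D(-)$ of a cofibration against a trivial fibration is again a trivial fibration — this is just the cofibration weak factorization system, no interaction between the two WFS is needed. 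So $\delta\Rightarrow(\del\Rightarrow_D q)$ is a trivial fibration, i.e.\ $\del\Rightarrow_D q$ is a fibration. With that in hand, your argument is complete and coincides with the paper's.
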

\begin{proof}
Use the fact that any two diagonal fillers are homotopic to get a homotopy equivalence $e : B \simeq C$ filling the square.
\end{proof}

\begin{remark}
The foregoing proof of Proposition \ref{prop:Ufibrant}, the fibrancy of the universe $\U$, also works, \emph{mutatis mutandis}, for the universe of \emph{biased} fibrations, as used in the setting of \cite{CCHM:2018ctt}.
\end{remark}

Applying proposition \ref{prop:UfibtoFEP} now yields the following.

\begin{corollary}[Fibration extension property]\label{cor:FEP}
The fibration weak factorization system has the fibration extension property (definition \ref{def:fibextreplace}). 
\end{corollary}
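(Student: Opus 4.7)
The plan is to deduce the corollary by directly combining the two results already established just above it: Proposition \ref{prop:Ufibrant}, which asserts that the base object $\U$ of the universal fibration $\UU\fib\U$ is fibrant, and Proposition \ref{prop:UfibtoFEP}, which asserts that fibrancy of $\U$ entails the fibration extension property. Since the latter proposition was proved in full in the text, the only remaining content is to invoke the former.

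Unpacking what this application amounts to: given a fibration $f : Y \fib X$ and a trivial cofibration $\eta : X \cof X'$, I would first use Proposition \ref{prop:UniversalunbiasedFib} to classify $f$ by some map $y : X \to \U$ with $Y \cong y^*\UU$ over $X$. Since $\U$ is fibrant by Proposition \ref{prop:Ufibrant} and $\eta$ is a trivial cofibration, lifting $y$ against the map $\U \to 1$ produces an extension $y' : X' \to \U$ with $y'\circ \eta = y$. Pulling back $\UU\fib\U$ along $y'$ then yields a fibration $f' : Y'\fib X'$ whose restriction along $\eta$ is (canonically isomorphic to) $f$, which is exactly the statement in Definition \ref{def:fibextreplace}.

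The one subtlety to address is size: the fibration $\UU\fib\U$ of Proposition \ref{prop:UniversalunbiasedFib} is constructed relative to a fixed inaccessible cardinal $\alpha$ and classifies only $\alpha$-small fibrations, so an arbitrary $f : Y \fib X$ must first be placed at an appropriate level of the cumulative hierarchy of universes \eqref{eq:Vhierarchy}. Choosing $\alpha$ large enough that $f$ is $\alpha$-small (which is always possible), the argument applies verbatim to that level. No further obstacle arises: the substantive content has been discharged in Proposition \ref{prop:Ufibrant}, whose proof rested on the equivalence extension property (Proposition \ref{prop:EEP}), the Frobenius property (Corollary \ref{cor:unbiasedFrobenius}), and the realignment lemma for fibrations (Lemma \ref{lemma:realignmentforfibrations}).
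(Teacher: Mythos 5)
Your proof is correct and follows essentially the same route as the paper: the paper's entire proof of this corollary is to invoke Proposition \ref{prop:UfibtoFEP} in light of the just-established Proposition \ref{prop:Ufibrant}. Your added remark on the cumulative universe hierarchy \eqref{eq:Vhierarchy} correctly handles the smallness hypothesis that the paper leaves tacit.
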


By Theorem \ref{theorem:QMSmodFEP}, finally, we have the following.

\begin{theorem}\label{theorem:QMS}
There is a Quillen model structure $(\CC,\WW,\FF)$ on the category $\cSet$ of cubical sets for which:
\begin{enumerate}
\item the cofibrations $\CC$ are any class of maps satisfying (C0)-(C8) (equivalently, the simplified axioms in  Appendix A),
\item the fibrations $\FF$ are the maps $f : Y\ra X$ for which the canonical map 
\[
(f^\I \times \I, \eval_Y) : Y^\I \times \I\too (X^\I \times \I)\times_X Y
\]
lifts on the right against $\CC$.
\item the weak equivalences $\WW$ are the maps $w : X\ra Y$  for which the internal precomposition $K^w : K^Y \to K^X$ is bijective on connected components for every fibrant object $K$.
\end{enumerate}
\end{theorem}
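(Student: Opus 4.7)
The plan is to assemble this as the summary of all results established in the preceding sections, since by this point the real work is done. The backbone of the argument is Theorem~\ref{theorem:QMSmodFEP}, which already produces the model structure conditional on the fibration extension property (Definition~\ref{def:fibextreplace}), so I would first invoke that theorem and then discharge its hypothesis via Corollary~\ref{cor:FEP}.

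More concretely, I would proceed as follows. First, invoke Proposition~\ref{prop:FWC} to obtain the premodel structure: the two interlocking weak factorization systems $(\CC,\WW\cap\FF)$ and $(\CC\cap\WW,\FF)$ are in place, with $\WW\cap\FF=\TFib$ and $\CC\cap\WW=\TCof$. This verifies Quillen's axiom (1) in Definition~\ref{def:qmsviaJT}. Next, I would invoke Proposition~\ref{prop:WHEiffWE}, which identifies the class $\WW$ with the weak homotopy equivalences of Definition~\ref{def:WHE} and thereby yields the 3-for-2 property (\emph{assuming} the fibration extension property), establishing axiom (2). This gives the characterization (3) of $\WW$ in the theorem statement. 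The characterization (2) of $\FF$ is built into Definition~\ref{def:unbiasedfibration} and Proposition~\ref{prop:fibiffrlp}, since $(\delta\Rightarrow f)$ being a trivial fibration means it has the right lifting property against all cofibrations.

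The hypothesis required for the preceding step is the fibration extension property, which is exactly Corollary~\ref{cor:FEP}. That corollary in turn follows from Proposition~\ref{prop:Ufibrant}, the fibrancy of the base $\U$ of the universal unbiased fibration $\UU\fib\U$ constructed in Proposition~\ref{prop:UniversalunbiasedFib}, via Proposition~\ref{prop:UfibtoFEP}. So the substantive input is really the chain: universal fibration $\UU\fib\U$ (Section~\ref{sec:universalfibration}), realignment for fibrations (Lemma~\ref{lemma:realignmentforfibrations}, which uses axiom (C8) and the weak-proposition property from Lemma~\ref{lemma:Fibweakprop}), Frobenius for unbiased fibrations (Corollary~\ref{cor:unbiasedFrobenius}), and the equivalence extension property (Proposition~\ref{prop:EEP}); these combine in Proposition~\ref{prop:Ufibrant} to show that $\U$ is fibrant.

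The main obstacle was never this final packaging, but rather the proof of Proposition~\ref{prop:Ufibrant}: using equivalence extension to build a filler $D\fib Z\times\I$ for a generic open-box lifting problem against $\U$, and then correcting the resulting classifying map via realignment so that it agrees strictly with the given boundary data. With that in hand, Theorem~\ref{theorem:QMS} is just the statement that the three classes $(\CC,\WW,\FF)$, with the indicated characterizations of $\FF$ and $\WW$, constitute a Quillen model structure on $\cSet$ in the sense of Definition~\ref{def:qmsviaJT}, and the proof reduces to the two-line citation: apply Theorem~\ref{theorem:QMSmodFEP}, whose hypothesis is supplied by Corollary~\ref{cor:FEP}.
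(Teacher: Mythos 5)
Your proposal is correct and matches the paper's own proof exactly: the theorem is stated immediately after Corollary~\ref{cor:FEP} with the one-line justification that it follows from Theorem~\ref{theorem:QMSmodFEP}, whose fibration-extension hypothesis is supplied by that corollary. The additional tracing of the dependency chain back through Proposition~\ref{prop:Ufibrant}, realignment, Frobenius, and equivalence extension is accurate but not part of the paper's formal proof of this final statement.
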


\begin{remark}
We note that in terms of the universal fibration $\UU\fib\U$ constructed in Section \ref{sec:U} the equivalence extension property Proposition \ref{prop:EEP} says that the second projection from the classifying type of equivalences $A\simeq B$ between small families, 
\[
\pi_2 : \Sigma_{A,B}\mathsf{Eq}(A,B) \too \U \,,
\]
is a trivial fibration.  From this, it follows that the canonical transport map 
\begin{equation}\label{eq:UA}
* : \U^\I \too \Sigma_{A,B}\mathsf{Eq}(A,B)
\end{equation}
 is an equivalence over the base $\U$ via $p_2: \U^\I \to \U$, which is a trivial fibration because $\U$ is fibrant by Proposition \ref{prop:Ufibrant}.  In type theory, the pathobject $\U^\I$ of course interprets the identity type $A=B$, so the equivalence \eqref{eq:UA} can be expressed as 
 \[
 (A=B)\simeq(A\simeq B)\,.
 \]
 \end{remark}

\section*{Appendix A: Axioms for Cartesian cofibrations}\label{appendix:Cofibrations}

A system of maps satisfying the axioms (C0)-(C8) above for the cofibrations in a cartesian cubical model category will be called \emph{cartesian cofibrations}. The axioms may be restated equivalently as follows.
\begin{enumerate}
\item[(A0)] All cofibrations are monomorphisms.
\item[(A1)] All isomorphisms are cofibrations.
\item[(A2)] The composite of two cofibrations is a cofibration.
\item[(A3)] Any pullback of a cofibration is a cofibration.
\item[(A4)] The join of two cofibrant subobjects is a cofibration.
\item[(A5)] The diagonal of the interval $\I\ra\I\times\I$ is a cofibration.
\item[(A6)] Cofibrations are preserved by the pathobject functor $(-)^\I$.
\item[(A7)] The category of cofibrations and Cartesian squares has a terminal object .
\end{enumerate}

\begin{example}
Consider the cartesian cubical presheaves $\cEE = \EE^{\op{\C}}\!= \EE^{\B}$ in a topos $\EE$.
For such (internal) discrete opfibrations $(A,\, \alpha)$,
\[\begin{tikzcd}
A  \ar[d] & \ar[l,swap,"\alpha"] \ar[d] \B_1 \times_{\B_0} A \ar[r] \pbmark  &  \ar[d] A \\
\B_0 & \ar[l, "\cod" ] \B_1 \ar[r,swap,"\dom"] & \B_0
\end{tikzcd}\]
over the (internal) category $\B = (\B_1\rightrightarrows\B_0)$ of finite bipointed sets, call a subpresheaf $c : (C,\gamma)\to (A,\alpha)$ \emph{locally complemented} if the underlying map $c:C\to A$ over $\B_0 = \N$ is a complemented subobject in $\EE/_{\!\N}$, \ie\ $C+\neg{C} \cong A$ over $\N$.  Internally, this means that
\begin{equation}\label{pointwisecomplemented}
C_n + \neg(C_n) \cong A_n\qquad\text{for all $n\in\N$},
\end{equation}
which is a weaker condition than $(A,\alpha)+\neg (A,\alpha) \cong (B,\beta)$ as presheaves (unless $\EE=\Set$, in which case it is trivial).
\begin{proposition}
 For any topos  $\EE$, the locally complemented subobjects in the category $\cEE$ of cubical $\EE$-objects satisfy the axioms for cartesian cofibrations.
\end{proposition}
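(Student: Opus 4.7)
The plan is to verify each of axioms (A0)--(A7) by reducing to the corresponding level-wise statement in $\EE$. The underlying-family functor $\cEE \to \EE/_\N$ sending $(A,\alpha)$ to $A\to \B_0 = \N$ reflects and preserves all small (co)limits, since $\cEE = \EE^\B$ and these are computed pointwise. By definition, a subobject $C\to A$ is locally complemented iff it is complemented in $\EE/_\N$, iff each fibre $C_n \to A_n$ is a complemented subobject of $A_n$ in $\EE$. Complemented subobjects in a topos form a well-behaved class, closed under composition, arbitrary pullback, finite unions, finite meets, and negation.

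Given this, axioms (A0)--(A4) are immediate: (A0) holds because complemented subobjects are monos; (A1) because an identity has initial complement; and (A2)--(A4) follow from the closure of complemented subobjects in $\EE$ under composition, pullback, and finite join, each of which is computed pointwise in $\cEE$. Axiom (A6) reduces to the remark from the proof of Proposition~\ref{prop:Itiny} that the pathobject $(-)^\I$ is precomposition with the ``successor'' functor $s:\B\to\B$, $[n]\mapsto [n+1]$; hence $(A^\I)_n = A_{n+1}$ sits inside $(B^\I)_n = B_{n+1}$ precisely as the level-$(n+1)$ inclusion, so level-wise complementation is preserved.

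For (A5), recall from Lemma~\ref{lemma:binomial} that $\I_n = \B([1],[n])$ is a finite set of cardinality $n+2$, so as an object of $\EE$ it is a finite coproduct of copies of $1$. Such objects are \emph{decidable}: their diagonals admit complements, given concretely by the coproduct over off-diagonal pairs. Hence the diagonal $\I \to \I\times\I$ is level-wise complemented.

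The only real obstacle is (A7), which requires constructing a universal locally complemented subobject $t:1\cof \Phi$ in $\cEE$. The plan is to carve $\Phi$ out of the subobject classifier $\Omega_{\cEE}$ of the topos $\cEE$, taking at each level the sub-$\EE$-object of ``complemented truth values'', and exploiting the Boolean sub-classifier $2 = 1+1\hookrightarrow \Omega_\EE$ which classifies complemented subobjects in $\EE$. One then has to check (i) that these level-wise pieces assemble into a genuine subpresheaf of $\Omega_\cEE$---which holds because restriction of a complemented subobject along any map in $\EE$ is again complemented, so the restriction maps of $\Omega_\cEE$ carry complemented truth values to complemented ones---and (ii) that $t:1\cof \Phi$, obtained by restricting $\mathrm{true}:1\to \Omega_\cEE$ along $\Phi\hookrightarrow \Omega_\cEE$, inherits the universal classification property from the pointwise Boolean classifier via Yoneda. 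This is the only step that is not pure bookkeeping; once $\Phi$ is in hand, the proposition follows.
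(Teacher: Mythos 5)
Your treatment of axioms (A0)--(A6) matches the paper's: axioms (A0)--(A4) reduce to closure properties of complemented subobjects in $\EE/_{\!\N}$ via the levelwise-computed structure, (A6) reduces to the dimension shift $(A^\I)_n = A_{n+1}$, and your observation for (A5) that $\I_n$ is a finite coproduct of copies of $1$ and hence decidable is the same fact the paper invokes (phrased there as decidability of equality on $\N$).

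The genuine gap is in (A7). You propose to carve $\Phi$ out of $\Omega_{\cEE}$ by ``taking at each level the sub-$\EE$-object of complemented truth values'' using the Boolean subclassifier $2 \hookrightarrow \Omega_\EE$. But $\Omega_{\cEE}$ is \emph{not} levelwise $\Omega_\EE$: its value at $[n]$ is the $\EE$-object of internal sieves on $[n]$ (subpresheaves of $\y[n]$), which is a much larger object built from the whole diagram $\B$, not a single truth-value object. So ``complemented truth values at level $n$'' is not a well-posed description of a subobject of $\Omega_{\cEE}(n)$, and the intended Boolean restriction does not land you inside $\Omega_{\cEE}$ in the way you suggest. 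The correct remaining step, which you flag as ``not pure bookkeeping'' but do not supply, is exactly the place where the paper does something different: it exploits the adjunction $U \dashv R : \EE/_{\!\N} \to \cEE$ (where $U$ is the restriction along $|\B| \hookrightarrow \B$, whose right adjoint is a right Kan extension) and sets $\Phi := R(\N^*2)$, with $t := R(\N^*\mathsf{true}) : 1 \cong R(\N^*1) \to R(\N^*2)$. Classification then follows formally: a locally complemented $C\cof A$ is by definition classified in $\EE/_{\!\N}$ by some $\chi : UA \to \N^*2$, and the transpose $\chi^\sharp = R(\chi)\circ\eta_A : A \to \Phi$ pulls $t$ back to $C$ because $R$ preserves pullbacks and $C$ is recovered from $UC$ by the presheaf structure of $A$. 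This replaces your open-ended Yoneda verification with a one-line appeal to right-adjointness, and is the missing idea you need.
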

\begin{proof}
Axioms (A0)-(A4)  are satisfied by the complemented subobjects in $\EE/_{\!\N}$, and the forgetful functor $U : \cEE \to \EE/_{\!\N}$ creates the monos, isos, composites, pullbacks, and joins in question.  
For (A5), we use the fact that the equality relation on $\N$ is decidable to infer that, for each $[n] = \{0, x_1, \dots, x_n, 1\}$, the finite set $\{i = j \,|\, 0\leq i,j \leq n+1\}$ is complemented in $\B([1],[n])\times \B([1],[n])$,  and so for the subpresheaf $\delta : \I \to \I\times\I$ we indeed have,
\begin{equation}
\delta_n + \neg(\delta_n) \cong \I_n\times \I_n \qquad\text{for all $n\in\N$}\,.
\end{equation}
For (A6) we use the fact that the pathobject $A^{\I}$ is a shift by one dimension, together with $(A^{\I})_{n} = A_{n+1}$, together with \eqref{pointwisecomplemented}.
The cofibration classifier in (A7) is given by applying the right adjoint $U \dashv R : \EE/_\N \to \cEE$ to the complemented subobject classifier $\N^*2$ of $\EE/_{\!\N}$.
\end{proof}
\end{example}

\section*{Appendix B: Cartesian cubical sets classifies intervals}\label{appendix:classtopos}

Recall from Section \ref{sec:cSet} that the objects of the \emph{Cartesian cube category} $\Box$ may be taken concretely to be finite, strictly bipointed sets, written
\[
[n] = \{0, x_1, ..., x_n, 1\},
\]
 and the arrows $f : [n] \to [m]$ to be all bipointed maps $[m]\to [n]$ (note the direction). 
The category of \emph{(Cartesian) cubical sets} is then the presheaf topos
\[
\cSet =  \psh{\Box}.
\]
It is generated by the \emph{$n$-cubes} $\I^n = \yon[n]$, with  $1 = \I^0$, $\I = \yon[1]$; and $\I^n \times \I^m \cong \I^{n+m}$ by preservation of products by the Yoneda embedding $\yon : \op{\Box} \hook\cSet$.  For a cubical set $X: \op{\Box} \to \Set$ we have the usual Yoneda correspondence for the set $X_n$ of  \emph{$n$-cubes in $X$},
\[
\{c \in X_n \} \ \cong\ \{c : \I^n \to X\}.
\]
In particular, $\I^n_m = \C([m],[n])$ is the set of $m$-cubes in the $n$-cube.\footnote{Note that the cardinality of $\I^n_m$ is therefore just $(m+2)^n$, in comparison to the \emph{Dedekind} cubes $\C_{\wedge,\vee}$ used in \cite{CCHM:2018ctt,orton-pitts}, for which \eg\ $\C_{\wedge,\vee}([n],[1])$ is the $n^{th}$ \emph{Dedekind number}, the number of elements in the free distributive lattice on $n$ generators, which is in general a number so large that it is unknown for values of $n>8$.} 

\begin{proposition}\label{prop:csetclassifiesintervals}
The category $\cSet$ of Cartesian cubical sets is the classifying topos for \emph{intervals}: objects $\mathcal{I}$ with points $i,j:1\rightrightarrows \mathcal{I}$ the pullback of which is $0$:
\[
\xymatrix{
0 \ar[d] \ar[r] \pbcorner & 1 \ar[d]^j  \\
1 \ar[r]_i & \mathcal{I}
}
\]
\end{proposition}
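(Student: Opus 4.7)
The plan is to apply Diaconescu's theorem and then unpack what flatness concretely demands of a functor out of the Lawvere theory $\Box$. By Diaconescu, geometric morphisms $f : \EE \to \cSet = \psh{\Box}$ correspond pseudo-naturally in $\EE$ to flat functors $F : \Box \to \EE$, via $F = f^* \circ \yon$. Since $\Box$ is by construction the free finite-product category on the bipointed object $\delta_0, \delta_1 : [0] \rightrightarrows [1]$, finite-product-preserving functors $\Box \to \EE$ correspond to bipointed objects in $\EE$, with the correspondence sending $F$ to $(F[1], F\delta_0, F\delta_1)$. Any flat functor from a category with finite products preserves them automatically (its left Kan extension is left-exact), so every flat $F : \Box \to \EE$ is already a bipointed object in $\EE$.

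The crux is therefore to identify which bipointed objects $(\mathcal{I}, i, j)$ in $\EE$ come from a \emph{flat} functor. For a finite-product-preserving $F$, flatness reduces to the parallel-pair condition: for every $f, g : [n] \rightrightarrows [m]$ in $\Box$ and every generalized element $x : X \to F[n]$ with $Ff \circ x = Fg \circ x$, there exists a jointly epimorphic family $\{X_\alpha \to X\}$ such that each $X_\alpha \to F[n]$ factors through $F(h_\alpha)$ for some $h_\alpha : [k_\alpha] \to [n]$ equalizing $f$ and $g$ in $\Box$. Working componentwise reduces us to the case $m = 1$, where morphisms $[n] \to [1]$ in $\Box$ correspond to elements of $\{0, x_1, \ldots, x_n, 1\}$ (choosing the image of the generator). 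A parallel pair $(a, b)$ with $\{a,b\} \neq \{0,1\}$ admits an equalizer in $\Box$ obtained by substitution — e.g.\ identifying two variables $x_j, x_k$, or setting a variable equal to $0$ or $1$ — and any finite-product-preserving $F$ sends this equalizer to an equalizer in $\EE$, because it is a pullback along a product projection of one of the monomorphisms $i, j : 1 \to \mathcal{I}$ (which are monic as maps out of the terminal). Hence flatness is automatic for all such pairs.

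The single remaining parallel pair, up to this analysis, is the pair of constants $0, 1 : [n] \to [1]$ (induced by $\delta_0, \delta_1 : [0] \rightrightarrows [1]$), which admits no equalizer in $\Box$ since identifying the two distinguished points is forbidden in a strictly bipointed set. The flatness condition in this case demands that any $x : X \to F[n] = \mathcal{I}^n$ with $i \circ !_X = j \circ !_X$ be covered by the empty family, i.e.\ that $X = 0$. This is exactly the statement that $1 \times_\mathcal{I} 1 \cong 0$, the interval axiom. Conversely, when this axiom holds, the flatness condition for $(0,1)$ is vacuous (the domain of the ``bad'' generalized element is initial), so $F$ is flat.

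The main obstacle will be the careful verification in the third step that there are no further hidden flatness obligations — in particular, that composite parallel pairs with at least one ``$(0,1)$-component'' impose nothing beyond the interval axiom, while pairs with no such component are handled by product-preservation alone. Once this analysis is in place, combining the three steps gives a pseudo-natural equivalence between $\Geom(\EE, \cSet)$ and the groupoid (or category) of intervals in $\EE$, the universal interval being $\I = \yon[1]$, whose interval property was recorded at the end of Section \ref{sec:cSet}.
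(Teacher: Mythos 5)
Your proposal is correct in outline but takes a genuinely different route from the paper's. The paper works with the covariant presentation $\Set^{\B}$, embeds $\B$ in $\B_=$ (all finite bipointed sets, so that $\Set^{\B_=}$ classifies arbitrary bipointed objects), and then identifies $\Set^{\B}$ as the closed complement subtopos determined by $[0=1]\cof 1$, i.e.\ the subtopos forcing $0\neq 1$. You instead go directly through Diaconescu: geometric morphisms into $\psh{\Box}$ are flat functors $\Box\to\EE$, and since $\Box$ is a Lawvere theory, such functors are exactly bipointed objects (finite products are automatically preserved by flat functors) together with the residual parallel-pair condition. Your key observation --- which is right --- is that the only parallel pairs in $\Box$ lacking an equalizer are those whose generated identification forces $0\sim 1$, and for exactly those the flatness condition is $1\times_\mathcal{I}1\cong 0$. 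What your route buys is directness and avoidance of the auxiliary category $\B_=$ and its topology; what the paper's route buys is a reduction to general machinery (classifying toposes of algebraic theories, open/closed complements) with no case analysis of flatness.

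Two of the details you defer need more care than your sketch suggests. First, the ``componentwise reduction to $m=1$'' is not literally componentwise: a pair $f,g:[n]\rightrightarrows[m]$ can force $0\sim 1$ by chaining component equations even when no single $(f_i,g_i)$ equals $(0,1)$ --- e.g.\ $f=(x_1,x_2,0)$, $g=(0,1,x_2)$. The correct invariant is the equivalence relation on $\{0,x_1,\dots,x_n,1\}$ generated by all the $f_i\sim g_i$; the equalizer exists in $\Box$ iff $0\not\sim 1$, in which case it is $\I^k\hook\I^n$ with $k$ the number of free classes, and its universal property in $\EE$ is verified directly for the bipointed object $(\mathcal{I},i,j)$. (Relatedly, for a component $(x_j,x_k)$ the mono whose pullback gives the equalizer is the diagonal $\mathcal{I}\to\mathcal{I}\times\mathcal{I}$, not $i$ or $j$.) Second, when $0\sim 1$, one traces the chain of equalities to conclude $i\circ !_X = j\circ !_X$, so $X$ maps to $1\times_\mathcal{I} 1\cong 0$ and is itself initial because initial objects in a topos are strict. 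With those two points supplied, your argument is a valid alternative proof.
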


\begin{proof}
Consider the covariant presentation $\cSet = \Set^\B$ where $\B$ is the category of finite, strictly bipointed sets and bipointed maps.  We can extend $\B\hook\B_=$ by freely adjoining coequalizers, making $\B_=$ the free finite \emph{colimit} category on a co-bipointed object. A concrete presentation of $\B_=$ is the finite bipointed sets, including those with $0=1$.  Let us write $(n)$ for the bipointed set $\{x_1, ..., x_n, *\}$, with $n$ (non-constant) elements and a further element $0=*=1$.  There is an evident coequalizer $[1]\rightrightarrows [n]\to(n)$, which (only) identifies the distinguished points, and every coqualizer in $\B_=$ has either the form $[m]\rightrightarrows [n]\to [k]$ or $[m]\rightrightarrows [n]\to(k)$, for a suitable choice of $k$.  Note that there are no maps of the form $(m)\to [n]$, and that every map $[m]\to (n)$ factors uniquely as $[m]\to (m)\to (n)$ with  $[m]\to(m)$ the canonical coequalizer of $0$ and $1$.  The category $\B_=$ can therefore be decomposed into two ``levels'', the upper one of which is essentially $\B$, and the lower one consisting of just the objects $(n)$, and thus essentially the finite \emph{pointed} sets, and for each $n$, there is the canonical coequalizer $[n]\to(n)$ going from the upper level to the lower one.
\[
\xymatrix{
\dots \ar[r] & [m] \ar@{->>}[d] \ar[r] & [n] \ar@{->>}[d] \ar[r] & \dots \\
\dots \ar[r] & (m) \ar[r] & (n) \ar[r] & \dots
}
\]
Write $u : \B\to\B_=$ for the upper inclusion, which is the classifying functor of generic co-bipointed object in $\B_=$.  

Now consider the induced geometric morphism:
\[
\xymatrix{
\Set^\B \ar@<-2ex>[rr]_{u_!} \ar@<2ex>[rr]^{u_*} && \ar[ll]|-{\,u^*} \Set^{\B_=}  & u_! \dashv u^* \dashv u_*
}
\]
Since $u^*$ is the restriction along $u$, the right adjoint $u_*$ must be ``prolongation by $1$'',
\begin{align*}
u_*(P)[n] &= P[n],\\
u_*(P)(n) &= \{*\},
\end{align*}
with the obvious maps,
\[
\xymatrix{
\dots \ar[r] & P[m] \ar@{->>}[d] \ar[r] & P[n] \ar@{->>}[d] \ar[r] & \dots \\
\dots \ar[r] & \{*\} \ar[r] & \{*\} \ar[r] & \dots
}
\]
as is easily seen by considering maps in $\Set^{\B_=}$ of the form
\[
\xymatrix{
 Q[n] \ar[d]_{\cdot} \ar[r] & P[n] \ar@{->>}[d]^{\cdot} \\
 Q(n) \ar[r]^{} & \{*\} .
}
\]
Since $u_* : \Set^{\B} \to \Set^{\B_=}$ is evidently full and faithful, it is the inclusion part of a sheaf subtopos $\mathsf{sh}(\B^\mathsf{op}_=, j) \hook \Set^{\B_=}$ for a suitable Grothendieck topology $j$ on $\B^\mathsf{op}_=$.  We claim that $j$ is the closed complement topology of the subobject $[0 = 1] \rightarrowtail 1$ represented by the coequalizer $[0]\to (0)$.  Indeed, in $\Set^{\B_=}$ we have the representable functors:
\begin{align*}
\I &= y[1],\\
1 &= y[0],\\
[0=1] &= y(0)
\end{align*}
fitting into an equalizer $[0=1] \to 1 \rightrightarrows \I$, which is the image under Yoneda of the canonical coequalizer $[1] \rightrightarrows [0]\to (0)$ in $\B_=$.  The closed complement topology for $[0=1] \mono 1$ is generated by the single cover $0\to [0=1]$, which can be described logically as 
forcing the sequent $(0=1 \vdash \bot)$ to hold.  Recall from \cite{JohnstoneTT}, Proposition 3.53, the following simple characterization of the sheaves for the closed complement topology of an object $U\mono 1$: an object $X$ is a sheaf iff $X\times U \cong U$. In the present case, it therefore suffices to show that for any $P:\B_= \to \Set$ we have:
\[
P\times [0=1] \cong [0=1] \quad\text{iff}\quad P(n) = 1\ \text{for all $n$}.
\]
For any object $b\in \B_=$, consider the map
\[
\hom(yb, P\times [0=1] ) \cong \hom(yb, P) \times \hom(yb, [0=1]) \to \hom(yb, [0=1]) .
\]
If $b = [k]$, then $\hom(yb, [0 = 1]) \cong \hom_{\B_=}((0), [k]) \cong 0$, and so we always have an iso
\[
\begin{split}
\hom(yb, P\times [0 = 1] ) \cong \hom(yb, P) \times \hom(yb, [0=1])\\
 \cong \hom(yb, P) \times 0 \cong 0.
 \end{split}
\]
If $b = (k)$, then $\hom(y(k), [0=1]) \cong \hom_{\B_=}((0), (k)) \cong 1$, and we have an iso
\[
\begin{split}
\hom(y(k), P\times [\bot=\top] ) \cong \hom(y(k), P) \times \hom(y(k), [0=1]) \\
\cong \hom(y(k), P) \times 1 \cong \hom(y(k), P) \cong P(k).
\end{split}
\]
Thus either way we will have an iso $P\times [0=1] \cong [0=1]$ iff $P(k) \cong 1$.

The presheaf topos $\Set^\B$ is therefore the closed complement of the open subtopos
\[
\Set^{\B_=}\!/_{\![0=1]}\ \hook\ \Set^{\B_=}\,,
\]
given by forcing the proposition $0 \neq 1$.  Since $\Set^{\B_=}$ is clearly the classifying topos for \emph{arbitrary} bipointed objects, say $\Set[B, 0,1]$, the sheaf subtopos 
\[
 \Set[B, 0\neq 1]\ \simeq\ \Set^\B\ \hook\ \Set^{\B_=}\ \simeq\ \Set[B, 0,1]
\]
classifies \emph{strictly} bipointed objects, \ie\ intervals, as claimed.
\end{proof}

\begin{corollary}
The geometric realization functor to topological spaces 
\[
R: \cSet \to \mathsf{Top}
\]
preserves finite products, $R(X\times Y) \cong R(X)\times R(Y)$ and $R(1) \cong \{*\}$.
\end{corollary}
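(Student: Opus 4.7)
The geometric realization $R : \cSet \to \mathsf{Top}$ is, by definition, the cocontinuous extension along Yoneda of the functor $\Box \to \mathsf{Top}$ sending each $n$-cube $[n]$ to the standard topological cube $[0,1]^n$ with face and degeneracy maps realized in the coordinate-wise way. In particular $R$ is a left adjoint (its right adjoint being the singular cubical complex functor), so it preserves all colimits. The terminal case $R(1) \cong \{*\}$ is immediate, since $1 = \yon[0]$ and $[0,1]^0 = \{*\}$.

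For binary products, the first observation comes from the construction of $\Box$ itself: because $\Box$ is freely generated as a finite-product category by the bipointed object $[0]\rightrightarrows[1]$, products of representables in $\cSet$ are themselves representable, with $\I^n \times \I^m \cong \I^{n+m}$. Hence
\[
R(\I^n \times \I^m) \cong R(\I^{n+m}) = [0,1]^{n+m} \cong [0,1]^n \times [0,1]^m \cong R(\I^n) \times R(\I^m),
\]
so $R$ preserves products of representables. I would then write arbitrary $X, Y \in \cSet$ as canonical colimits of representables over their categories of elements and use distributivity of products over colimits in $\cSet$ to obtain
\[
X \times Y \cong \varinjlim_{(\I^n \to X)} \varinjlim_{(\I^m \to Y)}\, \I^n \times \I^m.
\]
Applying $R$ and combining cocontinuity with product preservation on representables yields a canonical comparison map $R(X) \times R(Y) \to R(X\times Y)$.

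The main obstacle is showing this comparison is an isomorphism: this requires that the categorical product in $\mathsf{Top}$ commute with the colimits appearing in realizations. In plain $\mathsf{Top}$ this fails in general, and the standard remedy is to interpret $\mathsf{Top}$ as a convenient cartesian closed category of spaces (\eg\ compactly generated weak Hausdorff spaces, or $k$-spaces), in which products are left adjoints and hence distribute over all colimits. With this reading, the comparison map is an isomorphism and the proof concludes.

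An alternative, more conceptual route makes direct use of Proposition~\ref{prop:csetclassifiesintervals}. Consider a convenient Grothendieck topos of spaces $\mathcal{E}$ (\eg\ sheaves on the site of topological spaces with open covers) into which $\mathsf{Top}$ embeds so that $[0,1]$ with its endpoints remains an interval in the sense of the Proposition. The classifying property then produces a unique (up to unique iso) geometric morphism $f : \mathcal{E} \to \cSet$ with $f^*\I \cong [0,1]$; its inverse image $f^*$ is left exact by general topos theory. Since $f^*$ is also cocontinuous and agrees with $R$ (composed with the embedding) on representables, they agree on all of $\cSet$, whence $R$ preserves finite products. The price of this route is the verification that $f^*$ truly recovers the classical realization, trading the distributivity check for a compatibility check.
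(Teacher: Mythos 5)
Your proposal is correct in substance, but the paper takes a shorter and more economical route than either of your two. Your first route (direct cocontinuous extension, product preservation on representables, then distributivity of products over colimits) is the standard hands-on argument, and you correctly flag the issue that this needs a convenient (\eg\ compactly generated) category of spaces for the comparison map to be a homeomorphism. Your second, ``conceptual'' route is closest in spirit to the paper's, but the paper chooses a much better intermediary: instead of a Grothendieck topos of sheaves on $\mathsf{Top}$, the paper observes that $\Delta^1 \in \mathsf{sSets}$ with its two vertices is already an interval in the sense of Proposition~\ref{prop:csetclassifiesintervals} (the two vertices of $\Delta^1$ have empty intersection), so the classifying property produces a geometric morphism $\mathsf{sSets}\to\cSet$ whose inverse image $\cSet\to\mathsf{sSets}$ is finite-product-preserving. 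The cubical realization then factors as this inverse image followed by the \emph{simplicial} realization $\mathsf{sSets}\to\mathsf{Top}$, which preserves finite products by Milnor's classical theorem. What this buys over your second route: no verification is needed that the classifying inverse image ``truly recovers'' the classical realization, because one defines the cubical realization to be the composite; and no sheaf topos of spaces needs to be set up. The CGWH caveat still lurks in Milnor's theorem, but the paper leaves that implicit, as is common. If you want to keep a self-contained argument, your first route is fine, but for a one-line proof the paper's factorization through $\mathsf{sSets}$ is the cleaner use of Proposition~\ref{prop:csetclassifiesintervals}.
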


\begin{proof}
Compose the inverse image of the classifying geometric morphism $\mathsf{sSets} \to \cSet$ of the $1$-simplex $\Delta^1$ with the standard geometric realization $\mathsf{sSets} \to \mathsf{Top}$, both of which preserve finite products.
\end{proof}

\begin{example}[P.~Aczel]
The cubical set $P$ of polynomials (say, over the integers), is defined by:
\[
P_n = \{ p(x_1, ...,  x_n)\ |\  \text{polynomials in at most}\  x_1, ..., x_n \}
\]
with the substitution map $s^* : P_n \to P_m$ taking $p(x_1, ...,  x_n)$ to 
\[
 s^*p(x_1, ...,  x_n) = p\big(s(x_1),\dots, s(x_n)\big) \,,
\]
for each bipointed map $s:[n] \to [m]$.

This cubical set $P$ underlies a ring object in $\cSet$, and the interval $\I = \y[1]$ embeds into it via the component maps
\[
\eta_n : \I_n \to P_n
\]
taking $v_i \in \C([n], [1])\cong \B\big([1],[n]\big) \cong \{0, x_1, \dots, x_n, 1\} $ to $0$, $1$, or the variable $x_i$, respectively, in $P_n$.
The same is true for any algebraic theory $\T$ with two constants, 
such as Boolean algebras: there is a distinguished cubical $\T$-algebra $\mathcal{A}$, and a natural map $\eta : \I \to |\mathcal{A}|$ in $\cSet$.

Indeed, let $\cSet = \Set[\mathcal{I}]$ as a classifying topos for intervals by Proposition \ref{prop:csetclassifiesintervals} with $\mathcal{I} = (1\rightrightarrows \I)$, and let
\[
\Set[\T,\mathsf{flat}]= \Set^{\op{\T}}
\]
 be the topos of presheaves on the Lawvere algebraic theory $\T$, which therefore classifies \emph{flat} $\T$-algebras.  There is a bipointed object $\mathcal{J} = (1\rightrightarrows \J)$ in $\T$, consisting of the generic $\T$-algebra and its two constants, which has a classifying functor $\J^\sharp : \C \to \T$, inducing adjoint functors on presheaves, 
\[
\J_! \dashv \J^* \dashv \J_* : \Set[\mathcal{I}] = \psh{\C} \too \psh{\T} = \Set[\T,\mathsf{flat}]\,,
\]
where $\J_! \circ \y_{\C} =\ \y_{\T} \circ \J^\sharp$,  with $\y$ the respective Yoneda embeddings.

We can then calculate,
\begin{equation}
\begin{split}
\J^* \J_! ( \I )( [n] ) &= \J^* \J_! ( \y[1] )( [n] )\\
 &= \J^* \y( \J^\sharp[1] )( [n] ) = \y( \J^\sharp[1] )( \J^\sharp[n] ) \\
 &= \T( \J^\sharp[n] , \J^\sharp[1] ) = \T\mathsf{Alg}( \J^\sharp[1] , \J^\sharp[n] ) \\
 &= \T\mathsf{Alg}( F(1) , F(n) ) = | F(n) |,
\end{split}
\end{equation}
where $|F(n)|$ is the underlying set of the free $\T$-algebra $F(n)$, the $n^{th}$ object of the Lawvere theory under its dual presentation $\op{\T}\hook \T\mathsf{Alg}$.  The unit of the $J_! \dashv J^*$ adjunction provides a natural map $\eta: \I \to \J^* \J_! ( \I )$, given pointwise by  $\I_n \cong   \{0, x_1, \dots, x_n, 1\} \too | F(n) | \cong \J^* \J_! ( \I )_n $.

The cubical set of polynomials $P = \J^* \J_! ( \I )$ is thus indeed a cubical ring, with a map $\I \to P$, since $\J_!(\I) \cong \y_\T\,\J^\sharp([1]) \cong \y_\T(\J)$ is a ring in $\Set[\T,\mathsf{flat}]$ and $J^*$ is left exact.  In fact, we learn thereby  that $P$ is flat.
\end{example}

\begin{definition}
Let $\Box \to \Cat$ be the unique product-preserving functor taking the interval $[1]$ to the one arrow  category $\bbtwo = (0\leq1)$.  This functor then takes $[n]$ to $\bbtwo^n$, the $n$-fold product in $\Cat$, and maps $[m] \to [n]$ to the corresponding monotone functions $\bbtwo^m \to \bbtwo^n$ of posets.\footnote{Thus factoring through the full subcategory $\C_{\wedge,\vee} \hook\Cat$ of \emph{Dedekind cubes}, mentioned above, which is the Lawvere algebraic theory of distributive lattices.}   The  \emph{cubical nerve} functor 
\[
N : \Cat \to \cSet
\]
 is then defined by:
\[
N(\bbC)_n = \Cat(\bbtwo^n, \bbC).
\]
Thus $N(\bbC)_0$  is the set of objects of \bbC; $N(\bbC)_1$ is the set of arrows; $N(\bbC)_2$ consists of all commutative squares; $N(\bbC)_3$ all commutative cubes, etc. 
\end{definition}

\begin{proposition}\label{prop:nervefull}
The cubical nerve $N : \Cat \to \cSet$ is full and faithful.
\end{proposition}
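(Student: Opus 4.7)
The plan is to verify faithfulness and fullness separately, using the identifications $N\bbC_0 = \mathrm{ob}(\bbC)$ and $N\bbC_1 = \mathrm{arr}(\bbC)$ arising from $\bbtwo^0 \cong 1$ and $\bbtwo^1 = \bbtwo$. Faithfulness is immediate: two functors $F,G : \bbC \to \bbD$ with $N(F) = N(G)$ must agree on $N\bbC_0$ and $N\bbC_1$, and a functor is determined by its action on objects and arrows.

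For fullness, given $\varphi : N\bbC \to N\bbD$, I would define a candidate functor $F : \bbC \to \bbD$ by setting $F := \varphi_0$ on objects and $F := \varphi_1$ on arrows. Three things must be verified. First, preservation of source and target: apply naturality of $\varphi$ with respect to the two endpoint arrows $\delta_0, \delta_1 : [0] \to [1]$ in $\C$, whose images under $\C \to \Cat$ are the point inclusions $1 \rightrightarrows \bbtwo$; this gives $\varphi_0(\mathrm{dom}\,f) = \mathrm{dom}(\varphi_1 f)$ and similarly for codomains. Second, preservation of identities: apply naturality of $\varphi$ to the unique arrow $[1] \to [0]$ in $\C$ (image: $\bbtwo \to 1$). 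Third, preservation of composition: for composable $f : a \to b$ and $g : b \to c$, form the commutative square $S_{f,g} : \bbtwo^2 \to \bbC$ with edges $f, g, \mathrm{id}_a, g \circ f$, giving an element of $N\bbC_2$. By naturality of $\varphi$ against the four coface maps $[1] \to [2]$ in $\C$ picking out the sides of the square, the image $\varphi_2(S_{f,g})$ is a commutative square in $\bbD$ with edges $\varphi_1 f,\, \varphi_1 g,\, \mathrm{id}_{\varphi_0 a}$, and $\varphi_1(g \circ f)$; commutativity forces $\varphi_1(g \circ f) = \varphi_1(g) \circ \varphi_1(f)$.

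Once $F$ is confirmed as a functor, it remains to prove $N(F) = \varphi$, i.e., $N(F)_n(H) = \varphi_n(H)$ for every $H : \bbtwo^n \to \bbC$. For this I would argue that a functor $\bbtwo^n \to \bbD$ is determined by its restriction to the $1$-skeleton of $\bbtwo^n$, that is, by its values on the $2^n$ vertices and on the coordinate-direction unit edges connecting adjacent vertices. The vertices of $\bbtwo^n$ correspond to arrows $[0] \to [n]$ in $\C$, i.e., to bipointed maps $[n] \to [0]$, of which there are $2^n$; the required unit edges correspond to certain arrows $[1] \to [n]$ in $\C$. Naturality of $\varphi$ with respect to these arrows, together with $\varphi_0 = F$ on objects and $\varphi_1 = F$ on arrows, yields that $N(F)_n(H)$ and $\varphi_n(H)$ agree on the $1$-skeleton of $\bbtwo^n$, and hence as functors.

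The only step requiring any thought is this last one: reconstructing a functor out of the $n$-cube from its $1$-skeleton. This uses that every morphism of $\bbtwo^n$ factors as a composite of unit edges (in possibly many compatible ways), so its image is forced by the functorial laws already established. Nothing deep is required beyond careful bookkeeping of the arrows in $\C$ that pick out vertices and edges of $\bbtwo^n$.
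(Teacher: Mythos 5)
Your argument is correct and takes essentially the same route as the paper's: define $F$ on objects and arrows via $\varphi_0$ and $\varphi_1$, and verify functoriality by naturality of $\varphi$. The only variations are cosmetic---for composition you use naturality against the four face maps $[1]\to[2]$ of a square with edges $f$, $g$, $\mathrm{id}_a$, $g\circ f$, whereas the paper applies naturality against the diagonal $d:\bbtwo\to\bbtwo^2$ to the square with edges $f,f,g,g$; and you explicitly justify the final identity $N(F)=\varphi$ by a $1$-skeleton argument on $\bbtwo^n$, a step the paper simply asserts as clear.
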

\begin{proof}
Given categories $\bbC$ and $\D$ and functors $F, G : \bbC\to\D$, suppose $F(f) \neq G(f)$ for some $f : A\to B$ in $\bbC$.  Take $f^{\sharp} : \bbtwo \to \bbC$ with image~$f$.  Then $N(F)_1(f^\sharp) = F(f) \neq G(f) = N(G)_1(f^\sharp)$, and so $N(F) \neq N(G) : N(\bbC)\to N(\D)$.  So $N$ is faithful.

For fullness, let $\varphi : N(\bbC) \to N(\D)$ be a natural transformation, and define a proposed functor $F : \bbC\to \D$ by  
\begin{align*}
F_0 = \varphi_0 &: \bbC_0 = N(\bbC)_0 \to N(\D)_0 = \D_0\\
F_1 = \varphi_1 &: \bbC_1 = N(\bbC)_1 \to N(\D)_1 = \D_1.
 \end{align*}
We just need to show that $F$ preserves identity arrows and composition.
Consider the following diagram.
\[
\xymatrix{
\Cat(\bbtwo^1, \bbC) = N(\bbC)_1 \ar[r]^{F_1} & N(\D)_1 = \Cat(\bbtwo^1, \D) \\
\Cat(\bbtwo^0, \bbC) = N(\bbC)_0 \ar[u]^{!^*} \ar[r]_{F_0} & N(\D)_0 = \Cat(\bbtwo^0, \D) \ar[u]_{!^*}.
}
\]
Here $!^* : \Cat(\bbtwo^0, \bbC) \to \Cat(\bbtwo, \bbC) $ is precomposition with $! : \bbtwo = \bbtwo^1 \rightarrow \bbtwo^0 = \mathbbm{1}$, so the diagram commutes.  But since $! : \bbtwo \rightarrow \mathbbm{1}$ is a functor, 
\[
\bbC_0 = \Cat(\mathbbm{1}, \bbC)  \stackrel{!^*}{\to}  \Cat(\bbtwo, \bbC) = \bbC_1
\]
 takes objects in $\bbC$ to their identity arrows.  Thus $F$ preserves identity arrows.  Similarly, for composition, consider 
\[
\xymatrix{
\Cat(\bbtwo^2, \bbC) = N(\bbC)_2 \ar[d]_{d^*} \ar[r]^{\varphi_2} & N(\D)_2 = \Cat(\bbtwo^2, \D) \ar[d]^{d^*} \\
\Cat(\bbtwo, \bbC) = N(\bbC)_1  \ar[r]_{F_1} & N(\D)_1 = \Cat(\bbtwo, \D).
}
\]
where $\varphi_2 : N(\bbC)_2 \to N(\D)_2$ is the action of $\varphi$ on commutative squares of arrows, and $d^* : \Cat(\bbtwo^2, \bbC) \to \Cat(\bbtwo, \bbC)$ is precomposition with the diagonal map $d : \bbtwo \rightarrow \bbtwo^2 = \bbtwo\times\bbtwo$, so the diagram commutes.  For any composable $f: A \to B$ and $g:B\to C$ in $\bbC$ there is a commutative square 
\[
\xymatrix{
A \ar[r]^{f} \ar[d]_{f} & B \ar[d]^{g} \\
B  \ar[r]_{g} & C,
}
\]
and the effect of $d^* : \Cat(\bbtwo^2, \bbC) \to \Cat(\bbtwo, \bbC)$ on this square is exactly $g\circ f: A\to C$, and similarly for $d^* : \Cat(\bbtwo^2, \D) \to \Cat(\bbtwo, \D)$.  Thus the commutativity of the above diagram implies that $F$ preserves composition.
Since clearly $N(F) = \varphi$, we indeed have that $N$ is also full.
\end{proof}

\bibliographystyle{alpha}
\bibliography{../references}

\end{document}